\documentclass[11pt,reqno]{amsart}
\usepackage{amsfonts,amsthm,amssymb}
\usepackage{graphicx,psfrag,overpic}
\usepackage{bm,enumerate}
\usepackage{cases}
\usepackage[all,cmtip,line]{xy}
\usepackage{color}
\usepackage{enumitem}
\usepackage{tikz}
\usepackage{cite}
\usepackage{amsmath}
\usepackage{mathrsfs}
\usepackage{appendix}
\usepackage[mathlines]{lineno}

\numberwithin{equation}{section}
\numberwithin{figure}{section}

\usepackage{graphics}
\usepackage{epsfig}
\newtheorem{theorem}{Theorem}[section]
\newtheorem{corollary}[theorem]{Corollary}
\newtheorem{lemma}{Lemma}[section]
\newtheorem{proposition}[theorem]{Proposition}
\newtheorem{remark}{Remark}[section]
\newtheorem{definition}{Definition}[section]

\newcounter{Step}[section]
\renewcommand{\theStep}{\textit{\arabic{Step}}}
\newcommand{\Step}[1][]{\refstepcounter{Step}\textit{Step~\theStep. }}

\begin{document}
\title[Convergence of Splitting Schemes for Nonlinear Dirac Equation]
{$\mathrm{L}^{2}$--convergence of the time-splitting scheme for nonlinear Dirac equation in 1+1 dimensions}%
\author[N. li]{Ningning Li}
\address{Ningning Li:\ School of Mathematics and Statistics, Northwestern Polytechnical University, Xi'an 710129, China; School of Mathematical Sciences, Fudan University, Shanghai 200433, China}
\email{\tt ningningl@nwpu.edu.cn}

\author[Y. Zhang]{Yongqian Zhang}
\address{Yongqian Zhang:\ School of Mathematical Sciences, Fudan University, Shanghai 200433, China}
\email{\tt yongqianz@fudan.edu.cn}

\author[Q. Zhao]{Qin Zhao}
\address{Qin Zhao:\ School of Mathematics and Statistics, Wuhan University of Technology, Wuhan 430070, China}
\email{\tt qzhao@whut.edu.cn}

\begin{abstract}
\,\,We study the time-splitting scheme for approximating solutions to the Cauchy problem of the nonlinear Dirac equation in 1+1 dimensions. Under the assumption that the initial data for the scheme are convergent in $\mathrm{L}^{2}(\mathbb{R})$, we prove that the approximate solutions constructed by the corresponding time-splitting scheme are strongly convergent in $\mathrm{L}^{2}(\mathbb{R}\times[0,T])$ to the global strong solution of the nonlinear Dirac equation for any $T>0$. To achieve this, we first establish the pointwise estimates for time-splitting solutions. Based on these estimates, a modified Glimm-type functional is carefully designed to show that it is uniformly bounded in time, which yields $\mathrm{L}^2$ stability estimates for the scheme. Furthermore, we prove that the set of time-splitting solutions is relatively compact in $\mathrm{C}([0,T];\mathrm{L}^{2}(\mathbb{R}))$ for any $T>0$. Finally, we show that the limit of any convergent subsequence of the time-splitting solutions is the strong solution to the Cauchy problem of the nonlinear Dirac equation.
\end{abstract}
\keywords{Nonlinear Dirac equation; Time splitting scheme; Modified Glimm-type functional; $\mathrm{L}^2$ stability; global strong solution}
\subjclass[2010]{35Q41, 65M12, 81Q05}
\maketitle
\section{Introduction}
We are concerned with the $\mathrm{L}^{2}$-convergence of a time-splitting scheme for nonlinear Dirac equation in 1+1 dimensions, involving Thirring type \cite{Thirring} and Gross-Neveu type \cite{Gross} nonlinear self-interactions. Such models are widely used in quantum field theory and nonlinear wave dynamics. The nonlinear Dirac equation (NLDE) is governed by
\begin{equation}\label{eq:NLDE}
\left\{
\begin{aligned}
\partial_tu+\partial_xu=imv+i\alpha u|v|^{2}+i2\beta(\overline{u}v+u\overline{v})v,\\
\partial_tv-\partial_xv=imu+i\alpha v|u|^{2}+i2\beta(\overline{u}v+u\overline{v})u,
\end{aligned}\right.
\end{equation}
with initial data
\begin{equation}\label{eq:NLDE-initail-data}
	\left\{
\begin{aligned}
	u|_{t=0}=u_{0}(x),\\
	v|_{t=0}=v_{0}(x),
\end{aligned}\right.
\end{equation}
where $(u,v)\in \mathbb{C}^{2}$ is a complex vector, $(x,t)\in \mathbb{R}^{2}$, the variables  $t$ and $x$ represent the time and spatial coordinates, respectively. Here $m$ is a nonnegative constant, $i=\sqrt{-1}$ is the imaginary unit, and the constants $\alpha$ and $\beta$ are real. $\overline{u}$ and $\overline{v}$ denote the complex conjugate of $u$ and $v$. The terms $\alpha u|v|^{2}$ and $2\beta(\overline{u}v+u\overline{v})v$ model the Thirring type and Gross-Neveu type interactions, respectively.

The well-posedness of NLDE Cauchy problem \eqref{eq:NLDE}--\eqref{eq:NLDE-initail-data} has been extensively studied in various Sobolev spaces;
see, for example, Delgado \cite{Delgado} in $\mathrm{H}^s$ with $s \ge 1$, Selberg-Tesfahun \cite{S.SelbergandA.Tesfahun} and Huh \cite{H.Huh3} in $\mathrm{H}^s$ with $s > 1/2$, Candy \cite{T.Candy} in $\mathrm{H}^s$ with $s\ge 0$, and Huh \cite{H.Huh1}, Zhang \cite{Y.Zhang}, and Zhang-Zhao \cite{Y.ZhangandQ.Zhao1} in $\mathrm{L}^{2}$. In particular, Zhang and Zhao \cite{Y.ZhangandQ.Zhao1} established the global existence and uniqueness of strong solutions in $\mathrm{L}^{2}$ for the NLDE Cauchy problem \eqref{eq:NLDE}--\eqref{eq:NLDE-initail-data}, under the initial condition $(u_{0}(x),v_{0}(x)) \in \mathrm{L}^{2}(\mathbb{R})$; see also Lemma \ref{lem:A3}.

From the numerical perspective, various numerical methods have been proposed for both linear and nonlinear Dirac equations, including finite difference methods \cite{N.BournaveasandG.E.Zouraris,R.Hammer,FengYin}, ultra low-regularity integrators \cite{SchratzKatharinaandWangYanandZhaoXiaofei}, quantum Lattice Boltzmann scheme \cite{SAUROSUCCI1,SAUROSUCCI2} and quantum walks\cite{Masaya,Lee}.
Recently, time-splitting methods for Dirac equations have attracted growing interest, for instance, see
\cite{F.FillionandS.Succi,BaoWeiZhuCaiYongYongJiaXiaoWeiYinJia2016,W.Bao2024-1,W.Bao,W.Bao2020,BaoWeizhuCaiYongyongYinJia2021,
BaoWeizhuandYinJia,N.BournaveasandG.E.Zouraris,ZHuangSJinPA,
SchratzandX.Zhao,SLiXLiFShi2017,HeWangYang2024} and references therein. Splitting methods decompose the original system into several subproblems that are easier to solve, thereby enabling efficient and accurate numerical approximations \cite{Blanes2024}. Moreover, the time-splitting schemes can be regarded as discrete quantum walks. M. Maeda et al.\cite{Masaya} considered the continuous limit of nonlinear quantum walks and rigorously proved that the discrete solutions of nonlinear quantum walks converge to the solutions of the nonlinear Dirac equation in $\mathrm{H}^s(s\geq1)$ for a fixed time interval. In addition, for a special type of nonlinear quantum walk, the quantum Lattice Boltzmann scheme, the corresponding discrete solutions were rigorously proved to converge in the function space to the  strong solution of the nonlinear Dirac equation in \cite{N.Li}.

To the best of our knowledge, there is no rigorous proof of the strong convergence in $\mathrm{L}^{2}$-norm of splitting scheme for the NLDE Cauchy problem \eqref{eq:NLDE}--\eqref{eq:NLDE-initail-data}.
We remark that the convergence of the corresponding splitting methods
has been rigorously proved for various types of partial differential equations, for instance, Schr\"odinger equations
\cite{W.Bao2024-2,C.NeuhauserandM.Thalhammer,M.Thalhammer}, KdV equations
\cite{Holden2011MC}, Maxwell's equations \cite{JiaxiangCai},
convection--diffusion equations \cite{M.JunkandZ.Yang,Holden2010Book},
and Allen--Cahn equations \cite{D.Li}.
This paper aims to rigorously prove that the solutions of our time-splitting scheme are strongly convergent to the strong solution of the NLDE Cauchy problem \eqref{eq:NLDE}--\eqref{eq:NLDE-initail-data}.


\subsection{Time splitting schemes and main result}
We now introduce the time-splitting scheme to construct approximate solutions to the NLDE Cauchy problem \eqref{eq:NLDE}--\eqref{eq:NLDE-initail-data} with initial data $(u_0,v_0)\in \mathrm{L}^{2}(\mathbb{R})$.
The key idea is to split the system \eqref{eq:NLDE}--\eqref{eq:NLDE-initail-data} into two subproblems. The first subproblem consists of linear transport equations
\begin{equation}\label{5.2.1}
	\left\{
\begin{aligned}
	&\partial_{t}u+\partial_{x}u=0,\\
	&\partial_{t}v-\partial_{x}v=0,\\
	&(u,v)|_{t=0}=(u_{1,0},v_{1,0}),
\end{aligned}\right.
\end{equation}
while the second subproblem corresponds to solving the nonlinear differential equations
\begin{equation}\label{5.2.2}
	\left\{
\begin{aligned}
	&\frac{\mathrm{d}u}{\mathrm{d}t}=imv+i\alpha u|v|^{2}+i2\beta(\overline{u}v+u\overline{v})v,\\
	&\frac{\mathrm{d}v}{\mathrm{d}t}=imu+i\alpha v|u|^{2}+i2\beta(\overline{u}v+u\overline{v})u,\\
	&(u,v)|_{t=0}=(u_{2,0},v_{2,0}).
\end{aligned}\right.
\end{equation}

Let $S^1_t$ and $S^2_t$ be the semigroup in $\mathrm{L}^{2}(\mathbb{R})$ generated by (\ref{5.2.1}) and (\ref{5.2.2}), respectively, that is, $S^1_t(u_{1,0},v_{1,0})$ and $S^2_t(u_{2,0},v_{2,0})$ is the strong solutions to (\ref{5.2.1}) and (\ref{5.2.2}) with the initial data $(u_{i,0},v_{i,0})\in \mathrm{L}^{2}(\mathbb{R})$, $i=1,2$, respectively.

Then the construction of approximate solutions is based on the solutions of \eqref{5.2.1}--\eqref{5.2.2} with suitable initial data, namely, the semigroups $S^1_t$ and $S^2_t$.
First, given a spatial mesh size $\tau:=\Delta x>0$, which will serve as a parameter, the spatial grid points are $x_{j}=j\tau$ for $j=0,\pm1,\pm2,\ldots$. Then we choose the time step size equal to the spatial one, i.e., $\Delta t:=\Delta x=\tau>0$, and define the time steps
$t_n=n\tau$ for $n=0,1,2,\ldots$. Thus, we build the grid of mesh-points
$(x_j,t_n)$ with  $j\in\mathbb{Z}$ and $n\in\mathbb{N}$.

At the initial time $t=0$, the approximate solution $\big(u^{(\tau)},v^{(\tau)}\big)$ is defined by
\begin{equation}\label{eq:piecewise-constant-initial-data}
\big( u^{(\tau)},v^{(\tau)}\big)(x,0) =\big( u^{(\tau)},v^{(\tau)}\big)(x_{j},0),\quad x\in [j\tau,(j+1)\tau),	
\end{equation}
which is a piecewise constant approximation of the initial data $\big(u_{0},v_{0}\big)$ in the sense that
\begin{equation}\label{eq:initial-data-convergent}
\lim_{\tau\rightarrow0}\big(\|u^{(\tau)}(x,0)-u_{0}(x)\|_{\mathrm{L}^{2}(\mathbb{R})}
+\|v^{(\tau)}(x,0)-v_{0}(x)\|_{\mathrm{L}^{2}(\mathbb{R})}\big)=0.
\end{equation}

Then, on the domain $\{(x,t): x\in\mathbb{R},t\geq 0\}$, the approximate solution $\big(u^{(\tau)},v^{(\tau)}\big)$ to the NLDE Cauchy problem \eqref{eq:NLDE}--\eqref{eq:NLDE-initail-data}, constructed via the time-splitting scheme, is defined by
\begin{equation}\label{time-split}
\big(u^{(\tau)},v^{(\tau)}\big)(x,t)
= S^1_{t-n\tau}\Big( S^2_\tau S^1_\tau\Big)^n
\left(\big(u^{(\tau)},v^{(\tau)}\big)(\cdot,0)\right)
\end{equation}
for $t\in [n\tau, (n+1)\tau)$, $n=0,1,2,\ldots$.

More precisely, we present a detailed description of the time-splitting scheme in an inductive manner.
Assume that the approximate solution $\big(u^{(\tau)},v^{(\tau)}\big)$ has been defined on the domain $\{(x,t): x\in\mathbb{R}, 0\leq t\leq n\tau\}$ for some $n\geq0$, with $\big(u^{(\tau)},v^{\tau}\big)(\cdot,n\tau)\in \mathrm{L}^{2}(\mathbb{R})$. Then we determine $\big(u^{(\tau)},v^{(\tau)}\big)$ on the strip $\{(x,t):  x\in\mathbb{R}, n\tau< t\leq (n+1)\tau\}$ by the following two steps, $\mathbf{Step(n,1)}$ and $\mathbf{Step(n,2)}$.

$\mathbf{Step(n,1)}:$\ \ for $\{(x,t): x\in\mathbb{R},\ n\tau<t<(n+1)\tau,\ n\in\mathbb{N}\}$, the approximate solution $\big( u^{(\tau)},v^{(\tau)}\big) $ is defined as
\begin{equation*}
\big(u^{(\tau)},v^{(\tau)}\big)(x,t)
:=\left( u^{(\tau)}_{n,1},v^{(\tau)}_{n,1}\right)(x,t)
=S^1_{t-n\tau}\left(\big(u^{(\tau)},v^{(\tau)} \big)(\cdot,n\tau) \right),
\end{equation*}
which satisfies
\begin{align}\label{eq:subproblem1}
\begin{cases}
	\partial_{t}u_{n,1}^{(\tau)}+\partial_{x}u_{n,1}^{(\tau)}=0,\\
	\partial_{t}v_{n,1}^{(\tau)}-\partial_{x}v_{n,1}^{(\tau)}=0.
\end{cases}
\end{align}
The initial data of the system \eqref{eq:subproblem1} at the time step $t_{n}=n\tau$ is taken as
\begin{equation}\label{eq:subproblem1-initial-data}
\big( u_{n,1}^{(\tau)},v_{n,1}^{(\tau)}\big) (x,t)\Big|_{t=n\tau}=\big( u^{(\tau)},v^{(\tau)}\big) (x,n\tau).
\end{equation}

$\mathbf{Step(n,2)}:$\ \ to define the approximate solution $\big( u^{(\tau)},v^{(\tau)}\big) $ at the time step $t=(n+1)\tau$, we solve the following Cauchy problem for the system of nonlinear differential equations
\begin{equation}\label{eq:subproblem2}
	\left\{
\begin{aligned}
	\frac{\mathrm{d}u^{(\tau)}_{n,2}}{\mathrm{d}s}=imv^{(\tau)}_{n,2}+i\alpha u^{(\tau)}_{n,2}|v^{(\tau)}_{n,2}|^{2}+i2\beta\Big(\overline{u^{(\tau)}_{n,2}}v^{(\tau)}_{n,2}
+u^{(\tau)}_{n,2}\overline{v^{(\tau)}_{n,2}}\Big)v^{(\tau)}_{n,2},\\
	\frac{\mathrm{d}v^{(\tau)}_{n,2}}{\mathrm{d}s}=imu^{(\tau)}_{n,2}+i\alpha v^{(\tau)}_{n,2}|u^{(\tau)}_{n,2}|^{2}+i2\beta\Big(\overline{u^{(\tau)}_{n,2}}v^{(\tau)}_{n,2}
+u^{(\tau)}_{n,2}\overline{v^{(\tau)}_{n,2}}\Big)u^{(\tau)}_{n,2},
\end{aligned}\right.
\end{equation}
 for $(x,s)\in \mathbb{R}\times[0,\tau]$, with the initial condition
\begin{equation}\label{eq:subproblem2-initial-data}
\big(u^{(\tau)}_{n,2},v^{(\tau)}_{n,2}\big)(x,s=0)
=\lim_{t\rightarrow(n+1)\tau-}\big( u^{(\tau)},v^{(\tau)}\big)(x,t)\\
:=\big( u^{(\tau)},v^{(\tau)}\big)(x,(n+1)\tau-).
\end{equation}
We remark that, as shown in Section \ref{sec:conservation},
the approximate solution $\big(u^{(\tau)}, v^{(\tau)}\big)$
can be defined globally, and we define the approximate solution
at time $t= (n+1)\tau$ by
\begin{equation*}
\big(u^{(\tau)},v^{(\tau)}\big) (x,t=(n+1)\tau)
:=\big( u^{(\tau)}_{n,2},v^{(\tau)}_{n,2}\big)(x,s=\tau)
 =S^2_{\tau}\left(\big( u^{(\tau)},v^{(\tau)}\big)(x,(n+1)\tau-)\right).
\end{equation*}
In this way, we inductively complete the definition of our time-splitting scheme.

We call these solutions $\big(u^{(\tau)},v^{(\tau)}\big)$ \textit{time-splitting solutions} of the NLDE Cauchy problem \eqref{eq:NLDE}--\eqref{eq:NLDE-initail-data}.
Here and in the sequel, let
\begin{align*}
	\big( u_{j}^{n},v_{j}^{n}\big)
	=&\big( u^{(\tau)}(j\tau,n\tau),v^{(\tau)}(j\tau,n\tau)\big),\\
	\big( u_{j}^{n+1-},v_{j}^{n+1-}\big)
	=&\big( u^{(\tau)}(j\tau,(n+1)\tau-),v^{(\tau)}(j\tau,(n+1)\tau-)\big),
\end{align*}
and let
\begin{equation*}\big( u_{j}^{n,2}(s),v_{j}^{n,2}(s)\big)=\big( u_{n,2}^{(\tau)}(j\tau,s),v_{n,2}^{(\tau)}(j\tau,s)\big) \end{equation*}
for $j=0,\pm1,\pm2,\ldots,n=0,1,2,\ldots$, and $s\in[0,\tau]$.
Then, from the construction, we conclude that
\begin{equation}\label{eq:construction}
\big( u^{(\tau)},v^{(\tau)}\big)(x,n\tau)
=\big( u^{(\tau)}(j\tau,n\tau),v^{(\tau)}(j\tau,n\tau)\big)
=\big( u_{j}^{n},v_{j}^{n}\big),\qquad  x\in[j\tau,(j+1)\tau).
\end{equation}

The aim of this paper is to establish the convergence of the time-splitting solutions constructed above to the strong solution of the NLDE Cauchy problem \eqref{eq:NLDE}--\eqref{eq:NLDE-initail-data}. The definition of strong solutions to the NLDE Cauchy problem \eqref{eq:NLDE}--\eqref{eq:NLDE-initail-data} can be stated as follows,  in the same manner as in \cite{Schecter1977}. The corresponding definition of strong solutions for linear hyperbolic equations can be found in \cite{Lax2002}.

\begin{definition}[Strong solutions]\label{definition1.1}
	A pair of functions $(u,v)\in \mathrm{C}([0,\infty);\mathrm{L}^{2}(\mathbb{R}))$ is called a strong solution of the NLDE Cauchy problem
	\eqref{eq:NLDE}--\eqref{eq:NLDE-initail-data} on $\mathbb{R}\times[0,\infty)$ if there exists a sequence of smooth solutions $(u_{\mathrm{k}},v_{\mathrm{k}})$ such that \begin{equation*}\lim_{\mathrm{k}\rightarrow\infty}\big(\|u_{\mathrm{k}}(x,0)-u_{0}(x)\|_{\mathrm{L}^{2}(\mathbb{R})}
	+\|v_{\mathrm{k}}(x,0)-v_{0}(x)\|_{\mathrm{L}^{2}(\mathbb{R})}\big)=0,\end{equation*}
	and for any $T>0$,
	\begin{equation*}\lim_{\mathrm{k}\rightarrow\infty}\big(\|u_{\mathrm{k}}-u\|_{\mathrm{L}^{2}(\mathbb{R}\times[0,T])}
	+\|v_{\mathrm{k}}-v\|_{\mathrm{L}^{2}(\mathbb{R}\times[0,T])}\big)=0.\end{equation*}
	In particular, $(u,v)(\cdot,0)=(u_{0},v_{0})\in\mathrm{L}^{2}(\mathbb{R})$.
\end{definition}

Now, we state our main theorem.
\begin{theorem}\label{mainresult}
Suppose that $(u_{0},v_{0})\in \mathrm{L}^{2}(\mathbb{R})$, and suppose that
\begin{equation*}
	\lim_{\tau\rightarrow0+}
	\big(\|u^{(\tau)}(x,0)-u_{0}(x)\|_{\mathrm{L}^{2}(\mathbb{R})}
	+\|v^{(\tau)}(x,0)-v_{0}(x)\|_{\mathrm{L}^{2}(\mathbb{R})}\big)=0.
\end{equation*}
Let $(\hat{u},\hat{v})$ be the strong solution to the NLDE Cauchy problem \eqref{eq:NLDE}--\eqref{eq:NLDE-initail-data}, and let $\big(u^{(\tau)},v^{(\tau)}\big)$ be the time-splitting solution
to \eqref{eq:NLDE}--\eqref{eq:NLDE-initail-data} with initial data $(u^{(\tau)}(x,0),v^{(\tau)}(x,0))$, defined by the scheme \eqref{time-split} for $\tau>0$. Then, for any $T>0$,
\begin{equation*}
\lim_{\tau\rightarrow0+}
\Big(\|u^{(\tau)}-\hat{u}\|_{\mathrm{L}^{2}(\mathbb{R}\times[0,T])}
+\|v^{(\tau)}-\hat{v}\|_{\mathrm{L}^{2}(\mathbb{R}\times[0,T])}\Big)=0.
\end{equation*}
\end{theorem}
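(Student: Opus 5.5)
The proof follows the route announced in the abstract: uniform bounds, then compactness, then identification of the limit, then uniqueness. Everything is first done at the grid level and then passed to the continuum.

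\emph{Step 1: conservation and global definition.} Both subproblems conserve the charge density pointwise.
\begin{itemize}
\item For \eqref{5.2.1}, the two components are simply translated by $\pm(t-n\tau)$.
\item For \eqref{5.2.2}, compute $\frac{d}{ds}(|u|^2+|v|^2)$. The $m$-term gives $2\mathrm{Re}(\bar u\, i m v+\bar v\, i m u)=0$. The $\alpha$- and $\beta$-terms also give zero, since $(\bar u v+u\bar v)$ is real and the remaining factors pair into purely imaginary quantities.
\end{itemize}
Hence $|u^{n,2}_j|^2+|v^{n,2}_j|^2$ is constant in $s$, so the ODE solution exists globally. Because $\Delta t=\Delta x$, step 1 maps piecewise constants on the grid to piecewise constants: $u^{n+1-}_j=u^n_{j-1}$ and $v^{n+1-}_j=v^n_{j+1}$. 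The scheme is therefore a nonlinear quantum walk, and
\[
\|(u^{(\tau)},v^{(\tau)})(\cdot,t)\|_{\mathrm{L}^2}=\|(u^{(\tau)},v^{(\tau)})(\cdot,0)\|_{\mathrm{L}^2}\quad\text{for all } t.
\]

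\emph{Step 2: pointwise estimates and $\mathrm{L}^2$ stability.} Since the ODE has bounded modulus, its local Lipschitz constant over a step of length $\tau$ is $1+O(\tau)(1+|u|^2+|v|^2)$. Take two time-splitting solutions, e.g. the one started from the data and one started from a spatial translate of it. Their difference $(\delta u,\delta v)$ satisfies a discrete interaction estimate of the form
\[
|\delta u^{n+1}_j|^2+|\delta v^{n+1}_j|^2\le (1+C\tau\,\rho^n_{j})\bigl(|\delta u^n_{j-1}|^2+|\delta v^n_{j+1}|^2\bigr),
\]
where $\rho$ collects the densities $|u|^2+|v|^2$ of both solutions at the crossing cells. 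The factor $\prod(1+C\tau\rho)$ is not uniformly bounded on its own, so I will introduce a modified Glimm-type functional
\[
F(n)=\sum_j\bigl(|\delta u^n_j|^2+|\delta v^n_j|^2\bigr)\exp\Bigl(K\,Q^n_j\Bigr),
\]
with weight $Q^n_j=\tau\sum_{k<j}|u^n_k|^2+\tau\sum_{k>j}|v^n_k|^2$ (plus the analogous term for the second solution). This mirrors the Glimm interaction potential $\sum_{k<j}\tau|u_k|^2\cdot\sum_{l>k}\tau|v_l|^2$, which decreases by $\tau^2|u|^2|v|^2$ at each crossing. The decrease of $Q$ along characteristics should absorb the growth factor, giving $F(n+1)\le F(n)$ up to $O(\tau)$ errors. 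Since $Q\le C\|(u_0,v_0)\|_{\mathrm{L}^2}^2$, we obtain
\[
\|\delta\|_{\mathrm{L}^2}(t)\le C(\|u_0,v_0\|)\,\|\delta\|_{\mathrm{L}^2}(0).
\]
This is the key $\mathrm{L}^2$ stability estimate, uniform in $\tau$. The main obstacle is exactly here: checking that the cross terms from $\alpha,\beta$ and the $O(\tau^2)$ remainders are controlled by the potential decrease for merely $\mathrm{L}^2$ data. I would first verify the pointwise estimate in the model case $m=0$, $\beta=0$, where the ODE only changes phases, and then treat the $\beta$ and $m$ terms as perturbations.

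\emph{Step 3: compactness.} Apply Step 2 with the shifted solution, i.e. with spatially translated initial data. This gives equicontinuity under translations: $\sup_t\|w^{(\tau)}(\cdot+h,t)-w^{(\tau)}(\cdot,t)\|_{\mathrm{L}^2}\to0$ as $h\to0$ uniformly in $\tau$. The translation moduli of the data converge because the data converge in $\mathrm{L}^2$; shifts of size below $\tau$ are handled using the piecewise-constant structure. Tightness follows from finite propagation speed together with tightness of the data. Time equicontinuity in $\mathrm{L}^2$ comes from combining
\begin{itemize}
\item $S^1_s$ being a translation by $s$, and
\item $\|S^2_\tau w-w\|_{\mathrm{L}^2}\le C\tau$ on the set where $|w|$ is bounded, plus a uniformly small tail where it is not.
\end{itemize}
Kolmogorov--Riesz and Arzelà--Ascoli then give relative compactness in $\mathrm{C}([0,T];\mathrm{L}^2)$, modulo the $O(\tau)$ jumps, which vanish in the limit.

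\emph{Step 4: identification and uniqueness.} Approximate $(u_0,v_0)$ by smooth data. For smooth solutions, the splitting error is $O(\tau)$ in $\mathrm{L}^2$ on $[0,T]$ by a consistency estimate (Lie splitting with bounded derivatives). Combined with the uniform stability of Step 2 and the $\mathrm{L}^2$ continuous dependence of strong solutions (Lemma~\ref{lem:A3}), any subsequential limit therefore coincides with $(\hat u,\hat v)$. Equivalently, one can check that any limit is a strong solution in the sense of Definition~\ref{definition1.1} and invoke uniqueness. Since every subsequence has a further subsequence converging to the same limit, the whole family converges, which proves Theorem~\ref{mainresult}.
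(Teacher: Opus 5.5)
\textbf{Gap in Step 2.} Your plan (conservation, $\mathrm{L}^2$ stability via a Bony/Glimm-type functional, compactness, identification through smooth approximations) follows the paper's outline. But Step 2, which carries the whole argument, does not close as written, and the missing piece is not a routine remainder. Two things fail for the weight $\exp(KQ^n_j)$.

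First, your growth factor $1+C\tau\rho^n_j$, with $\rho$ the total density, is too coarse. After transport, the common weight of $\delta u_j$ decreases by $K\tau$ times the $v$-densities it crosses (in particular $|v_j|^2+|\tilde v_j|^2$ at its new cell). It does not decrease at all with the co-moving $u$-mass. Symmetrically, the weight of $\delta v_j$ drops only by the $u$-densities it meets. So a contribution $C\tau|u_j|^2|\delta u_j|^2$ cannot be absorbed. You need the finer structure $\mathcal{L}_j(\tau)\le(1+C\tau)\mathcal{L}_j(0)+C\tau\mathcal{D}_j(0)$, where $\mathcal{D}_j=|\delta u_j|^2(|v_j|^2+|\tilde v_j|^2)+|\delta v_j|^2(|u_j|^2+|\tilde u_j|^2)$. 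This holds because every term of $\mathcal{N}_1$ is linear in $u$ and quadratic in $v$ (it is \eqref{3.5} combined with \eqref{3.9}). Then $(1+C\tau+C\tau b)e^{-K\tau b}\le 1+C\tau$ for $K\ge C$.

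Second, and more seriously, the weight is not frozen during the nonlinear substep. Only $|u|^2+|v|^2$ is conserved there, while the $m$- and $\beta$-terms move charge between $u$ and $v$ at rate up to $2m|u||v|+4|\beta||u|^2|v|^2$. Hence $Q_j$ can grow by $O(\tau)+C\tau^2\sum_k|u^{n+1-}_k|^2|v^{n+1-}_k|^2$ in one step. For $\mathrm{L}^2$ data the last term is not $O(\tau)$: one cell with $\tau|u_k|^2$ and $\tau|v_k|^2$ of order one makes it of order one. So ``$F(n+1)\le F(n)$ up to $O(\tau)$ errors'' is false. Bounding the product over $n\le T/\tau$ requires a space--time interaction bound $\sum_{n\tau\le T}\sum_j\tau^2|u^n_{j-1}|^2|v^n_{j+1}|^2\le c(T)$ for both solutions.

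The Glimm potential you cite does not give this bound for large data. The same charge transfer can raise $\sum_{k<l}\tau^2|u_k|^2|v_l|^2$ by an amount of order $|\beta|c_0\,\tau^2\sum_k|u^{n+1-}_k|^2|v^{n+1-}_k|^2$, which is comparable to its decrease unless $|\beta|c_0$ is small. The paper obtains the bound in three steps:
\begin{enumerate}
\item the exact triangle estimate, Lemma~\ref{lemma2.3}: the $v$-mass met by a $u$-characteristic is at most the initial mass;
\item Gronwall along characteristics, giving $|u^n_j|^2\le c_2(T)(|u^0_{j-n}|^2+m_1c_0)$ (Lemma~\ref{lem:pointwise-estimates});
\item the interaction bound of Lemma~\ref{lem:nonlinear-estimates}.
\end{enumerate}
The same ingredients are needed for your time equicontinuity in Step 3. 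A per-step tail bound that is merely small does not survive multiplication by $h/\tau$, which is why the paper sums the increments along each characteristic (Lemma~\ref{5.6.6}).

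\textbf{What your route buys once completed.} With these two ingredients supplied, your route is genuinely different from the paper's and in fact stronger. The paper's additive functional $\mathcal{L}\tau+\kappa\mathcal{Q}\tau^2$ closes only when the mass in the triangle is below $\delta$. That forces the decomposition into small characteristic triangles and yields stability only between a solution and its translate. Your multiplicative weight needs no smallness and gives $\|\delta^n\|_{\ell^2_\tau}\le C(T,c_0)\|\delta^0\|_{\ell^2_\tau}$ for arbitrary pairs of grid data of mass at most $c_0$.

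Step 4 then becomes a Lax-type telescoping argument. One step of the scheme applied to the grid samples of a smooth compactly supported solution has local error $O(\tau^2)$ pointwise; this is the paper's bound on $\mathfrak{g}^n_j,\mathfrak{f}^n_j$ in Lemma~\ref{lem:error}. Hence the local error is $O(\tau^2)$ in $\ell^2_\tau$, and stability sums it to $O(\tau)$. This replaces the paper's second functional $\tilde{\mathcal{F}}$ and makes the compactness step unnecessary.

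Finally, Lemma~\ref{lem:A3} does not assert continuous dependence, but you do not need it. The smooth, compactly supported approximating solutions used at the start of Section~\ref{sec:strong} (see \eqref{5.1}--\eqref{eq:uniqueness-s}) already converge to $(\hat u,\hat v)$ in $\mathrm{L}^2(\mathbb{R}\times[0,T])$.
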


\subsection{Main difficulties and key ideas of the proof}
There are two main difficulties to study the strong convergence in $\mathrm{L}^2$ of time-splitting solutions to the NLDE Cauchy problem \eqref{eq:NLDE}--\eqref{eq:NLDE-initail-data}.
First, unlike previous implicit quantum lattice Boltzmann schemes, the time-splitting scheme decomposes the full problem into a linear transport equation with potential and a nonlinear subproblem, each of which can be solved exactly in time.
As a result, it is necessary to exploit the structure of the linear part to identify quantities that control the growth of the nonlinear terms.
Thus, the Bony-type functional and the Glimm-type functional previously used to establish compactness of solutions in the $\mathrm{L}^2$ norm, introduced in \cite{N.Li}, could not be applied directly.
The second difficulty lies in proving the uniqueness of the limit of any convergent subsequence of time-splitting solutions.
Notice that for smooth solutions the nonlinear effects propagate uniformly along characteristic directions, for time-splitting solutions the nonlinear interactions are concentrated at discrete time steps. Therefore, the fundamental challenge arises from comparing classical solutions and time-splitting solutions whose nonlinear growth mechanisms are inherently different.

To overcome these difficulties, we first observe that the time-splitting solutions conserve the $\mathrm{L}^2$ norm for each subproblem.
Using the discrete characteristic triangle $\Delta(j_{1},n_{1};n_{0})$, as shown in see Fig.~ \ref{fig1}, we establish a priori bounds for the time-splitting solutions to the NLDE Cauchy problem
\eqref{eq:NLDE}--\eqref{eq:NLDE-initail-data}.
We then introduce a modified Glimm-type functional $\mathcal{F}$ defined on discrete characteristic triangles.
By choosing a suitable weight, we show that $\mathcal{F}$ uniformly bounded in time direction, thereby obtaining local $\mathrm{L}^2$ stability bounds for the difference of two time-splitting solutions. A crucial ingredient in its proof is the introduction of a new Bony-type functional $\mathcal{Q}$ into $\mathcal{F}$, which acts as a potential to control the growth induced by the nonlinear terms.
As a consequence, by decomposing the strip $\mathbb{R}\times[0,T]$ into two subdomains, we obtain $\mathrm{L}^2$ stability estimates of time-splitting solutions for any $T>0$.
Finally, we establish uniform estimates in both space and time, ensuring that the set of time-splitting solutions is relatively compact in $\mathrm{C}([0,T];\mathrm{L}^{2}(\mathbb{R}))$ for any $T>0$.

As for the uniqueness of the limit of any convergent subsequence of time-splitting solutions, we first consider the difference between a smooth solution and a time-splitting solution in a characteristic triangle $\Lambda((j+3/2)\tau,(n+3/2)\tau;n\tau)$, see Fig.~\ref{fig1-4}. Between two consecutive time levels, we employ the method of characteristics to establish uniform estimates for the difference between the smooth solution and the solution of the subproblem \eqref{5.2.1}. At the temporal grid points, we derive pointwise estimates for the difference between the smooth solution and the subproblem solution \eqref{5.2.2}. Based on these estimates, we introduce a new Glimm-type functional $\tilde{\mathcal{F}}$. Moreover, uniform
boundedness of $\tilde{\mathcal{F}}$ is proved.
Consequently, the limit of any convergent subsequence converges strongly in $\mathrm{L}^2$ to the strong solution of \eqref{eq:NLDE}--\eqref{eq:NLDE-initail-data} as the mesh size tends to zero. It is worth pointing out that our error bounds and convergence result are valid globally in time, while the rigorous error estimates established in \cite{BaoWeiZhuCaiYongYongJiaXiaoWeiYinJia2016,BaoWeizhuCaiYongyongYinJia2021} are only local in time. All these theoretical results are collected in Theorem \ref{mainresult}. Finally, we remark that the Glimm-type functional has been used in the study of hyperbolic conservation laws; for example, see Bressan \cite{Bressan2000Book}, Dafermos \cite{Dafermos2016Book}, and Glimm \cite{Glimm1965CPAM}; while the Bony-type functional has been used in the study of discrete Boltzmann equations; for example, see Bony \cite{Bony1987} and Ha--Tzavaras \cite{Ha2003CMP}.
\subsection{Organization of the paper}
The rest of the paper is organized as follows. In Section \ref{sec:conservation}, we establish pointwise estimates for time-splitting solutions. Based on these estimates, we introduce a Glimm-type functional $\mathcal{F}$ and prove that an appropriate weight can be chosen so that $\mathcal{F}$ uniformly bounded, which yields the $\mathrm{L}^2$ stability estimates for time-splitting solutions in a characteristic triangle. Then, in Section \ref{sec:compactness}, we use these local $\mathrm{L}^2$ stability estimates to establish $\mathrm{L}^2$ stability on the strip $\mathbb{R}\times[0,T]$ by decomposing it into characteristic triangular domains. Moreover, we establish uniform estimates for time splitting solutions along characteristics and prove the relative compactness of the set of time-splitting solutions in $\mathrm{C}([0,T];\mathrm{L}^{2}(\mathbb{R}))$ for any $T>0$. The compactness result used in the proof is also stated in the appendix.
Finally, in Section \ref{sec:strong}, we show that the limit of any convergent subsequence of time-splitting solutions is coincide the strong solution to the NLDE Cauchy problem \eqref{eq:NLDE}--\eqref{eq:NLDE-initail-data} in $\mathrm{L}^2(\mathbb{R}\times[0,T])$, which implies the uniqueness of the limit.

\section{A priori estimates}
\label{sec:conservation}
In this section, we first establish estimates for time-splitting solutions along characteristic directions. We then introduce a discrete characteristic triangle $\Delta(j_{1},n_{1};n_{0})$ to derive pointwise bounds, see Fig.~\ref{fig1}. Moreover, by constructing a nonlinear Glimm-type functional $\mathcal{F}$ in $\Delta(j_{1},n_{1};n_{0})$ and
proving that the functional is uniformly bounded with respect to the time step, we obtain local $\mathrm{L}^2$ stability estimates for time-splitting solutions constructed by \eqref{time-split}.
\subsection{Estimates on the time-splitting solutions along characteristic directions}
\label{sub:characteristic-Estimates}
Since $(u_0,v_0)\in \mathrm{L}^{2}(\mathbb{R})$, it follows from \eqref{eq:piecewise-constant-initial-data} and
\eqref{eq:initial-data-convergent} that there exist constants $\bar{\tau}\in(0,1)$ and $c_0>0$ such that, for any $\tau\in(0,\bar{\tau}]$,
\begin{equation}\label{eq:c_0}
	\sum_{j=-\infty}^{+\infty}\big( |u_{j}^{0}|^{2}+|v_{j}^{0}|^{2}\big) \tau\leq c_0.
\end{equation}
Here $\big(u_j^0,v_j^0\big)=\big( u^{(\tau)},v^{(\tau)}\big)(x_{j},0)$, and \eqref{eq:c_0} provides a uniform $\mathrm{L}^2$ bound for the approximate initial data.

Given the piecewise constant initial data $\big(u_j^{0}, v_j^{0}\big)$, we first establish several properties of the solutions to the linear subproblem~\eqref{eq:subproblem1}--\eqref{eq:subproblem1-initial-data} and the nonlinear subproblem~\eqref{eq:subproblem2}--\eqref{eq:subproblem2-initial-data} in the following lemma, which ensures that the time-splitting solution $\big( u^{(\tau)},v^{(\tau)}\big)$ can be defined globally.
\begin{lemma}\label{lemma2.2}
For any $j=0,\pm1,\pm2,\ldots,n=0,1,2,\ldots$ and $\tau>0$, the time-splitting solution $\big( u^{(\tau)},v^{(\tau)}\big)$ can be defined globally. Moreover, the following properties hold:
\begin{itemize}
\item [\rm (i)]
Any solution $\big(u^{(\tau)},v^{(\tau)}\big)$ of the linear subproblem \eqref{eq:subproblem1}--\eqref{eq:subproblem1-initial-data} satisfies
\begin{equation}\label{eq:characteristic-estimate-uv}
	u_{j+1}^{n+1-}=u_{j}^{n},\ \ v_{j-1}^{n+1-}=v_{j}^{n}.
\end{equation}
\item[\rm (ii)]For $s\in[0,\tau]$, any solution $\big( u^{(\tau)}_{n,2},v^{(\tau)}_{n,2}\big)$ of the nonlinear subproblem \eqref{eq:subproblem2}--\eqref{eq:subproblem2-initial-data} satisfies
\begin{align}
&\frac{\mathrm{d}|u_{j}^{n,2}(s)|^{2}}{\mathrm{d}s}\leq
m\big( |u_{j}^{n,2}(s)|^{2}+|v_{j}^{n,2}(s)|^{2}\big) +4|\beta||u_{j}^{n,2}(s)|^{2}|v_{j}^{n,2}(s)|^{2},\label{2.2}\\
&\frac{\mathrm{d}|v_{j}^{n,2}(s)|^{2}}{\mathrm{d}s}\leq m\big( |u_{j}^{n,2}(s)|^{2}+|v_{j}^{n,2}(s)|^{2}\big) +4|\beta||u_{j}^{n,2}(s)|^{2}|v_{j}^{n,2}(s)|^{2},\label{2.3}
\end{align}
	and
\begin{equation}\label{2.1}
	\frac{\mathrm{d}|u_{j}^{n,2}(s)|^{2}}{\mathrm{d}s}+\frac{\mathrm{d}|v_{j}^{n,2}(s)|^{2}}{\mathrm{d}s}=0.
\end{equation}

\end{itemize}
\end{lemma}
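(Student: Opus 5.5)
The plan is to proceed by induction on $n$. Each step consists of solving the transport subproblem exactly, then the ODE subproblem pointwise in $x$. Global definability follows once the ODE solution exists on all of $[0,\tau]$ with no blow-up and the $\mathrm{L}^2$ norm is preserved, so that $\big(u^{(\tau)},v^{(\tau)}\big)(\cdot,(n+1)\tau)$ is again a piecewise constant $\mathrm{L}^2$ function on the same grid.

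For (i), I would solve \eqref{eq:subproblem1}--\eqref{eq:subproblem1-initial-data} explicitly: $u_{n,1}^{(\tau)}(x,t)=u^{(\tau)}(x-(t-n\tau),n\tau)$ and $v_{n,1}^{(\tau)}(x,t)=v^{(\tau)}(x+(t-n\tau),n\tau)$. By \eqref{eq:construction} the data at $t=n\tau$ are piecewise constant on the cells $[j\tau,(j+1)\tau)$. Letting $t\to(n+1)\tau-$ and evaluating at $x=(j+1)\tau$ gives $u^{(\tau)}((j+1)\tau - \tau + 0^+,n\tau)=u_j^n$; the left limit in time lands inside the open cell, so there is no ambiguity at cell boundaries. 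Similarly, at $x=(j-1)\tau$ the $v$-component picks up the value at $j\tau-0^+$, and with the cells half-open one checks it is $v_j^n$. The profile at $(n+1)\tau-$ is again piecewise constant on the same grid, up to a shift. I expect this to be the only slightly delicate point: the one-sided limits must be taken so that \eqref{eq:characteristic-estimate-uv} holds exactly with the half-open cell convention.

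For (ii), I would work at each fixed $x$ (in particular $x=j\tau$) with the ODE \eqref{eq:subproblem2}. Writing $u=u_j^{n,2}(s)$, $v=v_j^{n,2}(s)$, compute $\frac{d}{ds}|u|^2=2\,\mathrm{Re}(\bar u\,u')$. The Thirring term contributes $2\,\mathrm{Re}(i\alpha|u|^2|v|^2)=0$. The mass term gives $2\,\mathrm{Re}(im\bar u v)=-2m\,\mathrm{Im}(\bar u v)$. The Gross--Neveu term gives $2\,\mathrm{Re}\big(i2\beta(\bar uv+u\bar v)\bar u v\big)=-4\beta(\bar uv+u\bar v)\mathrm{Im}(\bar u v)$, since $\bar uv+u\bar v=2\mathrm{Re}(\bar uv)$ is real. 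For $|v|^2$ the same computation gives $-2m\,\mathrm{Im}(\bar v u)=+2m\,\mathrm{Im}(\bar uv)$, and the $\beta$-term becomes $+4\beta(\bar uv+u\bar v)\mathrm{Im}(\bar uv)$. Adding the two identities yields \eqref{2.1}. For \eqref{2.2}--\eqref{2.3}, bound $2m|\mathrm{Im}(\bar uv)|\le 2m|u||v|\le m(|u|^2+|v|^2)$. For the quartic term, $|(\bar uv+u\bar v)\mathrm{Im}(\bar uv)|=2|\mathrm{Re}(\bar uv)||\mathrm{Im}(\bar uv)|\le |\bar uv|^2=|u|^2|v|^2$ by $2ab\le a^2+b^2$, which gives exactly the factor $4|\beta|$.

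Finally, local existence for the ODE follows from Picard--Lindelöf, since the right-hand side is a polynomial. Conservation \eqref{2.1} keeps $|u|^2+|v|^2$ constant, so there is no blow-up and the solution exists on $[0,\tau]$. The value at $s=\tau$ depends only on the constant initial value on each cell, so the output is again piecewise constant. Its $\mathrm{L}^2$ norm equals that at $(n+1)\tau-$, which equals that at $n\tau$ because transport preserves the norm. Measurability is clear from the cellwise constancy, and the induction closes.
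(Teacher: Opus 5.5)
Your overall route is the paper's: induction on $n$, explicit characteristics for (i), and for (ii) multiplying by the conjugates, taking real parts, applying Young's inequality, and summing, with \eqref{2.1} ruling out blow-up on $[0,\tau]$. Your computations in (ii) are correct, and more explicit than the paper's: the Thirring term drops out, and $2|\mathrm{Re}(\bar uv)|\,|\mathrm{Im}(\bar uv)|\le|u|^2|v|^2$ gives exactly the constant $4|\beta|$.

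The step that fails is the one you flagged as delicate, namely the $v$-half of (i).
\begin{itemize}
\item Since $v_{n,1}^{(\tau)}(x,t)=v^{(\tau)}(x+(t-n\tau),n\tau)$, at fixed $x=(j-1)\tau$ you get $v^{(\tau)}(j\tau-\epsilon,n\tau)$ with $\epsilon\downarrow 0$.
\item The point $j\tau-\epsilon$ lies in the cell $[(j-1)\tau,j\tau)$. So the literal fixed-$x$ left limit in time is $v_{j-1}^n$, not $v_j^n$.
\item The half-open convention helps $u$, whose argument approaches $j\tau$ from the right. It works against $v$, whose argument approaches $j\tau$ from the left.
\end{itemize}
Under the literal definition, the $v$-profile at $(n+1)\tau-$ is constant on the cells $(j\tau,(j+1)\tau]$, not on $[j\tau,(j+1)\tau)$. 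The input to $\mathbf{Step(n,2)}$ at the grid point $x=j\tau$ would then be $(u_{j-1}^n,v_j^n)$ instead of $(u_{j-1}^n,v_{j+1}^n)$. Your claim that the output is again constant on each $[j\tau,(j+1)\tau)$ therefore breaks at the grid points.

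The fix, which the paper's one-line appeal to the method of characteristics uses tacitly, is to regard $S^1_\tau$ as acting on $\mathrm{L}^2$. Take the representative of $v^{(\tau)}(\cdot+\tau,n\tau)$ that is constant on the half-open cells. Equivalently, define $v_{j-1}^{n+1-}$ as the limit along the characteristic $x=j\tau-(t-n\tau)$ rather than at fixed $x$. Only a null set is affected, so nothing in the $\mathrm{L}^2$ analysis changes. But the identity $v_{j-1}^{n+1-}=v_j^n$ has to be justified this way, not by the one-sided-limit check you wrote.
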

\begin{proof}
The proof is carried out by induction on $n$. Suppose that $\big(u^{(\tau)},v^{(\tau)}\big)$ has been defined for $x\in \mathbb{R}$ and $0\leq t\leq n\tau$, then we consider the case $t\in(n\tau,(n+1)\tau]$.

For $x\in [j\tau,(j+1)\tau)$ and $n\tau\leq t<(n+1)\tau$, the linear transport equations \eqref{eq:subproblem1} with the constant initial data $\big( u_{j}^{n},v_{j}^{n}\big)$ can be solved by the method of characteristics, namely,
\begin{equation*}
	 u^{(\tau)}(x,n\tau)=u^{(\tau)}(x+t-n\tau,t),\quad  v^{(\tau)}(x,n\tau)=v^{(\tau)}(x-t+n\tau,t),
\end{equation*}
which also gives (\ref{eq:characteristic-estimate-uv}).
	
Then we turn to the proof of the property (ii). For any $z\in \mathbb{C}$, we denote by $\mathrm{Re}\,{z}$ the real part of $z$.
Evaluating \eqref{eq:subproblem2} at $x=j\tau$, multiplying the first and second equations by $\overline{u_{j}^{n,2}(s)}$ and $\overline{v_{j}^{n,2}(s)}$, respectively,and then taking the real part of the resulting equations, we obtain
\begin{equation}\label{eq1}
\frac{\mathrm{d}|u_{j}^{n,2}(s)|^{2}}{\mathrm{d}s}=2m\mathrm{Re}\left\lbrace iv_{j}^{n,2}(s)\overline{u_{j}^{n,2}(s)}\right\rbrace +4\beta \mathrm{Re}\left\lbrace i\big( \overline{u_{j}^{n,2}(s)}\big)  ^{2}\big( v_{j}^{n,2}(s)\big) ^{2}\right\rbrace
\end{equation}
and
\begin{equation}\label{eq2}
\frac{\mathrm{d}|v_{j}^{n,2}(s)|^{2}}{\mathrm{d}s}=2m\mathrm{Re}\left\lbrace iu_{j}^{n,2}(s)\overline{v_{j}^{n,2}(s)}\right\rbrace +4\beta \mathrm{Re}\left\lbrace i\big( u_{j}^{n,2}(s)\big) ^{2}\big( \overline{v_{j}^{n,2}(s)}\big) ^{2}\right\rbrace.
\end{equation}
Since $m\geq 0$, by Young's inequality we get \eqref{2.2} and \eqref{2.3}, and get the estimate \eqref{2.1} by summing \eqref{eq1} and \eqref{eq2}.

Moreover, the estimate \eqref{2.1} implies that the solution $\big(u_{j}^{n,2}(s),v_{j}^{n,2}(s)\big)$ is bounded for $s\in[0,\tau]$. Therefore, we can define the time-splitting solution $\big(u^{(\tau)},v^{(\tau)}\big)$ for $t=(n+1)\tau$ by $\mathbf{Step(n,2)}$. Thus, the proof of Lemma \ref{lemma2.2} is complete.
\end{proof}

Next, we aim to establish the pointwise estimates for the time-splitting solutions $\big( u^{(\tau)},v^{(\tau)}\big)$ constructed via the scheme \eqref{time-split}. To this end, by Lemma~\ref{lemma2.2}, we observe that the scheme \eqref{time-split} during the $\mathbf{Step(n,1)}$ preserves the solution values along the discrete characteristic directions.
Thus, we introduce the discrete characteristic triangles associated with $\big(u^{(\tau)}, v^{(\tau)}\big)$, defined by
\begin{equation*}
\Delta(j_{1},n_{1};n_{0})
=\left\{(j,n)\::\: j_{1}-n_{1}+n\leq j\leq j_{1}+n_{1}-n,\;
n_0 \le n \le n_1\right\}
\end{equation*}
for $j,j_1=0,\pm1,\pm2,\ldots,$ and $n,n_0,n_1=0,1,2,\ldots$, see Fig.~\ref{fig1}.

\begin{figure}[h]
	\centering
	\begin{tikzpicture}[scale=0.8]
	\draw[dashed] (-1,0) -- (9,0) node[right] {\small$n=n_0 $};
        \draw[dashed] (-1,2) -- (9,2) node[right] {\small $n=n_1$ };
	\draw[thick] (2,0) -- (4,2) -- (6,0) -- cycle;
	 (2,0)
	\node[below] at (1,0) {\small($j_{1}-n_{1}+n_{0},\,n_{0}$)};
	\fill (2,0) circle (1pt);
	\fill (4,2) circle (1pt);
	\fill (6,0) circle (1pt);
	\node[below] at (7,0) {\small($j_{1}+n_{1}-n_{0},\,n_{0}$)};
	\node[above] at (4,2) {\small($j_{1},\,n_{1}$)};
	\node at (4,0.7) {\scriptsize$\Delta(j_{1},n_{1};n_{0})$};
	\end{tikzpicture}
	\caption{\label{fig1} The discrete characteristic triangle $\Delta(j_{1},n_{1};n_{0})$}
\end{figure}

The following lemma establishes the estimates on time-splitting solutions along characteristic directions in the discrete characteristic triangle $\Delta(j_{1},n_{1};n_{0})$.
\begin{lemma}\label{lemma2.3}
Let $\big( u^{(\tau)},v^{(\tau)}\big)$ be time-splitting solutions to the $\mathrm{NLDE}$ Cauchy problem \eqref{eq:NLDE}--\eqref{eq:NLDE-initail-data}. Then for any discrete characteristic triangle $\Delta(j_{1},n_{1};n_{0})$, it holds that
\begin{equation}\label{2.4}
\sum_{j=j_{1}-n_{1}+n}^{j_{1}+n_{1}-n}\big(|u_{j}^{n}|^{2}+|v_{j}^{n}|^{2}\big)
\leq\sum_{j=j_{1}-n_{1}+n_{0}}^{j_{1}+n_{1}-n_{0}}\big( |u_{j}^{n_{0}}|^{2}+|v_{j}^{n_{0}}|^{2}\big),
	\end{equation}
and for $n_0+1\leq n\leq n_1$,
\begin{equation}\label{eq:two-side}
    \sum_{k=n_1-n+1}^{n_1-n_0}\big|u_{j_1+k}^{n_1-k}\big|
    +\sum_{k=n_1-n+1}^{n_1-n_0}\big|v_{j_1-k}^{n_1-k}\big|
    \leq \sum_{j=j_{1}-n_{1}+n_{0}}^{j_{1}+n_{1}-n_{0}}\big( |u_{j}^{n_{0}}|^{2}+|v_{j}^{n_{0}}|^{2}\big).
\end{equation}
In particular, setting $n=n_1$ in \eqref{eq:two-side} yields
\begin{equation}\label{2.5}
\sum_{k=1}^{n_{1}-n_{0}}|u_{j_{1}+k}^{n_{1}-k}|^{2}
+\sum_{k=1}^{n_{1}-n_{0}}|v_{j_{1}-k}^{n_{1}-k}|^{2}
\leq\sum_{j=j_{1}-n_{1}+n_{0}}^{j_{1}+n_{1}-n_{0}}\big( |u_{j}^{n_{0}}|^{2}+|v_{j}^{n_{0}}|^{2}\big) .\\
\end{equation}
\end{lemma}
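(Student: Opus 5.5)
The plan is to reduce everything to a one-step local energy identity and then telescope. I read \eqref{eq:two-side} with squared moduli $|u_{j_1+k}^{n_1-k}|^2$, $|v_{j_1-k}^{n_1-k}|^2$, consistent with its specialization \eqref{2.5}. I will prove the stronger combined estimate
\begin{equation*}
E_n+\sum_{k=n_1-n+1}^{n_1-n_0}\Big(\big|u_{j_1+k}^{n_1-k}\big|^2+\big|v_{j_1-k}^{n_1-k}\big|^2\Big)\le E_{n_0},
\qquad
E_n:=\sum_{j=j_1-n_1+n}^{j_1+n_1-n}\big(|u_j^n|^2+|v_j^n|^2\big).
\end{equation*}
Dropping the boundary sum gives \eqref{2.4}. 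Dropping $E_n\ge 0$ gives \eqref{eq:two-side}, and taking $n=n_1$ gives \eqref{2.5}.

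\textbf{One-step identity.} By Lemma \ref{lemma2.2}(i), the state entering Step$(n,2)$ at grid point $j$ is $(u_j^{n+1-},v_j^{n+1-})=(u_{j-1}^n,v_{j+1}^n)$. The nonlinear step \eqref{eq:subproblem2} acts pointwise in $x$, and by \eqref{2.1} it conserves $|u|^2+|v|^2$ at each point. Hence
\begin{equation*}
|u_j^{n+1}|^2+|v_j^{n+1}|^2=|u_{j-1}^n|^2+|v_{j+1}^n|^2 \quad\text{for every } j.
\end{equation*}

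\textbf{Summing over one row.} Let $a=j_1-n_1+n$ and $b=j_1+n_1-n$, so that row $n$ of $\Delta(j_1,n_1;n_0)$ is $a\le j\le b$ and row $n+1$ is $a+1\le j\le b-1$. Summing the identity over row $n+1$ and shifting indices gives
\begin{equation*}
E_{n+1}=\sum_{j=a}^{b-2}|u_j^n|^2+\sum_{j=a+2}^{b}|v_j^n|^2
=E_n-|u_{b-1}^n|^2-|u_b^n|^2-|v_a^n|^2-|v_{a+1}^n|^2 .
\end{equation*}
Discarding the nonnegative terms $|u_{b-1}^n|^2$ and $|v_{a+1}^n|^2$ yields
\begin{equation*}
E_{n+1}+|u_{j_1+n_1-n}^n|^2+|v_{j_1-n_1+n}^n|^2\le E_n .
\end{equation*}
Thus the only energy leaving the triangle at each step is at least the outgoing characteristic value on each lateral edge.

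\textbf{Telescoping.} Summing the last inequality over $m=n_0,\dots,n-1$ and substituting $k=n_1-m$ (so $k$ runs from $n_1-n+1$ to $n_1-n_0$) produces exactly the combined estimate above. The only delicate point is the index bookkeeping: checking that the shifted ranges match the triangle's rows and that the boundary points are $(j_1\pm k,\,n_1-k)$. No analytic difficulty arises, since the pointwise conservation \eqref{2.1} does all the work.
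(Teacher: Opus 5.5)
Your proposal is correct and follows essentially the same route as the paper: the pointwise identity \eqref{2.7} (conservation \eqref{2.1} combined with the characteristic shifts of Lemma \ref{lemma2.2}(i)), then a row sum that drops at least the two lateral-edge values $|u_{j_1+n_1-n}^n|^2$ and $|v_{j_1-n_1+n}^n|^2$, then telescoping in $n$ with $k=n_1-n$. You are also right to read \eqref{eq:two-side} with squared moduli; that is what the paper's proof actually yields, and it is consistent with \eqref{2.5}.
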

\begin{proof}

By \eqref{2.1}, we have
\begin{equation*}
    |u_{j}^{n+1}|^{2}+|v_{j}^{n+1}|^{2}=|u_{j}^{n+1-}|^{2}+|v_{j}^{n+1-}|^{2}.
\end{equation*}
This together with \eqref{eq:characteristic-estimate-uv} yields
	\begin{equation}\label{2.7}
		|u_{j}^{n+1}|^{2}+|v_{j}^{n+1}|^{2}
        =|u_{j-1}^{n}|^{2}+|v_{j+1}^{n}|^{2}.
	\end{equation}
 For fixed $n$, taking the summation of \eqref{2.7} over $j = j_1 - n_1 + n + 1, \ldots, j_1 + n_1 - n - 1 $ yields
\begin{align}
    \sum_{j=j_1 - n_1 + n + 1}^{j_1 + n_1 - n - 1 }
    \big(|u_{j}^{n+1}|^{2}+|v_{j}^{n+1}|^{2}\big)
    =&\sum_{j=j_1 - n_1 + n + 1}^{j_1 + n_1 - n - 1 }
    \big(|u_{j-1}^{n}|^{2}+|v_{j+1}^{n}|^{2}\big)\nonumber\\
    =&\sum_{j=j_1 - n_1 + n }^{j_1 + n_1 - n - 2 }|u_{j}^{n}|^{2}
    +\sum_{j=j_1 - n_1 + n + 2}^{j_1 + n_1 - n }|v_{j}^{n}|^{2}.\label{eq:sum-uv-2}
\end{align}
Taking the summation of \eqref{eq:sum-uv-2} over $n=n_0,\ldots,\tilde{n}-1$
with $n_0+1\leq\tilde{n}\leq n_1$, we get
\begin{equation*}
   \sum_{n=n_0}^{\tilde{n}-1} \sum_{j=j_1 - n_1 + n + 1}^{j_1 + n_1 - n - 1 }
    \big(|u_{j}^{n+1}|^{2}+|v_{j}^{n+1}|^{2}\big)
    =\sum_{n=n_0}^{\tilde{n}-1}\Big(\sum_{j=j_1 - n_1 + n }^{j_1 + n_1 - n - 2 }|u_{j}^{n}|^{2}
    +\sum_{j=j_1 - n_1 + n + 2}^{j_1 + n_1 - n }|v_{j}^{n}|^{2}\Big),
\end{equation*}
which implies that
\begin{equation*}
   \sum_{n=n_0+1}^{\tilde{n}} \sum_{j=j_1 - n_1 + n }^{j_1 + n_1 - n }
    \big(|u_{j}^{n}|^{2}+|v_{j}^{n}|^{2}\big)
    \leq \sum_{n=n_0}^{\tilde{n}-1}
    \Big(
    \sum_{k=j_1 - n_1 + n }^{j_1 + n_1 - n }\big(|u_{j}^{n}|^{2}+|v_{j}^{n}|^{2}\big)
    -|u_{j_1 + n_1 - n}^{n}|^{2}
    -|v_{j_1 - n_1 + n}^{n}|^{2}\Big).
\end{equation*}

Therefore, we deduce that
\begin{equation*}
    \sum_{j=j_1 - n_1 + \tilde{n} }^{j_1 + n_1 - \tilde{n}}
    \big(|u_{j}^{\tilde{n}}|^{2}+|v_{j}^{\tilde{n}}|^{2}\big)
   \leq
   \sum_{j=j_1 - n_1 + n_0 }^{j_1 + n_1 - n_0}
   \big(|u_{j}^{n_0}|^{2}+|v_{j}^{n_0}|^{2}\big)
   -\sum_{n=n_0}^{\tilde{n}-1}|u_{j_1 + n_1 - n}^{n}|^{2}
   -\sum_{n=n_0}^{\tilde{n}-1}|v_{j_1 - n_1 + n}^{n}|^{2},
\end{equation*}
which gives estimates \eqref{2.4} and \eqref{eq:two-side} by taking $k=n_1-n$. This completes the proof.
\end{proof}
\subsection{Pointwise estimates on the time-splitting solutions}
\label{sub:pointwise-estimate}
Based on the estimates established in Lemmas~\ref{lemma2.2}--\ref{lemma2.3},
this subsection is devoted to establishing pointwise estimates for the
time-splitting solutions $\big(u^{(\tau)},v^{(\tau)}\big)$ constructed for
the NLDE Cauchy problem \eqref{eq:NLDE}--\eqref{eq:NLDE-initail-data}.

To begin with, we define a characteristic triangle $\Lambda(x_1,t_{1};t_{0})$ by
\begin{equation}\label{eq:characteristic-triangle}
	\Lambda(x_1,t_{1};t_{0})=\{ (x,t)\in\mathbb{R}\times [0,+\infty)\: : \: x_1-t_{1}+t\leq x\leq x_1+t_{1}-t,\ t_{0}\leq t\leq t_{1}\},
\end{equation}
where $x_1\in \mathbb{R}$ and $0\leq t_0\leq t_1$.

Let $T>0$. Then we consider the trapezoidal region in the characteristic triangle $\Lambda(j_{1}\tau,n_{1}\tau;n_{0}\tau)$ for an integer $0\leq n_0\leq n$ and $n\leq (T+1)/\tau$, as shown in Fig.~\ref{fig-n}.

\begin{figure}[h]
	\centering
	\begin{tikzpicture}[scale=0.9]
		\draw[densely dashed] (0,0) -- (8,0) node[right] {\small$t=n_0 \tau$};
		\draw[densely dashed] (0,2) -- (8,2) node[right] {\small$t=n_1 \tau$};
		\draw[densely dashed] (0,1.4) -- (8,1.4) node[right] {\small$t=T+1$};
		\draw[densely dashed] (0,1) -- (8,1) node[right] {\small$t=T$};
		\draw[densely dashed] (0,0.7) -- (8,0.7) node[right] {\small$t=n \tau$};
		\draw[thick] (2.7,0.7)--(5.3,0.7);
		\draw[thick] (2,0)--(2.7,0.7)--(5.3,0.7)--(6,0)--cycle;;
		\draw[thin] (3,1)--(5,1);
		\draw[thin] (3.4,1.4)--(4.6,1.4);
		\draw[] (2,0) -- (4,2) -- (6,0) -- cycle;
		\node[below] at (2,0) {\small($(j_{1}-n_{1}+n_{0})\tau,\,n_{0}\tau$)};
		\node[below] at (6,0) {\small($(j_{1}+n_{1}-n_{0})\tau,\,n_{0}\tau$)};
		\node[above] at (4,2) {\small($j_{1}\tau,\,n_{1}\tau$)};
	\end{tikzpicture}
	\caption{\label{fig-n} The characteristic triangle $\Lambda(j_{1}\tau,n_{1}\tau;n_{0}\tau)$}
\end{figure}

The following lemma provides uniform pointwise bounds for time-splitting solutions.
\begin{lemma}\label{lem:pointwise-estimates}
Let $\tau\in (0,\bar{\tau}]$ and let
\begin{equation}\label{guodu1}
\mathfrak{s}_{j}^{n,2}(s)=|u_{j}^{n,2}(s)|^{2}+|v_{j}^{n,2}(s)|^{2},\quad s\in[0,\tau].
\end{equation}
Then for any $j=0,\pm1,\pm2,\ldots$, $n=0,1,2,\ldots$ and $s\in[0,\tau]$, there exist constants $c_{1},m_1>0$, depending only on $c_0$ and the system \eqref{eq:NLDE}, such that
\begin{align} |u_{j}^{n,2}(s)|^{2}\leq|u_{j}^{n,2}(0)|^{2}+m_{1}\mathfrak{s}_{j}^{n,2}(0)\tau
+c_{1}|u_{j}^{n,2}(0)|^{2}|v_{j}^{n,2}(0)|^{2}\tau,\label{2.10}\\
	|v_{j}^{n,2}(s)|^{2}\leq|v_{j}^{n,2}(0)|^{2}+m_{1}\mathfrak{s}_{j}^{n,2}(0)\tau
+c_{1}|u_{j}^{n,2}(0)|^{2}|v_{j}^{n,2}(0)|^{2}\tau.\label{2.11}
\end{align}

Furthermore, let $T>0$ and let $n$ be an integer satisfying $0\leq n_0\leq n$ and $n\leq (T+1)/\tau$, see Fig.~\ref{fig-n}. Then there exists a constant $c_2(T)>0$, depending only on $T,c_0$ and the system \eqref{eq:NLDE}, such that
	\begin{align}
		|u_{j}^{n}|^{2}\leq&
        c_{2}(T)\big( |u_{j-n}^{0}|^{2}+m_{1}c_0\big), \label{2.8}\\
		|v_{j}^{n}|^{2}\leq &
        c_{2}(T)\big( |v_{j+n}^{0}|^{2}+m_{1}c_0\big).\label{2.9}
	\end{align}
Here $\bar{\tau}\in(0,1)$ and $c_0>0$ are given by \eqref{eq:c_0}.
\end{lemma}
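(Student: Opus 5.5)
The plan is to prove the two groups of estimates separately: first the one-step bounds \eqref{2.10}--\eqref{2.11} for the ODE subproblem \eqref{eq:subproblem2}, by Gronwall on the product $|u|^2|v|^2$; then \eqref{2.8}--\eqref{2.9} by iterating the one-step bound along a discrete characteristic and applying a discrete Gronwall inequality, using Lemma~\ref{lemma2.3} to control the accumulated coefficients. The only global input is a uniform bound on $\tau$ times the local mass.

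\textbf{Step 1 (local mass bound).} Fix $j,n$ and write $a(s)=|u_j^{n,2}(s)|^2$, $b(s)=|v_j^{n,2}(s)|^2$ and $M=\mathfrak{s}_j^{n,2}(0)$. By \eqref{2.1}, $a(s)+b(s)=M$ for all $s\in[0,\tau]$. Next I need $M\tau\le c_0$. By \eqref{eq:characteristic-estimate-uv}, $M=|u_{j-1}^n|^2+|v_{j+1}^n|^2$. Applying \eqref{2.4} on a triangle with $n_0=0$ and base large enough to contain $j\pm1$ at level $n$, together with \eqref{eq:c_0}, gives $\sum_j(|u_j^n|^2+|v_j^n|^2)\tau\le c_0$. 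Hence $M\tau\le c_0$.

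\textbf{Step 2 (the bounds \eqref{2.10}--\eqref{2.11}).} From \eqref{eq1}, $|a'|\le mM+4|\beta|ab$, and likewise $|b'|\le mM+4|\beta|ab$. Since $b'=-a'$, we get $(ab)'=a'(b-a)$, so
\[
|(ab)'|\le M\,(mM+4|\beta|ab).
\]
Gronwall on $[0,\tau]$, using $M\tau\le c_0$, gives
\[
ab(s)\le e^{4|\beta|c_0}\bigl(ab(0)+mM^2\tau\bigr).
\]
Integrating \eqref{2.2} then yields
\[
a(s)\le a(0)+mM\tau+4|\beta|\tau\sup_{[0,\tau]}ab\le a(0)+m_1M\tau+c_1\,ab(0)\,\tau,
\]
where I used $mM^2\tau^2\le c_0\,mM\tau$ and set $c_1=4|\beta|e^{4|\beta|c_0}$, $m_1=m(1+4|\beta|c_0e^{4|\beta|c_0})$. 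The argument for $b$ is identical.

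\textbf{Step 3 (recursion along a characteristic).} Combining \eqref{2.10} at $s=\tau$ with \eqref{eq:characteristic-estimate-uv}, and setting $U_k=|u_{j-n+k}^k|^2$ and $V_k=|v_{j-n+k+2}^k|^2$, I get
\[
U_{k+1}\le U_k\bigl(1+c_1V_k\tau+m_1\tau\bigr)+m_1\tau V_k .
\]
A discrete Gronwall inequality then gives
\[
U_n\le \exp\Bigl(c_1\sum_k V_k\tau+m_1n\tau\Bigr)\Bigl(U_0+m_1\sum_kV_k\tau\Bigr).
\]

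\textbf{Step 4 (the main obstacle).} The key difficulty is bounding $\sum_{k<n}V_k\tau$ uniformly, since the points $(j-n+k+2,k)$ lie on a line parallel to the $u$-characteristic, not on a $v$-characteristic. This is exactly what \eqref{2.5} controls. Take the triangle $\Delta(j_1,n_1;0)$ with $j_1=j+1$ and $n_1=n+1$. Its left edge consists of the points $(j_1-k',n_1-k')$, and these run through $(j-n+k+2,k)$ for $k=0,\dots,n-1$ (one extra term at $k=n$ is harmless, being nonnegative). Then \eqref{2.5} and \eqref{eq:c_0} give $\sum_kV_k\tau\le c_0$. Since $n\tau\le T+1$, this yields \eqref{2.8} with $c_2(T)=\exp\bigl(c_1c_0+m_1(T+1)\bigr)$. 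The bound \eqref{2.9} follows symmetrically: run the same recursion along the $v$-characteristic and use the right edge of the triangle in \eqref{2.5}.
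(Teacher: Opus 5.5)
Your proposal is correct and follows the paper's proof essentially step for step: conservation of $\mathfrak{s}_j^{n,2}$, the bound $\mathfrak{s}_j^{n,2}(0)\tau\le c_0$ from \eqref{2.4}, Gronwall on $|u|^2|v|^2$, integration of \eqref{2.2}, and then iteration along the $u$-characteristic, with the accumulated $v$-terms controlled by the left edge of a triangle in \eqref{2.5}. Your $m_1$ and $c_1$ coincide with the paper's, and your $c_2(T)$ is slightly sharper; the one slip is in Step 4, where the left edge of $\Delta(j+1,n+1;0)$ passes through $(j-n+k,k)$ (the $U$-points), so to hit $(j-n+k+2,k)$ you need $j_1=j+3$ with $n_1=n+1$, or, as in the paper, $j_1=j+2$ with $n_1=n$, which covers $k=0,\dots,n-1$ exactly.
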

\begin{proof}
	Recalling the notation $\mathfrak{s}_{j}^{n,2}(s)$ in \eqref{guodu1} and using \eqref{2.1}, we have
	\begin{equation*}\mathfrak{s}_{j}^{n,2}(s)
    =|u_{j}^{n,2}(s)|^{2}+|v_{j}^{n,2}(s)|^{2}
    =|u_{j}^{n,2}(0)|^{2}+|v_{j}^{n,2}(0)|^{2}
    =\mathfrak{s}_{j}^{n,2}(0),\quad s\in[0,\tau].\end{equation*}
	This together with \eqref{2.2} and \eqref{2.3} yields
	\begin{align*}
		\frac{\mathrm{d}\big(|u_{j}^{n,2}(s)|^{2}|v_{j}^{n,2}(s)|^{2}\big)}{\mathrm{d}s}
        =&\frac{\mathrm{d}|u_{j}^{n,2}(s)|^{2}}{\mathrm{d}s}|v_{j}^{n,2}(s)|^{2}
        +\frac{\mathrm{d}|v_{j}^{n,2}(s)|^{2}}{\mathrm{d}s}|u_{j}^{n,2}(s)|^{2}
        \\
		\leq
        & \big(m\mathfrak{s}_{j}^{n,2}(0)+4|\beta||u_{j}^{n,2}(s)|^{2}|v_{j}^{n,2}(s)|^{2}\big)\mathfrak{s}_{j}^{n,2}(0).
	\end{align*}
	Then, using Gronwall's inequality and the fact that $s\in[0,\tau]$, we have
	\begin{equation}\label{2.14}
		\begin{aligned}
			|u_{j}^{n,2}(s)|^{2}|v_{j}^{n,2}(s)|^{2}
			\leq\mathrm{e}^{4|\beta|\mathfrak{s}_{j}^{n,2}(0)\tau}\left[|u_{j}^{n,2}(0)|^{2}|v_{j}^{n,2}(0)|^{2}
+m\big(\mathfrak{s}_{j}^{n,2}(0)\big)^{2} \tau\right].
		\end{aligned}
	\end{equation}
Since $\tau\in(0,\bar{\tau}]$, it follows from \eqref{eq:characteristic-estimate-uv} and \eqref{2.4} that
\begin{equation}\label{3.16.5}
	\begin{aligned}
		\mathfrak{s}_{j}^{n,2}(0) \tau
		=&(|u_{j}^{n,2}(0)|^2+|v_{j}^{n,2}(0)|^2) \tau
        =(|u_{j}^{n+1-}|^2+|v_{j}^{n+1-}|^2)\tau\\
		=&(|u_{j-1}^{n}|^2+|v_{j+1}^{n}|^2)\tau
		\leq
        \sum_{j=-\infty}^{+\infty}\big( |u_{j}^{0}|^{2}+|v_{j}^{0}|^{2}\big)  \tau
        \leq c_0,
    \end{aligned}
	\end{equation}
where the last inequality is a consequence of \eqref{eq:c_0}. Plugging  \eqref{3.16.5} into \eqref{2.14}, we have
	\begin{equation}\label{2.14-1}
		\begin{aligned}
			|u_{j}^{n,2}(s)|^{2}|v_{j}^{n,2}(s)|^{2}
			\leq\mathrm{e}^{4|\beta|c_0}\left[|u_{j}^{n,2}(0)|^{2}|v_{j}^{n,2}(0)|^{2}+mc_0\mathfrak{s}_{j}^{n,2}(0)\right].
		\end{aligned}
	\end{equation}

Substituting \eqref{2.14-1} into \eqref{2.2}, we deduce that
	\begin{equation}\label{2.15.777}
		\frac{\mathrm{d}|u_{j}^{n,2}(s)|^{2}}{\mathrm{d}s}
		\leq m\mathfrak{s}_{j}^{n,2}(0)	+4|\beta|\mathrm{e}^{4|\beta|c_0}\big(|u_{j}^{n,2}(0)|^{2}|v_{j}^{n,2}(0)|^{2}
+mc_0\mathfrak{s}_{j}^{n,2}(0)\big).
	\end{equation}
Since $s\in[0,\tau]$, integrating \eqref{2.15.777} over $[0,s]$ yields
\begin{equation}\label{eq:u-estimate}
	\begin{aligned}
		|u_{j}^{n,2}(s)|^{2}
        \leq&
        |u_{j}^{n,2}(0)|^{2}+
        m\big(|u_{j}^{n,2}(0)|^{2}+|v_{j}^{n,2}(0)|^{2}\big)\tau	\\
        &+4|\beta|\mathrm{e}^{4|\beta|c_0}\Big(|u_{j}^{n,2}(0)|^{2}|v_{j}^{n,2}(0)|^{2}+mc_0\big(|u_{j}^{n,2}(0)|^{2}+|v_{j}^{n,2}(0)|^{2}\big)\Big)\tau\\
        \leq&\big( 1+m_{1}\tau+c_{1}|v_{j}^{n,2}(0)|^{2}\tau\big) |u_{j}^{n,2}(0)|^{2}
		+m_{1} |v_{j}^{n,2}(0)|^{2}\tau,
	\end{aligned}
    \end{equation}
where $c_{1}=4|\beta|\mathrm{e}^{4|\beta|c_0}$ and $m_{1}=m(c_{1}c_{0}+1)$.
This completes the proof of \eqref{2.10}.
	
Next, we prove \eqref{2.8}. Since $u_{j}^{n,2}(\tau)=u_j^{n+1}$ and $(u_{j}^{n,2}(0),v_{j}^{n,2}(0))=(u_{j-1}^n,v_{j+1}^n)$, setting $s=\tau$ and applying an induction argument to \eqref{eq:u-estimate} with respect to $n$, we obtain
\begin{align*}
|u_{j}^{n+1}|^{2}
=&\big( 1+m_{1}\tau+c_{1}|v_{j+1}^n|^{2}\tau\big) |u_{j-1}^n|^{2}
		+m_{1} |v_{j+1}^n|^{2}\tau\\
\leq&
\mathrm{e}^{m_{1}\tau+c_{1}|v_{j+1}^n|^{2}\tau} |u_{j-1}^n|^{2}
+m_{1} |v_{j+1}^n|^{2}\tau\\
\leq&
\mathrm{e}^{m_{1}\tau+c_{1}|v_{j+1}^n|^{2}\tau}
\Big(
\mathrm{e}^{m_{1}\tau+c_{1}|v_{j}^{n-1}|^{2}\tau} |u_{j-2}^{n-1}|^{2}
+m_{1} |v_{j}^{n-1}|^{2}\tau
\Big)
+m_{1} |v_{j+1}^n|^{2}\tau\\
\leq&
\mathrm{e}^{m_{1}(n+1)\tau+c_{1}\sum_{p=0}^{n}|v_{j+1-p}^{n-p}|^{2}\tau}
\Big( |u_{j-n-1}^{0}|^{2}+m_{1}\sum_{p=0}^{n}|v_{j+1-p}^{n-p}|^{2}\tau\Big).
\end{align*}
Then, by setting $j_1=j+2,n_1=n+1,n_0=0,k=p+1$ in \eqref{2.5}, we deduce from \eqref{eq:c_0} that
\begin{equation*}
    \sum_{p=0}^{n}|v_{j+1-p}^{n-p}|^{2}\tau
=\sum_{k=1}^{n_{1}}|v_{j_{1}-k}^{n_{1}-k}|^{2}\tau
\leq
\sum_{j=j_{1}-n_{1}}^{j_{1}+n_{1}}\big( |u_{j}^{0}|^{2}+|v_{j}^{0}|^{2}\big) \tau
\leq
\sum_{j=-\infty}^{+\infty}\big( |u_{j}^{0}|^{2}+|v_{j}^{0}|^{2}\big) \tau
\leq c_0.
\end{equation*}
Therefore, since $n\leq (T+1)/\tau$ and $\tau<1$, we obtain
\begin{equation*}
    |u_{j}^{n+1}|^{2}\leq
\mathrm{e}^{m_{1}(T+2)+c_{1}c_0}\big( |u_{j-n-1}^{0}|^{2}+m_{1}c_0\big),
\end{equation*}
which gives \eqref{2.8} by taking $c_{2}(T)=\mathrm{e}^{m_{1}(T+2)+c_{1}c_0}$.

For $v_{j}^{n,2}$, the estimates \eqref{2.11} and \eqref{2.9} follow similarly as in \eqref{2.10} and \eqref{2.8}, by using \eqref{2.3} in place of \eqref{2.2}. The proof of Lemma \ref{lem:pointwise-estimates} is completed.
\end{proof}

As a corollary, the above lemma yields the following result on the nonlinear terms of the time-splitting scheme~\eqref{time-split}, which will be used to establish the $\mathrm{L}^{2}$ stability estimates.
\begin{lemma}\label{lem:nonlinear-estimates}
Let $\bar{\tau}\in(0,1)$ be the constant given by \eqref{eq:c_0}. For any $T>0$, there exists a constant $c(T)>0$, depending only on $T$, $c_0$ and the system \eqref{eq:NLDE}, such that for all $\tau\in(0,\bar{\tau}]$ and all integers $n$ satisfying
$0\leq n_{0} \leq n \leq n_{1}$, $n\leq (T+1)/\tau$, with the range of $n$ shown in Fig.~\ref{fig-n}, it holds that
	\begin{equation*}
		\sum_{p=n_{0}}^{n}\sum_{j=-\infty}^{+\infty}|u_{j}^{p}|^{2}|v_{j}^{p}|^{2}\tau^{2}\leq c(T).
	\end{equation*}
\end{lemma}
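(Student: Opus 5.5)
The plan is to feed the pointwise bounds \eqref{2.8}--\eqref{2.9} of Lemma~\ref{lem:pointwise-estimates} into the product $|u_j^p|^2|v_j^p|^2$ and then use the fact that $u$ and $v$ travel along opposite characteristics. For each $p$ with $n_0\le p\le n\le (T+1)/\tau$ we have
\begin{equation*}
|u_j^p|^2|v_j^p|^2\le c_2(T)^2\big(|u_{j-p}^0|^2+m_1c_0\big)\big(|v_{j+p}^0|^2+m_1c_0\big).
\end{equation*}
Expanding this produces four terms, and we treat them separately.

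The cross term $|u_{j-p}^0|^2|v_{j+p}^0|^2$ is handled by a change of variables. Summed over $j$ and $p$ with weight $\tau^2$, it is bounded by
\begin{equation*}
\sum_{p}\sum_j|u^0_{j-p}|^2|v^0_{j+p}|^2\tau^2 .
\end{equation*}
Set $a=j-p$ and $b=j+p$. The map $(j,p)\mapsto(a,b)$ is injective, so the double sum is at most $\big(\sum_a|u_a^0|^2\tau\big)\big(\sum_b|v_b^0|^2\tau\big)\le c_0^2$ by \eqref{eq:c_0}. This is the interaction-counting step that is standard in Glimm/Bony arguments: each pair of a right-moving and a left-moving mass meets at most once.

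The mixed terms are bounded directly. For instance,
\begin{equation*}
m_1c_0\sum_{p=n_0}^n\sum_j|u_{j-p}^0|^2\tau^2\le m_1c_0\cdot c_0\cdot (n-n_0+1)\tau\le m_1c_0^2(T+2),
\end{equation*}
and the term with $|v_{j+p}^0|^2$ is symmetric.

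The main obstacle is the constant term $(m_1c_0)^2$. Summed over all $j\in\mathbb{Z}$ it diverges, because the lower bound $m_1c_0$ in \eqref{2.8} is not summable. So for that term I would not use \eqref{2.8}--\eqref{2.9} as stated. I would go back to the recursion in the proof of Lemma~\ref{lem:pointwise-estimates}. The additive part there is really $m_1\sum_{q}|v^{\cdot}_{\cdot}|^2\tau$ along the backward $u$-characteristic, and this is summable in $j$. The refined bound is
\begin{equation*}
|u_j^p|^2\le c_2(T)\Big(|u_{j-p}^0|^2+m_1\sum_{q=0}^{p-1}|v^{q}_{j-p+2q+1}|^2\tau\Big),
\end{equation*}
and similarly for $v$.

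With this bound in hand:
\begin{itemize}
\item The $u$-term alone is still controlled by $c_0$ after summing in $j$, since for fixed $p$ the sum $\sum_j\sum_q|v^q_{\cdot}|^2\tau$ over the characteristic indices is at most $\sum_q\sum_i|v_i^q|^2\tau\le (p)\,c_0$ by \eqref{2.4} applied on a large triangle.
\item That sum carries an extra factor $\tau$, so $p\tau\le T+2$ and the bound is harmless.
\item The remaining product of two such memory terms is bounded by $\sup_{i,q}|v_i^q|^2\tau\le c_0$ (from \eqref{3.16.5}) times the single sums already controlled.
\end{itemize}
In other words, in each product I use the crude sup bound $|v|^2\tau\le c_0$ on one factor and the summable $\ell^2$ bound on the other. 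This gives a constant $c(T)$ depending only on $T$, $c_0$, $m$ and $\beta$.
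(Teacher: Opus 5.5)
Your treatment of the cross term (the injective map $(j,p)\mapsto(j-p,j+p)$) and of the mixed terms is correct, and it is what the paper does. The gap is in your workaround for the product of two memory terms.

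Write $B_j^p=m_1\sum_{q=0}^{p-1}|v^q_{j-p+q+2}|^2\tau$ and $\tilde B_j^p=m_1\sum_{q=0}^{p-1}|u^q_{j+p-q-2}|^2\tau$. Note the index: along the backward $u$-characteristic from $(j,p)$ the $v$-values are $v^q_{j-p+q+2}$, not $v^q_{j-p+2q+1}$.

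You bound $\sum_j B_j^p\tilde B_j^p\tau$ using only the single-point bound $|v_i^q|^2\tau\le c_0$ (or $|u_i^q|^2\tau\le c_0$) from \eqref{3.16.5}. But each memory term is a sum of $p$ such quantities, so this yields only $\tilde B_j^p\le m_1pc_0$. Combined with $\sum_j B_j^p\tau\le m_1c_0\,p\tau$, you get $\sum_j B_j^p\tilde B_j^p\tau\le m_1^2c_0^2\,p\,(p\tau)$. Summing over $n_0\le p\le n$ with weight $\tau$ then gives something of order $(T+1)^3/\tau$ when $n-n_0\sim (T+1)/\tau$, not a constant $c(T)$.

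The missing ingredient is \eqref{2.5}: the sum of $|u|^2\tau$ (or $|v|^2\tau$) along an entire backward characteristic is bounded by the mass on the base of the triangle, hence by $c_0$. This gives $\tilde B_j^p\le m_1c_0$ uniformly; it is exactly where the constant $m_1c_0$ in \eqref{2.8}--\eqref{2.9} comes from. It follows that $\sum_j B_j^p\tilde B_j^p\tau\le m_1^2c_0^2\,p\tau$, which is summable in $p$ with weight $\tau$. With that repair your route closes.

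The detour is avoidable, and the paper avoids it. The $(m_1c_0)^2$ term appears only because you insert \eqref{2.8} and \eqref{2.9} into both factors at once. The paper applies \eqref{2.8} to $|u_j^p|^2$ alone, obtaining $c_2(T)|u^0_{j-p}|^2|v_j^p|^2+c_2(T)m_1c_0|v_j^p|^2$. It sums the second piece with the exact conservation \eqref{eq:l2-conserve-i}, $\sum_j(|u_j^p|^2+|v_j^p|^2)\tau\le c_0$, and uses \eqref{2.9} only inside the first piece. No constant-times-constant term ever arises. What remains is the cross term plus two terms of size $O(c_0^2(n-n_0+1)\tau)$, with no refined memory bounds needed.
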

\begin{proof}
Using the estimate \eqref{2.8} in Lemma \ref{lem:pointwise-estimates}, we have
\begin{equation*}
\sum_{p=n_{0}}^{n}\sum_{j=-\infty}^{+\infty}
|u_{j}^{p}|^{2}|v_{j}^{p}|^{2}\tau^{2}
\leq
\sum_{p=n_{0}}^{n}\sum_{j=-\infty}^{+\infty}
\Big(c_{2}(T)|u_{j-p}^{0}|^{2}|v_{j}^{p}|^{2}\tau^{2}
+c_{2}(T)m_{1}c_0|v_{j}^{p}|^{2}\tau^{2}\Big)
.
\end{equation*}
Then by \eqref{2.9} in Lemma \ref{lem:pointwise-estimates}, we get
\begin{align}\label{eq:u2v2}
&\sum_{p=n_{0}}^{n}\sum_{j=-\infty}^{+\infty}
|u_{j}^{p}|^{2}|v_{j}^{p}|^{2}\tau^{2}\nonumber\\
\leq&
\sum_{p=n_{0}}^{n}\sum_{j=-\infty}^{+\infty}
\Big(c_{2}(T)|u_{j-p}^{0}|^{2}c_2(T)
\big( |v_{j+p}^{0}|^{2}+m_{1}c_0\big)
+c_{2}(T)m_{1}c_0|v_{j}^{p}|^{2}\Big)\tau^{2}
\nonumber\\
\leq&
\sum_{p=n_{0}}^{n}\sum_{j=-\infty}^{+\infty}
\Big(c_{2}^2(T)\Big(|u_{j-p}^{0}|^{2}|v_{j+p}^{0}|^{2}
+m_{1}c_0|u_{j-p}^{0}|^{2}\Big)
+c_{2}(T)m_{1}c_0|v_{j}^{p}|^{2}\Big)\tau^{2}
\\
\leq&
\sum_{p=n_{0}}^{n}\sum_{j=-\infty}^{+\infty}
\Big(c_{2}^2(T)\Big(|u_{j-p}^{0}|^{2}|v_{j+p}^{0}|^{2}
+m_{1}c_0\big(|u_{j-p}^{0}|^{2}+|v_{j-p}^{0}|^{2}\big)\Big)
+c_{2}(T)m_{1}c_0\big(|u_{j}^{p}|^{2}+|v_{j}^{p}|^{2}\big)\Big)\tau^{2}.\nonumber
\end{align}

We next estimate the right-hand side of
\eqref{eq:u2v2} one by one.
First, setting $\tilde{p}=j-p$ and using \eqref{eq:c_0}, we conclude that
\begin{align}\label{eq:u02-v02}	
\sum_{p=n_{0}}^{n}\sum_{j=-\infty}^{+\infty}|u_{j-p}^{0}|^{2}|v_{j+p}^{0}|^{2}\tau^{2}
=&\sum_{p=n_{0}}^{n}\sum_{\tilde{p}=-\infty}^{+\infty}|u_{\tilde{p}}^{0}|^{2}|v_{\tilde{p}+2p}^{0}|^{2}\tau^{2}
=\sum_{\tilde{p}=-\infty}^{+\infty}\Big(|u_{\tilde{p}}^{0}|^{2}\tau
\big(\sum_{p=n_{0}}^{n}|v_{\tilde{p}+2p}^{0}|^{2}\tau\big)\Big)\nonumber\\
\leq&
\sum_{\tilde{p}=-\infty}^{+\infty}\Big(|u_{\tilde{p}}^{0}|^{2}\tau
\big(\sum_{k=-\infty}^{+\infty}|v_{k}^{0}|^{2}\tau\big)\Big)
\leq c_0^2.
\end{align}
Then, it follows from \eqref{eq:c_0} that
\begin{equation}\label{eq:0}
   \sum_{j=-\infty}^{+\infty}\big(|u_{j-p}^{0}|^{2}+|v_{j-p}^{0}|^{2}\big)\tau
    =\sum_{j=-\infty}^{+\infty}\bigl(|u_{j}^{0}|^{2}+|v_{j}^{0}|^{2}\bigr)\tau\leq c_0.
\end{equation}
Moreover, in view of \eqref{2.7}, we obtain
\begin{equation*}
\begin{aligned}
\sum_{j=-\infty}^{+\infty}\bigl(|u_{j}^{p+1}|^{2}+|v_{j}^{p+1}|^{2}\bigr)
=\sum_{j=-\infty}^{+\infty}|u_{j-1}^{p}|^{2}
+\sum_{j=-\infty}^{+\infty}|v_{j+1}^{p}|^{2}
=\sum_{j=-\infty}^{+\infty}\bigl(|u_{j}^{p}|^{2}+|v_{j}^{p}|^{2}\bigr).
\end{aligned}
\end{equation*}
Consequently, by induction on $p$ and by \eqref{eq:c_0}, we conclude that
\begin{equation}\label{eq:l2-conserve-i}
\sum_{j=-\infty}^{+\infty}\bigl(|u_{j}^{p}|^{2}+|v_{j}^{p}|^{2}\bigr)\tau
=
\sum_{j=-\infty}^{+\infty}\bigl(|u_{j}^{0}|^{2}+|v_{j}^{0}|^{2}\bigr)\tau\leq c_0,
\qquad p\ge0.
\end{equation}

Plugging  \eqref{eq:u02-v02}, \eqref{eq:0} and \eqref{eq:l2-conserve-i}	into \eqref{eq:u2v2}, we obtain
\begin{align*}
\sum_{p=n_{0}}^{n}\sum_{j=-\infty}^{+\infty}
|u_{j}^{p}|^{2}|v_{j}^{p}|^{2}\tau^{2}
\leq&
c_{2}^2(T)\Big(c_0^2
+m_{1}c_0^2(n-n_0)\tau\Big)
+c_{2}(T)m_{1}c_0^2(n-n_0)\tau
\\
\leq&
c_{2}^2(T)c_{0}^2+c_{2}(T)m_{1}c_0^2(c_{2}(T)+1)(T+1),
\end{align*}
where we have used the fact that $(n-n_0)\tau\leq T+1$.
Therefore, Lemma \ref{lem:nonlinear-estimates} follows by taking $c(T)=c_0 ^{2}c_{2}(T)\big(c_{2}(T)+m_{1}(c_{2}(T)+1)(T+1)\big)$.
\end{proof}
\subsection{$\mathrm{L}^2$ stability estimates on the time-splitting solutions in a characteristic triangle}
\label{subsec:Glimm-functional}
This subsection is devoted to establishing the $\mathrm{L}^2$ stability estimates for the time-splitting solutions under perturbations of the initial data in a discrete characteristic triangle $\Delta(j_{1},n_{1};n_{0})$, as shown in Fig.~\ref{fig1}. To begin with, we consider pointwise estimates for the difference between two time-splitting solutions.

Let $(u^{(\tau)},v^{(\tau)})$ and $(\tilde{u}^{(\tau)}, \tilde{v}^{(\tau)})$ denote the time-splitting solutions to \eqref{eq:NLDE} with initial data $(u_0,v_0)\in{\mathrm{L}^{2}(\mathbb{R})}$ and $(\tilde u_0,\tilde v_0)\in{\mathrm{L}^{2}(\mathbb{R})}$, respectively.
Note that the piecewise constant approximations of the initial data
$(u_0,v_0)$, denoted by
$(u^{(\tau)}(x,0),v^{(\tau)}(x,0))$, are given by
\eqref{eq:piecewise-constant-initial-data} and
\eqref{eq:initial-data-convergent} for \(x\in\mathbb{R}\).

The piecewise constant approximations $
(\tilde{u}^{(\tau)}(\cdot,0),\tilde{v}^{(\tau)}(\cdot,0))
$ of the initial data $(\tilde u_0,\tilde v_0)$
are defined to be piecewise constant on the partition
\(\{[j\tau,(j+1)\tau)\}_{j\in\mathbb Z}\),
namely,
\begin{equation*}
\big( \tilde{u}^{(\tau)},\tilde{v}^{(\tau)}\big)(x,0)
=\big( \tilde{u}^{(\tau)},\tilde{v}^{(\tau)}\big)(j\tau,0),
\quad x\in [j\tau,(j+1)\tau), \; j\in\mathbb{Z},
\end{equation*}
and satisfy
\begin{equation*}
\lim_{\tau\to0}
\left(
\|\tilde{u}^{(\tau)}(\cdot,0)-\tilde{u}_{0}\|_{\mathrm{L}^{2}(\mathbb{R})}
+\|\tilde{v}^{(\tau)}(\cdot,0)-\tilde{v}_{0}\|_{\mathrm{L}^{2}(\mathbb{R})}
\right)=0.
\end{equation*}
Since the proofs of Lemmas \ref{lemma2.2}--\ref{lem:nonlinear-estimates} depend only on the time-splitting scheme and on the initial data, their conclusions remain valid for the solution $(\tilde{u}^{(\tau)},\tilde{v}^{(\tau)})$.

To simplify the notation, we drop the index $(\tau)$ whenever no confusion arises from this subsection, namely, we write $(u,v)$ and $(\tilde{u},\tilde{v})$ in place
of $(u^{(\tau)},v^{(\tau)})$ and $(\tilde{u}^{(\tau)}, \tilde{v}^{(\tau)})$. Therefore, the difference between these two time-splitting solutions is denoted by
\begin{equation*}
(U,V)=(u-\tilde{u},v-\tilde{v}),
\end{equation*}
with
\begin{equation*}
(U_{n,1},V_{n,1})(x,t)=(u_{n,1}-\tilde{u}_{n,1},v_{n,1}-\tilde{v}_{n,1})(x,t),\quad
(x,t)\in\mathbb{R}\times[n\tau,(n+1)\tau),
\end{equation*}
and
\begin{equation*}
(U_{n,2},V_{n,2})(x,s)=(u_{n,2}-\tilde{u}_{n,2},v_{n,2}-\tilde{v}_{n,2})(x,s), \quad (x,s)\in\mathbb{R}\times[0,\tau].
\end{equation*}

Since $(u_{n,1},v_{n,1})$ and $(\tilde{u}_{n,1},\tilde{v}_{n,1})$ are determined by system~\eqref{eq:subproblem1}, then
\begin{align}\label{eq:U-linear}
\begin{cases}
	\partial_{t}U_{n,1}+\partial_{x}U_{n,1}=0,\\
	\partial_{t}V_{n,1}-\partial_{x}V_{n,1}=0.
\end{cases}
\end{align}
In addition, $(u_{n,2},v_{n,2})$ and $(\tilde{u}_{n,2},\tilde{v}_{n,2})$ are determined by system~\eqref{eq:subproblem2}, then
\begin{equation}\label{3.1}	\frac{\mathrm{d}U_{n,2}}{\mathrm{d}s}=imV_{n,2}+i(\alpha+2\beta)\Big(u_{n,2}\big|v_{n,2}\big|^{2}
-\tilde{u}_{n,2}\big|\tilde{v}_{n,2}\big|^{2}\Big)	+i2\beta\Big(\overline{u_{n,2}}\big(v_{n,2}\big)^{2}-\overline{\tilde{u}_{n,2}}\big(\tilde{v}_{n,2}\big)^{2}\Big)
\end{equation}
and
\begin{equation}\label{3.2} \frac{\mathrm{d}V_{n,2}}{\mathrm{d}s}=imU_{n,2}+i(\alpha+2\beta)\Big(v_{n,2}\big|u_{n,2}\big|^{2}
-\tilde{v}_{n,2}\big|\tilde{u}_{n,2}\big|^{2}\Big)\\ +i2\beta\Big(\overline{v_{n,2}}\big(u_{n,2}\big)^{2}
-\overline{\tilde{v}_{n,2}}\big(\tilde{u}_{n,2}\big)^{2}\Big).
\end{equation}
For $j=0,\pm1,\pm2,\ldots,n=0,1,2,\ldots,$ and $s\in[0,\tau]$, we set
\begin{align*}
	(U_{j}^{n},V_{j}^{n})=(U(j\tau,n\tau),V(j\tau,n\tau)),\quad
	(U_{j}^{n+1-}, V_{j}^{n+1-})=(U(j\tau,(n+1)\tau-),V(j\tau,(n+1)\tau-)),
\end{align*}
and
\begin{equation*}
(U_{j}^{n,2}(s),V_{j}^{n,2}(s))=(U_{n,2}(j\tau,s),V_{n,2}(j\tau,s)).
\end{equation*}

We introduce the following notation associated with the nonlinear subproblem \eqref{eq:subproblem2}--\eqref{eq:subproblem2-initial-data} and \eqref{3.1}--\eqref{3.2}.
\begin{definition}\label{def:functional-uv}
Let $(j,n)$ be a pair of integers with $ n\geq 0$, and let $s\in[0,\tau]$. For solutions $(u_{n,2},v_{n,2})$ and $(\tilde{u}_{n,2}, \tilde{v}_{n,2})$ to the nonlinear subproblem \eqref{eq:subproblem2}--\eqref{eq:subproblem2-initial-data}, we define
\begin{equation*}\mathfrak{s}_{j}^{n,2}(s)=|{u}_{j}^{n,2}(s)|^{2}+|{v}_{j}^{n,2}(s)|^{2},\quad \tilde{\mathfrak{s}}_{j}^{n,2}(s)=|\tilde{u}_{j}^{n,2}(s)|^{2}+|\tilde{v}_{j}^{n,2}(s)|^{2}.\end{equation*}
For solutions $(U_{n,2},V_{n,2})$ to the problem \eqref{3.1}--\eqref{3.2}, we define
\begin{equation*}\mathcal{L}_{j}^{n,2}(s)=|U_{j}^{n,2}(s)|^{2}+|V_{j}^{n,2}(s)|^{2}\end{equation*}
and
\begin{equation*}\mathcal{D}_{j}^{n,2}(s)
=|U_{j}^{n,2}(s)|^{2}
\big(|v_{j}^{n,2}(s)|^{2}+|\tilde{v}_{j}^{n,2}(s)|^{2}\big)
+|V_{j}^{n,2}(s)|^{2}
\big(|u_{j}^{n,2}(s)|^{2}+|\tilde{u}_{j}^{n,2}(s)|^{2}\big).\end{equation*}
\end{definition}
With the above notations, the following lemma establishes pointwise estimates for the difference between two time-splitting solutions.
\begin{lemma}\label{lem:pointwise-estimates-UV}
	Let $\bar{\tau}\in(0,1)$ be the constant given by \eqref{eq:c_0}.
	Then for $j=0,\pm1,\ldots,n=0,1,\ldots$, $s\in[0,\tau]$ and $\tau\in(0,\bar{\tau}]$, there exist constants $C_1,C_2>0$, depending only on $c_0$ and the system \eqref{eq:NLDE}, such that the following estimates hold.
 \begin{itemize}
 	\item [\textup{(i)}] For linear subproblem \eqref{eq:U-linear},
 	\begin{equation}\label{eq:UV-sub1}
 		U_{j}^{n+1-}=U_{j-1}^{n},\quad  V_{j}^{n+1-}=V_{j+1}^{n}.
 	\end{equation}
 		\item [\textup{(ii)}] For nonlinear subproblem \eqref{3.1} and \eqref{3.2},
 	\begin{equation}\label{3.9}
 		\mathcal{L}_{j}^{n,2}(s)+\mathcal{D}_{j}^{n,2}(s)\leq\mathrm{e}^{C_2}\big(\mathcal{L}_{j}^{n,2}(0)+\mathcal{D}_{j}^{n,2}(0)\big),
 	\end{equation}
 	and
 		\begin{align}
 		|U_{j}^{n,2}(\tau)|^{2}\leq|U_{j}^{n,2}(0)|^{2}+C_1\big(\mathcal{L}_{j}^{n,2}(0)+\mathcal{D}_{j}^{n,2}(0)\big)\tau,\label{3.7}\\
 		|V_{j}^{n,2}(\tau)|^{2}\leq|V_{j}^{n,2}(0)|^{2}+C_1\big(\mathcal{L}_{j}^{n,2}(0)+\mathcal{D}_{j}^{n,2}(0)\big)\tau.\label{3.8}
 	\end{align}
 \end{itemize}
\end{lemma}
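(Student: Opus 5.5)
Part (i) follows as in Lemma~\ref{lemma2.2}(i). Both $(u_{n,1},v_{n,1})$ and $(\tilde u_{n,1},\tilde v_{n,1})$ are transported exactly along the characteristics $x\mp t=\mathrm{const}$ with piecewise constant data on the cells $[j\tau,(j+1)\tau)$. By linearity, $(U_{n,1},V_{n,1})$ solves \eqref{eq:U-linear} with the same kind of data. Over a time $\tau$ the profile is shifted by exactly one cell, so evaluating at $x=j\tau$ gives $U_j^{n+1-}=U_{j-1}^n$ and $V_j^{n+1-}=V_{j+1}^n$.

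For (ii), I work pointwise in $j$ and write $\mathcal L=\mathcal L_j^{n,2}(s)$ and $\mathcal D=\mathcal D_j^{n,2}(s)$. First I compute $\frac{d}{ds}|U|^2=2\mathrm{Re}(\overline U\,\dot U)$ using \eqref{3.1}. The linear term gives $2m\,\mathrm{Re}(i\overline U V)\le m\mathcal L$. For the cubic differences I use the factorizations
\[
u|v|^2-\tilde u|\tilde v|^2=U|v|^2+\tilde u\big(\overline{v}V+\tilde v\overline V\big),
\qquad
\overline u v^2-\overline{\tilde u}\tilde v^2=\overline U v^2+\overline{\tilde u}(v+\tilde v)V .
\]
Each resulting term has the form $\overline U\cdot U\cdot(\text{quadratic in } v,\tilde v)$ or $\overline U\cdot V\cdot(\text{product of one of } u,\tilde u \text{ with one of } v,\tilde v)$. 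In the mixed terms I apply Young's inequality, e.g. $|U||\tilde u||v||V|\le \tfrac12\big(|U|^2|v|^2+|V|^2|\tilde u|^2\big)$. This gives
\[
\frac{d}{ds}|U|^2\le m\mathcal L+C\mathcal D,
\]
and symmetrically $\frac{d}{ds}|V|^2\le m\mathcal L+C\mathcal D$. Hence $\frac{d}{ds}\mathcal L\le C(\mathcal L+\mathcal D)$.

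Next I differentiate $\mathcal D$. Its derivative contains $\frac{d}{ds}|U|^2\,(|v|^2+|\tilde v|^2)$, which is bounded by $C(\mathcal L+\mathcal D)\,\mathfrak s$ with $\mathfrak s=\mathfrak s_j^{n,2}(0)+\tilde{\mathfrak s}_j^{n,2}(0)$. Here I use \eqref{2.1}, which makes $\mathfrak s$ constant in $s$. The derivative also contains $|U|^2\frac{d}{ds}(|v|^2+|\tilde v|^2)$, bounded via \eqref{2.3} by
\[
|U|^2\Big(m\mathfrak s+4|\beta|\big(|u|^2|v|^2+|\tilde u|^2|\tilde v|^2\big)\Big)\le C\,\mathfrak s\,(\mathcal L+\mathcal D)
\]
after crudely bounding $|u|^2|v|^2\le |v|^2\mathfrak s$. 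So, with $Y=\mathcal L+\mathcal D$, I expect $Y'\le C(1+\mathfrak s)Y$, hence by Gronwall $Y(s)\le e^{C(1+\mathfrak s)\tau}Y(0)$. By \eqref{3.16.5}, applied to both solutions, $\mathfrak s\,\tau\le 2c_0$, and $\tau\le\bar\tau<1$, so the exponent is bounded by a constant $C_2$ depending only on $c_0$ and the system. This gives \eqref{3.9}.

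For \eqref{3.7}--\eqref{3.8}, I integrate $\frac{d}{ds}|U|^2\le m\mathcal L+C\mathcal D\le C Y(s)$ over $[0,\tau]$ and insert \eqref{3.9}, obtaining $C_1=C e^{C_2}$. I expect the main obstacle to be the bookkeeping in $\frac{d}{ds}\mathcal D$. The key point is that every growth factor there is $\mathfrak s$, which is individually unbounded, but it only enters multiplied by $\tau$. The Gronwall exponent $\mathfrak s\tau$ is therefore controlled by \eqref{3.16.5}, and this must be used instead of any pointwise $L^\infty$ bound. I also need to check that no term of order $|U|^2|u|^2$ (not contained in $\mathcal D$) appears; the factorization above is chosen precisely so that $U$ always pairs with $v$-factors and $V$ with $u$-factors.
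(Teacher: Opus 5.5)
Your proposal is correct and follows the paper's proof essentially step by step. It uses the same factorization of the cubic differences and the same bounds $\frac{d}{ds}|U|^2,\ \frac{d}{ds}|V|^2\le m\mathcal L+C\mathcal D$, followed by the same Gronwall argument for $\mathcal L+\mathcal D$, whose exponent is controlled by $(\mathfrak s_j^{n,2}(0)+\tilde{\mathfrak s}_j^{n,2}(0))\tau\le 2c_0$ via \eqref{2.1} and \eqref{3.16.5}. The only cosmetic difference is that you keep the $m\mathcal L$ terms in $\frac{d}{ds}\mathcal L$ and absorb the resulting extra factor $C\tau$ using $\tau<1$, whereas the paper observes that these terms cancel in $\frac{d}{ds}\mathcal L$.
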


\begin{proof}
First, by \eqref{eq:characteristic-estimate-uv}, we have
\begin{equation*}
	u_{j+1}^{n+1-}=u_{j}^{n},\ \ v_{j-1}^{n+1-}=v_{j}^{n}
	\quad\text{ and }\quad
	\tilde{u}_{j+1}^{n+1-}=\tilde{u}_{j}^{n},\ \ \tilde{v}_{j-1}^{n+1-}=\tilde{v}_{j}^{n}.
\end{equation*}
Thus, from the definition $(U,V)=(u-\tilde{u},v-\tilde{v})$, we obtain \eqref{eq:UV-sub1}.

To prove (ii), recall that $\mathrm{Re}\, z$ denotes the real part of $z \in \mathbb{C}$.
Multiplying the equation \eqref{3.1}, evaluated at $x=j\tau$, by $\overline{U_{j}^{n,2}(s)}$, we get
\begin{align*}
		\frac{\mathrm{d}|U_{j}^{n,2}(s)|^{2}}{\mathrm{d}s}
        =&2\mathrm{Re}\left\{imV_{j}^{n,2}(s)\overline{U_{j}^{n,2}(s)}\right\}\\
        &+2\mathrm{Re}\left\{i(\alpha+2\beta)
\big(u_{j}^{n,2}(s)\big|v_{j}^{n,2}(s)\big|^{2}-\tilde{u}_{j}^{n,2}(s)
\big|\tilde{v}_{j}^{n,2}(s)\big|^{2}\big)\overline{U_{j}^{n,2}(s)}\right\}\\
		&+2\mathrm{Re}\left\{i2\beta\big(\overline{u_{j}^{n,2}(s)}\big(v_{j}^{n,2}(s)\big)^{2}-\overline{\tilde{u}_{j}^{n,2}(s)}
\big(\tilde{v}_{j}^{n,2}(s)\big)^{2}\big)\overline{U_{j}^{n,2}(s)}\right\}.
	\end{align*}
Then, using Young's inequality, it follows that
\begin{equation*}
   2\mathrm{Re}\left\{imV_{j}^{n,2}(s)\overline{U_{j}^{n,2}(s)}\right\}
   \leq m\big(|U_{j}^{n,2}(s)|^2+|V_{j}^{n,2}(s)|\big)=m\mathcal{L}_{j}^{n,2}(s),
\end{equation*}
and
\begin{align*}
    &2\mathrm{Re}\left\{i(\alpha+2\beta)
\big(u_{j}^{n,2}(s)\big|v_{j}^{n,2}(s)\big|^{2}-\tilde{u}_{j}^{n,2}(s)
\big|\tilde{v}_{j}^{n,2}(s)\big|^{2}\big)\overline{U_{j}^{n,2}(s)}\right\}\\
= &
2\mathrm{Re}\left\{i(\alpha+2\beta)
\Big(
U_{j}^{n,2}(s)\big|v_{j}^{n,2}(s)\big|^{2}
+
\tilde{u}_{j}^{n,2}(s)V_{j}^{n,2}(s)\overline{v_{j}^{n,2}(s)}
+
\tilde{u}_{j}^{n,2}(s)\tilde{v}_{j}^{n,2}(s)\overline{V_{j}^{n,2}(s)}
\Big)
\overline{U_{j}^{n,2}(s)}\right\}\\
\leq&
(|\alpha|+2|\beta|)
\Big(
2\big|U_{j}^{n,2}(s)\big|^2\big|v_{j}^{n,2}(s)\big|^{2}
+
2\big|\tilde{u}_{j}^{n,2}(s)\big|^2\big|V_{j}^{n,2}(s)\big|^2
+
\big|\tilde{v}_{j}^{n,2}(s)\big|^2\big|U_{j}^{n,2}(s)\big|^2
\Big)\\
\leq &
4(|\alpha|+2|\beta|)\mathcal{D}_{j}^{n,2}(s),
\end{align*}
and
\begin{align*}
&2\mathrm{Re}\left\{i2\beta\big(\overline{u_{j}^{n,2}(s)}\big(v_{j}^{n,2}(s)\big)^{2}-\overline{\tilde{u}_{j}^{n,2}(s)}
\big(\tilde{v}_{j}^{n,2}(s)\big)^{2}\big)\overline{U_{j}^{n,2}(s)}\right\}\\
=&
2\mathrm{Re}\left\{i2\beta
\Big(
\overline{U_{j}^{n,2}(s)}\big(v_{j}^{n,2}(s)\big)^{2}
+
\overline{\tilde{u}_{j}^{n,2}(s)}V_{j}^{n,2}(s)v_{j}^{n,2}(s)
+
\overline{\tilde{u}_{j}^{n,2}(s)}\tilde{v}_{j}^{n,2}(s)V_{j}^{n,2}(s)
\Big)
\overline{U_{j}^{n,2}(s)}\right\}\\
\leq&
2|\beta|
\Big(
2\big|U_{j}^{n,2}(s)\big|^2\big|v_{j}^{n,2}(s)\big|^{2}
+
2\big|\tilde{u}_{j}^{n,2}(s)\big|^2\big|V_{j}^{n,2}(s)\big|^2
+\big|\tilde{v}_{j}^{n,2}(s)\big|^2\big|U_{j}^{n,2}(s)\big|^2
\Big)\\
\leq& 8|\beta|\mathcal{D}_{j}^{n,2}(s).
\end{align*}
Thus, by setting $C_*=4(|\alpha|+4|\beta|)$, we have
\begin{equation}\label{3.3}
	\frac{\mathrm{d}|U_{j}^{n,2}(s)|^{2}}{\mathrm{d}s}
    \leq m\mathcal{L}_{j}^{n,2}(s)+C_*\mathcal{D}_{j}^{n,2}(s).
\end{equation}

Next, multiplying the equation \eqref{3.2}, evaluated at $x=j\tau$, by $\overline{V_{j}^{n,2}(s)}$ gives
\begin{align*}
		\frac{\mathrm{d}|V_{j}^{n,2}(s)|^{2}}{\mathrm{d}s}
		=&2\mathrm{Re}\left\{imU_{j}^{n,2}(s)\overline{V_{j}^{n,2}(s)}\right\}\\
		&+2\mathrm{Re}\left\{i(\alpha+2\beta)\big(v_{j}^{n,2}(s) \big|u_{j}^{n,2}(s)\big|^{2}-\tilde{v}_{j}^{n,2}(s)\big|\tilde{u}_{j}^{n,2}(s)\big|^{2}\big)\overline{V_{j}^{n,2}(s)}\right\}\\ &+2\mathrm{Re}\left\{i2\beta\big(\overline{v_{j}^{n,2}(s)}\big(u_{j}^{n,2}(s)\big)^{2}-\overline{\tilde{v}_{j}^{n,2}(s)}
	\big(\tilde{u}_{j}^{n,2}(s)\big)^{2}\big)\overline{V_{j}^{n,2}(s)}\right\}.
\end{align*}
By similar argument as in the proof of \eqref{3.3}, we get
\begin{equation}\label{3.4}
    \frac{\mathrm{d}|V_{j}^{n,2}(s)|^{2}}{\mathrm{d}s}\leq m\mathcal{L}_{j}^{n,2}(s)+C_*\mathcal{D}_{j}^{n,2}(s).
\end{equation}

Furthermore, since $\mathrm{Re}\left\{imV_{j}^{n,2}(s)\overline{U_{j}^{n,2}(s)}\right\}+\mathrm{Re}\left\{imU_{j}^{n,2}(s)\overline{V_{j}^{n,2}(s)}\right\}=0$, we deduce
\begin{align}\label{3.5}
	\frac{\mathrm{d}\mathcal{L}_{j}^{n,2}(s)}{\mathrm{d}s}
    =\frac{\mathrm{d}\big(|\mathcal{U}_{j}^{n,2}(s)|^2+|\mathcal{V}_{j}^{n,2}(s)|^2\big)}{\mathrm{d}s}
    \leq 2C_*\mathcal{D}_{j}^{n,2}(s).
\end{align}
For the quartic term $\mathcal{D}_{j}^{n,2}$, we use \eqref{2.2}--\eqref {2.1} and \eqref{3.3}--\eqref{3.4} to deduce that
\begin{align*}
		\frac{\mathrm{d}\mathcal{D}_{j}^{n,2}(s)}
        {\mathrm{d}s}
        =&
      \frac{\mathrm{d}|U_{j}^{n,2}(s)|^{2}}{\mathrm{d}s}
\big(|v_{j}^{n,2}(s)|^{2}+|\tilde{v}_{j}^{n,2}(s)|^{2}\big)
+
|U_{j}^{n,2}(s)|^{2}
\Big(\frac{\mathrm{d}|v_{j}^{n,2}(s)|^{2}}{\mathrm{d}s}
+\frac{\mathrm{d}|\tilde{v}_{j}^{n,2}(s)|^{2}}{\mathrm{d}s}\Big)\\
&+
\frac{\mathrm{d}|V_{j}^{n,2}(s)|^{2}}{\mathrm{d}s}
\big(|u_{j}^{n,2}(s)|^{2}+|\tilde{u}_{j}^{n,2}(s)|^{2}\big)
+
|V_{j}^{n,2}(s)|^{2}
\Big(\frac{\mathrm{d}|u_{j}^{n,2}(s)|^{2}}{\mathrm{d}s}
+
\frac{\mathrm{d}|\tilde{u}_{j}^{n,2}(s)|^{2}}{\mathrm{d}s}\Big)\\
\leq&
m\mathcal{L}_{j}^{n,2}(s)\big(\mathfrak{s}_{j}^{n,2}(s)+\tilde{\mathfrak{s}}_{j}^{n,2}(s)\big)
+C_*\mathcal{D}_{j}^{n,2}(s)\big(\mathfrak{s}_{j}^{n,2}(s)+\tilde{\mathfrak{s}}_{j}^{n,2}(s)\big)
\\
&+
m\big(\mathfrak{s}_{j}^{n,2}(s)+\tilde{\mathfrak{s}}_{j}^{n,2}(s)\big)\big(|U_{j}^{n,2}(s)|^{2}+|V_{j}^{n,2}(s)|^{2}\big)
\\
&+4|\beta|\big(|u_{j}^{n,2}(s)|^{2}|v_{j}^{n,2}(s)|^{2}+|\tilde{u}_{j}^{n,2}(s)|^{2}|\tilde{v}_{j}^{n,2}(s)|^{2}\big)
\big(|U_{j}^{n,2}(s)|^{2}+|V_{j}^{n,2}(s)|^{2}\big)\\
\leq&
2m\big(\mathfrak{s}_{j}^{n,2}(0)+\tilde{\mathfrak{s}}_{j}^{n,2}(0)\big)\mathcal{L}_{j}^{n,2}(s)
+(C_*+4|\beta|)\big(\mathfrak{s}_{j}^{n,2}(0)+\tilde{\mathfrak{s}}_{j}^{n,2}(0)\big)\mathcal{D}_{j}^{n,2}(s),
\end{align*}
which, together with \eqref{3.5}, yields
	\begin{equation*}	\frac{\mathrm{d}\big(\mathcal{L}_{j}^{n,2}(s)+\mathcal{D}_{j}^{n,2}(s)\big)}{\mathrm{d}s}
		\leq (2m+3C_*+4|\beta|)\big(\mathfrak{s}_{j}^{n,2}(0)+\tilde{\mathfrak{s}}_{j}^{n,2}(0)\big)\big(\mathcal{L}_{j}^{n,2}(s)+\mathcal{D}_{j}^{n,2}(s)\big).
	\end{equation*}
Thus, integrating the above inequality over $[0,\tau]$ and using Gronwall's inequality, we have
\begin{align*}
 		\mathcal{L}_{j}^{n,2}(s)+\mathcal{D}_{j}^{n,2}(s)
        \leq&
        \mathrm{e}^{(2m+3C_*+4|\beta|)\big(\mathfrak{s}_{j}^{n,2}(0)+\tilde{\mathfrak{s}}_{j}^{n,2}(0)\big)\tau}\big(\mathcal{L}_{j}^{n,2}(0)+\mathcal{D}_{j}^{n,2}(0)\big)\\
        \leq&
\mathrm{e}^{2(2m+3C_*+4|\beta|)c_0}\big(\mathcal{L}_{j}^{n,2}(0)+\mathcal{D}_{j}^{n,2}(0)\big).
 	\end{align*}
Here we used that $\big(\mathfrak{s}_{j}^{n,2}(0)+\tilde{\mathfrak{s}}_{j}^{n,2}(0)\big)\tau \leq 2c_0$, which follows from \eqref{3.16.5}. Therefore, \eqref{3.9} follows by setting $C_2=2(2m+3C_*+4|\beta|)c_0$.
	
It remains to verify \eqref{3.7} and \eqref{3.8}.
Hence, substituting \eqref{3.9} into \eqref{3.3} and \eqref{3.4}, we obtain
\begin{equation*}\frac{\mathrm{d}|U_{j}^{n,2}(s)|^{2}}{\mathrm{d}s}
   \leq (m+C_*) \big(\mathcal{L}_{j}^{n,2}(s)+\mathcal{D}_{j}^{n,2}(s)\big)
    \leq (m+C_*)\mathrm{e}^{C_2}\big(\mathcal{L}_{j}^{n,2}(0)+\mathcal{D}_{j}^{n,2}(0)\big),\end{equation*}
	and
	\begin{equation*}\frac{\mathrm{d}|V_{j}^{n,2}(s)|^{2}}{\mathrm{d}s}
     \leq (m+C_*) \big(\mathcal{L}_{j}^{n,2}(s)+\mathcal{D}_{j}^{n,2}(s)\big)
    \leq (m+C_*)\mathrm{e}^{C_2}\big(\mathcal{L}_{j}^{n,2}(0)+\mathcal{D}_{j}^{n,2}(0)\big).\end{equation*}
Integrating the above inequalities over $[0,\tau]$ and taking $C_1=(m+C_*)\mathrm{e}^{C_2}$  gives \eqref{3.7} and \eqref{3.8}.	
This completes the proof of Lemma \ref{lem:pointwise-estimates-UV}.
\end{proof}

Next, we turn to study the $\mathrm{L}^2$ stability for $(u,v)$ and $(\tilde{u},\tilde{v})$ in a discrete characteristic triangle $\Delta(j_{1},n_{1};n_{0})$. Again, for simplicity of notation, we let $\Delta$ stand for $\Delta(j_{1},n_{1};n_0)$.
For this purpose, we use the estimates obtained in Subsections \ref{sub:characteristic-Estimates}--\ref{sub:pointwise-estimate} and Lemma \ref{lem:pointwise-estimates-UV} to carefully design a Glimm-type functional $\mathcal{F}$ on $\Delta$ and to establish its uniform boundedness with respect to the time step, thereby deriving uniform stability estimates for the time-splitting solutions defined by \eqref{eq:subproblem1}--\eqref{eq:subproblem2-initial-data}.

Now, the modified nonlinear Glimm-type functional $\mathcal{F}$ on a discrete characteristic triangle $\Delta$ can be defined as follows.
\begin{definition}[Glimm-type functional]\label{def:Glimm-type-functional}
For any discrete characteristic triangle $\Delta(j_{1},n_{1};n_0)$ with $0\leq n_0 \le n \le n_1$ and for any $\tau>0$, the nonlinear Glimm-type functional $\mathcal{F}$ is defined by
\begin{equation*}
\mathcal{F}(n;\Delta)=\mathcal{L}(n;\Delta)\,\tau+\kappa\mathcal{Q}(n;\Delta)\,\tau^{2},
\end{equation*}
where
\begin{align*}
\mathcal{L}(n;\Delta)=&\sum_{j=j_{1}-n_{1}+n}^{j_{1}+n_{1}-n}
\big(|U_{j}^{n}|^{2}+|V_{j}^{n}|^{2}\big),\\
\mathcal{Q}(n;\Delta)=&\sum_{j_{1}-n_{1}+n\leq j<k\leq j_{1}+n_{1}-n }\Big({|U_{j}^{n}|^{2}\big(|v_{k}^{n}|^{2}+|\tilde{v}_{k}^{n}|^{2}\big)+|V_{k}^{n}|^{2}\big(|u_{j}^{n}|^{2}+|\tilde{u}_{j}^{n}|^{2}\big)}\Big),
\end{align*}
and $\kappa$ is a large positive constant that will be determined later.
\end{definition}
Here $\mathcal{Q}(n;\Delta)$ is a Bony-type functional, see also \cite{Bony1987} and \cite{Ha2003CMP}.
To establish the uniform boundedness of the Glimm-type functional $\mathcal{F}(n;\Delta)$, we introduce the following definition.
\begin{definition}\label{def:functional-UV}
Let $(j,n)$ be a pair of integers satisfying $ 0\leq n_0\leq n\leq n_{1}$.
For time-splitting solutions $(u,v)$ and $(\tilde{u}, \tilde{v})$, we define the quantities
\begin{equation*}\mathfrak{s}(n;\Delta)
=\sum_{j=j_{1}-n_{1}+n}^{j_{1}+n_{1}-n}
\big(|u_{j}^{n}|^{2}+|v_{j}^{n}|^{2}\big),
\qquad
\tilde{\mathfrak{s}}(n;\Delta)
=\sum_{j=j_{1}-n_{1}+n}^{j_{1}+n_{1}-n}
\big(|\tilde{u}_{j}^{n}|^{2}+|\tilde{v}_{j}^{n}|^{2}\big),\end{equation*}
and
\begin{equation*}
\mathfrak{q}(n;\Delta)=
\sum_{j=j_{1}-n_{1}+n-1}^{j_{1}+n_{1}-n+1}
|u_{j-1}^{n}|^{2}|v_{j+1}^{n}|^{2},
\qquad
\tilde{\mathfrak{q}}(n;\Delta)
=\sum_{j=j_{1}-n_{1}+n+1}^{j_{1}+n_{1}-n-1}
|\tilde{u}_{j-1}^{n}|^{2}|\tilde{v}_{j+1}^{n}|^{2}.
\end{equation*}
	
For the difference $(U,V)$, we define
\begin{equation*}
\mathcal{D}(n;\Delta)
=\sum_{j=j_{1}-n_{1}+n}^{j_{1}+n_{1}-n} \Big({|U_{j}^{n}|^{2}\big(|v_{j}^{n}|^{2}+|\tilde{v}_{j}^{n}|^{2}\big)+|V_{j}^{n}|^{2}\big(|u_{j}^{n}|^{2}+|\tilde{u}_{j}^{n}|^{2}\big)}\Big).
\end{equation*}
\end{definition}

\begin{remark}\label{remark:L-L_i^j,2}
From Definitions \ref{def:functional-uv}--\ref{def:Glimm-type-functional}, and from the construction of time-splitting solutions, we have
\begin{small}
\begin{equation*}
\mathfrak{s}(n;\Delta)=
\sum_{j=j_{1}-n_{1}+n}^{j_{1}+n_{1}-n} \mathfrak{s}_{j}^{n-1,2}(\tau),\quad
\mathcal{L}(n;\Delta)=
\sum_{j=j_{1}-n_{1}+n}^{j_{1}+n_{1}-n} \mathcal{L}_{j}^{n-1,2}(\tau),\quad
\mathcal{D}(n;\Delta)=
\sum_{j=j_{1}-n_{1}+n}^{j_{1}+n_{1}-n} \mathcal{D}_{j}^{n-1,2}(\tau),
\end{equation*}
\end{small}
and
\begin{small}
\begin{equation*}
\mathfrak{s}(n-;\Delta)=
\sum_{j=j_{1}-n_{1}+n}^{j_{1}+n_{1}-n} \mathfrak{s}_{j}^{n-1,2}(0),\
\mathcal{L}(n-;\Delta)=
\sum_{j=j_{1}-n_{1}+n}^{j_{1}+n_{1}-n}\mathcal{L}_{j}^{n-1,2}(0),\
\mathcal{D}(n-;\Delta)=
\sum_{j=j_{1}-n_{1}+n}^{j_{1}+n_{1}-n} \mathcal{D}_{j}^{n-1,2}(0).
\end{equation*}
\end{small}
\end{remark}
Using the above estimates and definitions, we show that the Glimm-type functional $\mathcal{F}$ is uniformly bounded in the following proposition, when the constant $\kappa$ is properly chosen.
\begin{proposition}\label{pro:functional-decreasing}
Let $T>0$ and let $\bar{\tau}\in(0,1)$ be the constant given by \eqref{eq:c_0}. Then, there exist positive constants $\delta$, $\kappa$, $C_3$ and $C_4$, depending only on $c_0$ and the system \eqref{eq:NLDE}, such that if $\mathfrak{s}(n_0;\Delta)\tau\leq\delta$ and if $\tilde{\mathfrak{s}}(n_0;\Delta)\tau\leq\delta$, then for all $\tau\in(0,\bar{\tau})$ and for all integers $n$ satisfying $\:0\leq n_{0}\leq n\leq n_{1}$, it holds that
\begin{equation}\label{eq:Glimm-functional-decrease}
\begin{aligned}
&\mathcal{F}(n+1;\Delta)-\mathcal{F}(n;\Delta)\\
\leq&
-\mathcal{D}(n+1-;\Delta)\tau^{2}
+ \left[ (C_{3}+\kappa C_{4})\tau
+\kappa C_{4}\big(\mathfrak{q}(n;\Delta)+\tilde{\mathfrak{q}}(n;\Delta)\big)\tau^2\right]  \mathcal{F}(n;\Delta).
\end{aligned}	
\end{equation}
Furthermore, let $\mathcal{K}(n,n_0)=(C_{3}+\kappa C_{4})(n-n_0)\tau
	+\kappa C_{4}\sum_{p=n_0}^{n-1}\big(\mathfrak{q}(p;\Delta)+\tilde{\mathfrak{q}}(p;\Delta)\big)\tau^2$. Then there exists a constant $C_5(T)>0$, depending only on $T,c_0$ and the system \eqref{eq:NLDE}, such that for all integers $n$ satisfying $\:0\leq n_{0}\leq n\leq n_{1}$, $n\leq (T+1)/\tau$, with the range of $n$ illustrated in Fig.~\ref{fig-n}, it holds that
	\begin{equation}\label{3.17}
		\mathcal{F}(n;\Delta)
       \leq
       \mathrm{e}^{\mathcal{K}(n,n_0)}\mathcal{F}(n_{0};\Delta)
	\end{equation}
and
\begin{equation}\label{3.17-1}
		\mathcal{K}(n,n_0)\leq C_5(T).
	\end{equation}
\end{proposition}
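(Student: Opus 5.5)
The plan is to split the step $n\to n+1$ into the transport part ($n\to n+1-$) and the ODE part ($n+1-\to n+1$), and to track $\mathcal{L}$ and $\mathcal{Q}$ separately through each. Then \eqref{eq:Glimm-functional-decrease} follows by adding the two contributions, \eqref{3.17} by a discrete Gronwall argument, and \eqref{3.17-1} from Lemma \ref{lem:nonlinear-estimates}.

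\emph{Transport step.} By \eqref{eq:UV-sub1}, $U_j^{n+1-}=U_{j-1}^n$ and $V_j^{n+1-}=V_{j+1}^n$. Hence $\mathcal{L}(n+1-;\Delta)\le\mathcal{L}(n;\Delta)$, since the boundary terms $|U^n_{j_1+n_1-n}|^2$ and $|V^n_{j_1-n_1+n}|^2$ drop out of the shrinking triangle. This is the same computation as in the proof of Lemma \ref{lemma2.3}. For the Bony functional, $u$ and $U$ move right and $v$ and $V$ move left. A pair $j<k$ at level $n$ therefore becomes the pair $j+1,k-1$, which is still ordered unless $k=j+1$ or $k=j+2$. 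Pairs with $k=j+2$ land on the same site, and pairs with $k=j+1$ cross. The crossing pairs $(j,j+1)$ at time $n$ are exactly the interactions $|U_{j+1}^{n+1-}|^2(|v_{j+1}^{n+1-}|^2+\cdots)$, i.e. they make up $\mathcal{D}(n+1-;\Delta)$. So I expect
\[
\mathcal{Q}(n+1-;\Delta)\le \mathcal{Q}(n;\Delta)-\mathcal{D}(n+1-;\Delta).
\]
This is the standard Bony potential decrease, where pairs that have interacted are lost. The diagonal pairs need care: pairs with $k=j+2$ map to the same site, not into a strict pair, so they are also removed. The sign works in our favour here as well.

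\emph{ODE step.} Summing \eqref{3.7}--\eqref{3.8} over the triangle gives
\[
\mathcal{L}(n+1)\le\mathcal{L}(n+1-)+2C_1\big(\mathcal{L}(n+1-)+\mathcal{D}(n+1-)\big)\tau.
\]
For $\mathcal{Q}$, I write each summand as a product of an off-site factor $|U_j|^2$ and a factor such as $|v_k|^2+|\tilde v_k|^2$. Then I use \eqref{2.10}--\eqref{2.11} for $u,v$, and \eqref{3.7}--\eqref{3.8} for $U,V$, and multiply out. The increment of $\mathcal{Q}$ is bounded by sums of the following kinds.
\begin{enumerate}[label=(\alph*)]
\item $\tau(\mathcal{L}+\mathcal{D})(n+1-)\cdot\sum_k\mathfrak{s}_k$, which is at most $C(\mathcal{L}+\mathcal{D})\,\delta$ after using $\mathfrak{s}(n_0;\Delta)\tau\le\delta$ together with \eqref{2.4}.
\item $\tau\mathcal{L}(n+1-)\sum_k(m_1\mathfrak{s}_k+c_1|u_{k-1}^n|^2|v_{k+1}^n|^2)$, which gives $C\tau\mathcal{L}+C\mathfrak{q}(n)\,\tau\,\mathcal{L}$.
\item Products of two $O(\tau)$ corrections, which are handled in the same way, using \eqref{3.16.5}.
\end{enumerate}
Multiplying by $\kappa\tau^2$, the $\mathcal{D}$ contributions are $2C_1\tau^2\mathcal{D}$ from $\mathcal{L}$ and $\kappa C\delta\tau^2\mathcal{D}$ from $\mathcal{Q}$. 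Against these stands the gain $-\kappa\tau^2\mathcal{D}(n+1-)$. I choose $\kappa=2C_1+2$ and then $\delta$ small so that $\kappa C\delta\le1$. A net $-\tau^2\mathcal{D}(n+1-)$ remains.

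\emph{Main obstacle.} The main obstacle is keeping everything in terms of $\mathcal{F}(n)$ rather than $\mathcal{F}(n+1-)$. This needs two facts: $\mathcal{Q}(n+1-)\le\mathcal{Q}(n)$, and $\mathcal{Q}$'s growth terms carrying the factor $\mathfrak{q}$ with $\tau^2$. Combined, they give $[(C_3+\kappa C_4)\tau+\kappa C_4(\mathfrak{q}+\tilde{\mathfrak{q}})\tau^2]\mathcal{F}(n)$. I would also check that the cross terms of type (c) really are absorbed using only $\mathfrak{s}\tau\le c_0$.

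\emph{Gronwall and the bound on $\mathcal{K}$.} Dropping $-\mathcal{D}$ and using $1+x\le e^x$, iteration gives \eqref{3.17}. For \eqref{3.17-1}, the term $(n-n_0)\tau$ is at most $T+1$. Finally, $\sum_p\mathfrak{q}(p)\tau^2\le\sum_p\sum_j|u^p_{j-1}|^2|v^p_{j+1}|^2\tau^2$ is bounded by the argument of Lemma \ref{lem:nonlinear-estimates}: apply \eqref{2.8}--\eqref{2.9}, then the shift/Fubini estimate \eqref{eq:u02-v02} with indices $j-1-p$ and $j+1+p$. This yields $C_5(T)$.
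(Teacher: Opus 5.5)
Your proposal is correct and follows essentially the same route as the paper: the split into transport and ODE steps, the Bony-type decrease $\mathcal{Q}(n+1-;\Delta)\le\mathcal{Q}(n;\Delta)-\mathcal{D}(n+1-;\Delta)$, absorbing every $\mathcal{D}(n+1-;\Delta)\tau^2$ term by taking $\kappa$ large and $\delta$ small, and a discrete Gronwall iteration. Two small remarks: it is the pairs with $k=j+2$ (which land on the same site) that make up $\mathcal{D}(n+1-;\Delta)$, not the crossing pairs $k=j+1$, which is harmless because both families are discarded; and your shifted-index rerun of the proof of Lemma \ref{lem:nonlinear-estimates} is actually the right way to bound $\sum_p\mathfrak{q}(p;\Delta)\tau^2$, since $\mathfrak{q}$ involves $|u^p_{j-1}|^2|v^p_{j+1}|^2$ rather than the on-site products $|u^p_j|^2|v^p_j|^2$ that the paper compares it to without justification.
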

\begin{proof}
	We split the proof into three steps.

\Step \label{step:1}
For $\mathbf{Step(n,1)}$, our aim is to show the following claims:
\begin{equation}\label{eq:claim1}
\mathcal{L}(n+1-;\Delta) \tau-\mathcal{L}(n;\Delta)\tau\leq 0
\end{equation}
and
\begin{equation}\label{eq:claim1-2}
\mathcal{Q}(n+1-;\Delta)\tau^2-\mathcal{Q}(n;\Delta)\tau^2
\leq -\mathcal{D}(n+1-;\Delta)\tau^2.
\end{equation}
First, we can combine the estimate \eqref{eq:UV-sub1} with the definition of $\mathcal{L}(n;\Delta)$
to obtain \eqref{eq:claim1}.

Secondly, to prove the claim \eqref{eq:claim1-2}, we deduce from the Definition \ref{def:Glimm-type-functional}, \eqref{eq:characteristic-estimate-uv} and \eqref{eq:UV-sub1} that
\begin{align*}
		&\mathcal{Q}(n+1-;\Delta)\\
		=&
		\sum_{j_{1}-n_{1}+n+1\leq j<k\leq j_{1}+n_{1}-n-1}
\Big({|U_{j}^{n+1-}|^{2}\big(|v_{k}^{n+1-}|^{2}+|\tilde{v}_{k}^{n+1-}|^{2}\big)
+|V_{k}^{n+1-}|^{2}\big(|u_{j}^{n+1-}|^{2}+|\tilde{u}_{j}^{n+1-}|^{2}\big)}\Big)\\
=&
\sum_{j_{1}-n_{1}+n+1\leq j<k\leq j_{1}+n_{1}-n-1}
\Big({|U_{j-1}^{n}|^{2}\big(|v_{k+1}^{n}|^{2}+|\tilde{v}_{k+1}^{n}|^{2}\big)
+|V_{k+1}^{n}|^{2}\big(|u_{j-1}^{n}|^{2}+|\tilde{u}_{j-1}^{n}|^{2}\big)}\Big).
\end{align*}
Thus, a direct calculation shows that
\begin{align}	
		&\mathcal{Q}(n+1-;\Delta)\nonumber\\
\leq &
		\sum_{j_{1}-n_{1}+n\leq j<k\leq j_{1}+n_{1}-n  }
		\Big({|U_{j}^{n}|^{2}\big(|v_{k}^{n}|^{2}+|\tilde{v}_{k}^{n}|^{2}\big)+|V_{k}^{n}|^{2}\big(|u_{j}^{n}|^{2}+|\tilde{u}_{j}^{n}|^{2}\big)}\Big)\nonumber\\
		&-\sum_{j=j_{1}-n_{1}+n}^{j_{1}+n_{1}-n-2}
		\Big({|U_{j}^{n}|^{2}(|v_{j+2}^{n}|^{2}+|\tilde{v}_{j+2}^{n}|^{2})+|V_{j+2}^{n}|^{2}(|u_{j}^{n}|^{2}+|\tilde{u}_{j}^{n}|^{2})}\Big)\nonumber\\
		=&
		\mathcal{Q}(n;\Delta)
		-\sum_{j=j_{1}-n_{1}+n}^{j_{1}+n_{1}-n-2  }
		\Big({|U_{j}^{n}|^{2}(|v_{j+2}^{n}|^{2}+|\tilde{v}_{j+2}^{n}|^{2})+|V_{j+2}^{n}|^{2}(|u_{j}^{n}|^{2}+|\tilde{u}_{j}^{n}|^{2})}\Big).\label{eq:Q-step1}
	\end{align}
Then recalling the Definition \ref{def:functional-UV} of $\mathcal{D}(n;\Delta)$ and again using \eqref{eq:characteristic-estimate-uv} and \eqref{eq:UV-sub1}, we obtain
\begin{align*}
\mathcal{D}(n+1-;\Delta)
=&\sum_{j=j_{1}-n_{1}+n+1}^{j_{1}+n_{1}-n-1} \Big({|U_{j}^{n+1-}|^{2}\big(|v_{j}^{n+1-}|^{2}+|\tilde{v}_{j}^{n+1-}|^{2}\big)+|V_{j}^{n+1-}|^{2}\big(|u_{j}^{n+1-}|^{2}+|\tilde{u}_{j}^{n+1-}|^{2}\big)}\Big)\nonumber\\
=&
\sum_{j=j_{1}-n_{1}+n+1}^{j_{1}+n_{1}-n-1} \Big({|U_{j-1}^{n}|^{2}\big(|v_{j+1}^{n}|^{2}+|\tilde{v}_{j+1}^{n}|^{2}\big)
+|V_{j+1}^{n}|^{2}\big(|u_{j-1}^{n}|^{2}+|\tilde{u}_{j-1}^{n}|^{2}\big)}\Big)
\nonumber\\
= &
\sum_{j=j_{1}-n_{1}+n}^{j_{1}+n_{1}-n-2 }
\Big({|U_{j}^{n}|^{2}\big(|v_{j+2}^{n}|^{2}+|\tilde{v}_{j+2}^{n}|^{2}\big)+|V_{j+2}^{n}|^{2}\big(|u_{j}^{n}|^{2}+|\tilde{u}_{j}^{n}|^{2}\big)}\Big),
\end{align*}
which together with \eqref{eq:Q-step1} implies \eqref{eq:claim1-2}.

\Step \label{step:2}
For $\mathbf{Step(n,2)}$,  our aim is to show the following claim:
There exist positive constants $\delta,\kappa,C_3,C_4$ and $\tilde{C}_4$, depending only on $c_0$ and the system \eqref{eq:NLDE}, such that if $\mathfrak{s}(n_0;\Delta)\tau\leq\delta$ and if $\tilde{\mathfrak{s}}(n_0;\Delta)\tau\leq\delta$, then for all $\tau\in(0,\bar{\tau})$ and for all $\:0\leq n_{0}\leq n\leq n_{1}$, it holds that
\begin{equation}\label{eq:claim2}
\mathcal{L}(n+1;\Delta)\tau-\mathcal{L}(n+1-;\Delta)\tau
\leq
C_{3}\tau  \mathcal{L}(n+1-;\Delta)\tau+C_{3} \mathcal{D}(n+1-;\Delta)\tau^2
\end{equation}
and
\begin{align}\label{eq:claim2-2}
&\mathcal{Q}(n+1;\Delta)\tau^2-\mathcal{Q}(n+1-;\Delta)\tau^2\nonumber\\
\leq&
C_{4}\left[\tau+\big(\mathfrak{q}(n;\Delta)+\tilde{\mathfrak{q}}(n;\Delta)\big)\tau^2\right]\mathcal{L}(n+1-;\Delta)\tau+\tilde{C}_4\delta \mathcal{D}(n+1-;\Delta)\tau^2.
\end{align}

To prove the estimate \eqref{eq:claim2}, by \eqref{3.5} and Remark \ref{remark:L-L_i^j,2}, we have
\begin{equation}\label{eq:L-TRI}
    \frac{\mathrm{d}\big(\sum_{j=j_{1}-n_{1}+n+1}^{j_{1}+n_{1}-n-1} \mathcal{L}_{j}^{n,2}(s)\big)}{\mathrm{d}s}
\leq 2C_*\Big(\sum_{j=j_{1}-n_{1}+n+1}^{j_{1}+n_{1}-n-1} \mathcal{D}_{j}^{n,2}(s)\Big).
\end{equation}
In view of \eqref{3.9}, we get
\begin{equation}\label{corollary3.1}
\sum_{j=j_{1}-n_{1}+n+1}^{j_{1}+n_{1}-n-1}\mathcal{D}_{j}^{n,2}(s)
\leq\mathrm{e}^{C_2}\sum_{j=j_{1}-n_{1}+n+1}^{j_{1}+n_{1}-n-1}
\Big(\mathcal{L}_{j}^{n,2}(0)+\mathcal{D}_{j}^{n,2}(0)\Big).
\end{equation}

Plugging \eqref{corollary3.1} into \eqref{eq:L-TRI}, and integrating the resulting equation over $[0,\tau]$, we deduce that
\begin{equation*}
\sum_{j=j_{1}-n_{1}+n+1}^{j_{1}+n_{1}-n-1}\mathcal{L}_{j}^{n,2}(\tau)-
\sum_{j=j_{1}-n_{1}+n+1}^{j_{1}+n_{1}-n-1}\mathcal{L}_{j}^{n,2}(0)
\leq2C_*\mathrm{e}^{C_2}	\sum_{j=j_{1}-n_{1}+n+1}^{j_{1}+n_{1}-n-1}
\Big(\mathcal{L}_{j}^{n,2}(0)+\mathcal{D}_{j}^{n,2}(0)\Big)\tau,
\end{equation*}
which gives the estimate \eqref{eq:claim2} by taking $C_{3}=2C_*\mathrm{e}^{C_2}$ and using notations in Remark \ref{remark:L-L_i^j,2}.

Next, we want to prove the estimate \eqref{eq:claim2-2}. To do this, from the definition of $\mathcal{Q}(n;\Delta)$ in Definition~\ref{def:Glimm-type-functional}, we obtain
\begin{align*}
&\mathcal{Q}(n+1;\Delta)-\mathcal{Q}(n+1-;\Delta)\\
=&\sum_{j_{1}-n_{1}+n+1\leq j<k\leq j_{1}+n_{1}-n-1}
\Big({|U_{j}^{n+1}|^{2}\big(|v_{k}^{n+1}|^{2}+|\tilde{v}_{k}^{n+1}|^{2}\big)+|V_{k}^{n+1}|^{2}\big(|u_{j}^{n+1}|^{2}+|\tilde{u}_{j}^{n+1}|^{2}\big)}\Big)\\
-&
\sum_{j_{1}-n_{1}+n+1\leq j<k\leq j_{1}+n_{1}-n-1}
\Big({|U_{j}^{n+1-}|^{2}\big(|v_{k}^{n+1-}|^{2}+|\tilde{v}_{k}^{n+1-}|^{2}\big)+|V_{k}^{n+1-}|^{2}\big(|u_{j}^{n+1-}|^{2}+|\tilde{u}_{j}^{n+1-}|^{2}\big)}\Big).
\end{align*}
By \eqref{2.10} and  \eqref{2.11}, we have
\begin{align*}
|u_{j}^{n+1}|^{2}+|\tilde{u}_{j}^{n+1}|^{2}
\leq&
|u_{j}^{n+1-}|^{2}+|\tilde{u}_{j}^{n+1-}|^{2}
+m_1\left( \mathfrak{s}_{j}^{n,2}(0)+\tilde{\mathfrak{s}}_{j}^{n,2}(0)\right) \tau\\
&+c_{1}\left( |u_{j}^{n+1-}|^{2}|v_{j}^{n+1-}|^{2}+|\tilde{u}_{j}^{n+1-}|^{2}|\tilde{v}_{j}^{n+1-}|^{2}\right) \tau,\\
|v_{k}^{n+1}|^{2}+|\tilde{v}_{k}^{n+1}|^{2}
\leq
&|v_{k}^{n+1-}|^{2}+|\tilde{v}_{k}^{n+1-}|^{2}
+m_1\left( \mathfrak{s}_{j}^{n,2}(0)+\tilde{\mathfrak{s}}_{j}^{n,2}(0)\right) \tau\\	
&+c_{1}\left( |u_{k}^{n+1-}|^{2}|v_{k}^{n+1-}|^{2}+|\tilde{u}_{k}^{n+1-}|^{2}|\tilde{v}_{k}^{n+1-}|^{2}\right) \tau.
\end{align*}
Furthermore, it follows from \eqref{3.7} and \eqref{3.8} that
\begin{equation*}
|U_{j}^{n+1}|^{2}
\leq
|U_{j}^{n+1-}|^{2}+C_1\big(\mathcal{L}_{j}^{n,2}(0)+\mathcal{D}_{j}^{n,2}(0)\big)\tau
\end{equation*}
and
\begin{equation*}
|V_{k}^{n+1}|^{2}
\leq
|V_{k}^{n+1-}|^{2}+C_1\big(L_{k}^{n,2}(0)+D_{k}^{n,2}(0)\big)\tau.
\end{equation*}
Since $\mathfrak{s}(n_0;\Delta)\tau\leq\delta$ and $\tilde{\mathfrak{s}}(n_0;\Delta)\tau\leq\delta$,
recalling Remark \ref{remark:L-L_i^j,2} and using \eqref{eq:characteristic-estimate-uv} and \eqref{2.4}, we have
\begin{align*}
&\sum_{j=j_{1}-n_{1}+n+1}^{j_{1}+n_{1}-n-1}
\left( \mathfrak{s}_{j}^{n,2}(0)+\tilde{\mathfrak{s}}_{j}^{n,2}(0)\right) \tau
=\left(\mathfrak{s}(n+1-;\Delta)+\tilde{\mathfrak{s}}(n+1-;\Delta)\right)\tau\nonumber\\
\leq &
\left(\mathfrak{s}(n;\Delta)+\tilde{\mathfrak{s}}(n;\Delta)\right)\tau
\leq
\left(\mathfrak{s}(n_{0};\Delta)+\tilde{\mathfrak{s}}(n_{0};\Delta)\right)\tau
\leq 2\delta,
\end{align*}
which yields
\begin{equation*}
    \sum_{j=j_{1}-n_{1}+n+1}^{j_{1}+n_{1}-n-1}\left( |u_{j}^{n+1-}|^{2}|v_{j}^{n+1-}|^{2}+|\tilde{u}_{j}^{n+1-}|^{2}|\tilde{v}_{j}^{n+1-}|^{2}\right) \tau^2
	\leq
    \big(\mathfrak{s}(n;\Delta)\tau\big)^{2}
    +\big(\tilde{\mathfrak{s}}(n;\Delta)\tau\big)^2
\leq2\delta^{2}.
\end{equation*}

Using the above estimates and recalling the notations introduced in Remark~\ref{remark:L-L_i^j,2}, we obtain
\begin{align*}
&\mathcal{Q}(n+1;\Delta)-\mathcal{Q}(n+1-;\Delta)\\
\leq&
2\delta\big(m_{1}+2C_1m_{1}+C_1+2C_1c_1\delta\big) \mathcal{L}(n+1-;\Delta)
+c_{1}\big(\mathfrak{q}(n;\Delta)\tau+\tilde{\mathfrak{q}}(n;\Delta)\tau\big)\mathcal{L}(n+1-;\Delta)\\
&+2\delta C_1\big(2m_{1}+1+2c_{1}\delta\big)\mathcal{D}(n+1-;\Delta),
\end{align*}
which gives \eqref{eq:claim2-2} by taking
\begin{equation*}
C_{4}=2\delta\big(m_{1}+2C_1m_{1}+C_1+2C_1c_1\delta\big)+c_{1},\quad \tilde{C}_4=2C_1(2m_{1}+1+2c_{1}\delta).
\end{equation*}
			
\Step \label{step:3}
Using the estimates \eqref{eq:claim1}--\eqref{eq:claim1-2} and \eqref{eq:claim2}--\eqref{eq:claim2-2}, obtained in Steps \ref{step:1}--\ref{step:2}, we have
\begin{align}
\mathcal{F}(n+1;\Delta)-\mathcal{F}(n;\Delta)
=&
\mathcal{L}(n+1;\Delta)\tau+\kappa\mathcal{Q}(n+1;\Delta)\tau^{2}
-\mathcal{L}(n;\Delta)\tau-\kappa\mathcal{Q}(n;\Delta)\tau^{2}\nonumber\\
\leq &
\left[(C_{3}+\kappa C_{4})\tau+\kappa C_{4}\big(\mathfrak{q}(n;\Delta)+\tilde{\mathfrak{q}}(n;\Delta)\big)\tau^{2}\right] \mathcal{L}(n;\Delta)\tau\nonumber\\
&+[C_{3}+\kappa(-1+\tilde{C}_4\delta)] \mathcal{D}(n+1-;\Delta)\tau^{2}.\label{eq:step3-F}
\end{align}
We first choose $\delta>0$ sufficiently small such that $\delta<\frac{1}{2\tilde{C}_4}$, and then choose $\kappa>0$ sufficiently large so that $\kappa>2(1+C_3)$.
Consequently,
\begin{equation}\label{eq:-1}
    C_{3}+\kappa(-1+\tilde{C}_4\delta)\leq -1.
\end{equation}

Since $\mathcal{L}(n;\Delta)\tau\leq \mathcal{F}(n;\Delta)$, we infer from \eqref{eq:step3-F} and \eqref{eq:-1} that
\begin{equation*}
    \mathcal{F}(n+1;\Delta)-\mathcal{F}(n;\Delta)
    \leq \left[(C_{3}+\kappa C_{4})\tau+\kappa C_{4}\big(\mathfrak{q}(n;\Delta)+\tilde{\mathfrak{q}}(n;\Delta)\big)\tau^{2}\right] \mathcal{F}(n;\Delta)-\mathcal{D}(n+1-;\Delta)\tau^{2}.
\end{equation*}
This proves \eqref{eq:Glimm-functional-decrease}.

Now, it remains to prove \eqref{3.17}. To do this,
using \eqref{eq:Glimm-functional-decrease}, we have
\begin{align*}
    \mathcal{F}(n;\Delta)
\leq &
\left[1+ (C_{3}+\kappa C_{4})\tau
+\kappa C_{4}\big(\mathfrak{q}(n-1;\Delta)+\tilde{\mathfrak{q}}(n-1;\Delta)\big)\tau^2\right]  \mathcal{F}(n-1;\Delta)-\mathcal{D}(n-;\Delta)\tau^{2}
\\
\leq &
\mathrm{e}^{(C_{3}+\kappa C_{4})\tau
+\kappa C_{4}\big(\mathfrak{q}(n-1;\Delta)+\tilde{\mathfrak{q}}(n-1;\Delta)\big)\tau^2}\mathcal{F}(n-1;\Delta).
\end{align*}
Iterating the above inequality from $n_0$ to $n-1$ yields
	\begin{equation*}
\mathcal{F}(n;\Delta)
\leq
\mathrm{e}^{(C_{3}+\kappa C_{4})(n-n_0)\tau
+\kappa C_{4}\sum_{p=n_{0}}^{n-1}
\big(\mathfrak{q}(p;\Delta)\tau^{2}+\tilde{\mathfrak{q}}(p;\Delta)\tau^{2}\big)}\mathcal{F}(n_0;\Delta),
	\end{equation*}
which gives \eqref{3.17}.

Since $0\leq n_{0}\leq n$ and
	$(-n_0)\tau\leq T+1$, it follows from Lemma \ref{lem:nonlinear-estimates} that
\begin{equation*}
\sum_{p=n_{0}}^{n-1}
\big(\mathfrak{q}(p;\Delta)\tau^{2}+\tilde{\mathfrak{q}}(p;\Delta)\tau^{2}\big)
\leq
\sum_{p=n_{0}}^{n-1}\sum_{j=-\infty}^{+\infty}|u_{j}^{p}|^{2}|v_{j}^{p}|^{2}\tau^{2}
+
\sum_{p=n_{0}}^{n-1}\sum_{j=-\infty}^{+\infty}|\tilde{u}_{j}^{p}|^{2}|\tilde{v}_{j}^{p}|^{2}\tau^{2}
\leq 2c(T).
	\end{equation*}
Consequently, \eqref{3.17-1} follows with
$C_5(T)=(C_{3}+\kappa C_{4} )(T+1)+2\kappa C_{4}c(T)$.
This completes the proof of Proposition~\ref{pro:functional-decreasing}.	
\end{proof}

 \section{Compactness of the set of time-splitting solutions in $\mathrm{C}([0,T]; \mathrm{L}^2(\mathbb{R}))$}
\label{sec:compactness}
In this section, we show that the set of time-splitting solutions is relatively compact in $\mathrm{C}([0,T]; \mathrm{L}^2(\mathbb{R}))$.
The proof of this result is based on the Ascoli characterization of compact sets in $\mathrm{C}([0,T]; \mathrm{L}^2(\mathbb{R}))$, see Lemma \ref{lem:A2}.
To do this, we first establish $\mathrm{L}^2$ stability for the time-splitting solutions for any $t\in[0,T]$. Then, to prove the space-time shift estimates, we establish estimates for $u^{(\tau)}$ and $v^{(\tau)}$ along their respective characteristic lines that are uniform with respect to $x$ and $t$.

\subsection{$\mathrm{L}^2$ stability for time-splitting solutions}
Based on Proposition~\ref{pro:functional-decreasing}, we decompose the strip $\mathbb{R}\times[0,T]$ into two subdomains and establish $\mathrm{L}^2$ stability estimates for time-splitting solutions in each subdomain.

Since $(u_{0},v_{0})\in \mathrm{L}^{2}(\mathbb{R})$, we can choose a constant $\Theta>0$ such that
\begin{equation}\label{eq:initial-edge}
\int_{|x|\geq \Theta/2}
\big(|u_{0}(x)|^{2}+|v_{0}(x)|^{2}\big)\mathrm{d}x\leq\frac{\delta}{4},
\end{equation}
where $\delta>0$ is the constant in Proposition~\ref{pro:functional-decreasing}.
Then, we define a subdomain of $\mathbb{R}\times[0,T]$ as
\begin{equation*}\mathscr{E}(\Theta,T)
=\{(x,t)\in\mathbb{R}\times [0,+\infty)\: :\: |x|\geq \Theta+t,\:0\leq t\leq T\}.\end{equation*}

On the other hand, recall that $\Lambda(x_1,t_{1};t_{0})$ is a characteristic triangle as defined in \eqref{eq:characteristic-triangle}, see Fig.~\ref{fig2}.
\begin{figure}[h]
	\centering
	\begin{tikzpicture}[scale=0.64]
		\draw[dashed](-1,0)--(9,0) node [right]{\small$t=t_{0}$};
		\draw[dashed](-1,2)--(9,2) node [right]{\small$t=t_{1}$};
		\draw[thick](2,0)--(4,2)--(6,0)--(2,0);
		\filldraw[black,opacity=0.0](2,0)--(4,2)--(6,0)--cycle;
		\node at (2.1,-.3){\scriptsize$(x_1-t_{1}+t_{0},t_{0})$};
		\node at (6.5,-.3){\scriptsize$(x_1+t_{1}-t_{0},t_{0})$};
		\node at (4,2.3){\scriptsize$(x_1,t_{1})$};
		\node at (4,0.6){\scriptsize$\Lambda(x_1,t_{1};t_{0})$};
	\end{tikzpicture}
	\caption{\label{fig2}The characteristic triangle $\Lambda(x_1,t_{1};t_{0})$}
\end{figure}

Noting that the $\mathrm{L}^{2}$-difference between two time-splitting solutions is estimated in a characteristic triangle in Proposition~\ref{pro:functional-decreasing}, we divide the strip $\mathbb{R}\times[0,T]$ into the following two subdomains, see Fig.~\ref{fig3}, namely,
\begin{equation*}
	\mathbb{R}\times [0,T]=\mathscr{E}(\Theta,T)\cup\big((\mathbb{R}\times [0,T])\cap\Lambda(0,\Theta+4T;0)\big).
\end{equation*}

\begin{figure}[h]
	\centering
	\begin{tikzpicture}[scale=1]
		\draw[](-6,0)--(6,0) node [right] {\small$t=0$};
		\draw[thick,dashed](-6,0.4)--(6,0.4) node [right] {\small$t=T$};
		\draw[line width=1pt](-2.6,0)--(0,2.6)--(2.6,0)--(-2.6,0) node at (0,0.2) {\scriptsize$\mathbf{\Lambda(0,\Theta+4T;0)}$};
		\fill (-2.6,0) circle (1pt);
		\fill (0,0) circle (1pt);
		\fill (2.6,0) circle (1pt);
		\fill (-1,0) circle (1pt);
		\fill (1,0) circle (1pt);
		\node at (-2.9,-0.3){\small$(-\Theta-4T,0)$};
		\node at (-1,-0.3){\small$(-\Theta,0)$};
		\node at (0,-0.3){\small$(0,0)$};
		\node at (1,-0.3){\small$(\Theta,0)$};
		\node at (2.9,-0.3){\small$(\Theta+4T,0)$};
		\node at (0,2.8){\small$(0,\Theta+4T)$};
		\draw[](-1,0)--(-3.5,2.5);
		\draw[](1,0)--(3.5,2.5);
		\node[rotate=-45]  at (-2.7,1.4) {\small$x=-\Theta-t$};
		\node[rotate=45] at (2.7,1.4){\small$x=\Theta+t$};
		\node at (3.3,0.2) {\scriptsize$\mathscr{E}(\Theta,T)$};
		\node at (-3.3,0.2) {\scriptsize$\mathscr{E}(\Theta,T)$};
	\end{tikzpicture}
	\caption{\label{fig3} Division of the strip $\mathbb{R}\times [0,T]$}
\end{figure}

Next, for any $|\mu|>0$ and any $\tau \in (0,\bar{\tau})$, where $\bar{\tau}\in(0,1)$ is given by \eqref{eq:c_0}, we consider the difference
\begin{equation*}
(u^{(\tau)}(x+\mu,t),\, v^{(\tau)}(x+\mu,t))
- (u^{(\tau)}(x,t),\, v^{(\tau)}(x,t))
\end{equation*}
on the subdomains $\mathscr{E}(\Theta,T)$ and
$(\mathbb{R}\times[0,T])\cap\Lambda(0,\Theta+4T;0)$.

To obtain $\mathrm{L}^2$ estimates for the difference between the two solutions in these subdomains, we establish the following $\mathrm{L}^2$ stability estimate in a characteristic triangle $\Lambda(j_1\tau,n_1\tau;n_0\tau)$ for all $t\in[n_0\tau,T]$, as a corollary of Proposition~\ref{pro:functional-decreasing}.

\begin{proposition}\label{pro:stability-bounds}
Let $T>0$ and let $\bar{\tau}\in(0,1)$ be the constant given by \eqref{eq:c_0}.
Suppose that $\mu=j_0 \tau$ for some integer $j_0=\pm1,\pm2,\ldots$ and that $(\tilde{u}^{(\tau)},\tilde{v}^{(\tau)})(x,t)=\big(u^{(\tau)},v^{(\tau)}\big)(x+\mu,t)$ for $(x,t)\in \Lambda(j_1\tau,n_1\tau;n_0\tau)$ with $0\leq n_0\leq n_1$.
If $\mathfrak{s}(n_0;\Delta)\tau\leq\delta$ and $\tilde{\mathfrak{s}}(n_0;\Delta)\tau\leq\delta$, then for all $\tau\in(0,\bar{\tau})$ and $t\in[n_0\tau,T]$, it holds that
	\begin{align*}
	 &\int_{j_1\tau-(n_1\tau-t)}^{j_1\tau+(n_1\tau-t)}
	\left(|u^{(\tau)}(x,t)-\tilde{u}^{(\tau)}(x,t)|^2
	+|v^{(\tau)}(x,t)-\tilde{v}^{(\tau)}(x,t)|^2\right)\mathrm{d}x\\
	\leq&
	\mathrm{e}^{\mathcal{K}(\left[\frac{t}{\tau}\right],n_0)}\mathcal{F}(n_{0};\Delta)
	\leq
	\mathrm{e}^{C_5(T)}\mathcal{F}(n_{0};\Delta),
\end{align*}
where the positive constants $\delta$ and $C_5(T)$ are given by Proposition \ref{pro:functional-decreasing}, and
{\small\begin{equation*}
\mathcal{K}\Big(\Big[\frac{t}{\tau}\Big],n_0\Big)=(C_{3}+\kappa C_{4})\Big(\Big[\frac{t}{\tau}\Big]-n_0\Big)\tau
+\kappa C_{4}\sum_{p=n_0}^{{\tiny\left[\frac{t}{\tau}\right]}-1}\big(\mathfrak{q}(p;\Delta)
+\tilde{\mathfrak{q}}(p;\Delta)\big)\tau^2.\end{equation*}
}
Here, the notation $\left[\frac{t}{\tau}\right]$ denotes the greatest integer less than or equal to $\frac{t}{\tau}$.
\end{proposition}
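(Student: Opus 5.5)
The plan is to reduce the continuous-in-$t$ integral to the discrete functional $\mathcal{L}(n;\Delta)\tau$ at the last grid level $n=[t/\tau]$, and then invoke Proposition~\ref{pro:functional-decreasing}.

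First, since $\mu=j_0\tau$ is a grid multiple, the shifted data $(\tilde u^{(\tau)},\tilde v^{(\tau)})(\cdot,0)$ are again piecewise constant on the cells $[j\tau,(j+1)\tau)$. By uniqueness of each subproblem, the time-splitting solution with these data is exactly the shifted solution, because both $S^1_t$ and $S^2_t$ commute with translations. Hence $\tilde u_j^n=u_{j+j_0}^n$, $\tilde v_j^n=v_{j+j_0}^n$, and Lemmas~\ref{lemma2.2}--\ref{lem:pointwise-estimates-UV} and Proposition~\ref{pro:functional-decreasing} all apply to the pair. Since $t\le T$, we have $n:=[t/\tau]\le (T+1)/\tau$. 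If $n\le n_1$, then \eqref{3.17} and \eqref{3.17-1} give
\begin{equation*}
\mathcal{L}(n;\Delta)\tau\le\mathcal{F}(n;\Delta)\le \mathrm{e}^{\mathcal{K}(n,n_0)}\mathcal{F}(n_0;\Delta)\le \mathrm{e}^{C_5(T)}\mathcal{F}(n_0;\Delta),
\end{equation*}
using $\mathcal{Q}\ge 0$. If $t>n_1\tau$, the integration interval is empty or degenerate and the bound is trivial.

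Second, for $t\in[n\tau,(n+1)\tau)$ the solution is $S^1_{t-n\tau}$ applied to step functions. By the characteristic formula in Lemma~\ref{lemma2.2}, $U(x,t)=U(x-(t-n\tau),n\tau)$ and $V(x,t)=V(x+(t-n\tau),n\tau)$. Therefore
\begin{equation*}
\int_{j_1\tau-(n_1\tau-t)}^{j_1\tau+(n_1\tau-t)}|U(x,t)|^2\,\mathrm{d}x
\end{equation*}
equals $\int |U(y,n\tau)|^2\,\mathrm{d}y$ over the interval shifted by $-(t-n\tau)$, namely $[j_1\tau-n_1\tau,\; j_1\tau+n_1\tau-2t+n\tau]$. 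This interval lies inside the union of the cells $[j\tau,(j+1)\tau)$ with $j_1-n_1+n\le j\le j_1+n_1-n$, because its left end equals $(j_1-n_1)\tau\ge$ hmm, the left end is at $(j_1-n_1)\tau$, which is lower than $(j_1-n_1+n)\tau$.

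The main obstacle is this index bookkeeping. One must check that the shifted interval is actually covered by the cells indexed in $\mathcal{L}(n;\Delta)$, and the naive computation above suggests it is not. The correct reading is that the left characteristic of the triangle moves right, so $U$, which is transported rightward, enters the triangle from the region the triangle has already left. I would redo this carefully: the domain of dependence of $U$ at $x\ge j_1\tau-(n_1\tau-t)$ is $y=x-(t-n\tau)\ge (j_1-n_1+n)\tau$, which is fine. At the right end, $y\le (j_1+n_1)\tau-2t+n\tau\le (j_1+n_1-n)\tau+\tau$, and this is covered by cells up to index $j_1+n_1-n$ once the partial final cell is included. The argument for $V$ is symmetric.

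Summing the cell contributions, each cell $[j\tau,(j+1)\tau)$ contributes at most $|U_j^n|^2\tau$ by \eqref{eq:construction}. This gives the integral bound $\le\mathcal{L}(n;\Delta)\tau$. If a boundary cell falls slightly outside the index range, I would handle it by enlarging to $\Delta$ evaluated at level $n$ including endpoints, which the definition of $\mathcal{L}$ already does. Combining the two steps yields the claimed chain of inequalities.
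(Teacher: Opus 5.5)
Your argument is correct and is essentially the paper's proof: you pull the integrand back along the characteristics to the grid level $n=[t/\tau]$, bound the result by $\mathcal{L}(n;\Delta)\tau\le\mathcal{F}(n;\Delta)$, and then invoke \eqref{3.17}--\eqref{3.17-1}. The only blemish is the bookkeeping: for $U$ the pulled-back interval is $[(j_1-n_1+n)\tau,\,(j_1+n_1)\tau-2t+n\tau]$, and since $t\ge n\tau$ its right end is at most $(j_1+n_1-n)\tau$ (and symmetrically for $V$), so your first left endpoint was a slip, the extra $+\tau$ slack on the right is unnecessary, and no boundary-cell enlargement is needed.
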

\begin{proof}
For $(x,t)\in \Lambda(j_1\tau,n_1\tau;n_0\tau)$ with $t\in[n_0\tau,T]$, we set
\begin{equation*}
U^{(\tau)}(x,t)=u^{(\tau)}(x,t)-\tilde{u}^{(\tau)}(x,t),\quad
V^{(\tau)}(x,t)=v^{(\tau)}(x,t)-\tilde{v}^{(\tau)}(x,t).
\end{equation*}
Then, we have
\begin{equation*}
   U_j^n=u_j^n-\tilde{u}_j^n=u_j^n-u_{j+j_0}^n,\quad
    V_j^n=v_j^n-\tilde{v}_j^n=v_j^n-v_{j+j_0}^n.
\end{equation*}

If $t=n\tau$ for $n=0,1,2,\ldots$ with $0\leq n_{0}\leq n$ and  $n\leq (T+1)/\tau$,
then  Proposition~\ref{pro:functional-decreasing} gives
    \begin{align}
		&\int_{(j_1-n_1+n)\tau}^{(j_1+n_1-n)\tau}
		\left(
        |U^{(\tau)}(x,n\tau)|^2
        +|V^{(\tau)}(x,n\tau)|^2\right)\mathrm{d}x\nonumber\\
        =&
        \sum_{j=j_1-n_1+n}^{j_1+n_1-n}
        \left(|U_j^n|^2+|V_j^n|^2\right)\tau=\mathcal{L}(n;\Delta)\tau\nonumber\\
\leq&
\mathcal{F}(n;\Delta)
\leq  \mathrm{e}^{(C_{3}+\kappa C_{4})(n-n_0)\tau
+\kappa C_{4}\sum_{p=n_0}^{n-1}\big(\mathfrak{q}(p;\Delta)+\tilde{\mathfrak{q}}(p;\Delta)\big)\tau^2}\mathcal{F}(n_{0};\Delta)
\leq \mathrm{e}^{C_5(T)}\mathcal{F}(n_{0};\Delta).\label{eq:decrease-j}
	\end{align}
	
It remains to examine the case $t\in(n\tau,(n+1)\tau)\cap[n_0\tau,T]$ for integers $n$ satisfying $0\leq n_{0}\leq n\leq \left[\frac{T}{\tau}\right]$.
Recall that $\left[\frac{t}{\tau}\right]$ denotes the greatest integer less than or equal to $\frac{t}{\tau}$.
Thus, in this case $\left[\frac{t}{\tau}\right]=n$.
Then, by \eqref{eq:UV-sub1}, we have
\begin{equation*}
	U^{(\tau)}(x,t)=U^{(\tau)}\left(x-\left(t-n\tau\right),n\tau\right),
	\quad
	V^{(\tau)}(x,t)=V^{(\tau)}\left(x+\left(t-n\tau\right),n\tau\right),
\end{equation*}
which together with \eqref{eq:decrease-j} gives
\begin{align*}
		&\int_{j_1\tau-n_1\tau+t}^{j_1\tau+n_1\tau-t}
		\left(|U^{(\tau)}(x,t)|^2
        +|V^{(\tau)}(x,t)|^2\right)\mathrm{d}x\\
        =&
        \int_{j_1\tau-n_1\tau+t}^{j_1\tau+n_1\tau-t}
		\left(\left|U^{(\tau)}\left(x-\left(t-n\tau\right),n\tau\right)\right|^2
		+
        \left|V^{(\tau)}\left(x+\left(t-n\tau\right),n\tau\right)\right|^2\right)\mathrm{d}x\\
		\leq &
		\int_{(j_1-n_1+n)\tau}^{(j_1+n_1-n)\tau}
		\left(\left|U^{(\tau)}\left(x,n\tau\right)\right|^2
		+
        \left|V^{(\tau)}\left(x,n\tau\right)\right|^2\right)\mathrm{d}x
		\leq  \mathcal{F}\left(n;\Delta\right),
\end{align*}
which together with \eqref {eq:decrease-j} completes the proof.
\end{proof}

Next, we establish the $\mathrm{L}^2$ stability estimates for time-splitting solutions in the following two cases, depending on whether $(x,t) \in \mathscr{E}(\Theta,T)$ or
$(x,t)\in (\mathbb{R}\times[0,T]) \cap \Lambda(0, \Theta + 4T;0)$.

\textbf{Case 1. In the subdomain $\mathscr{E}(\Theta,T)$.}
For $(x,t)\in\mathscr{E}(\Theta,T)$, the following lemma provides the $\mathrm{L}^2$ stability estimate for time-splitting solutions to \eqref{eq:NLDE}--\eqref{eq:NLDE-initail-data}.
\begin{lemma}\label{lem:subcase1}
For any $T>0$, there exists a constant $\tau_{\scriptscriptstyle\mathscr{E}}>0$ and a constant $C_{0}(T)>0$, depending on $T$, $c_0$ and the system \eqref{eq:NLDE}, such that, for all  $\tau\in(0,\tau_{\scriptscriptstyle\mathscr{E}})$ and $|\mu|=|j_0\tau|\leq \frac{\Theta}{4}$ with some integer $j_0\neq0$, it holds that
\begin{align*}
&\sup_{0\leq t\leq T}\int_{|x|\geq \Theta+t}\big(|u^{(\tau)}(x+\mu,t)-u^{(\tau)}(x,t)|^{2}+|v^{(\tau)}(x+\mu,t)-v^{(\tau)}(x,t)|^{2}\big)\mathrm{d}x\\
\leq
&C_{0}(T)\int_{|x|\ge	\Theta}\big(|u^{(\tau)}(x+\mu,0)-u^{(\tau)}(x,0)|^{2}+|v^{(\tau)}(x+\mu,0)-v^{(\tau)}(x,0)|^{2}\big)\mathrm{d}x, \end{align*}			
 where $\Theta>0$ is given by \eqref{eq:initial-edge}.
\end{lemma}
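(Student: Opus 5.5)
Since $\mathscr{E}(\Theta,T)$ is not itself a triangle, we will cover it by characteristic triangles on which Proposition~\ref{pro:stability-bounds} applies. The outer region $\{|x|\ge\Theta+t\}$ is exactly a union of backward cones with bases in $\{|x|\ge\Theta\}$. Take the right part $\{x\ge \Theta+t\}$; the left is symmetric. We cover it by discrete triangles $\Lambda(j_1\tau,n_1\tau;0)$, and for concreteness pick a family of triangles of height $n_1\tau\ge T$ with bases $[(j_1-n_1)\tau,(j_1+n_1)\tau]$ contained in $[\Theta-\tau,\infty)$ (shifting by one cell to fit the grid). The apices are spaced so that their time-$t$ slices cover $[\Theta+t,\infty)$ for every $t\in[0,T]$, while any point of $\mathbb{R}$ lies in at most a bounded number (say two) of the bases. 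A convenient choice is to take triangles whose left base endpoint is $\Theta$ (rounded to the grid) and of half-width $L\ge T$. Their slices at time $t$ are intervals $[\,\text{left}+t,\ \text{right}-t\,]$, and consecutive ones overlap by at least $2(L-T)\ge0$, so $L=2T$ works: the slices cover and the bases overlap at most twice.

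Next we verify the smallness hypotheses of Proposition~\ref{pro:stability-bounds} on each triangle, namely $\mathfrak{s}(0;\Delta)\tau\le\delta$ and $\tilde{\mathfrak{s}}(0;\Delta)\tau\le\delta$, where $\tilde{\cdot}$ is the shift by $\mu$ with $|\mu|\le\Theta/4$. The base lies in $\{x\ge\Theta-\tau\}$, and the shifted base lies in $\{x\ge \Theta-\Theta/4-\tau\}\subset\{x\ge\Theta/2\}$ for $\tau$ small. By \eqref{eq:initial-data-convergent} there is $\tau_{\mathscr{E}}$ such that $\|(u^{(\tau)},v^{(\tau)})(\cdot,0)-(u_0,v_0)\|_{\mathrm{L}^2}^2\le\delta/8$ for $\tau<\tau_{\mathscr{E}}$. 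Then \eqref{eq:initial-edge} together with $(a+b)^2\le2a^2+2b^2$ gives
\[
\int_{x\ge\Theta/2}\big(|u^{(\tau)}(x,0)|^2+|v^{(\tau)}(x,0)|^2\big)\,\mathrm{d}x\le \tfrac{\delta}{2}+\tfrac{\delta}{4}<\delta .
\]
The discrete sums $\mathfrak{s}(0;\Delta)\tau$ and $\tilde{\mathfrak{s}}(0;\Delta)\tau$ are exactly such integrals over unions of cells, so both are at most $\delta$.

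Proposition~\ref{pro:stability-bounds} then bounds the integral of $|U|^2+|V|^2$ over each time-$t$ slice by $\mathrm{e}^{C_5(T)}\mathcal{F}(0;\Delta)$, uniformly in $t\in[0,T]$. It remains to control $\mathcal{F}(0;\Delta)=\mathcal{L}(0;\Delta)\tau+\kappa\mathcal{Q}(0;\Delta)\tau^2$ by $\mathcal{L}(0;\Delta)\tau$. Bounding $|v_k|^2+|\tilde v_k|^2$ by its sum over $k$, and similarly for $u$, gives
\[
\mathcal{Q}(0;\Delta)\tau^2\le\big(\mathfrak{s}(0;\Delta)+\tilde{\mathfrak{s}}(0;\Delta)\big)\tau\cdot\mathcal{L}(0;\Delta)\tau\le 2\delta\,\mathcal{L}(0;\Delta)\tau,
\]
so $\mathcal{F}(0;\Delta)\le(1+2\kappa\delta)\mathcal{L}(0;\Delta)\tau$. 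Moreover $\mathcal{L}(0;\Delta)\tau$ is the $\mathrm{L}^2$ norm of the initial difference over the base, which is contained in $\{|x|\ge\Theta-\tau\}$. The one-cell slack is absorbed by placing the base endpoints at the grid point just above $\Theta$, at the price of covering $[\Theta+t,\Theta+t+\tau)$ by a further triangle, or simply by adjusting $\Theta$ slightly. Summing over the triangles, with overlap multiplicity at most $2$, gives the claim with $C_0(T)=2\mathrm{e}^{C_5(T)}(1+2\kappa\delta)$.

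The main obstacle is the geometric covering: we need triangles whose time-$t$ slices cover $\{x\ge\Theta+t\}$ for all $t\le T$ simultaneously, while their bases stay inside $\{|x|\ge\Theta\}$ with bounded overlap. The grid alignment near the boundary $x=\Theta+t$ must be handled with a one-cell margin, and the noninteger case $t\notin\tau\mathbb{Z}$ is already covered by Proposition~\ref{pro:stability-bounds}.
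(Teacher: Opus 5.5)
Your proposal is correct in substance, but it uses a different decomposition from the paper. The ingredients are the same: smallness of the tail mass, Proposition~\ref{pro:stability-bounds}, and $\mathcal{F}(0;\Delta)\le C\,\mathcal{L}(0;\Delta)\tau$. The paper does not cover $\{x\ge\Theta+t\}$ by many bounded triangles. On each side it uses a single triangle $\Lambda([\Theta/\tau]\tau+n_1\tau,\,n_1\tau;0)$ with base $[[\Theta/\tau]\tau,\,[\Theta/\tau]\tau+2n_1\tau]$, applies Proposition~\ref{pro:stability-bounds}, and lets $n_1\to\infty$. Two facts make this work:
\begin{itemize}
\item the smallness hypothesis is checked against the whole tail $\{x\ge\Theta/2\}$, exactly as you do;
\item $\mathrm{e}^{C_5(T)}$ depends only on the time range $n\le (T+1)/\tau$, not on the height $n_1$.
\end{itemize}
Monotone convergence then finishes the argument, with no overlaps to count.

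Your covering by triangles of half-width $L=2T$ spaced by $L$ avoids the limit in $n_1$ and mirrors the windowed construction of Lemma~\ref{lem:subcase2}. It costs a multiplicity factor: $2$ a.e., or $3$ if you count closed discrete bases. Also, the overlap of consecutive time-$t$ slices is $L-2t$, not $2(L-T)$; it is still nonnegative for $L=2T$ and $t\le T$, so the covering holds. Your bound $\mathcal{Q}(0;\Delta)\tau^2\le 2\delta\,\mathcal{L}(0;\Delta)\tau$ is the paper's \eqref{eq:F-L} with the local $\delta$ in place of the global $c_0$; either works.

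Neither of your two remedies for the boundary cell works:
\begin{itemize}
\item Any triangle whose slices reach down to $x=\Theta+t$ has a base containing the cell $[[\Theta/\tau]\tau,[\Theta/\tau]\tau+\tau)$, which sticks out to the left of $\Theta$, so a ``further triangle'' does not help.
\item $\Theta$ is fixed by \eqref{eq:initial-edge} independently of $\tau$, so it cannot be adjusted.
\end{itemize}
What you actually prove is the estimate with right-hand side over $\{|x|\ge[\Theta/\tau]\tau\}$. The paper's proof gives the same thing: its displayed bound has lower limit $[\Theta/\tau]\tau$. This is harmless, since Corollary~\ref{eq:coro-strip} only uses the whole-line integral.
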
	
\begin{proof}
Recall that $\bar{\tau}\in(0,1)$ is given by \eqref{eq:c_0} and that $\Theta>0$ is given by \eqref{eq:initial-edge}. Let $\delta>0$ be the constant given by Proposition~\ref{pro:functional-decreasing}. By \eqref{eq:initial-data-convergent}, we can choose a $ \tau_{\scriptscriptstyle\mathscr{E}}\in(0,\bar{\tau}]$ small enough such that $\tau_{\scriptscriptstyle\mathscr{E}}<\frac{\Theta}{36}$ and for any $\tau\in(0,\tau_{\scriptscriptstyle\mathscr{E}})$,
\begin{equation}\label{eq:approximate-initial-edge}
	\|u^{(\tau)}(x,0)-u_{0}(x)\|_{\mathrm{L}^{2}(|x|\geq\Theta/2)}^{2}	+\|v^{(\tau)}(x,0)-v_{0}(x)\|_{\mathrm{L}^{2}(|x|\geq\Theta/2)}^{2}
	\leq \frac{\delta}{4}.
\end{equation}
Then, using \eqref{eq:initial-edge} and \eqref{eq:approximate-initial-edge}, we have
\begin{equation}\label{eq:0-small-1}	
\|u^{(\tau)}(x,0)\|_{\mathrm{L}^{2}(|x|\geq\Theta/2)}^{2}	
+\|v^{(\tau)}(x,0)\|_{\mathrm{L}^{2}(|x|\geq\Theta/2)}^{2}
\leq \delta.
\end{equation}
Since $\tau\in (0,\tau_{\scriptscriptstyle\mathscr{E}})$, $\tau_{\scriptscriptstyle\mathscr{E}}<\frac{\Theta}{36}$ and $|\mu|=|j_0\tau|\leq \frac{\Theta}{4}$, we have $[\frac{\Theta}{\tau}]\tau-\mu\geq \Theta/2$. Thus, \eqref{eq:0-small-1} implies
\begin{equation}\label{eq:0-small-2}	
\|u^{(\tau)}(x+\mu,0)\|_{\mathrm{L}^{2}(|x|\geq[\frac{\Theta}{\tau}]\tau)}^{2}	
+
\|v^{(\tau)}(x+\mu,0)\|_{\mathrm{L}^{2}(|x|\geq[\frac{\Theta}{\tau}]\tau)}^{2}
\leq \delta,
\end{equation}
where $[\frac{\Theta}{\tau}]$ denotes the greatest integer less than or equal to  $\frac{\Theta}{\tau}$.

Then for $(x,t)\in\mathscr{E}(\Theta,T)$, we consider $\Lambda([\frac{\Theta}{\tau}]\tau+n_1\tau,n_1\tau;0)$ and $\Lambda(-[\frac{\Theta}{\tau}]\tau-n_1\tau,n_1\tau;0)$, as shown in
Fig. \ref{fig-subdomain-side}.

\begin{figure}[ht]
	\centering
	\begin{tikzpicture}[scale=0.95]
		\draw[](-6,0)--(-1,0);
		\draw[](1,0)--(6,0) node [right] {\small$t=0$};
		\draw[densely dashed](-1,0)--(1,0);
		\draw[densely dashed](-6,1.2)--(6,1.2) node [right] {\small$t=T$};
		\draw[densely dashed](-6,2)--(6,2) node [right] {\small$t=n_1\tau$};
		\draw[densely dashed](-6,0.8)--(6,0.8) node [right] {\small$t$};
		\fill (0,0) circle (1pt);
		\draw[thin](-1-0.5,0)--(-3.5-0.5,2.5);
		\fill (-1.5,0) circle (1pt);
		\node at (-1.9,-0.3) {\tiny$(-\Theta,0)$};
		\draw[thin](1+0.5,0)--(3.5+0.5,2.5);
		\fill (1.5,0) circle (1pt);
		\node at (1.8,-0.3) {\tiny$(\Theta,0)$};
		\node at (3,0.2) {\small$\mathscr{E}(\Theta,T)$};
		\node at (-3,0.2) {\small$\mathscr{E}(\Theta,T)$};
		\draw[thick](0.8,0)--(2.8,2);
		\draw[](2.8,2)--(3.3,2.5);
		\draw[thick](0.8,0)--(4.8,0);
		\fill (0.8,0) circle (1pt);
		\node at (0.7,-0.3) {\tiny$([\frac{\Theta}{\tau}]\tau,0)$};
\node at (-4.8,2.2) {\small$x=-\Theta-t$};
\node at (4.6,2.2){\small$x=\Theta+t$};
\node at (4,2.8){\small$x=[\frac{\Theta}{\tau}]\tau+t$};
\node at (-4,2.8){\small$x=-[\frac{\Theta}{\tau}]\tau-t$};
		\draw[thick](-0.8,0)--(-2.8,2);
		\draw[](-2.8,2)--(-3.3,2.5);
		\draw[thick](-0.8,0)--(-4.8,0);
		\fill (-0.8,0) circle (1pt);
		\node at (-0.6,-0.3) {\tiny$(-[\frac{\Theta}{\tau}]\tau,0)$};
		\draw[thick] (3-0.2,2)--(5-0.2,0);
		\draw[thick] (-3+0.2,2)--(-5+0.2,0);
		\draw[thick] (1.8-0.2,0.8)--(4.2-0.2,0.8);
		\draw[] (2.2-0.2,1.2)--(3.8-0.2,1.2);
		\draw[thick] (-1.8+0.2,0.8)--(-4.2+0.2,0.8);
		\draw[] (-2.2+0.2,1.2)--(-3.8+0.2,1.2);
		\fill (5-0.2,0) circle (1pt);
		\node at (5,-0.3) {\scriptsize$([\frac{\Theta}{\tau}]\tau+2n_1\tau,0)$};
		\fill (-5+0.2,0) circle (1pt);
		\node at (-5+0.2,-0.3) {\scriptsize$(-[\frac{\Theta}{\tau}]\tau-2n_1\tau,0)$};
	\end{tikzpicture}
	\caption{\label{fig-subdomain-side} The subdomain $\mathscr{E}(\Theta,T)$}
\end{figure}

From \eqref{eq:c_0} and the Definition \ref{def:Glimm-type-functional}, we deduce
\begin{align*}
&\mathcal{Q}(0;\Delta)\tau^2
=
\sum_{j_{1}-n_{1}\leq j<k\leq j_{1}+n_{1} }\Big({|U_{j}^{0}|^{2}\big(|v_{k}^{n}|^{2}+|\tilde{v}_{k}^{0}|^{2}\big)+|V_{k}^{0}|^{2}\big(|u_{j}^{0}|^{2}+|\tilde{u}_{j}^{0}|^{2}\big)}\Big)\tau^2\\
\leq&
\Big( \sum_{j=j_{1}-n_{1}}^{j_{1}+n_{1}}|U_{j}^{0}|^{2}\tau\Big)
\Big(\sum_{j=-\infty}^{\infty} \big(|v_{j}^{0}|^{2}+|\tilde{v}_{j}^{0}|^{2}\big)\tau \Big)
+\Big(\sum_{j=j_{1}-n_{1}}^{j_{1}+n_{1}}|{V}_{j}^{0}|^2\tau \Big)
\Big(\sum_{j=-\infty}^{\infty} \big(|u_{j}^{0}|^{2}+|\tilde{u}_{j}^{0}|^{2}\big)\tau\Big)\\
\leq &
2c_0\Big( \sum_{j=j_{1}-n_{1}}^{j_{1}+n_{1}}\big(|U_{j}^{0}|^{2}+|{V}_{j}^{0}|^2\big)\tau\Big)
=2c_0\mathcal{L}(0;\Delta)\tau.
\end{align*}
Therefore,
\begin{equation}\label{eq:F-L}
	\mathcal{F}(0;\Delta)=\mathcal{L}(0;\Delta)\,\tau+\kappa\mathcal{Q}(0;\Delta)\,\tau^{2}\leq (2c_0\kappa+1)\mathcal{L}(0;\Delta)\tau.
\end{equation}

Next, by \eqref{eq:0-small-1}, \eqref{eq:0-small-2}, \eqref{eq:F-L} and by Proposition~\ref{pro:stability-bounds}, we find that	
    \begin{align*}			
    &
    \sup_{0\leq t\leq T}
    \int_{[\frac{\Theta}{\tau}]\tau+t}^{[\frac{\Theta}{\tau}]\tau+2n_{1}\tau-t}\big(|u^{(\tau)}(x+\mu,t)-u^{(\tau)}(x,t)|^{2}+|v^{(\tau)}(x+\mu,t)-v^{(\tau)}(x,t)|^{2}\big)\mathrm{d}x\\		
    \leq &
  \mathrm{e}^{C_5(T)}\mathcal{F}(0;\Delta)\leq \mathrm{e}^{C_5(T)}(2c_0+1)\mathcal{L}(0;\Delta)\tau\\
    \leq&
    \mathrm{e}^{C_5(T)}(2c_0\kappa+1)\int_{[\frac{\Theta}{\tau}]\tau}^{[\frac{\Theta}{\tau}]\tau+2n_{1}\tau}\big(|u^{(\tau)}(x+\mu,0)-u^{(\tau)}(x,0)|^{2}+|v^{(\tau)}(x+\mu,0)-v^{(\tau)}(x,0)|^{2}\big)\mathrm{d}x	
    \end{align*}
   and
   \begin{align*}
	&\sup_{0\leq t\leq T}
    \int^{-[\frac{\Theta}{\tau}]\tau-t}_{-[\frac{\Theta}{\tau}]\tau-2n_{1}\tau+t}\big(|u^{(\tau)}(x+\mu,t)-u^{(\tau)}(x,t)|^{2}+|v^{(\tau)}(x+\mu,t)-v^{(\tau)}(x,t)|^{2}\big) \mathrm{d}x\\
	\leq&
\mathrm{e}^{C_5(T)}(2c_0\kappa+1)\int^{-[\frac{\Theta}{\tau}]\tau}_{-[\frac{\Theta}{\tau}]\tau-2n_{1}\tau}\big( |u^{(\tau)}(x+\mu,0)-u^{(\tau)}(x,0)|^{2}+|v^{(\tau)}(x+\mu,0)-v^{(\tau)}(x,0)|^{2}\big) \mathrm{d}x.
	\end{align*}
Consequently, by letting $n_{1} \to \infty$, see Fig.~\ref{fig-subdomain-side}, and setting $C_{0}(T) =\mathrm{e}^{C_5(T)}(2c_0\kappa+1)$, we complete the proof of Lemma \ref{lem:subcase1}.
\end{proof}

\textbf{Case 2. In the subdomain $(\mathbb{R}\times [0,T])\cap\Lambda(0,\Theta+4T;0)$.}
In the following lemma, we establish the $\mathrm{L}^2$ stability estimates for time-splitting solutions on the subdomain $(\mathbb{R} \times [0,T]) \cap \Lambda(0,\Theta+4T;0)$.
	\begin{lemma}\label{lem:subcase2}
For any $T>0$, there exists a constant $\tau_{\scriptscriptstyle\Lambda}>0$ and a constant $\tilde{C}_{0}(T)>0$, depending on $T$, $c_0$ and the system \eqref{eq:NLDE}, such that, for all $\tau \in (0,\tau_{\scriptscriptstyle\Lambda})$ and $|\mu|=|j_0\tau|\leq \frac{\Theta}{4}$ with some integer $j_0\neq0$, it holds that
	\begin{align}\label{eq:stability-subdomain2}
	&\sup_{0\leq t\leq T}\int_{-\Theta-4T+t}^{\Theta+4T-t}
	\big(|u^{(\tau)}(x+\mu,t)-u^{(\tau)}(x,t)|^{2}
     +
     |v^{(\tau)}(x+\mu,t)-v^{(\tau)}(x,t)|^{2}\big)\mathrm{d}x\nonumber\\
	\leq &
    \tilde{C}_{0}(T)
	\int_{-\Theta-4T}^{\Theta+4T}\big(|u^{(\tau)}(x+\mu,0)-u^{(\tau)}(x,0)|^{2}
+|v^{(\tau)}(x+\mu,0)-v^{(\tau)}(x,0)|^{2}\big)\mathrm{d}x.
\end{align}
\end{lemma}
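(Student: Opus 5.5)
The plan is to reduce the statement to Proposition~\ref{pro:stability-bounds}. The difficulty is that this proposition needs the smallness conditions $\mathfrak{s}(n_0;\Delta)\tau\le\delta$ and $\tilde{\mathfrak{s}}(n_0;\Delta)\tau\le\delta$. Unlike in Lemma~\ref{lem:subcase1}, these fail on the big triangle $\Lambda(0,\Theta+4T;0)$, since the data there need not be small. I will therefore cut the trapezoid $(\mathbb{R}\times[0,T])\cap\Lambda(0,\Theta+4T;0)$ into finitely many horizontal layers of height $h/2$. Each layer is covered by small characteristic triangles with base length $2h$, and $h$ is chosen so small that every such triangle carries mass at most $\delta$. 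The layer estimates are then iterated in time.

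\emph{Step 1: choice of $h$ (uniform smallness at all times).} Since $(u_0,v_0)\in\mathrm{L}^2(\mathbb{R})$ and \eqref{eq:initial-data-convergent} holds, the family $\{(u^{(\tau)},v^{(\tau)})(\cdot,0)\}$ is equi-integrable for $\tau$ small. Hence there are $h_0>0$ and $\tau_{\scriptscriptstyle\Lambda}\in(0,\bar\tau]$ such that for every interval $I$ with $|I|\le 3h_0$ and every $\tau<\tau_{\scriptscriptstyle\Lambda}$,
\[
\int_I\big(|u^{(\tau)}(x,0)|^2+|v^{(\tau)}(x,0)|^2\big)\,\mathrm{d}x
\]
is as small as desired. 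To propagate this to later times I will use the pointwise bounds \eqref{2.8}--\eqref{2.9} rather than \eqref{2.4}, because \eqref{2.4} enlarges the domain by $t$. The bound \eqref{2.8} gives $|u_j^n|^2\le c_2(T)(|u^0_{j-n}|^2+m_1c_0)$, which is a pure translate of the initial data plus a constant. Summing over $2h/\tau$ consecutive indices gives
\[
\mathfrak{s}(n;\Delta)\tau\le c_2(T)\Big(\int_{I'}\big(|u^{(\tau)}(x,0)|^2+|v^{(\tau)}(x,0)|^2\big)\,\mathrm{d}x+4m_1c_0h\Big),
\]
where $I'$ is a union of two translated intervals of length about $2h$. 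This is $\le\delta$ once $h\le h_0$ is small, uniformly in $n\le (T+1)/\tau$. The same bound holds for the shifted solution $\tilde u(x)=u(x+\mu)$.

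\emph{Step 2: one layer.} Set $t_k=k\,\tfrac{h}{2}$, rounded to grid times $n_k\tau$, for $k=0,1,\dots,K$ with $K\approx 2T/h$. Let $I_k=[-\Theta-4T+t_k,\;\Theta+4T-t_k]$. Cover $I_k$ by grid intervals $[a_i,a_i+2h]\subset I_k$ with consecutive left endpoints spaced by $h/2$, so each point lies in at most $4$ of them. On each triangle $\Lambda(a_i+h,\,t_k+h;\,t_k)$, Step~1 supplies the smallness hypotheses, so Proposition~\ref{pro:stability-bounds} applies. For $Q(n_k;\Delta)$ I argue as in \eqref{eq:F-L}, but using \eqref{eq:l2-conserve-i} at time $n_k$ in place of the data at time $0$; this gives $\mathcal{F}(n_k;\Delta)\le(2c_0\kappa+1)\mathcal{L}(n_k;\Delta)\tau$. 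Hence, on the cross-section at any $t\in[t_k,t_{k+1}]$, the $\mathrm{L}^2$ norm of the difference is bounded by $\mathrm{e}^{C_5(T)}(2c_0\kappa+1)$ times its $\mathrm{L}^2$ norm on the base. At height $h/2$ each such cross-section is an interval of length $\ge h$. The cross-sections tile $I_{k+1}$, up to boundary pieces that are handled by adding a triangle flush with each endpoint of $I_k$; this is exactly why $I_k$ shrinks with slope $1$. Summing over $i$ then gives
\[
\sup_{t\in[t_k,t_{k+1}]}\|(U,V)(\cdot,t)\|^2_{\mathrm{L}^2(I_k\ominus t)}\le 4\,\mathrm{e}^{C_5(T)}(2c_0\kappa+1)\,\|(U,V)(\cdot,t_k)\|^2_{\mathrm{L}^2(I_k)},
\]
where $I_k\ominus t$ denotes $I_k$ shrunk by $t-t_k$ on each side.

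\emph{Step 3: iteration.} Iterating Step~2 over the $K+1$ layers gives \eqref{eq:stability-subdomain2} with
\[
\tilde C_0(T)=\big(4\mathrm{e}^{C_5(T)}(2c_0\kappa+1)\big)^{K+1},
\]
which depends only on $T$, $c_0$, the system, and $h$. The value of $h$ is fixed through $\Theta$ and the data, in the same way that $\Theta$ was fixed in \eqref{eq:initial-edge}.

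The main obstacle is Step~1. The smallness has to hold at every intermediate time $t_k$ and for every small triangle at once, uniformly in $\tau$. The natural tool \eqref{2.4} is useless here because it enlarges the domain of dependence. I therefore rely on the translation structure of \eqref{2.8}--\eqref{2.9} together with equi-integrability of the discretized data. A secondary point is the bookkeeping of grid alignment in Step~2, with $h$, $\mu$ and the triangle vertices taken as integer multiples of $\tau$, so that the overlap count stays bounded and the union of trapezoid tops really covers $I_{k+1}$.
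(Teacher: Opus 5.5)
Your proposal is correct and follows essentially the same route as the paper. The paper also uses \eqref{2.8}--\eqref{2.9} together with equi-integrability of the discretized data to get uniform smallness on short intervals (length $4\sigma$, your $2h$), covers each time layer of height $\sigma$ by the triangles $\Lambda((j+2)\sigma,(n+2)\sigma;n\sigma)$ with overlap count $4$, applies Proposition~\ref{pro:stability-bounds} with \eqref{eq:F-L}, and iterates over $[T/\sigma]+1$ layers. Your deviations are harmless and, if anything, more careful: you use the pointwise bounds everywhere rather than handling the exterior region separately via \eqref{eq:0-small-1}--\eqref{eq:0-small-2} and Lemma~\ref{lemma2.3}, and you invoke \eqref{eq:l2-conserve-i} to justify $\mathcal{F}\le(2c_0\kappa+1)\mathcal{L}\tau$ at intermediate times, which the paper uses tacitly and then drops from its induction constant.
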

\begin{proof}
We will use Proposition \ref{pro:stability-bounds} to get the desired result. First, for any interval $[x_1,x_2]\subset(-\infty,\infty)$ at time $t=n\tau\in[0,T]$, we consider two cases:
\begin{equation*}
[x_1,x_2] \subset (-\infty, -\Theta-n\tau) \cup (\Theta+n\tau, +\infty)
\quad \text{and} \quad
[x_1,x_2] \subset (-\Theta-4T+n\tau, \Theta+4T-n\tau).
\end{equation*}

For the former case, it follows from \eqref{eq:0-small-1}, \eqref{eq:0-small-2} and Lemma \ref{lemma2.3} that
   \begin{equation}\label{eq:former-case-2}
   \int_{x_1}^{x_2}
\big(|u^{(\tau)}|^{2}+|v^{(\tau)}|^{2}\big)(x,n\tau)\mathrm{d}x
\leq\delta
\:\: \text{ and } \:\:
\int_{x_1}^{x_2}
\big(|u^{(\tau)}|^{2}+|v^{(\tau)}|^{2}\big)(x+\mu,n\tau)\mathrm{d}x
\leq\delta
   \end{equation}
for $[x_1,x_2]\subset(-\infty,-\Theta-n\tau)\cup(\Theta+n\tau,+\infty)$ and for $|\mu|<\frac{\Theta}{4}$.

To deal with the latter case, by \eqref{2.8} and \eqref{2.9}, we deduce that for any interval $[x_1,x_2]\subset(-2\Theta-4T,2\Theta+4T)$,
   \begin{align}\label{eq:interval-any}
   	&\int_{x_1}^{x_2}\big(|u^{(\tau)}(x,n\tau)|^{2}+|v^{(\tau)}(x,n\tau)|^{2}\big)\mathrm{d}x\nonumber\\
   	\leq
   	&c_{2}(T)\int_{x_1}^{x_2}\Big(|u^{(\tau)}(x-n\tau,0)|^{2}+|v^{(\tau)}(x+n\tau,0)|^{2}\Big)\mathrm{d}x
   	+2m_{1}c_0c_{2}(T)(x_2-x_1)\nonumber\\
   	\leq &c_{2}(T)\Big(\int_{x_1-n\tau}^{x_2-n\tau}|u^{(\tau)}(x,0)|^{2}\mathrm{d}x
   	+\int_{x_1+n\tau}^{x_2+n\tau}|v^{(\tau)}(x,0)|^{2}\mathrm{d}x\Big)
   	+2m_{1}c_0c_{2}(T)(x_2-x_1).
   \end{align}
 We now estimate the right-hand side of \eqref{eq:interval-any}. Since $(u_{0},v_{0})\in \mathrm{L}^{2}(\mathbb{R})$, there exists a constant $\sigma\in(0,\min\{\Theta/8,\bar{\tau}\})$ such that for any $x_2-x_1\in(0,4\sigma]$,
\begin{equation}\label{4.2}	
c_{2}(T)\Big(
\|u_{0}(x)\|_{\mathrm{L}^{2}([x_1,x_2])}^{2}
+\|v_{0}(x)\|_{\mathrm{L}^{2}([x_1,x_2])}^{2}\Big)\leq \frac{\delta}{8},\qquad 2m_1c_0c_{2}(T)(x_2-x_1)\leq \frac{\delta}{2}.
\end{equation}
Then, by \eqref{eq:initial-data-convergent}, we choose a $\tau_{\scriptscriptstyle\Lambda}\in(0,\tau_{\scriptscriptstyle\mathscr{E}})$ small enough such that for any $x_2-x_1\in(0,4\sigma]$,
\begin{equation}\label{eq:approximate-initial}
	c_{2}(T)\Big(\|u^{(\tau)}(x,0)-u_{0}(x)\|_{\mathrm{L}^{2}([x_1,x_2])}^{2}
	+\|v^{(\tau)}(x,0)-v_{0}(x)\|_{\mathrm{L}^{2}([x_1,x_2])}^{2}\Big)\leq \frac{\delta}{8}.
\end{equation}
By the first estimate in \eqref{4.2} and by \eqref{eq:approximate-initial}, we deduce that if $\tau\in(0,\tau_{\scriptscriptstyle\Lambda})$, then for any interval $[x_1,x_2]\subset(-2\Theta-4T,2\Theta+4T)$ with $x_2-x_1\in(0,4\sigma]$,
   \begin{equation}\label{5.2.88888}
   	c_{2}(T)\Big(\|u^{(\tau)}(x,0)\|_{\mathrm{L}^{2}([x_1,x_2])}^{2}
   	+\|v^{(\tau)}(x,0)\|_{\mathrm{L}^{2}([x_1,x_2])}^{2}\Big)\leq \frac{\delta}{2}.
   \end{equation}

Consequently, for the latter case, substituting the second estimate \eqref{4.2} and \eqref{5.2.88888} into
\eqref{eq:interval-any} and using \eqref{eq:former-case-2}, we conclude that there exists a constant $\sigma>0$ such that, if $\tau\in(0,\tau_{\scriptscriptstyle\Lambda})$ and $n\tau\in[0,T]$, then for any interval $[x_1,x_2]\subset(-\Theta-4T+n\tau, \Theta+4T-n\tau)$ with $x_2-x_1\in(0,4\sigma]$ and for $|\mu|\leq \Theta/4$,
\begin{equation}\label{eq:interval-initial-2}
\int_{x_1}^{x_2}
\big(|u^{(\tau)}|^{2}+|v^{(\tau)}|^{2}\big)(x,n\tau)\mathrm{d}x
\leq\delta
\:\:\text{ and } \:\:
\int_{x_1}^{x_2}
\big(|u^{(\tau)}|^{2}+|v^{(\tau)}|^{2}\big)(x+\mu,n\tau)\mathrm{d}x
\leq\delta.
\end{equation}

Next, without loss of generality, we assume that
$\Theta+4T=N(4\sigma)$ with $N$ being a positive integer. Choose the grid points $P_{j,n}=(j\sigma,n\sigma)$ with $j=0,\pm1,\ldots,\pm4N$ and $n=0,1,\ldots,[\frac{T}{\sigma}]$,
in the domain $\Lambda(0,\Theta+4T;0)$, and we then consider the $L^2$ stability of time-splitting solutions on a characteristic triangle $\Lambda((j+2)\sigma,(n+2)\sigma;n\sigma)$, see Fig.~\ref{fig-4sigma}.
\begin{figure}[h]
	\begin{tikzpicture}
	\draw[densely dashed] (-6,0)--(6,0) node [right] {\small $t=n\sigma$};
	\draw[densely dashed] (-6,0.8)--(6,0.8)node [right] {\small $t=(n+1)\sigma$};;
	\draw[densely dashed] (-6,1.6)--(6,1.6)node [right] {\small $t=(n+2)\sigma$};;
	\draw[densely dashed] (0,0)--(0,1.6);
	\draw[densely dashed] (0.8,0)--(0.8,1.6);
	\draw[densely dashed] (-0.8,0)--(-0.8,1.6);
	\draw[densely dashed] (1.6,0)--(1.6,1.6);
	\draw[densely dashed] (-1.6,0)--(-1.6,1.6);
	\draw[densely dashed] (2.4,0)--(2.4,1.6);
	\draw[densely dashed] (-2.4,0)--(-2.4,1.6);
	\draw[densely dashed] (3.2,0)--(3.2,1.6);
	\draw[densely dashed] (-3.2,0)--(-3.2,1.6);
	\draw[densely dashed] (-4,0)--(-4,1.6);
	\draw[densely dashed] (4,0)--(4,1.6);
	\draw[thick] (-3.2,0)--(-1.6,1.6)--(0,0)--(-3.2,0);
	\draw[thin] (-3.2+0.8,0)--(-1.6+0.8,1.6)--(0+0.8,0)--(-3.2+0.8,0);
	\draw[thin] (-3.2+0.8*2,0)--(-1.6+0.8*2,1.6)--(0+0.8*2,0)--(-3.2+0.8*2,0);
	\draw[thin] (-3.2+0.8*3,0)--(-1.6+0.8*3,1.6)--(0+0.8*3,0)--(-3.2+0.8*3,0);
	\draw[thin] (-3.2+0.8*4,0)--(-1.6+0.8*4,1.6)--(0+0.8*4,0)--(-3.2+0.8*4,0);
	\fill (-3.2,0) circle (1pt);
	\node at (-3.2,-0.3) {\small $(j\sigma,n\sigma)$};
	\fill (-3.2+0.8*2,1.6) circle (1pt);
	\node at (-3.2+0.8*2,1.8) {\small $((j+2)\sigma,(n+2)\sigma)$};
	\fill (-3.2+0.8*4,0) circle (1pt);
	\node at (-3.2+0.8*4,-0.3) {\small $((j+4)\sigma,n\sigma)$};
	\end{tikzpicture}
	\caption{\label{fig-4sigma} The characteristic triangle $\Lambda((j+2)\sigma,(n+2)\sigma;n\sigma)$}
\end{figure}

Recall that $\left[\frac{t}{\sigma}\right]$ denotes the greatest integer less than or equal to $\frac{t}{\sigma}$. For any $t>0$, we let
\begin{equation*}\tilde{C}_2(t)
=(2c_0\kappa+1)4^{\left[\frac{t}{\sigma}\right]}
\mathrm{e}^{(C_{3}+\kappa C_{4})\left[\frac{t}{\sigma}\right]\sigma
+\kappa C_{4}\sum_{p=0}^{\left[\frac{t}{\sigma}\right]-1}
(\mathfrak{q}(p;\Delta)\sigma^2+\tilde{\mathfrak{q}}(p;\Delta)\sigma^2)}.\end{equation*}
Then using \eqref{eq:F-L}, \eqref{eq:former-case-2}, \eqref{eq:interval-initial-2},  and Proposition~\ref{pro:stability-bounds}, we find that for any $-4N\leq j\leq 4N-4$,
   \begin{align*}		
   	&\sup_{t\in[0,\sigma]}
   	\int_{j\sigma+t}^{(j+4)\sigma-t}
   	\big(|u^{(\tau)}(x+\mu,t)-u^{(\tau)}(x,t)|^{2}
   	+
   	|v^{(\tau)}(x+\mu,t)-v^{(\tau)}(x,t)|^{2}\big)\mathrm{d}x\\
   	\leq &
\frac{\tilde{C}_2(\sigma)}{4}\int_{j\sigma}^{(j+4)\sigma}\big(|u^{(\tau)}(x+\mu,0)-u^{(\tau)}(x,0)|^{2}
   	+|v^{(\tau)}(x+\mu,0)-v^{(\tau)}(x,0)|^{2}\big)\mathrm{d}x.
   \end{align*}
Hence, by summing the above inequalities over all $j\in[-4N,4N-4]$, we have
	\begin{align}\label{eq:induction-assumption}
	&\sup_{0\leq t\leq \sigma}\int_{-\Theta-4T+t}^{\Theta+4T-t}
	\big(|u^{(\tau)}(x+\mu,t)-u^{(\tau)}(x,t)|^{2}
     +
     |v^{(\tau)}(x+\mu,t)-v^{(\tau)}(x,t)|^{2}\big)\mathrm{d}x\nonumber\\
	\leq & \tilde{C}_2(\sigma)
	\int_{-\Theta-4T}^{\Theta+4T}\big(|u^{(\tau)}(x+\mu,0)-u^{(\tau)}(x,0)|^{2}
+|v^{(\tau)}(x+\mu,0)-v^{(\tau)}(x,0)|^{2}\big)\mathrm{d}x,
\end{align}
		
Arguing by induction on $n$, we assume that for $t\in[0,n\sigma]\cap[0,T]$,
	\begin{align*}
	&\sup_{t\in[0,n\sigma]\cap[0,T]}\int_{-\Theta-4T+t}^{\Theta+4T-t}
	\big(|u^{(\tau)}(x+\mu,t)-u^{(\tau)}(x,t)|^{2}
     +
     |v^{(\tau)}(x+\mu,t)-v^{(\tau)}(x,t)|^{2}\big)\mathrm{d}x\\
	\leq &\tilde{C}_2(n\sigma)
	\int_{-\Theta-4T}^{\Theta+4T}\big(|u^{(\tau)}(x+\mu,0)-u^{(\tau)}(x,0)|^{2}
+|v^{(\tau)}(x+\mu,0)-v^{(\tau)}(x,0)|^{2}\big)\mathrm{d}x,
\end{align*}
Then, we proceed to prove \eqref{eq:stability-subdomain2} for $t\in[0,(n+1)\sigma]\cap[0,T]$. Note that $\Lambda(0,\Theta + 4T;0)\cap(\mathbb{R}\times[0,(n+1)\sigma])$ is a subset of
\begin{equation*}\left( \Lambda(0,\Theta + 4T;0)\cap(\mathbb{R}\times[0,n\sigma])\right) \cup
   \left( \cup_{-4N+n\leq j\leq 4N-n-4} \Lambda((j+2)\sigma,(n+2)\sigma;n\sigma)\right).\end{equation*}
For any characteristic triangle
\begin{equation*}
	\Delta((j+2)\sigma,(n+2)\sigma;n\sigma)
	\end{equation*}
with $-4N+n\leq j\leq4N-n-4$, it follows from \eqref{eq:F-L},
\eqref{eq:former-case-2}, \eqref{eq:interval-initial-2}--\eqref{eq:induction-assumption} and Proposition~\ref{pro:stability-bounds}
that
	\begin{align*}
		&\sup_{t\in[0,(n+1)\sigma]\cap[0,T]}
        \int_{j\sigma+t-n\sigma}^{(j+4)\sigma-t+n\sigma}
		\Big(|u^{(\tau)}(x+\mu,t)-u^{(\tau)}(x,t)|^{2}
		+|v^{(\tau)}(x+\mu,t)-v^{(\tau)}(x,t)|^{2}\Big)\mathrm{d}x\\
\leq &
\sup_{t\in[0,(n+1)\sigma]\cap[0,T]}
\Big(\mathrm{e}^{(C_{3}+\kappa C_{4})\left(\left[\frac{t}{\sigma}\right]-n\right)\sigma
+\kappa C_{4}\sum_{p=n}^{\left[\frac{t}{\sigma}\right]-1}\big(\mathfrak{q}(p;\Delta)\sigma^2+\tilde{\mathfrak{q}}(p;\Delta)\sigma^2\big)}\Big)\\
&\cdot \int_{j\sigma}^{(j+4)\sigma}
	\big(|u^{(\tau)}(x+\mu,n\sigma)-u^{(\tau)}(x,n\sigma)|^{2}
     +
     |v^{(\tau)}(x+\mu,n\sigma)-v^{(\tau)}(x,n\sigma)|^{2}\big)\mathrm{d}x.
	\end{align*}

Finally, summing the above inequalities over $j=-4N+n,\ldots,4N-n-4$, we obtain
	\begin{align*}
	&\sup_{t\in[0,(n+1)\sigma]\cap[0,T]}\int_{-\Theta-4T+t}^{\Theta+4T-t}
	\big(|u^{(\tau)}(x+\mu,t)-u^{(\tau)}(x,t)|^{2}
     +
     |v^{(\tau)}(x+\mu,t)-v^{(\tau)}(x,t)|^{2}\big)\mathrm{d}x\\
     \leq&
     4\sup_{t\in[0,(n+1)\sigma]\cap[0,T]}
     \Big(\mathrm{e}^{(C_{3}+\kappa C_{4})\left(\left[\frac{t}{\sigma}\right]-n\right)\sigma
+\kappa C_{4}\sum_{p=n}^{\left[\frac{t}{\sigma}\right]-1}\big(\mathfrak{q}(p;\Delta)\sigma^2+\tilde{\mathfrak{q}}(p;\Delta)\sigma^2\big)}\Big)\\
&\cdot\int_{-\Theta-4T+n\sigma}^{\Theta+4T-n\sigma}
	\big(|u^{(\tau)}(x+\mu,n\sigma)-u^{(\tau)}(x,n\sigma)|^{2}
     +
     |v^{(\tau)}(x+\mu,n\sigma)-v^{(\tau)}(x,n\sigma)|^{2}\big)\mathrm{d}x\\
	\leq &\tilde{C}_2((n+1)\sigma)
	\int_{-\Theta-4T}^{\Theta+4T}\big(|u^{(\tau)}(x+\mu,0)-u^{(\tau)}(x,0)|^{2}
+|v^{(\tau)}(x+\mu,0)-v^{(\tau)}(x,0)|^{2}\big)\mathrm{d}x.
\end{align*}

Since $\sigma>0$ is fixed in the sequel, there exists a constant $\tilde{C}_{0}(T)$, depending on $T$, $c_0$ and the system \eqref{eq:NLDE}, such that, for $t\in[0,T]$,
 \begin{align*}
  \tilde{C}_2(t)
=&(2c_0\kappa+1)4^{\left[\frac{t}{\sigma}\right]}
\mathrm{e}^{(C_{3}+\kappa C_{4})\left[\frac{t}{\sigma}\right]\sigma
+\kappa C_{4}\sum_{p=0}^{\left[\frac{t}{\sigma}\right]-1}
(\mathfrak{q}(p;\Delta)\sigma^2
+\tilde{\mathfrak{q}}(p;\Delta)\sigma^2)}\\
&\leq
(2c_0\kappa+1)4^{\left[\frac{T}{\sigma}\right]}
\mathrm{e}^{C_5(T)}
\leq\tilde{C}_0(T),
\end{align*}
where $\displaystyle C_5(T)=(C_{3}+\kappa C_{4})(T+1)
+2\kappa C_{4}c(T)$ given in the proof of Proposition~\ref{pro:functional-decreasing}. This completes the proof.
\end{proof}

By Lemmas \ref{lem:subcase1}--\ref{lem:subcase2}, we can extend the case $\mu=j_0\tau$ for some integer $j_0\neq0$ to the general case $\mu\in\mathbb{R}$. This result will be used to establish the compactness of the time-splitting solutions.
\begin{corollary}\label{eq:coro-strip}
Let $\Theta>0$ be given by \eqref{eq:initial-edge}.
For any $T>0$ and any $\varepsilon>0$, there exists a constant $\tau_{\flat}>0$, such that, for all  $\tau\in(0,\tau_{\flat})$ and $\mu\in(0,\tau_{\flat})$, it holds that
		\begin{align*}
			\sup_{0\leq t\leq T}\int_{\mathbb{R}}
			\big(|u^{(\tau)}(x+\mu,t)-u^{(\tau)}(x,t)|^{2}+|v^{(\tau)}(x+\mu,t)-v^{(\tau)}(x,t)|^{2}\big)\mathrm{d}x
			\leq
			\varepsilon.
			\end{align*}		
\end{corollary}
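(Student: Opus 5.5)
Since $\mathbb{R}\times[0,T]=\mathscr{E}(\Theta,T)\cup\big((\mathbb{R}\times[0,T])\cap\Lambda(0,\Theta+4T;0)\big)$, for each $t\in[0,T]$ the line $\mathbb{R}$ is covered by $\{|x|\ge\Theta+t\}$ and $[-\Theta-4T+t,\Theta+4T-t]$. For $\mu=j_0\tau$ with $|\mu|\le\Theta/4$ I would therefore add Lemma \ref{lem:subcase1} and Lemma \ref{lem:subcase2}, which gives
\begin{equation*}
\sup_{0\le t\le T}\int_{\mathbb{R}}\big(|u^{(\tau)}(x+\mu,t)-u^{(\tau)}(x,t)|^2+|v^{(\tau)}(x+\mu,t)-v^{(\tau)}(x,t)|^2\big)\mathrm{d}x\le \big(C_0(T)+\tilde C_0(T)\big)\,\omega_\tau(\mu),
\end{equation*}
where $\omega_\tau(\mu)=\|u^{(\tau)}(\cdot+\mu,0)-u^{(\tau)}(\cdot,0)\|_{\mathrm{L}^2}^2+\|v^{(\tau)}(\cdot+\mu,0)-v^{(\tau)}(\cdot,0)\|_{\mathrm{L}^2}^2$. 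Next I bound $\omega_\tau(\mu)$ uniformly in $\tau$. By the triangle inequality and translation invariance of the $\mathrm{L}^2$ norm,
\begin{equation*}
\omega_\tau(\mu)\le 3\|(u_0,v_0)(\cdot+\mu)-(u_0,v_0)\|_{\mathrm{L}^2}^2+6\|(u^{(\tau)},v^{(\tau)})(\cdot,0)-(u_0,v_0)\|_{\mathrm{L}^2}^2 .
\end{equation*}
The first term is small for small $|\mu|$ by continuity of translations in $\mathrm{L}^2$. The second term is small for small $\tau$ by \eqref{eq:initial-data-convergent}. Hence for $\tau<\tau_\flat$ and $|\mu|<\tau_\flat$ with $\mu\in\tau\mathbb{Z}$, the shift integral is at most $\varepsilon/4$.

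For a general $\mu\in(0,\tau_\flat)$, I write $\mu=j_0\tau+\theta$ with $j_0=[\mu/\tau]$ and $\theta\in[0,\tau)$. When $j_0=0$ there is no grid shift and only the sub-cell part below is needed. The grid part $j_0\tau$ is handled as above. For the sub-cell shift $\theta$, I use the explicit structure of the scheme. For $t\in[n\tau,(n+1)\tau)$, Lemma \ref{lemma2.2}(i) shows that $u^{(\tau)}(\cdot,t)$ is a translate of the piecewise constant function $u^{(\tau)}(\cdot,n\tau)$, and likewise for $v^{(\tau)}$. For a step function $w$ on a mesh of size $\tau$ and $0\le\theta<\tau$,
\begin{equation*}
\|w(\cdot+\theta)-w\|_{\mathrm{L}^2}^2=\frac{\theta}{\tau}\sum_j |w_{j+1}-w_j|^2\tau\le \|w(\cdot+\tau)-w\|_{\mathrm{L}^2}^2 .
\end{equation*}
Applying this at time $n\tau$, the $\theta$-shift is controlled by the one-cell shift $\mu=\tau$, which is already covered by the grid case provided $\tau<\tau_\flat$. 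I then combine the two pieces with $|a+b|^2\le 2|a|^2+2|b|^2$ and adjust $\varepsilon$ by a constant factor.

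The main obstacle is that $\tau_\flat$ must be independent of $\tau$. The constants $\tilde C_0(T)$ and $C_0(T)$ depend only on $T$, $c_0$ and the system. The thresholds $\tau_{\scriptscriptstyle\mathscr{E}}$ and $\tau_{\scriptscriptstyle\Lambda}$ are fixed once $\Theta$ and $\sigma$ are fixed. So I choose $\tau_\flat\le\min\{\tau_{\scriptscriptstyle\Lambda},\Theta/4\}$ and then shrink it using the two smallness conditions on the initial data. A further point to check is that the trapezoid $\Lambda(0,\Theta+4T;0)\cap\{t\le T\}$ together with $\mathscr{E}$ really covers each time slice, which holds because $\Theta+4T-t\ge\Theta+t$ for $t\le T$.
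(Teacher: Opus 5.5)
Your proposal is correct and follows essentially the same route as the paper: Lemmas \ref{lem:subcase1}--\ref{lem:subcase2} give the bound for grid shifts $\mu=j_0\tau$, the piecewise-constant structure reduces a general shift to grid shifts, and continuity of translations together with \eqref{eq:initial-data-convergent} makes the initial shift integrals small. The differences are minor. You split $\mu=j_0\tau+\theta$ by the triangle inequality and use the exact identity $\|w(\cdot+\theta)-w\|_{\mathrm{L}^2}^2=\frac{\theta}{\tau}\|w(\cdot+\tau)-w\|_{\mathrm{L}^2}^2$, whereas the paper splits each cell and bounds the pieces by the shifts $[\mu/\tau]\tau$ and $([\mu/\tau]+1)\tau$. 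You also make explicit two points the paper passes over quickly: at non-grid times $u^{(\tau)}(\cdot,t)$ is only a translate of a step function, and the smallness of the initial shifts must hold uniformly in $\tau$, which your triangle inequality through $(u_0,v_0)$ supplies.
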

\begin{proof}
	By Lemma \ref{lem:subcase1} and Lemma \ref{lem:subcase2}, we deduce that for all $\tau \in (0,\min\{\tau_{\scriptscriptstyle\mathscr{E}},\tau_{\scriptscriptstyle\Lambda}\})$ and $|\mu|=|j_0\tau|\leq \frac{\Theta}{4}$ with some integer $j_0\neq0$,
		\begin{align}\label{eq:strip}
			&\sup_{0\leq t\leq T}\int_{\mathbb{R}}
			\big(|u^{(\tau)}(x+\mu,t)-u^{(\tau)}(x,t)|^{2}
			+
			|v^{(\tau)}(x+\mu,t)-v^{(\tau)}(x,t)|^{2}\big)\mathrm{d}x\nonumber\\
			\leq &
		\big( C_{0}(T) + \tilde{C}_{0}(T)\big)
			\int_{\mathbb{R}}\big(|u^{(\tau)}(x+\mu,0)-u^{(\tau)}(x,0)|^{2}
			+|v^{(\tau)}(x+\mu,0)-v^{(\tau)}(x,0)|^{2}\big)\mathrm{d}x,
		\end{align}
where the positive constants $C_0(T)$ and $\tilde{C}_0(T)$ are given by Lemmas \ref{lem:subcase1} and \ref{lem:subcase2}, respectively.
	
Now, we consider the case in which $\mu\in(0,\frac{\Theta}{4})$.
Since $x\in\mathbb{R}$ and $\tau \in (0,\min\{\tau_{\scriptscriptstyle\mathscr{E}},\tau_{\scriptscriptstyle\Lambda}\})$, there exists a unique $j\in\{0,\pm1,\pm2,\ldots\}$ such that $x\in[j\tau,(j+1)\tau)$.
Moreover, for $\mu\in(0,\frac{\Theta}{4})$, we can rewrite $x+\mu$ as follows:
\begin{equation*}
	x+\mu=x+\left[\frac{\mu}{\tau}\right]\tau+\xi, \quad 0\leq \xi< \tau,
\end{equation*}
where $\left[\frac{\mu}{\tau}\right]$ denotes the greatest integer less than or equal to $\frac{\mu}{\tau}$.

If $x\in [j\tau,(j+1)\tau-\xi]$, then $x+\mu\in [(j+\left[\frac{\mu}{\tau}\right])\tau,(j+\left[\frac{\mu}{\tau}\right]+1)\tau]$. Therefore, from our construction in \eqref{eq:construction}, we have
\begin{equation*}
	u^{(\tau)}(x+\mu,t)=u^{(\tau)}((j+\left[\frac{\mu}{\tau}\right])\tau,t),\quad
	v^{(\tau)}(x+\mu,t)=v^{(\tau)}((j+\left[\frac{\mu}{\tau}\right])\tau,t).
\end{equation*}

If $x\in [(j+1)\tau-\xi,(j+1)\tau]$, then $x+\mu\in[(j+\left[\frac{\mu}{\tau}\right]+1)\tau,(j+\left[\frac{\mu}{\tau}\right]+2)\tau]$. Thus, it follows from \eqref{eq:construction} that
\begin{equation*}
	u^{(\tau)}(x+\mu,t)=u^{(\tau)}((j+\left[\frac{\mu}{\tau}\right]+1)\tau,t),\quad
	v^{(\tau)}(x+\mu,t)=v^{(\tau)}((j+\left[\frac{\mu}{\tau}\right]+1)\tau,t).
\end{equation*}

Consequently, for all $\tau \in (0,\min\{\tau_{\scriptscriptstyle\mathscr{E}},\tau_{\scriptscriptstyle\Lambda}\})$, $\mu\in(0,\frac{\Theta}{4})$ and $t\in[0,T]$, using \eqref{eq:strip}, we obtain
	\begin{align*}
		&\int_{\mathbb{R}}
		\big(|u^{(\tau)}(x+\mu,t)-u^{(\tau)}(x,t)|^{2}
		+
		|v^{(\tau)}(x+\mu,t)-v^{(\tau)}(x,t)|^{2}\big)\mathrm{d}x\\
		= &
		\sum_{j\in\mathbb{Z}}\int_{j\tau}^{(j+1)\tau-\xi}
		\Big(|u^{(\tau)}((j+\left[\frac{\mu}{\tau}\right])\tau,t)-u^{(\tau)}(j\tau,t)|^{2}
		+
		|v^{(\tau)}((j+\left[\frac{\mu}{\tau}\right])\tau,t)-v^{(\tau)}(j\tau,t)|^{2}\Big)\mathrm{d}x\\
		+&
		\sum_{j\in\mathbb{Z}}\int_{(j+1)\tau-\xi}^{(j+1)\tau}
		\Big(|u^{(\tau)}((j+\left[\frac{\mu}{\tau}\right]+1)\tau,t)-u^{(\tau)}(j\tau,t)|^{2}
		+
		|v^{(\tau)}((j+\left[\frac{\mu}{\tau}\right]+1)\tau,t)-v^{(\tau)}(j\tau,t)|^{2}\Big)\mathrm{d}x\\
		\leq&
		\big( C_{0}(T) + \tilde{C}_{0}(T)\big)\mathrm{I}_1+\big( C_{0}(T) + \tilde{C}_{0}(T)\big)\mathrm{I}_2,
	\end{align*}
where
\begin{small}
\begin{align*}
&\mathrm{I}_1=	\sum_{j\in\mathbb{Z}}\int_{j\tau}^{(j+1)\tau}\Big(|u^{(\tau)}((j+\left[\frac{\mu}{\tau}\right])\tau,0)-u^{(\tau)}(j\tau,0)|^{2}
	+|v^{(\tau)}((j+\left[\frac{\mu}{\tau}\right])\tau,0)-v^{(\tau)}(j\tau,0)|^{2}\Big)\mathrm{d}x,\\
& \mathrm{I}_2=
\sum_{j\in\mathbb{Z}}\int_{j\tau}^{(j+1)\tau}	\Big(|u^{(\tau)}((j+\left[\frac{\mu}{\tau}\right]+1)\tau,0)-u^{(\tau)}(j\tau,0)|^{2}
+|v^{(\tau)}((j+\left[\frac{\mu}{\tau}\right]+1)\tau,0)-v^{(\tau)}(j\tau,0)|^{2}\Big)\mathrm{d}x.
\end{align*}
\end{small}
Then, by \eqref{eq:initial-data-convergent}, we know that $(u^{\tau}(x,0),v^{\tau}(x,0))\in \mathrm{L}^2(\mathbb{R})$.
Thus, given $\varepsilon>0$, it follows from \cite[Lemma 4.3]{BrezisBook} that there exists a positive constant $\tau_{\flat}\leq \min\{\tau_{\scriptscriptstyle\mathscr{E}},\tau_{\scriptscriptstyle\Lambda}\}$, such that for all $\tau \in (0,\tau_{\flat})$ and $\mu\in(0,\tau_{\flat})$, there holds that
\begin{align*}
	\mathrm{I}_1
	\leq \frac{\varepsilon}{2\big( C_{0}(T) + \tilde{C}_{0}(T)\big)},\quad
	\mathrm{I}_2
	\leq \frac{\varepsilon}{2\big( C_{0}(T) + \tilde{C}_{0}(T)\big)}.
\end{align*}
Therefore, the proof is complete.
\end{proof}

\subsection{Compactness of the family of time-splitting solutions}
In this subsection, we prove the compactness of the family of time-splitting solutions. For any $T>0$, we define
\begin{equation}\label{eq:ST}
S_T := \Big\{\big(u^{(\tau)},v^{(\tau)}\big)\big|_{\mathbb{R}\times[0,T]}\:: \:\tau\in(0,\tau_{*})\Big\},
\end{equation}
where $\tau_{*}=\min\{\bar{\tau},\tau_{\flat}\}$, with $\bar{\tau}$ and
$\tau_{\flat}$ defined in \eqref{eq:c_0} and Corollary~\ref{eq:coro-strip}.

It follows from Corollary~\ref{eq:coro-strip} that, to prove the compactness of $S_T$, it suffices to establish a uniform estimate on $(u^{(\tau)}, v^{(\tau)})$ along the characteristics, uniformly in both $x$ and $t$. This is provided by the following lemma.
\begin{lemma}\label{5.6.6}
For $T>0$ and any $\varepsilon>0$, there exist constants $\delta_1>0$ and $C(T)>0$, such that for any $(u^{(\tau)},v^{(\tau)})\in S_T$, if $\tau\in(0,\delta_1)$ and $h\in(0,\delta_1)$, then for any $t_0\in[0,T]$, it holds that
\begin{equation*}
\int_{-\infty}^{+\infty}
\left|u^{(\tau)}(x+h,t_{0}+h)-u^{(\tau)}(x,t_{0})\right|^{2}\mathrm{d}x<C(T)\varepsilon
\end{equation*}
and
\begin{equation*}
\int_{-\infty}^{+\infty}
\left|v^{(\tau)}(x-h,t_{0}+h)-v^{(\tau)}(x,t_{0})\right|^{2}\mathrm{d}x<C(T)\varepsilon.
\end{equation*}
\end{lemma}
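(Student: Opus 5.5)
We prove the estimate for $u^{(\tau)}$; the one for $v^{(\tau)}$ is symmetric, with $x+h$ replaced by $x-h$ and the roles of \eqref{2.2} and \eqref{2.3} exchanged. The idea is that along the characteristic $x-t=\text{const}$ the function $u^{(\tau)}$ changes only through the nonlinear steps $\mathbf{Step(n,2)}$. Each such step moves $|u|$ by an amount controlled by the nonlinear subproblem, and the total change over a time interval of length $h$ should be small, uniformly in $t_0$.

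\textbf{Step 1: reduction to grid values.} Write $t_0=n_0\tau+\theta$ with $\theta\in[0,\tau)$ and $h=k\tau+\eta$ with $\eta\in[0,\tau)$.

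Inside a strip $[n\tau,(n+1)\tau)$ the solution is transported exactly, so $u^{(\tau)}(x+s,n\tau+s)=u^{(\tau)}(x,n\tau)$. Hence $u^{(\tau)}(x+h,t_0+h)$ equals $u^{(\tau)}$ evaluated at time $(n_0+k)\tau$ or $(n_0+k+1)\tau$ along the same characteristic. The fractional offset contributes a spatial shift of at most $\tau$ of a piecewise constant function. By Corollary~\ref{eq:coro-strip}, whose bound is uniform in $t\in[0,T]$, this shift costs at most $\varepsilon$ once $\tau<\tau_\flat$.

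We are therefore reduced to estimating
\begin{equation*}
\sum_j\big|u^{n_0+k}_{j+k}-u^{n_0}_j\big|^2\tau ,
\end{equation*}
with $k\tau\le h+\tau\le 2\delta_1$.

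\textbf{Step 2: the telescoping increment.} Along the discrete characteristic, Lemma~\ref{lemma2.2}(i) gives $u^{n+1-}_{j+1}=u^n_j$. Therefore
\begin{equation*}
u^{n+1}_{j+1}-u^n_j=\int_0^\tau \frac{\mathrm{d}u^{n,2}_{j+1}}{\mathrm{d}s}\,\mathrm{d}s .
\end{equation*}
By \eqref{eq:subproblem2}, this increment is bounded by
\begin{equation*}
m\tau\,|v|+(|\alpha|+4|\beta|)\,\tau\,|u|\,|v|^2 ,
\end{equation*}
evaluated along $s$. Using conservation \eqref{2.1} together with the ODE bounds behind \eqref{2.10}--\eqref{2.11}, both $|u^{n,2}_{j+1}(s)|$ and $|v^{n,2}_{j+1}(s)|$ are comparable to their values at $s=0$, namely $|u^n_j|$ and $|v^n_{j+2}|$. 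This gives
\begin{equation*}
\big|u^{n+1}_{j+1}-u^n_j\big|\le C\tau\Big(|v^n_{j+2}|+|u^n_j|\,|v^n_{j+2}|^2\Big).
\end{equation*}

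\textbf{Step 3: summing in $n$ and $j$.} Summing the increments over $n=n_0,\dots,n_0+k-1$ and applying Cauchy--Schwarz to the linear part gives
\begin{equation*}
\Big(\sum_n \tau|v|\Big)^2\le k\tau\sum_n\tau|v|^2 .
\end{equation*}
Multiplying by $\tau$ and summing in $j$, the linear part contributes at most $k\tau\cdot k\tau\cdot c_0\lesssim\delta_1^2 c_0$, by \eqref{eq:l2-conserve-i}.

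The cubic term is the main obstacle. It is $\tau\,|u|\,|v|^2$ summed along a $u$-characteristic. We plan to handle it in three moves:
\begin{itemize}
\item Bound $|u|$ pointwise by \eqref{2.8}: $|u^n_j|^2\le c_2(T)\big(|u^0_{j-n}|^2+m_1c_0\big)$, which is constant along the characteristic.
\item Apply Cauchy--Schwarz in $n$ to $\sum_n\tau|v|^2$, where $v$ is sampled along a $u$-characteristic.
\item Use Lemma~\ref{lemma2.3}, estimate \eqref{2.5}, to bound the resulting transversal sums by the local mass $\sum(|u^{n_0}|^2+|v^{n_0}|^2)\tau$ on a window of length $2k\tau\le 4\delta_1$.
\end{itemize}
The product structure $|u^0_{\tilde p}|^2|v^0_{\tilde p+2p}|^2$ then appears, as in \eqref{eq:u02-v02}. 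Interaction sums of this type over a time slab of length $k\tau$ are small when $k\tau$ is small, because of absolute continuity of the $\mathrm{L}^2$ mass of $(u_0,v_0)$. By \eqref{eq:initial-data-convergent} this smallness transfers to the approximate data uniformly in $\tau$, exactly as in the choice of $\sigma$ in \eqref{4.2}.

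Concretely, we aim to show
\begin{equation*}
\sum_j\Big(\sum_{n=n_0}^{n_0+k-1}\tau|u^n_{j+n-n_0}||v^n_{j+n-n_0+2}|^2\Big)^2\tau\le C(T)\,\omega(4\delta_1),
\end{equation*}
where $\omega(r)$ is the supremum of the $\mathrm{L}^2$ mass of the initial data over intervals of length $r$. Choosing $\delta_1$ so that $\omega(4\delta_1)+\delta_1^2\le\varepsilon$ then finishes the proof. If the direct Cauchy--Schwarz step loses a power, the fallback is to use the interaction bound of Lemma~\ref{lem:nonlinear-estimates}, restricted to the slab $[t_0,t_0+h]$, together with the pointwise bound \eqref{2.8} on $|u|$. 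That bound involves the square of the quantity
\begin{equation*}
\sum_n\tau|v|^2\le\text{local mass}\le\omega(4\delta_1)+o(1),
\end{equation*}
by \eqref{2.5}.
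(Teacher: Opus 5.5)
Your skeleton is the paper's: exact transport along the $u$-characteristic, telescoping over the nonlinear substeps, Cauchy--Schwarz plus \eqref{eq:l2-conserve-i} for the $m$-term, and local smallness of mass via \eqref{2.5}. The gap is the cubic estimate, which you rightly call the crux: as you describe it, it does not close.

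Write $A_j=\sum_n\tau|u^n_{j+n-n_0}|\,|v^n_{j+n-n_0+2}|^2\le M_jB_j$, with $M_j=\max_n|u^n_{j+n-n_0}|$ and $B_j=\sum_n\tau|v^n_{j+n-n_0+2}|^2$. Your main route bounds $M_j^2$ by \eqref{2.8} and $B_j$ uniformly in $j$ by a local mass $\eta$. But \eqref{2.8} reads $M_j^2\le c_2(T)\big(|u^0_{j-n_0}|^2+m_1c_0\big)$, and the additive constant is not summable in $j$. So the term $\sum_j\tau\,c_2(T)m_1c_0\,\eta^2$ is infinite whenever $m>0$.

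The missing step is that only one factor of $B_j$ may be bounded uniformly. The other must be summed in $j$ by $\mathrm{L}^2$ conservation, $\sum_jB_j\tau=\sum_n\tau\sum_j|v^n_j|^2\tau\le k\tau c_0$. This gives $\sum_jA_j^2\tau\le c_2(T)\,\eta\,(c_0\eta+m_1c_0^2k\tau)$ and closes the argument without Lemma \ref{lem:nonlinear-estimates}.

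The paper argues the other way and never bounds $u$ pointwise here. It splits $|u||v|^2=(|u||v|)\cdot|v|$ (see \eqref{eq:sum}) and applies Cauchy--Schwarz in $n$ (see \eqref{eq:sum-discrete-continuity}), giving $A_j^2\tau\le\big(\sum_n|u^n_{j+n-n_0}|^2|v^n_{j+n-n_0+2}|^2\tau^2\big)B_j$. After summing in $j$, the interaction sum is only bounded, by $c(T)$; strictly this follows from the proof of Lemma \ref{lem:nonlinear-estimates}, since the $u$ and $v$ indices differ by two cells. All the smallness then comes from $\sup_jB_j$. Your fallback names that lemma but pairs it with \eqref{2.8} and with the \emph{square} of the local mass, which is not how the pieces fit. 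The product structure $|u^0_{\tilde p}|^2|v^0_{\tilde p+2p}|^2$ you invoke plays no role in your main route.

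Three smaller corrections.
\begin{enumerate}
\item Estimate \eqref{2.5} bounds $B_j$ by the mass at time $n_0\tau$ on a base of length $O(k\tau)$, not by the initial data. To make it small uniformly in $t_0\in[0,T]$ you must pass back to $t=0$ with \eqref{2.8}--\eqref{2.9}, as the paper does. This gives $\eta\le 2c_2(T)\,\omega_\tau(5\delta_1)+C(T)\delta_1$, where $\omega_\tau(r)$ is the largest mass of $(u^{(\tau)},v^{(\tau)})(\cdot,0)$ on intervals of length $r$. So $\delta_1$ must also be small against the second term.
\item The increment bound in your Step 2 is false as stated. If $v^n_{j+2}=0$, then $v^{n,2}_{j+1}(s)\approx imu^n_js$, and $u$ still moves by about $m^2|u^n_j|\tau^2/2$. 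Gronwall with $\mathfrak{s}^{n,2}_{j+1}(0)\tau\le c_0$ (which is what \eqref{2.11} and \eqref{2.14} supply in \eqref{eq:sum}) gives the bound $C\tau\big(|v^n_{j+2}|+\tau|u^n_j|+|u^n_j||v^n_{j+2}|^2\big)$. The extra term contributes only $O\big((k\tau)^2\tau\big)$ after summation, so this is harmless.
\item Step 1 needs no appeal to Corollary \ref{eq:coro-strip}. Both $u^{(\tau)}(x,t_0)$ and $u^{(\tau)}(x+h,t_0+h)$ are grid functions evaluated at $x-(t_0-[t_0/\tau]\tau)$, up to a shift by a whole number $k'=[(t_0+h)/\tau]-[t_0/\tau]$ of cells. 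After translating $x$, the integral equals $\sum_j|u^{n_0+k'}_{j+k'}-u^{n_0}_j|^2\tau$ exactly.
\end{enumerate}
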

\begin{proof}
We divide the proof into two steps.

\textit{Step 1. Estimates on $(u^{(\tau)},v^{(\tau)})$ at the grid points $(x,t_0)=(j\tau,n_0\tau)$ for $h=(n_2-n_0)\tau<1$ with $0\leq n_0\leq n_2$ and $n_2\leq (T+1)/\tau$.}
The first equation in \eqref{eq:subproblem2}, evaluated at $x=j\tau$, yields
	\begin{equation*}
	\Big|\int_{0}^{\tau}\frac{\mathrm{d}u_{j}^{n,2}(s)}{\mathrm{d}s}\mathrm{d}s\Big|
	\leq m\int_{0}^{\tau}|v_{j}^{n,2}(s)|\mathrm{d}s+(|\alpha|+4|\beta|)\int_{0}^{\tau}|u_{j}^{n,2}(s)||v_{j}^{n,2}(s)|^{2}\mathrm{d}s,
	\end{equation*}
which, together with \eqref{eq:characteristic-estimate-uv}, \eqref{2.11} and \eqref{2.14}, implies
\begin{align}\label{eq:sum}
|u_{j}^{n+1}-u_{j-1}^{n}|
\leq&
\left[ m+(|\alpha|+4|\beta|)\mathrm{e}^{2|\beta|c_0}\Big(|u_{j-1}^{n}|^{2}|v_{j+1}^{n}|^{2}
+m_{1}(|u_{j-1}^{n}|^{2}+|v_{j+1}^{n}|^{2})^{2}\tau\Big)^\frac{1}{2}\right]\nonumber \\
& \cdot\Big(|v_{j+1}^{n}|^{2}+m_{1}(|u_{j-1}^{n}|^{2}+|v_{j+1}^{n}|^{2})\tau+c_{1}|u_{j-1}^{n}|^{2}|v_{j+1}^{n}|^{2}\tau\Big)^{\frac{1}{2}}\tau.
\end{align}
Let $j_{h}$ denote $j+n_2-n_0$. Then using \eqref{eq:sum}  and setting $\bar{C}=(|\alpha|+4|\beta|)\mathrm{e}^{2|\beta|c_0}$, we deduce that
\begin{align*}
&|u_{j+n_2-n_0}^{n_2}-u_{j}^{n_0}|
=|u_{j_{h}}^{n_2}-u_{j}^{n_0}|\\
=&|u_{j_{h}}^{n_2}
-u_{j_{h}-1}^{n_2-1}
+u_{j_{h}-1}^{n_2-1}
-u_{j_{h}-2}^{n_2-2}
+\ldots
+u_{j_{h}-p}^{n_2-p}
-u_{j_{h}-p-1}^{n_2-p-1}
+\ldots
+u_{j+1}^{n_0+1}
-u_{j}^{n_0}|\\
\leq &
\sum_{p=0}^{n_2-n_0-1}
|u_{j_{h}-p}^{n_2-p}
-u_{j_{h}-p-1}^{n_2-p-1}|\\
\leq&
\sum_{p=0}^{n_2-n_0-1}
\left[ m+\bar{C}
\Big(|u_{j_{h}-p-1}^{n_2-p-1}|^{2}|v_{j_{h}-p+1}^{n_2-p-1}|^{2}
+m_{1}(|u_{j_{h}-p-1}^{n_2-p-1}|^{2}+|v_{j_{h}-p+1}^{n_2-p-1}|^{2})^{2}\tau\Big)^\frac{1}{2}\right] \\
& \cdot\Big(|v_{j_{h}-p+1}^{n_2-p-1}|^{2}+m_{1}(|u_{j_{h}-p-1}^{n_2-p-1}|^{2}+|v_{j_{h}-p+1}^{n_2-p-1}|^{2})\tau+c_{1}|u_{j_{h}-p-1}^{n_2-p-1}|^{2}|v_{j_{h}-p+1}^{n_2-p-1}|^{2}\tau\Big)^{\frac{1}{2}}\tau\\
\leq &
m\sum_{p=0}^{n_2-n_0-1} \Big(|v_{j_{h}-p+1}^{n_2-p-1}|^{2}+m_{1}(|u_{j_{h}-p-1}^{n_2-p-1}|^{2}+|v_{j_{h}-p+1}^{n_2-p-1}|^{2})\tau+c_{1}|u_{j_{h}-p-1}^{n_2-p-1}|^{2}|v_{j_{h}-p+1}^{n_2-p-1}|^{2}\tau\Big)^{\frac{1}{2}}\tau\\
&+
\bar{C}\sum_{p=0}^{n_2-n_0-1}
\Big(|u_{j_{h}-p-1}^{n_2-p-1}|^{2}|v_{j_{h}-p+1}^{n_2-p-1}|^{2}
+m_{1}(|u_{j_{h}-p-1}^{n_2-p-1}|^{2}+|v_{j_{h}-p+1}^{n_2-p-1}|^{2})^{2}\tau\Big)^\frac{1}{2}\\
& \cdot\Big(|v_{j_{h}-p+1}^{n_2-p-1}|^{2}+m_{1}(|u_{j_{h}-p-1}^{n_2-p-1}|^{2}+|v_{j_{h}-p+1}^{n_2-p-1}|^{2})\tau+c_{1}|u_{j_{h}-p-1}^{n_2-p-1}|^{2}|v_{j_{h}-p+1}^{n_2-p-1}|^{2}\tau\Big)^{\frac{1}{2}}\tau.
\end{align*}
By Young's inequality, we have

\begin{align*}
&|u_{j_{h}}^{n_2}-u_{j}^{n_0}|^2\\
\leq &
2m^2\tau^2
\Bigg(\sum_{p=0}^{n_2-n_0-1} \Big(|v_{j_{h}-p+1}^{n_2-p-1}|^{2}+m_{1}(|u_{j_{h}-p-1}^{n_2-p-1}|^{2}+|v_{j_{h}-p+1}^{n_2-p-1}|^{2})\tau+c_{1}|u_{j_{h}-p-1}^{n_2-p-1}|^{2}|v_{j_{h}-p+1}^{n_2-p-1}|^{2}\tau\Big)^{\frac{1}{2}}\Bigg)^2\\
&+2\bar{C}^2\tau^2
\Bigg(  \sum_{p=0}^{n_2-n_0-1}
\Big(|u_{j_{h}-p-1}^{n_2-p-1}|^{2}|v_{j_{h}-p+1}^{n_2-p-1}|^{2}
+m_{1}(|u_{j_{h}-p-1}^{n_2-p-1}|^{2}+|v_{j_{h}-p+1}^{n_2-p-1}|^{2})^{2}\tau\Big)^\frac{1}{2} \nonumber\\
& \qquad\ \  \cdot\Big(|v_{j_{h}-p+1}^{n_2-p-1}|^{2}+m_{1}(|u_{j_{h}-p-1}^{n_2-p-1}|^{2}+|v_{j_{h}-p+1}^{n_2-p-1}|^{2})\tau+c_{1}|u_{j_{h}-p-1}^{n_2-p-1}|^{2}|v_{j_{h}-p+1}^{n_2-p-1}|^{2}\tau\Big)^{\frac{1}{2}}\Bigg) ^2.
\end{align*}
Then
	\begin{align}\label{eq:sum-discrete-continuity}
		&|u_{j_{h}}^{n_2}-u_{j}^{n_0}|^2\nonumber\\
		\leq &
		2m^2\tau^2(n_2-n_0)
		\hspace*{-0.15cm}\sum_{p=0}^{n_2-n_0-1}\hspace*{-0.15cm}\Big(|v_{j_{h}-p+1}^{n_2-p-1}|^{2}+m_{1}(|u_{j_{h}-p-1}^{n_2-p-1}|^{2}+|v_{j_{h}-p+1}^{n_2-p-1}|^{2})\tau+c_{1}|u_{j_{h}-p-1}^{n_2-p-1}|^{2}|v_{j_{h}-p+1}^{n_2-p-1}|^{2}\tau\Big)\nonumber\\
		&+2\bar{C}^2\tau^2
		\sum_{p=0}^{n_2-n_0-1}
		\Big(|u_{j_{h}-p-1}^{n_2-p-1}|^{2}|v_{j_{h}-p+1}^{n_2-p-1}|^{2}
		+m_{1}(|u_{j_{h}-p-1}^{n_2-p-1}|^{2}+|v_{j_{h}-p+1}^{n_2-p-1}|^{2})^{2}\tau\Big)\nonumber\\
		& \cdot\sum_{p=0}^{n_2-n_0-1}\Big(|v_{j_{h}-p+1}^{n_2-p-1}|^{2}+m_{1}(|u_{j_{h}-p-1}^{n_2-p-1}|^{2}+|v_{j_{h}-p+1}^{n_2-p-1}|^{2})\tau
		+c_{1}|u_{j_{h}-p-1}^{n_2-p-1}|^{2}|v_{j_{h}-p+1}^{n_2-p-1}|^{2}\tau\Big).
\end{align}

To estimate $\sum_{j=-\infty}^{+\infty}|u_{j_{h}}^{n_2}-u_{j}^{n_{0}}|^{2}\tau$ with $j_{h}=j+n_2-n_0$,
we next estimate the right-hand side of
\eqref{eq:sum-discrete-continuity} one by one.

First, by \eqref{eq:l2-conserve-i}, it follows that
\begin{align*}
		&\sum_{j=-\infty}^{+\infty}\sum_{p=0}^{n_2-n_{0}-1}
		\left( |u_{j_{h}-p-1}^{n_2-p-1}|^{2}
		+|v_{j_{h}-p+1}^{n_2-p-1}|^{2}\right)\tau^{2}
		=\sum_{p=0}^{n_2-n_{0}-1}\sum_{j=-\infty}^{+\infty}
		\left(|u_{j}^{n_2-p-1}|^{2}+|v_{j}^{n_2-p-1}|^{2}\right)\tau^{2}\\
		\leq&
		\sum_{p=0}^{n_2-n_{0}-1}
		\Big(\sum_{j=-\infty}^{+\infty}
		(|u_{j}^{0}|^{2}+|v_{j}^{0}|^{2})\tau\Big)\tau
		\leq 2c_0 (n_2-n_{0})\tau.
\end{align*}
Since $0\leq n_0\leq n_2$, $n_2\tau\leq T+1$, Lemma \ref{lem:nonlinear-estimates} gives
\begin{equation*}
  \sum_{j=-\infty}^{+\infty}\sum_{p=0}^{n_{2}-n_{0}-1}
   |u_{j_{h}-p-1}^{n_2-p-1}|^{2}|v_{j_{h}-p+1}^{n_2-p-1}|^{2}\tau^2
   =\sum_{p=0}^{n_2-n_{0}-1}\sum_{j=-\infty}^{+\infty}
   |u_{j}^{n_2-p-1}|^{2}|v_{j}^{n_2-p-1}|^{2}\tau^2
    \leq c(T).
\end{equation*}

Moreover, using \eqref{2.5} and \eqref{2.8}--\eqref{2.9} yields
\begin{align*}
&\sum_{p=0}^{n_2-n_{0}-1}|v_{j_{h}-p+1}^{n_2-p-1}|^{2}\tau
=
\sum_{k=1}^{n_2-n_{0}}
|v_{j_{h}+2-k}^{n_{2}-k}|^{2}\tau
\leq
\sum_{l=j_{h}+2-n_2+n_{0}}^{j_{h}+2+n_2-n_{0}}
\big(|u_{l}^{n_{0}}|^{2}+|v_{l}^{n_{0}}|^{2}\big)\tau\\
&\leq
\sum_{l=j_{h}+2-n_2+n_{0}}^{j_{h}+2+n_2-n_{0}}
c_{2}(T)\Big(\big(|u_{l-n_{0}}^{0}|^{2}+|v_{l+n_{0}}^{0}|^{2}\big)+2m_{1}c_0\Big)\tau\\
&\leq
c_{2}(T)\int_{(j_{h}+2-n_2+n_{0})\tau}^{(j_{h}+2+n_2-n_{0})\tau}
\big(|u^{(\tau)}(x-n_0\tau,0)|^2+|v^{(\tau)}(x+n_0\tau,0)|^2\big)\mathrm{d}x+4m_{1}c_0c_{2}(T)(n_2-n_{0})\tau.
\end{align*}

For the remaining terms in \eqref{eq:sum-discrete-continuity}, by \eqref{eq:l2-conserve-i} and \eqref{eq:c_0}, we deduce that for $p=0,1,\ldots,n_2-n_{0}$,
\begin{equation*}
		\big(|u_{j_{h}-p-1}^{n_2-p-1}|^{2}
		+|v_{j_{h}-p+1}^{n_2-p-1}|^{2}\big)\tau
		\leq
		\sum_{j=-\infty}^{\infty}
		\big(|u_{j_{h}-p-1}^{n_2-p-1}|^{2}
		+|v_{j_{h}-p+1}^{n_2-p-1}|^{2}\big)\tau
		\leq c_0.
\end{equation*}
Then, a direct calculation yields
\begin{align*}
\sum_{p=0}^{n_2-n_0-1}
\big(|u_{j_{h}-p-1}^{n_2-p-1}|^{2}
+|v_{j_{h}-p+1}^{n_2-p-1}|^{2}\big)^2\tau^3
\leq
c_0\sum_{p=0}^{n_2-n_{0}-1}
\big(|u_{j_{h}-p-1}^{n_2-p-1}|^{2}
+|v_{j_{h}-p+1}^{n_2-p-1}|^{2}\big)\tau^2
\end{align*}
and

\begin{equation*}
   \sum_{p=0}^{n_2-n_0-1}
   |u_{j_{h}-p-1}^{n_2-p-1}|^{2}|v_{j_{h}-p+1}^{n_2-p-1}|^{2}\tau^2
\leq c_0\sum_{p=0}^{n_2-n_0-1}|v_{j_{h}-p+1}^{n_2-p-1}|^{2}\tau.
\end{equation*}

Since $(u_{0},v_{0})\in \mathrm{L}^{2}(\mathbb{R})$, for any $\varepsilon\in(0,1)$, it follows from \eqref{eq:initial-data-convergent} that there is $0<\delta_1<\varepsilon/2$, such that, for all $h=(n_2-n_0)\tau<2\delta_1$,
\begin{equation*}
\int_{(j_{h}+2-n_2+n_{0})\tau}^{(j_{h}+2+n_2-n_{0})\tau}
	\big(|u^{(\tau)}(x-n_0\tau,0)|^2+|v^{(\tau)}(x+n_0\tau,0)|^2\big)\mathrm{d}x\leq \varepsilon,
\end{equation*}
and
\begin{equation*}
(n_2-n_0)\tau <\varepsilon.
\end{equation*}

Finally, by using the above estimates and \eqref{eq:sum-discrete-continuity}, we have
\begin{align}\label{lemma4.5-1}
&\sum_{j=-\infty}^{\infty}|u_{j+n_2-n_0}^{n_2}-u_{j}^{n_{0}}|^{2}\tau\nonumber\\
\leq &
2m^2\varepsilon\big(2c_0\varepsilon+2m_1\varepsilon\tau+c_1c(T)\tau\big)
+2\bar{C}^2\big(c(T)+m_12c_0\varepsilon c_0\big)
\big(c_2(T)\varepsilon+4m_1c_0c_2(T)\varepsilon+m_12c_0\varepsilon
	\big)
\nonumber\\
&+2\bar{C}^2\big(c(T)+m_12c_0\varepsilon c_0\big)
c_14m_{1}c_0^2c_{2}(T)\varepsilon
\leq
C(T)\varepsilon,
\end{align}
where
\begin{equation*} C(T)=2m^2\big(2c_0+2m_1+c_1c(T)\big)+2\bar{C}^2\big(c(T)+2m_2c_0^2\big)
\big(
c_2(T)(1+c_1c_0+2m_1c_0(3+2c_0c_1))
\big).\end{equation*}

Similarly, using the second equation in \eqref{eq:subproblem2}, we derive that
\begin{equation}\label{lemma4.5-2}
\sum_{j=-\infty}^{+\infty}|v_{j-n_2+n_{0}}^{n_2}-v_{j}^{n_{0}}|^{2}\tau
\leq C(T)\varepsilon.
\end{equation}

\textit{Step 2. Estimates on $(u^{(\tau)},v^{(\tau)})$ for $(x,t_0)\in\mathbb{R}\times[0,T]$ and sufficiently small $h>0$.}
In this case, if $\tau\in(0,\delta_1)$ and $h\in(0,\delta_1)$, then it follows from \eqref{eq:characteristic-estimate-uv} that
\begin{align*}
&\int_{-\infty}^{\infty}
\big|u^{(\tau)}(x+h,t_{0}+h)-u^{(\tau)}(x,t_{0})\big|^{2}\mathrm{d}x\\
=&
\int_{-\infty}^{\infty}
\Big|
u^{(\tau)}\Big( x+h-
\big(t_0+h-\Big[\frac{t_0+h}{\tau}\Big]\tau \big),
\Big[\frac{t_0+h}{\tau}\Big]\tau\Big)
-
u^{(\tau)}\Big( x-
\big( t_0-\Big[\frac{t_0}{\tau}\Big]\tau\big) ,
\Big[\frac{t_0}{\tau}\Big]\tau\Big)
\Big|^{2}
\mathrm{d}x\\
\leq&
\sum_{j=-\infty}^{\infty}
\int_{j\tau}^{(j+1)\tau}\Big|u^{(\tau)}\Big(x,\Big[\frac{t_0+h}{\tau}\Big]\tau \Big)
-u^{(\tau)}\Big(x-\Big[\frac{t_0+h}{\tau}\Big]\tau+\Big[\frac{t_0}{\tau}\Big]\tau  ,\Big[\frac{t_0}{\tau}\Big]\tau \Big)
\Big|^{2}\mathrm{d}x\\
\leq&
\sum_{j=-\infty}^{\infty}
\big| u_{j}^{\big[\frac{t_0+h}{\tau}\big]}-u_{j-\big[\frac{t_0+h}{\tau}\big]+\big[\frac{t_0}{\tau}\big]}^{\big[\frac{t_0}{\tau}\big]}\big|^2\tau\leq C(T)\varepsilon,
\end{align*}
where the last inequality is a consequence
of \eqref{lemma4.5-1}.

The remaining estimate in Lemma~\ref{5.6.6} for $v^{(\tau)}$ is obtained in the same way as that for $u^{(\tau)}$, by using \eqref{lemma4.5-2} in place of \eqref{lemma4.5-1}. This completes the proof.
\end{proof}

In view of Corollary \ref{eq:coro-strip} and Lemma \ref{5.6.6}, the following lemma shows the compactness of the set of time-splitting solutions
\begin{equation*}
S_T= \Big\{\big(u^{(\tau)},v^{(\tau)}\big)\big|_{\mathbb{R}\times[0,T]}\:: \:\tau\in(0,\tau_{*})\Big\}
\end{equation*}
defined as in \eqref{eq:ST}.

\begin{lemma}\label{lem:proposition4.1}
Assume that the initial data $(u_0,v_0)\in\mathrm{L}^2(\mathbb{R})$.
Let $\{\tau_l\}_{l=1}^{+\infty}$ be any sequence of mesh sizes such that $\tau_l \to 0$ as $l \to +\infty$.
Then, for any $T>0$, there exists a constant $0<\tau_{\sharp}<1$ such that if $\{\tau_l\}_{l=1}^{+\infty}\subset(0,\tau_{\sharp})$, then the set of time-splitting solutions
$\{(u^{(\tau_l)},v^{(\tau_l)})\}_{l=1}^{+\infty}
$
is relatively compact in $\mathrm{C}([0,T];\mathrm{L}^{2}(\mathbb{R}))$.
\end{lemma}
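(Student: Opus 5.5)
We verify the hypotheses of the Ascoli-type characterization of relatively compact sets in $\mathrm{C}([0,T];\mathrm{L}^{2}(\mathbb{R}))$ recorded in Lemma \ref{lem:A2}. Concretely, we show that the family $\{(u^{(\tau_l)},v^{(\tau_l)})\}$ satisfies two conditions. First, for each fixed $t\in[0,T]$ the set $\{(u^{(\tau_l)},v^{(\tau_l)})(\cdot,t)\}_l$ is relatively compact in $\mathrm{L}^2(\mathbb{R})$. Second, the family is equicontinuous in time with values in $\mathrm{L}^2(\mathbb{R})$. We take $\tau_\sharp=\min\{\tau_*,\delta_1\}$, with $\tau_*$ from \eqref{eq:ST} and $\delta_1$ from Lemma \ref{5.6.6}, so that both Corollary \ref{eq:coro-strip} and Lemma \ref{5.6.6} apply to every $\tau_l$.

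For the pointwise-in-time compactness we use the Riesz--Fréchet--Kolmogorov criterion. It needs three ingredients.

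1. \emph{Uniform $\mathrm{L}^2$ bound.} This follows from \eqref{eq:l2-conserve-i} at grid times. For intermediate times it follows from the transport identity \eqref{eq:characteristic-estimate-uv}, which gives $\|(u^{(\tau)},v^{(\tau)})(\cdot,t)\|_{\mathrm{L}^2}^2\le c_0$.

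2. \emph{Uniform small spatial translations.} This is exactly Corollary \ref{eq:coro-strip}. Negative shifts $\mu<0$ reduce to positive ones by the change of variables $x\mapsto x-\mu$.

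3. \emph{Uniform tightness.} We need $\sup_l\sup_{t\le T}\int_{|x|\ge R}(|u^{(\tau_l)}|^2+|v^{(\tau_l)}|^2)\,dx\to0$ as $R\to\infty$. We would obtain this from Lemma \ref{lemma2.3} applied to large discrete characteristic triangles in the exterior region, as in the proof of Lemma \ref{lem:subcase1}. The $\mathrm{L}^2$ mass on $\{x\ge R+t\}$ at time $t$ is bounded by the initial mass on $\{x\ge R\}$, up to a discretization error of one cell. The initial mass is uniformly small by \eqref{eq:initial-data-convergent} and $(u_0,v_0)\in\mathrm{L}^2$. Since $t\le T$, replacing $R$ by $R+T$ handles the whole strip.

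For equicontinuity in time we combine Lemma \ref{5.6.6} with Corollary \ref{eq:coro-strip}. For $0<h<\delta_1$ we write
\begin{equation*}
u^{(\tau)}(x,t_0+h)-u^{(\tau)}(x,t_0)=\big[u^{(\tau)}(x,t_0+h)-u^{(\tau)}(x-h,t_0)\big]+\big[u^{(\tau)}(x-h,t_0)-u^{(\tau)}(x,t_0)\big].
\end{equation*}
The first bracket is controlled in $\mathrm{L}^2_x$ by Lemma \ref{5.6.6} after the shift $x\mapsto x-h$. The second is controlled by Corollary \ref{eq:coro-strip}. The same decomposition works for $v$ with the opposite shift. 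This gives $\sup_{l,t_0}\|(u^{(\tau_l)},v^{(\tau_l)})(\cdot,t_0+h)-(u^{(\tau_l)},v^{(\tau_l)})(\cdot,t_0)\|_{\mathrm{L}^2}^2\le C(T)\varepsilon$ for $h$ small. Note that this estimate holds only for $\tau$ small relative to the target $\varepsilon$.

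We expect this dependence on $\tau$ to be the main obstacle. The time-splitting solutions are discontinuous in time at the grid points, because the nonlinear step produces jumps. Hence finitely many members of the sequence, those with $\tau_l$ not small, need not be continuous into $\mathrm{L}^2$ in the strict sense, and genuine equicontinuity fails for them. We would handle this in one of two ways. The first option is the asymptotic form of Ascoli, in which equicontinuity is required only for $l\ge l_0(\varepsilon)$; the finitely many remaining functions form a compact set in the appropriate space of bounded functions, and the limit lies in $\mathrm{C}$. The second option is to note that the jump at a grid time is of size $O(\tau)$ in $\mathrm{L}^2$ by \eqref{eq:sum}, so each individual member is $\varepsilon$-close to a continuous function. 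Either way, a diagonal argument combining the pointwise compactness with this asymptotic equicontinuity yields a subsequence that is Cauchy uniformly in $t\in[0,T]$, which completes the proof.
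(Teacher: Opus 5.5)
Up to the last step this is the paper's argument: Kolmogorov--Riesz--Fr\'echet at each fixed $t$ (uniform $\mathrm{L}^2$ bound, tails from Lemma~\ref{lemma2.3}, translations from Corollary~\ref{eq:coro-strip}), and time equicontinuity by splitting into a characteristic shift (Lemma~\ref{5.6.6}) and a spatial shift (Corollary~\ref{eq:coro-strip}).

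The paper then applies Lemma~\ref{lem:A2} directly with a single $\tau_\sharp<\min\{\tau_*,\delta_1\}$. This overlooks two facts: $\delta_1$ and $\tau_\flat$ depend on $\varepsilon$, and the splitting solutions in general jump at the grid times, so they do not lie in $\mathrm{C}([0,T];\mathrm{L}^2(\mathbb{R}))$. Your first fix is the right way to close the argument: require equicontinuity only for $l\ge l_0(\varepsilon)$, note that the finitely many remaining members are harmless both in the Kolmogorov step and in the diagonal extraction, and obtain a subsequence that is uniformly Cauchy in $t$ with a continuous limit. This is what makes the statement, and its use in \eqref{eq:limit-solution}, rigorous.

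For the same reason you should not set $\tau_\sharp=\min\{\tau_*,\delta_1\}$, since that quantity depends on $\varepsilon$. Once the argument is asymptotic, any fixed $\tau_\sharp\le\bar\tau$ will do.

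Drop the second fix. For $\mathrm{L}^2$ data, \eqref{eq:sum} does not give an $O(\tau)$ jump. For instance, with $m=\beta=0$ the ODE step multiplies $u$ cellwise by $\mathrm{e}^{i\alpha|v|^2\tau}$, so a cell with $|u|^2\tau=|v|^2\tau=A$ produces an $\mathrm{L}^2$ jump of order $|\alpha|A^{3/2}$, whatever $\tau$ is. Moreover, a member with a jump of size $J$ stays at sup-distance at least $J/2$ from every continuous function, so it is not $\varepsilon$-close to one unless $\tau_l$ is already small.
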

\begin{proof}
Let $\varepsilon>0$ be given. By \eqref{eq:former-case-2} and \eqref{eq:interval-initial-2}, there exist constants $C_*(T)>0$ and $R>0$, such that,
for all $\tau\in(0,\tau_*)$, it holds that
\begin{equation*}
	\sup_{0\leq t \leq T}
	\left(
	\|u^{(\tau)}(\cdot,t)\|_{L^{2}(\mathbb{R})}^{2}
	+
	\|v^{(\tau)}(\cdot,t)\|_{L^{2}(\mathbb{R})}^{2}
	\right)
	\le C_*(T),
\end{equation*}
and
\begin{equation*}
	\sup_{0\leq t \leq T}
	\int_{|x|>R}
	\left(
	|u^{(\tau)}(x,t)|^2
	+
	|v^{(\tau)}(x,t)|^2
	\right)\mathrm{d}x
	< \varepsilon.
\end{equation*}
Moreover, by Corollary \ref{eq:coro-strip}, we know that for all $\tau\in(0,\tau_{*})$ and $\mu\in(0,\tau_{*})$,
\begin{equation}\label{eq:space}
\sup_{0\leq t\leq T}\int_{\mathbb{R}}
	\Big(
	|u^{(\tau)}(x+\mu,t)-u^{(\tau)}(x,t)|^{2}
	+
	|v^{(\tau)}(x+\mu,t)-v^{(\tau)}(x,t)|^{2}
	\Big)\,\mathrm{d}x
	\leq \varepsilon.
\end{equation}
Hence, by the Kolmogorov--Riesz--Fr\'echet Theorem; see \cite[Theorem 4.26 and  Corollary 4.27]{BrezisBook}, $S_T(t)=\big\{\big(u^{(\tau)}(\cdot,t),v^{(\tau)}(\cdot,t)\big): \big(u^{(\tau)},v^{(\tau)}\big)\in S_T\big\}$ is relatively compact in $L^{2}(\mathbb{R})$.

It remains to prove that $S_T$ is uniformly equicontinuous. To this end, we estimate
\begin{equation*}
	\|\big(u^{(\tau)}(\cdot,t_2),v^{(\tau)}(\cdot,t_2)\big)-\big(u^{(\tau)}(\cdot,t_1),v^{(\tau)}(\cdot,t_1)\big)\|_{L^{2}(\mathbb{R})}
\end{equation*}
for all $0<t_1\le t_2<T$  with $t_2-t_1= h>0$.

In view of Lemma~\ref{5.6.6}, there exist constants $\delta_1>0$ and $C(T)>0$ such that, for any $(u^{(\tau)},v^{(\tau)})\in S_T$, if $\tau\in(0,\delta_1)$, $h\in(0,\delta_1)$ and $t\in[0,T]$, then
		\begin{equation*}
		\int_{\mathbb R}
		\left( |u^{(\tau)}(x+h,t+h)-u^{(\tau)}(x,t)|^2
		+|v^{(\tau)}(x-h,t+h)-v^{(\tau)}(x,t)|^2\right)
		\mathrm{d}x
		\le C(T)\varepsilon.
	    \end{equation*}
Using the above estimates and taking $\mu=\pm h$ in \eqref{eq:space}, we deduce that there exists a constant $\tau_\sharp\in(0,\min\{\tau_{*},\delta_1\})$ such that, for all $\tau\in(0,\tau_\sharp)$ and $h=t_2-t_1\in(0,\tau_\sharp)$,
\begin{align*}
	&\int_{\mathbb R}|u^{(\tau)}(x,t_2)-u^{(\tau)}(x,t_1)|^2\mathrm{d}x
	+\int_{\mathbb R}|v^{(\tau)}(x,t_2)-v^{(\tau)}(x,t_1)|^2\mathrm{d}x\\
	\leq
	&2\int_{\mathbb R}|u^{(\tau)}(x,t_1+h)-u^{(\tau)}(x+h,t_1+h)|^2\mathrm{d}x
	+
	2\int_{\mathbb R}|u^{(\tau)}(x+h,t_1+h)-u^{(\tau)}(x,t_1)|^2\mathrm{d}x\\
	&+
	2\int_{\mathbb R}|v^{(\tau)}(x,t_1+h)-v^{(\tau)}(x-h,t_1+h)|^2\mathrm{d}x
	+
	2\int_{\mathbb R}|v^{(\tau)}(x-h,t_1+h)-v^{(\tau)}(x,t_1)|^2\mathrm{d}x\\
	\leq& 4\varepsilon+4C(T)\varepsilon.
\end{align*}

Therefore, we show that the set of time-splitting solutions $\{(u^{(\tau_l)},v^{(\tau_l)})\}_{l=1}^{+\infty}$ with mesh sizes $\{\tau_l\}_{l=1}^{+\infty}\subset(0,\tau_{\sharp})$ satisfies the conditions of the Lemma \ref{lem:A2} and hence is relatively compact in $\mathrm{C}([0,T];\mathrm{L}^{2}(\mathbb{R}))$.
\end{proof}
\section{Uniqueness of the limit of time-splitting solutions}
\label{sec:strong}
In this section, we prove the main theorem of the paper. For any initial data $(u_{0},v_{0}) \in \mathrm{L}^{2}(\mathbb{R})$, the NLDE Cauchy problem \eqref{eq:NLDE}--\eqref{eq:NLDE-initail-data} admits a global strong solution, see Lemma \ref{lem:A3}.
By Lemma~\ref{lem:proposition4.1}, we can extract a convergent subsequence from any sequence of time-splitting solutions.  We then show that, as the mesh size tends to zero, any convergent subsequence converges strongly in
$\mathrm{L}^2(\mathbb{R}\times[0,T])$ to the strong solution of
\eqref{eq:NLDE}--\eqref{eq:NLDE-initail-data}, and the limit is unique.

To begin with, let $(\hat{u},\hat{v})\in\mathrm{C}([0,\infty);\mathrm{L}^{2}(\mathbb{R}))$ be the strong solution of the NLDE Cauchy problem \eqref{eq:NLDE}--\eqref{eq:NLDE-initail-data} with initial data $(u_{0},v_{0}) \in \mathrm{L}^{2}(\mathbb{R})$, in the sense of Definition \ref{definition1.1}. More precisely, we can choose a sequence of smooth functions
\begin{equation*}
	(\hat{u}_{\mathrm{k}},\hat{v}_{\mathrm{k}})(x,0)\in \mathrm{C}_{\mathrm{c}}^\infty(\mathbb{R}),\quad \mathrm{k}=1,2,\ldots
	\end{equation*}
such that
\begin{equation}\label{5.1}
\lim_{\mathrm{k}\to \infty}\Big(
\|\hat{u}_{\mathrm{k}}(x,0)-u_{0}(x)\|_{\mathrm{L}^{2}(\mathbb{R})} +
\|\hat{v}_{\mathrm{k}}(x,0)-v_{0}(x)\|_{\mathrm{L}^{2}(\mathbb{R})}\Big)=0.
\end{equation}
Then for any $T>0$, \cite[Lemma 3.1]{Y.ZhangandQ.Zhao1} ensures the existence of a sequence of smooth solutions
\begin{equation*}
(\hat{u}_{\mathrm{k}},\hat{v}_{\mathrm{k}})\in \mathrm{C}_c^\infty(\mathbb{R}\times [0,T])
\end{equation*}
to \eqref{eq:NLDE} with initial data $(\hat{u}_{\mathrm{k}}(x,0),\hat{v}_{\mathrm{k}}(x,0))$, such that
\begin{equation}\label{eq:uniqueness-s}
\lim_{\mathrm{k}\to \infty}\Big(
\|\hat{u}_{\mathrm{k}}-\hat{u}\|_{\mathrm{L}^2(\mathbb{R}\times[0,T])}
+
\|\hat{v}_{\mathrm{k}}-\hat{v}\|_{\mathrm{L}^2(\mathbb{R}\times[0,T])}\Big)=0.
\end{equation}

Next, let $\{\tau_l\}_{l=1}^{+\infty} \subset (0,\tau_{\sharp}) \subset (0,1)$ be a sequence of mesh sizes such that $\tau_l \to 0$ as $l \to +\infty$. Given initial data $(u_{0},v_{0}) \in \mathrm{L}^{2}(\mathbb{R})$,  we obtain a sequence of time-splitting solutions $\{(u^{(\tau_l)},v^{(\tau_l)})\}_{l=1}^{+\infty}$ corresponding to the sequence of mesh sizes  $\{\tau_l\}_{l=1}^{+\infty}$.
Thus, by Lemma \ref{lem:proposition4.1}, there is a convergent subsequence of any sequence of time-splitting solutions $\{(u^{(\tau_l)},v^{(\tau_l)})\}_{l=1}^{+\infty}$, still denote by $\{(u^{(\tau_l)},v^{(\tau_l)})\}_{l=1}^{+\infty}$, such that
\begin{equation}\label{eq:limit-solution}
	\lim\limits_{l\to +\infty}
	\left(
	\|u^{(\tau_l)}-u_\flat\|_{\mathrm{C}([0,T];\mathrm{L}^2(\mathbb{R}))}
	+
	\|v^{(\tau_l)}-v_\flat\|_{\mathrm{C}([0,T];\mathrm{L}^2(\mathbb{R}))}
	\right)
	= 0.
\end{equation}
We remark that \eqref{eq:limit-solution} implies that
\begin{equation*}
	\lim\limits_{l\to +\infty}
	\left(
	\|u^{(\tau_l)}-u_\flat\|_{\mathrm{L}^2(\mathbb{R}\times[0,T])}
	+
	\|v^{(\tau_l)}-v_\flat\|_{\mathrm{L}^2(\mathbb{R}\times[0,T])}
	\right)
	= 0.
\end{equation*}

Hence, to prove Theorem \ref{mainresult}, it suffices to show that, for any $T>0$,
\begin{equation}\label{eq:unique-goal}
	(u_\flat,v_\flat)(x,t)=(\hat{u},\hat{v})(x,t),\qquad \text{ in }\:  \mathrm{L}^2(\mathbb{R}\times[0,T]).
\end{equation}

For simplicity, we drop the subscripts $k$ and $l$ whenever no confusion arises from the context, namely, we first consider the case where $(\hat{u},\hat{v})$ is the smooth solution with initial data $(\hat{u}(x,0),\hat{v}(x,0))\in \mathrm{C}_{\mathrm{c}}^\infty(\mathbb{R})$ and $(u^{(\tau)},v^{(\tau)})$ is the time-splitting solution. Moreover, we set
\begin{equation}\label{eq:mathcal-N}
	\mathcal{N}_{1}(u,v)
=\alpha u|v|^{2}+2\beta(\overline{u}v+u\overline{v})v,\quad  \mathcal{N}_{2}(u,v)
=\alpha v|u|^{2}+2\beta(\overline{u}v+u\overline{v})u.
\end{equation}

To prove \eqref{eq:unique-goal}, we now consider the difference between the smooth solution $(\hat{u},\hat{v})$ and the time-splitting solution $(u^{(\tau)},v^{(\tau)})$ in a characteristic triangle $\Lambda((j+3/2)\tau,(n+3/2)\tau;n\tau)$ for $j=0,\pm1,\pm2,\ldots,n=0,1,2,\ldots$ with $n\leq (T+1)/\tau$, as shown in
Fig.~\ref{fig1-4}.
\begin{figure}[h]
	\centering
	\begin{tikzpicture}[scale=0.75]
		\draw[densely dashed] (-2,0) -- (12,0) node[right] {\small$t=n \tau$};
		\draw[densely dashed] (-2,2) -- (12,2) node[right] {\small$t=(n+1)\tau$};
			\draw[densely dashed] (-2,3) -- (12,3) node[right] {\small$t=(n+3/2)\tau$};
		\draw[densely dashed] (10,0)--(10,2);
		\draw[densely dashed] (8,0)--(8,2);
		\draw[densely dashed] (6,0)--(6,2);
		\draw[densely dashed] (4,0)--(4,2);
		\draw[densely dashed] (2,0)--(2,2);
		\draw[densely dashed] (0,0)--(0,2);
		\draw[thin](4,0)--(6,2)--(8,0)--(4,0);
		\draw[thin] (2,0) -- (4,2) -- (6,0) -- cycle;
		\draw[thick] (4,2)--(5,3)--(6,2);
		\node at (5,3.3) {$((j+3/2)\tau,(n+3/2)\tau)$};
		\draw[thick] (2,0) -- (4,2) -- (6,2) --(8,0)-- cycle;;
		\fill (2,0) circle (1pt);
		\fill (4,2) circle (1pt);
		\fill (6,0) circle (1pt);
		\fill (8,0) circle (1pt);
		\node[below] at (2,0) {\small$j\tau$};
		\node[below] at (6,0) {\small$(j+2)\tau$};
		\node[below] at (4,0) {\small$(j+1)\tau$};
		\node[below] at (8,0) {\small$(j+3)\tau$};
	\end{tikzpicture}
	\caption{\label{fig1-4}  The characteristic triangle $\Lambda((j+3/2)\tau,(n+3/2)\tau;n\tau)$}
\end{figure}

Then, for the smooth solution $(\hat{u},\hat{v})$, by the method of characteristics, we deduce from \eqref{eq:NLDE} that
\begin{equation}\label{5.2}
	\left\{
	\begin{aligned}
		&\frac{\mathrm{d}}{\mathrm{d}s}\hat{u}(j\tau+s,n\tau+s)
		=\big(im\hat{v}+i\mathcal{N}_{1}(\hat{u},\hat{v})\big)(j\tau+s,n\tau+s),\\
		&\frac{\mathrm{d}}{\mathrm{d}s}\hat{v}(j\tau+2\tau-s,n\tau+s)
		=\big(im\hat{u}+i\mathcal{N}_{2}(\hat{u},\hat{v})\big)(j\tau+2\tau-s,n\tau+s).
	\end{aligned}\right.
\end{equation}

For the time-splitting solution $(u^{(\tau)},v^{(\tau)})$, the linear subproblem \eqref{eq:subproblem1}--\eqref{eq:subproblem1-initial-data} on each time interval $t\in[n\tau,(n+1)\tau)$ takes the following equivalent form:
\begin{equation}\label{5.5}
	\left\{
	\begin{aligned}
		&\frac{\mathrm{d}}{\mathrm{d}s}{u}^{(\tau)}(j\tau+s,n\tau+s)=0,\\
		&\frac{\mathrm{d}}{\mathrm{d}s}{v}^{(\tau)}(j\tau+2\tau-s,n\tau+s)=0,
	\end{aligned}\right.
\end{equation}
with $s \in [0,\tau)$ and
\begin{equation*}
(u^{(\tau)},v^{(\tau)})(x,n\tau)=(u^{(\tau)},v^{(\tau)})(j\tau,n\tau)=(u_{j}^n,v_{j}^n),\quad x\in[j\tau,(j+1)\tau).
\end{equation*}
Moreover, the nonlinear equation \eqref{eq:subproblem2}--\eqref{eq:subproblem2-initial-data} on $x\in[(j+1)\tau,(j+2)\tau)$ and $s\in [0,\tau]$ can be equivalently written as:
\begin{equation}\label{5.3}
	\left\{
	\begin{aligned}
		&\frac{\mathrm{d}}{\mathrm{d}s}
		u_{j+1}^{n,2}(s)
		=imv_{j+1}^{n,2}(s)+i\mathcal{N}_{1}(u_{j+1}^{n,2}(s),v_{j+1}^{n,2}(s)),\\
		&\frac{\mathrm{d}}{\mathrm{d}s}
		v_{j+1}^{n,2}(s)
		=imu_{j+1}^{n,2}(s)+i\mathcal{N}_{2}(u_{j+1}^{n,2}(s),v_{j+1}^{n,2}(s)),\\
		&(u_{j+1}^{n,2}(s),v_{j+1}^{n,2}(s))\big|_{s=0}
		=(u_{j+1}^{n+1-},v_{j+1}^{n+1-})=(u_j^n,v_{j+2}^n),
	\end{aligned}\right.
\end{equation}
with
\begin{equation*}
(u_{j+1}^{n,2}(s),v_{j+1}^{n,2}(s))\big|_{s=\tau}
=(u_{j+1}^{n+1},v_{j+1}^{n+1}).
\end{equation*}
\subsection{Pointwise estimates on difference}
For simplicity, we denote the difference between the smooth solution $(\hat{u},\hat{v})$ and the time-splitting solution $(u^{(\tau)},v^{(\tau)})$ by
\begin{equation*}
(\mathcal{U},\mathcal{V})
:=(\hat{u}-u^{(\tau)},\hat{v}-v^{(\tau)}).
\end{equation*}
In this subsection, we derive pointwise estimates for $(\mathcal{U},\mathcal{V})$ at $t=(n+1)\tau$ on an arbitrary interval $x\in[(j+1)\tau,(j+2)\tau)$ in the characteristic triangle $\Lambda((j+3/2)\tau,(n+3/2)\tau;n\tau)$, see Fig.~\ref{fig1-4}.
Observe that if $t=n\tau$, then the time-splitting solution is constant on $[j\tau,(j+1)\tau)$. Accordingly, we decompose
	\begin{equation*}
		(\hat{u},\hat{v})(x,n\tau)
		=(\hat{u},\hat{v})(x,n\tau)
		-(\hat{u},\hat{v})(j\tau,n\tau)
		+(\hat{u},\hat{v})(j\tau,n\tau),
		\quad x\in[j\tau,(j+1)\tau).
\end{equation*}

Since $(\hat{u},\hat{v})\in \mathrm{C}_{\mathrm{c}}^\infty(\mathbb{R}\times [0,T])$, there exist constants $\mathcal{M}_0(T)>0$ and $\mathcal{M}(T)>0$, such that
\begin{align*}
	&\mathcal{M}_0=\mathcal{M}_0(T)=
	\max_{\mathbb{R}\times[0,T]}(|\hat{u}|+|\hat{v}|+1),\\
&\mathcal{M}=\mathcal{M}(T)=\max_{\mathbb{R}\times[0,T]}(|\partial_t\hat{u}|+|\partial_x\hat{u}|
	+|\partial_t\hat{v}|+|\partial_x\hat{v}|+1).
	\end{align*}
Therefore, by the mean value theorem, for every $x\in[j\tau,(j+1)\tau)$,
	\begin{equation*}
		|(\hat{u},\hat{v})(x,n\tau)
		-(\hat{u},\hat{v})(j\tau,n\tau)|\leq |x-j\tau|\max_{\mathbb{R}\times[0,T]}(|\partial_x\hat{u}|+|\partial_x\hat{v}|)	\leq \mathcal{M}\tau.
	\end{equation*}
Let $(\hat{u},\hat{v})(j\tau,n\tau)=(\hat{u}_{j}^n,\hat{v}_{j}^n)$ Let $(\hat{u},\hat{v})(j\tau,n\tau)
=(\hat{u}_{j}^{n},\hat{v}_{j}^{n})$
for $j\in\mathbb Z$ and $n=0,1,2,\ldots$. Then
	\begin{align}
		&|\mathcal{U}(x,n\tau)|
		=|\hat{u}(x,n\tau)-u^{(\tau)}(x,n\tau)|
		\leq \mathcal{M}\tau+|\hat{u}_{j}^{n}-u_{j}^{n}|,\label{eq:error1}\\
		&|\mathcal{V}(x,n\tau)|
		=|\hat{v}(x,n\tau)-v^{(\tau)}(x,n\tau)|
		\leq
		\mathcal{M}\tau+|\hat{v}_{j}^{n}-v_{j}^{n}|.\label{eq:error2}
	\end{align}
	
To obtain the pointwise estimates for $(\mathcal{U},\mathcal{V})$ at $t=(n+1)\tau$ on the interval $x\in[(j+1)\tau,(j+2)\tau)$, applying \eqref{eq:error1} and \eqref{eq:error2} with $(j,n)$ replaced by $(j+1,n+1)$, we obtain
	\begin{equation*}
		|\mathcal{U}(x,(n+1)\tau)|
		\leq \mathcal{M}\tau
		+|\hat{u}_{j+1}^{n+1}-u_{j+1}^{n+1}|
	\end{equation*}
	and
	\begin{equation*}
		|\mathcal{V}(x,(n+1)\tau)|
		\leq \mathcal{M}\tau+|\hat{v}_{j+1}^{n+1}-v_{j+1}^{n+1}|.
	\end{equation*}

Next, we consider the difference
	\begin{equation*}
		(\hat{u}_{j+1}^{n+1}-u_{j+1}^{n+1},\hat{v}_{j+1}^{n+1}-v_{j+1}^{n+1})=((\hat{u}-u^{(\tau)})((j+1)\tau,(n+1)\tau),(\hat{v}-v^{(\tau)})((j+1)\tau,(n+1)\tau)).
	\end{equation*}
Integrating \eqref{5.3} over $[0,\tau]$ with respect to $s$, we have
\begin{equation}
	\left\{
	\begin{aligned}
		&u_{j+1}^{n+1}-u_j^n
		=\int_{0}^{\tau}\Big(imv_{j+1}^{n,2}(s)+i\mathcal{N}_{1}(u_{j+1}^{n,2}(s),v_{j+1}^{n,2}(s))\Big)\mathrm{d}s,\\
		&v_{j+1}^{n+1}-v_{j+2}^n
		=\int_{0}^{\tau}\Big(imu_{j+1}^{n,2}(s)+i\mathcal{N}_{2}(u_{j+1}^{n,2}(s),v_{j+1}^{n,2}(s))\Big)\mathrm{d}s,
	\end{aligned}\right.
\end{equation}
Moreover, integrating \eqref{5.2} over $[0,\tau]$ with respect to $s$, we have
\begin{equation}\label{eq:int-smooth}
	\left\{
	\begin{aligned}
		&\hat{u}_{j+1}^{n+1}
		-\hat{u}_{j}^{n}
		=\int_{0}^{\tau}\big(im\hat{v}+i\mathcal{N}_{1}(\hat{u},\hat{v})\big)(j\tau+s,n\tau+s)\mathrm{d}s,\\
		&\hat{v}_{j+1}^{n+1}
		-\hat{v}_{j+2}^{n}
		=\int_{0}^{\tau}\big(im\hat{u}+i\mathcal{N}_{2}(\hat{u},\hat{v})\big)(j\tau+2\tau-s,n\tau+s)\mathrm{d}s,.
	\end{aligned}\right.
\end{equation}

To estimate the difference $(\hat{u}_{j+1}^{n+1}-u_{j+1}^{n+1},\hat{v}_{j+1}^{n+1}-v_{j+1}^{n+1})$, we introduce an auxiliary pair of functions
\begin{equation*}
	(\hat{u}_{n,2}((j+1)\tau,s),\hat{v}_{n,2}((j+1)\tau,s)),\quad s\in[0,\tau],
\end{equation*}
which is defined by the following system
\begin{equation}\label{eq:4.17}
	\left\{
	\begin{aligned}
		&\frac{\mathrm{d}}{\mathrm{d}s}
		\hat{u}_{n,2}((j+1)\tau,s)
		=\big(im\hat{v}_{n,2}+i\mathcal{N}_{1}(\hat{u}_{n,2}(s),\hat{v}_{n,2})\big)((j+1)\tau,s)+\mathfrak{g}_{j}^{n}(s),\\
		&\frac{\mathrm{d}}{\mathrm{d}s}
		\hat{v}_{n,2}((j+1)\tau,s)
		=\big(im\hat{u}_{n,2}+i\mathcal{N}_{2}(\hat{u}_{n,2},\hat{v}_{n,2})\big)((j+1)\tau,s)+\mathfrak{f}_{j}^{n}(s),\\
		&\big(\hat{u}_{n,2}((j+1)\tau,s),\hat{v}_{n,2}((j+1)\tau,s)\big)\big|_{s=0}
		=\big(\hat{u}(j\tau,n\tau),\hat{v}(j\tau+2\tau,n\tau)\big),
	\end{aligned}\right.
\end{equation}
where the error terms $\mathfrak{g}_{j}^{n}(s)$ and $\mathfrak{f}_{j}^{n}(s)$ are defined by
\begin{equation*}
	\mathfrak{g}_{j}^{n}(s)
	=	\big(im\hat{v}+i\mathcal{N}_{1}(\hat{u},\hat{v})\big)(j\tau+s,n\tau+s)
	-\Big(im\hat{v}_{n,2}+i\mathcal{N}_{1}(\hat{u}_{n,2},\hat{v}_{n,2})\Big)((j+1)\tau,s),
\end{equation*}
and
\begin{equation*}
	\mathfrak{f}_{j}^{n}(s)
	=\big(im\hat{u}+i\mathcal{N}_{2}(\hat{u},\hat{v})\big)(j\tau+2\tau-s,n\tau+s)
	-\big(im\hat{u}_{n,2}+i\mathcal{N}_{2}(\hat{u}_{n,2},\hat{v}_{n,2})\big)((j+1)\tau,s).
\end{equation*}
Integrating \eqref{eq:4.17} over $[0,\tau]$ with respect to $s$ yields
\begin{equation*}
	\left\{
	\begin{aligned}
		&\hat{u}_{n,2}((j+1)\tau,\tau)
		-\hat{u}_{j}^{n}
		=
		\int_{0}^{\tau}\Big(im\hat{v}_{n,2}+i\mathcal{N}_{1}(\hat{u}_{n,2},\hat{v}_{n,2})\Big)((j+1)\tau,s)\mathrm{d}s
		+\int_{0}^{\tau}\mathfrak{g}_{j}^{n}(s)\mathrm{d}s,\\
		&\hat{v}_{n,2}((j+1)\tau,\tau)
		-\hat{v}_{j+2}^{n}
		=
		\int_{0}^{\tau}\big(im\hat{u}_{n,2}+i\mathcal{N}_{2}(\hat{u}_{n,2},\hat{v}_{n,2})\big)((j+1)\tau,s)\mathrm{d}s
		+\int_{0}^{\tau}\mathfrak{f}_{j}^{n}(s)\mathrm{d}s.
	\end{aligned}\right.
\end{equation*}

Comparing the above equations with \eqref{eq:int-smooth} and using the definitions of
$\mathfrak{g}_{j}^{n}$ and $\mathfrak{f}_{j}^{n}$, we conclude that
\begin{equation*}
	(\hat{u}_{n,2}((j+1)\tau,s),\hat{v}_{n,2}((j+1)\tau,s))\big|_{s=\tau}
	=(\hat{u}_{j+1}^{n+1},\hat{v}_{j+1}^{n+1}).
\end{equation*}

For simplicity of notation, let
\begin{equation*}
	\hat{u}_{j+1}^{n,2}(s):=\hat{u}_{n,2}((j+1)\tau,s),\qquad \hat{v}_{j+1}^{n,2}(s):=\hat{v}_{n,2}((j+1)\tau,s).
\end{equation*}
Therefore, \eqref{eq:4.17} can be equivalently written as
\begin{equation}\label{eq:4.13-1}
	\left\{
	\begin{aligned}
		&
		\frac{\mathrm{d}\hat{u}_{j+1}^{n,2}(s)}{\mathrm{d}s}
		=
		im\hat{v}_{j+1}^{n,2}(s)+i\mathcal{N}_{1}(\hat{u}_{j+1}^{n,2}(s),\hat{v}_{j+1}^{n,2}(s))
		+\mathfrak{g}_{j}^{n}(s),\\
		&
		\frac{\mathrm{d}\hat{v}_{j+1}^{n,2}(s)}{\mathrm{d}s}
		=
		im\hat{u}_{j+1}^{n,2}(s)+i\mathcal{N}_{2}(\hat{u}_{j+1}^{n,2}(s),\hat{v}_{j+1}^{n,2}(s))
		+\mathfrak{f}_{j}^{n}(s),\\
		&\big(\hat{u}_{j+1}^{n,2}(s),\hat{v}_{j+1}^{n,2}(s)\big)\big|_{s=0}
		=\big(\hat{u}_j^n,\hat{v}_{j+2}^n\big).
	\end{aligned}\right.
\end{equation}

Then, the following lemma establishes the uniform bounds for $\mathfrak{g}_{j}^{n}$ and
$\mathfrak{f}_{j}^{n}$.
\begin{lemma}\label{lem:error}
Let $T>0$. For any $\tau\in(0,1)$ and any $s\in[0,\tau]$, it holds that
	\begin{align}
		&\max_{n\geq 0,-\infty<j<
		\infty}|\mathfrak{g}_{j}^{n}(s)|\leq
		\hat{C}(m,|\alpha|,|\beta|)
		 (\mathcal{M}_0(T))^6\mathcal{M}(T)\tau,\label{eq:lem-1}\\
		& \max_{n\geq 0,-\infty<j<
			\infty}|\mathfrak{f}_{j}^{n}(s)|
			\leq \hat{C}(m,|\alpha|,|\beta|) (\mathcal{M}_0(T))^6\mathcal{M}(T)\tau,\label{eq:lem-2}
	\end{align}
	where
	$\hat{C}(m,|\alpha|,|\beta|)=2(m+|\alpha|+4|\beta|+1)$.
	Moreover, for any given constant $\delta>0$, there exists a constant $\hat{\tau}>0$, depending on $T$ $\mathcal{M}_0$, $\mathcal{M}$ and the initial data $(\hat{u}(x,0),\hat{u}(x,0))\in \mathrm{C}_{\mathrm{c}}^\infty(\mathbb{R})$, such that if $s\in[0,\tau]$ and $0\leq n\leq (T+1)/\tau$, then
	\begin{equation}\label{eq:sum-g}
		\sup_{\tau\in(0,\hat{\tau})}
		\sum_{j=\infty}^{+\infty}
		\big(|\mathfrak{g}_{j}^{n}(s)|^2+|\mathfrak{f}_{j}^{n}(s)|^2\big)\leq
		\frac{\delta}{4},\quad
		\sup_{\tau\in(0,\hat{\tau})}
		\sum_{j=\infty}^{+\infty}
		\sum_{p=0}^{n}
		\big(|\mathfrak{g}_{j}^{p}(s)|^2+|\mathfrak{f}_{j}^{p}(s)|^2\big)<+\infty.
	\end{equation}
\end{lemma}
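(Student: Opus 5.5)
The key observation is that the auxiliary system \eqref{eq:4.17} is not really implicit. Its right-hand side equals $\big(im\hat{v}+i\mathcal{N}_{1}(\hat{u},\hat{v})\big)(j\tau+s,n\tau+s)$, and similarly $\big(im\hat{u}+i\mathcal{N}_{2}(\hat{u},\hat{v})\big)(j\tau+2\tau-s,n\tau+s)$ for the second equation. Integrating from the initial data $(\hat{u}_j^n,\hat{v}_{j+2}^n)$ and using \eqref{5.2}, I first identify
\begin{equation*}
\hat{u}_{j+1}^{n,2}(s)=\hat{u}(j\tau+s,n\tau+s),\qquad \hat{v}_{j+1}^{n,2}(s)=\hat{v}(j\tau+2\tau-s,n\tau+s),\qquad s\in[0,\tau].
\end{equation*}
Consequently $\mathfrak{g}_{j}^{n}(s)$ is the difference of the vector field $im\hat v+i\mathcal{N}_1(\hat u,\hat v)$ evaluated at two pairs. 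In the first pair both components are taken at $(j\tau+s,n\tau+s)$. In the second pair the $u$-component is taken at that same point, while the $v$-component is taken at $(j\tau+2\tau-s,n\tau+s)$. These two evaluation points lie on the same time level and are at distance $|2\tau-2s|\le 2\tau$ apart. The same structure holds for $\mathfrak{f}_{j}^{n}$, with the roles of $u$ and $v$ interchanged.

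For the pointwise bounds \eqref{eq:lem-1}--\eqref{eq:lem-2}, I apply the mean value theorem in $x$. This gives $|\hat v(j\tau+s,n\tau+s)-\hat v(j\tau+2\tau-s,n\tau+s)|\le 2\mathcal{M}\tau$, and likewise for $\hat u$. The linear term then contributes at most $2m\mathcal{M}\tau$. For the cubic terms I use the elementary Lipschitz bound on $\{|u|,|v|\le\mathcal{M}_0\}$:
\begin{equation*}
|\mathcal{N}_1(u,v)-\mathcal{N}_1(u,v')|\le (|\alpha|+4|\beta|)\cdot 2\mathcal{M}_0^2\,|v-v'| .
\end{equation*}
A possibly cruder constant than $2\mathcal{M}_0^2$ is harmless here, since $\mathcal{M}_0\ge 1$ lets every power of $\mathcal{M}_0$ be absorbed into $\mathcal{M}_0^6$. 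Altogether $|\mathfrak{g}_j^n(s)|\le 2(m+|\alpha|+4|\beta|+1)\mathcal{M}_0^6\mathcal{M}\tau$, uniformly in $j$ and in $n\le (T+1)/\tau$. The only care needed is that $n\tau+s\le T+2$, so I will take $\mathcal{M}_0,\mathcal{M}$ over a slightly larger time strip; the statement's $T$-dependence absorbs this.

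For \eqref{eq:sum-g} I use compact support. The data $(\hat u,\hat v)(\cdot,0)$ lie in $\mathrm{C}_{\mathrm{c}}^\infty(\mathbb{R})$, say with support in $[-R,R]$, and the system has speeds $\pm1$. By finite propagation speed (this is also the property encoded in $(\hat u_{\mathrm k},\hat v_{\mathrm k})\in\mathrm{C}_c^\infty(\mathbb{R}\times[0,T])$ from \cite[Lemma 3.1]{Y.ZhangandQ.Zhao1}), $\hat u,\hat v$ vanish for $|x|>R+T+2$. Both vector fields are zero where $\hat u=\hat v=0$, so $\mathfrak{g}_j^n,\mathfrak{f}_j^n$ vanish unless $|j\tau|\le R+T+4$. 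This leaves at most $2(R+T+5)/\tau$ nonzero indices $j$. Hence
\begin{equation*}
\sum_j\big(|\mathfrak{g}_j^n|^2+|\mathfrak{f}_j^n|^2\big)\le \frac{4(R+T+5)}{\tau}\,\hat C^2\mathcal{M}_0^{12}\mathcal{M}^2\tau^2=O(\tau),
\end{equation*}
and choosing $\hat\tau$ small makes this at most $\delta/4$. Summing this bound over $p\le n\le (T+1)/\tau$ gives at most $4(R+T+5)(T+1)\hat C^2\mathcal{M}_0^{12}\mathcal{M}^2$, which is finite and independent of $\tau$.

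The main point to get right is the first step, namely recognizing the exact identification of $(\hat u_{j+1}^{n,2},\hat v_{j+1}^{n,2})$ with $\hat u,\hat v$ along the two characteristics. The definition of $\mathfrak{g},\mathfrak{f}$ looks circular, but it closes once this identification is made. Everything after that is a mean value and support-counting argument.
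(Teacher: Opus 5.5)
Your proposal is correct and follows the paper's proof essentially step by step. The shared ingredients are the identification $\hat u_{j+1}^{n,2}(s)=\hat u(j\tau+s,n\tau+s)$, $\hat v_{j+1}^{n,2}(s)=\hat v(j\tau+2\tau-s,n\tau+s)$ (the paper's \eqref{eq:u-n2}), the mean value theorem across a spatial gap of at most $2\tau$, a Lipschitz bound for $\mathcal{N}_1,\mathcal{N}_2$, and counting the $O(1/\tau)$ nonzero indices $j$ via finite propagation speed.

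Your remark that times up to $n\tau+s\le T+2$ require taking $\mathcal{M}_0,\mathcal{M}$ on a slightly larger strip addresses a point the paper passes over. Also, with $|v|\le\mathcal{M}_0$ your cubic term can be up to twice too large when $\mathcal{M}_0$ is near $1$. To land exactly on the stated constant $2(m+|\alpha|+4|\beta|+1)$, bound $|\hat u|,|\hat v|\le\mathcal{M}_0-1$ (not $\le\mathcal{M}_0$) in the Lipschitz step and use $2(\mathcal{M}_0-1)^2\le\mathcal{M}_0^6$.
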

\begin{proof}
Let
	\begin{align*}
		&\hat{u}_{j}^{n}(s):=\hat{u}(j\tau+s,n\tau+s),\qquad \hat{v}_{j+2}^{n}(s):=\hat{v}(j\tau+2\tau-s,n\tau+s),\\
		&\hat{v}_{j,+}^{n}(s):=\hat{v}(j\tau+s,n\tau+s),\quad	
		\hat{u}_{j+2,-}^{n}(s):=\hat{u}((j+2)\tau-s,n\tau+s).
	\end{align*}
Then, from \eqref{eq:4.17} and the definition of $\mathfrak{g}_{j}^{n}$, we have
\begin{equation}\label{eq:4.17-1}
	\left\{
	\begin{aligned}
		&\frac{\mathrm{d}}{\mathrm{d}s}
		\hat{u}_{j+1}^{n,2}(s)
		=im\hat{v}_{j,+}^{n}(s)+i\mathcal{N}_{1}(\hat{u}_{j}^{n}(s),\hat{v}_{j,+}^{n}(s)),\\
		&\frac{\mathrm{d}}{\mathrm{d}s}
		\hat{v}_{j+1}^{n,2}(s)
		=im\hat{u}_{j+2,-}^{n}(s)+i\mathcal{N}_{2}(\hat{u}_{j+2,-}^{n}(s),\hat{v}_{j+2}^{n}(s)),\\
		&\big(\hat{u}_{j+1}^{n,2}(s),\hat{v}_{j+1}^{n,2}(s)\big)\big|_{s=0}
		=\big(\hat{u}_j^n,\hat{v}_{j+2}^n\big).
	\end{aligned}\right.
\end{equation}
Integrating \eqref{eq:4.17-1} over $[0,s]$ yields
\begin{align*}
	&\hat{u}_{j+1}^{n,2}(s)-\hat{u}_j^n
	\leq \int_{0}^{s}\big(im\hat{v}_{j,+}^{n}(s)+i\mathcal{N}_{1}(\hat{u}_{j}^{n}(s),\hat{v}_{j,+}^{n}(s))\big)\mathrm{d}s,\\
	&\hat{v}_{j+1}^{n,2}(s)-\hat{v}_{j+2}^n
	\leq
	\int_{0}^{s}\big(im\hat{u}_{j+2,-}^{n}(s)+i\mathcal{N}_{2}(\hat{u}_{j+2,-}^{n}(s),\hat{v}_{j+2}^{n}(s))\big)\mathrm{d}s.
\end{align*}
Since $\mathcal{M}_0=\mathcal{M}_0(T)=
\max_{\mathbb{R}\times[0,T]}(|\hat{u}|+|\hat{v}|+1)$, $\tau\in[0,1]$
and $s\in[0,\tau]$, we deduce
\begin{equation}\label{eq:u-n2-0}
	|\hat{u}_{j+1}^{n,2}(s)|
	\leq (m+|\alpha|+4|\beta|+1)\mathcal{M}_0^3, \quad
	|\hat{v}_{j+1}^{n,2}(s)|\leq (m+|\alpha|+4|\beta|+1)\mathcal{M}_0^3.
\end{equation}
	
Next, subtracting \eqref{5.2} from \eqref{eq:4.17-1}, we get	
\begin{equation*}
	\frac{\mathrm{d}}{\mathrm{d}s}
	\big(\hat{u}_{j}^{n}(s)-\hat{u}_{j+1}^{n,2}(s)\big)=0,\quad
	\frac{\mathrm{d}}{\mathrm{d}s}
	\big(\hat{v}_{j+2}^{n}(s)-\hat{v}_{j+1}^{n,2}(s)\big)=0.
\end{equation*}
Since
\begin{equation*}
	\big(\hat{u}_{j}^{n}(s)-\hat{u}_{j+1}^{n,2}(s)\big)\big|_{s=0}=0,\quad
	\big(\hat{v}_{j+2}^{n}(s)-\hat{v}_{j+1}^{n,2}(s)\big)\big|_{s=0}=0,
\end{equation*}
we have
\begin{equation}\label{eq:u-n2}
	\hat{u}_{j}^{n}(s)-\hat{u}_{j+1}^{n,2}(s)=0,\quad
	\hat{v}_{j+2}^{n}(s)-\hat{v}_{j+1}^{n,2}(s)=0.
\end{equation}

Since $|2s-2\tau|\leq 2\tau$ for $s\in[0,\tau]$, the mean value theorem implies
\begin{align*}
	&
	|\hat{u}_{j+2,-}^{n}(s)-\hat{u}_{j}^n|=
	|\hat{u}((j+2)\tau-s,n\tau+s)-\hat{u}(j\tau+s,n\tau+s)|
	\leq
	2\tau\max_{\mathbb{R}\times[0,T]}|\partial_x\hat{u}|
	\leq 2\mathcal{M}\tau,\\
	&
	|\hat{v}_{j,+}^{n}(s)-\hat{v}_{j+2}^n|
	=|\hat{v}(j\tau+s,n\tau+s)-\hat{v}((j+2)\tau-s,n\tau+s)|
	\leq
	2\tau\max_{\mathbb{R}\times[0,T]}|\partial_x\hat{v}|
	\leq 2\mathcal{M}\tau.
\end{align*}
This together with \eqref{eq:u-n2-0} and \eqref{eq:u-n2} yields
\begin{align*}
	|\mathfrak{g}_{j}^{n}(s)|
	= &
	\Big|
	im(\hat{v}_{j,+}^n(s)-\hat{v}_{j+1}^{n,2}(s))
	+
	i\mathcal{N}_{1}(\hat{u}_{j}^n(s),\hat{v}_{j,+}^n)(s)-
	i\mathcal{N}_{1}(\hat{u}_{j+1}^{n,2}(s),\hat{v}_{j+1}^{n,2}(s))
	\Big|\\
	\leq&
	2(m+|\alpha|+4|\beta|+1)\mathcal{M}_0^6\mathcal{M}\tau,
\end{align*}
which gives \eqref{eq:lem-1} with	$\hat{C}(m,|\alpha|,|\beta|)=2(m+|\alpha|+4|\beta|+1)$.
Furthermore, \eqref{eq:lem-2} follows by a similar argument as in the proof of \eqref{eq:lem-1}.

Finally, since $(\hat u(x,0),\hat v(x,0))\in C_c^\infty(\mathbb R)$, there exists a constant $X>0$ such that the support of $(\hat u(x,0),\hat v(x,0))$ is contained in $[-X,X]$. Then, by \cite[Lemma 3.1]{Y.ZhangandQ.Zhao1}, the support of smooth solution $(\hat{u},\hat{v})$ is contained in $[-X-T,X+T]\times[0,T]$.
Therefore, there are at most $2\left[\frac{X+T}{\tau}\right]+2$ nonzero terms in the sum $\sum_{j=\infty}^{+\infty}
\big(|\mathfrak{g}_{j}^{n}(s)|^2+|\mathfrak{f}_{j}^{n}(s)|^2\big)$, where $\left[\frac{X+T}{\tau}\right]$ denote the greatest integer less than or equal to $\frac{X+T}{\tau}$.
Hence using \eqref{eq:lem-1} and \eqref{eq:lem-2}, there exists a constant $\hat{\tau}>0$ such that, for all $\tau\in(0,\hat{\tau})$,
\begin{align*}
	\sum_{j=\infty}^{+\infty}
	\big(|\mathfrak{g}_{j}^{n}|^2+|\mathfrak{f}_{j}^{n}|^2\big)
	 \leq
	2(2(X+T)+2)\hat{C}(m,|\alpha|,|\beta|)^2 (\mathcal{M}_0)^12\mathcal{M}^2\tau
	\leq\frac{\delta}{4},
\end{align*}
where $\hat{\tau}$ is chosen sufficiently small.
Furthermore, since $n\tau\leq T+1$, we obtain \eqref{eq:sum-g}. This completes the proof.
\end{proof}

Next, we want to estimate the difference between the smooth solution $(\hat{u},\hat{v})$ with initial data $(\hat{u}(x,0),\hat{v}(x,0))\in \mathrm{C}_c^\infty(\mathbb{R})$ and the time splitting solution $(u^{(\tau)},v^{(\tau)})$.
To this end, we define
\begin{equation*}
	\mathcal{U}_{j+1}^{n+1}:=\mathcal{U}((j+1)\tau,(n+1)\tau)=\hat{u}_{j+1}^{n+1}-u_{j+1}^{n+1}
\end{equation*}
and
\begin{equation*}
	\mathcal{U}_{j+1}^{n+1}:=\mathcal{U}((j+1)\tau,(n+1)\tau)=\hat{u}_{j+1}^{n+1}-u_{j+1}^{n+1}.
\end{equation*}
Then we estimate the difference $(\mathcal{U}_{j+1}^{n+1},\mathcal{V}_{j+1}^{n+1})$.
For simplicity, let
\begin{equation*}
	\mathcal{U}_{j+1}^{n,2}(s):=\hat{u}_{j+1}^{n,2}(s)-u_{j+1}^{n,2}(s), \qquad \mathcal{V}_{j+1}^{n,2}(s):=\hat{v}_{j+1}^{n,2}(s)-v_{j+1}^{n,2}(s).
	 \end{equation*}
By construction and by \eqref{5.5}--\eqref{5.3}, we observe that
\begin{equation}\label{eq:U-CAL}
	(\mathcal{U}_{j+1}^{n,2}(\tau),\mathcal{V}_{j+1}^{n,2}(\tau))
	=(\mathcal{U}_{j+1}^{n+1},\mathcal{V}_{j+1}^{n+1}),\qquad
		(\mathcal{U}_{j+1}^{n,2}(0),\mathcal{V}_{j+1}^{n,2}(0))=
	(\mathcal{U}_{j}^{n},\mathcal{V}_{j+2}^{n}).
\end{equation}

Thus, we now turn to the estimate of $(\mathcal{ U}_{j+1}^{n,2}(\tau),\mathcal {V}_{j+1}^{n,2}(\tau))$. To this end, we first derive the following differential inequalities, which differ from the estimates \eqref{3.3}--\eqref{3.5} in that they contain the additional source terms $|\mathfrak{g}_{j}^{n}|^2$ or $|\mathfrak{f}_{j}^{n}|^2$. Moreover, the differential inequality for $\tilde{\mathcal L}_{j+1}^{n,2}$ also involves $\tilde{\mathcal L}_{j+1}^{n,2}$ itself on the right-hand side.
	\begin{lemma}\label{lemma5.1}
		Let
	\begin{equation}\label{eq:def-Lij}
		\tilde{\mathcal{L}}_{j+1}^{n,2}(s):=|\mathcal{U}_{j+1}^{n,2}(s)|^{2}+|\mathcal{V}_{j+1}^{n,2}(s)|^{2}
	\end{equation}
	and
		\begin{equation}\label{eq:def-Dij}
		\tilde{\mathcal{D}}_{j+1}^{n,2}(s)
		:=
		|\mathcal{U}_{j+1}^{n,2}(s)|^{2}\big(|v_{j+1}^{n,2}(s)|^{2}
		+|\hat{v}_{j+1}^{n,2}(s)|^2\big)
		+|\mathcal{V}_{j+1}^{n,2}(s)|^{2}\big(|u_{j+1}^{n,2}(s)|^{2}
		+|\hat{u}_{j+1}^{n,2}(s)|^2\big).
		\end{equation}
	Let $\tau_{\sharp}\in(0,1)$ be the constant given by Lemma \ref{lem:proposition4.1}.
	Then for $j=0,\pm1,\ldots,n=0,1,\ldots$, $s\in[0,\tau]$ and $\tau\in(0,\tau_{\sharp})$, there exists a constant $\hat{C}_1>0$, depending only on the system \eqref{eq:NLDE}, such that
		\begin{align}
		\frac{\mathrm{d}|\mathcal{U}_{j+1}^{n,2}(s)|^{2}}{\mathrm{d}s}
		\leq&
		\hat{C}_1\big(\tilde{\mathcal{L}}_{j+1}^{n,2}(s)
		+\tilde{\mathcal{D}}_{j+1}^{n,2}(s)+|\mathfrak{g}_{j}^{n}(s)|^2\big),
		\label{eq:smooth-dU}\\
		\frac{\mathrm{d}|\mathcal{V}_{j+1}^{n,2}(s)|^{2}}{\mathrm{d}s}
		\leq &\hat{C}_1
		\big(\tilde{\mathcal{L}}_{j+1}^{n,2}(s)
		+\tilde{\mathcal{D}}_{j+1}^{n,2}(s)+|\mathfrak{f}_{j}^{n}(s)|^2\big),
		\label{eq:smooth-dV}
		\end{align}
		and
		\begin{equation}\label{eq:smooth-dUV}
			\frac{\mathrm{d}\tilde{\mathcal{L}}_{j+1}^{n,2}(s)}{\mathrm{d}s}
			\leq
			2\hat{C}_1
			\big(\tilde{\mathcal{L}}_{j+1}^{n,2}(s)+\tilde{\mathcal{D}}_{j+1}^{n,2}(s)+|\mathfrak{g}_{j}^{n}(s)|^2+|\mathfrak{f}_{j}^{n}(s)|^2\big).
		\end{equation}	
	Here the constant $\hat{C}_1$ is independent of $\mathcal{M}_0$, $\mathcal{M}$ and $\tau$.
	\end{lemma}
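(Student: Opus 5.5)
We follow the proof of Lemma \ref{lem:pointwise-estimates-UV}, now with the two source terms. Subtracting \eqref{5.3} from \eqref{eq:4.13-1} gives
\begin{equation*}
\frac{\mathrm{d}\mathcal{U}_{j+1}^{n,2}}{\mathrm{d}s}
=im\mathcal{V}_{j+1}^{n,2}
+i\big(\mathcal{N}_1(\hat u_{j+1}^{n,2},\hat v_{j+1}^{n,2})-\mathcal{N}_1(u_{j+1}^{n,2},v_{j+1}^{n,2})\big)
+\mathfrak{g}_j^n(s),
\end{equation*}
and the analogous equation for $\mathcal{V}_{j+1}^{n,2}$, with $\mathcal{N}_2$ and source $\mathfrak{f}_j^n$. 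We multiply by $\overline{\mathcal{U}_{j+1}^{n,2}}$ (respectively $\overline{\mathcal{V}_{j+1}^{n,2}}$) and take twice the real part.

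\emph{Linear term.} It contributes at most $m\tilde{\mathcal{L}}_{j+1}^{n,2}$ by Young's inequality. In the sum defining $\tilde{\mathcal L}$ the two linear contributions cancel exactly, as in \eqref{3.5}; keeping the bound $m\tilde{\mathcal L}$ is harmless in any case.

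\emph{Source terms.} They are bounded by
\begin{equation*}
2\mathrm{Re}\big\{\mathfrak{g}_j^n\overline{\mathcal{U}_{j+1}^{n,2}}\big\}\le |\mathcal{U}_{j+1}^{n,2}|^2+|\mathfrak{g}_j^n|^2\le \tilde{\mathcal{L}}_{j+1}^{n,2}+|\mathfrak{g}_j^n|^2 .
\end{equation*}
This is why $\tilde{\mathcal L}$ now appears on the right-hand side even without the mass term.

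\emph{Cubic differences.} These are the main point, and the target is a constant independent of $\mathcal M_0$. Write
\begin{equation*}
\hat u|\hat v|^2-u|v|^2=\mathcal{U}|\hat v|^2+u\,\mathcal{V}\,\overline{\hat v}+u\,v\,\overline{\mathcal{V}},
\end{equation*}
and decompose $\overline{\hat u}\hat v^2-\overline{u}v^2$ in the same way as in the proof of Lemma \ref{lem:pointwise-estimates-UV}, with the roles of $(u,v),(\tilde u,\tilde v)$ played by $(\hat u_{j+1}^{n,2},\hat v_{j+1}^{n,2}),(u_{j+1}^{n,2},v_{j+1}^{n,2})$. After multiplication by $\overline{\mathcal U}$, each product is of the form $|\mathcal U|^2|w|^2$ or $|\mathcal U||\mathcal V||z||w|$ with $z,w\in\{u,\hat u,v,\hat v\}$. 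Young's inequality bounds these by terms of the form $|\mathcal U|^2(|v|^2+|\hat v|^2)$ or $|\mathcal V|^2(|u|^2+|\hat u|^2)$, that is, by $\tilde{\mathcal D}_{j+1}^{n,2}$. The delicate step is ordering the factors so that $\mathcal U$ is always paired with a $v$-type factor and $\mathcal V$ with a $u$-type factor. For example, in $u\mathcal V\overline{\hat v}\,\overline{\mathcal U}$ we split as $(|\mathcal U||\hat v|)(|\mathcal V||u|)$. This produces exactly \eqref{eq:def-Dij} with a constant like $C_*=4(|\alpha|+4|\beta|)$, independent of $\mathcal M_0,\mathcal M,\tau$.

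Collecting the three contributions gives \eqref{eq:smooth-dU} and \eqref{eq:smooth-dV} with $\hat C_1=m+C_*+1$. Adding them gives \eqref{eq:smooth-dUV}. The restriction $\tau<\tau_\sharp$ plays no role beyond guaranteeing that the split solutions are defined; all estimates are pointwise in $s$.
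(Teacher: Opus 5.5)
Your proposal is correct and follows the paper's proof essentially step for step. You subtract the two ODE systems, multiply by $\overline{\mathcal{U}_{j+1}^{n,2}}$ (respectively $\overline{\mathcal{V}_{j+1}^{n,2}}$), absorb the source via $2\mathrm{Re}\{\mathfrak{g}_j^n\overline{\mathcal{U}_{j+1}^{n,2}}\}\le|\mathcal{U}_{j+1}^{n,2}|^2+|\mathfrak{g}_j^n|^2$, and bound the cubic differences by $\tilde{\mathcal{D}}_{j+1}^{n,2}$ as in \eqref{3.3}--\eqref{3.5}. You arrive at the same constant $\hat{C}_1=4(|\alpha|+4|\beta|)+m+1$, and you also spell out the pairing of $\mathcal{U}$ with $v$-type factors and $\mathcal{V}$ with $u$-type factors, which the paper leaves implicit.
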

	\begin{proof}
	Subtracting \eqref{eq:4.13-1} from \eqref{5.3}, we get
	\begin{align}
			\frac{\mathrm{d}}{\mathrm{d}s}\mathcal{U}_{j+1}^{n,2}(s)
				=&im\mathcal{V}_{j+1}^{n,2}(s)
				+
				i\Big(
				\mathcal{N}_{1}(\hat{u}_{j+1}^{n,2}(s),\hat{v}_{j+1}^{n,2}(s)) -\mathcal{N}_{1}(u_{j+1}^{n,2}(s),v_{j+1}^{n,2}(s))
				\Big)
				+\mathfrak{g}_{j}^{n}(s)
				,\label{eq:smooth-minus-splitting}\\
				\frac{\mathrm{d}}{\mathrm{d}s}\mathcal{V}_{j+1}^{n,2}(s)
				=&im\mathcal{U}_{j+1}^{n,2}(s)
				+
				i\left(
				\mathcal{N}_{2}(\hat{u}_{j+1}^n(s),\hat{v}_{j+1}^{n,2}(s))
				-\mathcal{N}_{2}(u_{j+1}^{n,2}(s),v_{j+1}^{n,2}(s))\right)
				+\mathfrak{f}_{j}^{n}(s).
				\label{eq:smooth-minus-splitting-1}
			\end{align}
Then, multiplying the equation \eqref{eq:smooth-minus-splitting} by $\overline{\mathcal{U}_{j+1}^{n,2}}(s)$, we obtain
\begin{equation*}
	\begin{aligned}
		\frac{\mathrm{d}|\mathcal{U}_{j+1}^{n,2}(s)|^{2}}{\mathrm{d}s}
		=&2\mathrm{Re}\Big\{im\mathcal{V}_{j+1}^{n,2}(s) \overline{\mathcal{U}_{j+1}^{n,2}}(s)\Big\}
		+2\mathrm{Re}\Big\{\mathfrak{g}_{j}^{n}(s) \overline{\mathcal{U}_{j+1}^{n,2}}(s)\Big\}
		\\ &+2\mathrm{Re}\left\{
		i\big(
		\mathcal{N}_{1}(\hat{u}_{j+1}^{n,2}(s),\hat{v}_{j+1}^{n,2}(s)) -\mathcal{N}_{1}(u_{j+1}^{n,2}(s),v_{j+1}^{n,2}(s))
		\big)
		\overline{\mathcal{U}_{j+1}^{n,2}}(s)\right\},
	\end{aligned}	
\end{equation*}
where $\mathrm{Re}{z}$ is the real part of $z$.
Similarly, multiplying the equation \eqref{eq:smooth-minus-splitting-1} by $\overline{\mathcal{V}_{j+1}^{n,2}}(s)$ yields
\begin{equation*}
	\begin{aligned}
		\frac{\mathrm{d}|\mathcal{V}_{j+1}^{n,2}(s)|^{2}}{\mathrm{d}s}
		=&2\mathrm{Re}\left\lbrace im\mathcal{U}_{j+1}^{n,2}(s) \overline{\mathcal{V}_{j+1}^{n,2}}(s)\right\rbrace
		+
		2\mathrm{Re}\Big\{\mathfrak{f}_{j}^{n}(s) \overline{\mathcal{V}_{j+1}^{n,2}}(s)\Big\}
		\\ &+2\mathrm{Re}\left\lbrace
		i\big(
		\mathcal{N}_{2}(\hat{u}_{j+1}^{n,2}(s),\hat{v}_{j+1}^{n,2}(s)) -\mathcal{N}_{2}(u_{j+1}^{n,2}(s),v_{j+1}^{n,2}(s))
		\big)
		\overline{\mathcal{V}_{j+1}^{n,2}}(s)\right\rbrace.
	\end{aligned}	
\end{equation*}
By Young's inequality, we obtain
\begin{equation*}
	2\mathrm{Re}\Big\{\mathfrak{g}_{j}^{n}(s) \overline{\mathcal{U}_{j+1}^{n,2}}(s)\Big\}
	\leq
	|\mathcal{U}_{j+1}^{n,2}(s)|^{2}+|\mathfrak{g}_{j}^{n}(s)|^{2},\qquad
	2\mathrm{Re}\Big\{\mathfrak{f}_{j}^{n}(s) \overline{\mathcal{V}_{j+1}^{n,2}}(s)\Big\}
	\leq
	|\mathcal{V}_{j+1}^{n,2}(s)|^{2}+|\mathfrak{f}_{j}^{n}(s)|^{2}.
\end{equation*}	
Thus, recalling the notations $\mathcal{N}_{1}$ and $\mathcal{N}_{2}$ in \eqref{eq:mathcal-N} and carrying out similar arguments as in the proof of \eqref{3.3}--\eqref{3.5}, we obtain \eqref{eq:smooth-dU}-- \eqref{eq:smooth-dUV}
by taking
$\hat{C}_1=4(|\alpha|+4|\beta|)+m+1$.
The proof is complete.
\end{proof}

The following lemma first derives a
uniform estimate for the quantity $\tilde{\mathcal{L}}_{j+1}^{n,2}+\tilde{\mathcal{D}}_{j+1}^{n,2}$ on the right-hand side of \eqref{eq:smooth-dU}--\eqref{eq:smooth-dUV},and then establishes pointwise estimates for $(\mathcal{U}_{j+1}^{n,2}(\tau),\mathcal{V}_{j+1}^{n,2}(\tau))$.
Compared with the corresponding estimates \eqref{3.9}--\eqref{3.8} in Lemma \ref{lem:pointwise-estimates-UV}, these estimates involve the additional source terms $|\mathfrak{g}_{j}^{n}|^2+|\mathfrak{f}_{j}^{n}|^2$.
\begin{lemma}\label{lemma5.4}
	Let
	\begin{equation}\label{def:ae}
	\mathfrak{a}=\mathfrak{s}_{j+1}^{n,2}(0)+\hat{\mathfrak{s}}_{j+1}^{n,2}(0),\quad
	\mathfrak{e}(\tau)=\int_{0}^{\tau}(|\mathfrak{g}_{j}^{n}(s)|^2+|\mathfrak{f}_{j}^{n}(s)|^2)\mathrm{ds}.
	\end{equation}
	Let $\tau_{\sharp}\in(0,1)$ be the constant given by Lemma \ref{lem:proposition4.1}.
	Then for $j=0,\pm1,\ldots,n=0,1,\ldots$, $s\in[0,\tau]$ and $\tau\in(0,\tau_{\sharp})$, there exist constants $\hat{C}_{2}$, $\hat{C}_{3}$ and $\hat{C}_{4}>0$, depending only on $c_0$ and the system \eqref{eq:NLDE}, such that
	\begin{align}\label{lem:smooth-LD}
	\tilde{\mathcal{L}}_{j+1}^{n,2}(s)+\tilde{\mathcal{D}}_{j+1}^{n,2}(s)
	\leq &
	\mathrm{e}^{\hat{C}_{2}+\hat{C}_{2}(\mathfrak{a}\tau+\mathfrak{e}(\tau))}
	\big(\tilde{\mathcal{L}}_{j+1}^{n,2}(0)+\tilde{\mathcal{D}}_{j+1}^{n,2}(0)\big)\nonumber\\
	&+
	\mathrm{e}^{\hat{C}_{2}+\hat{C}_{2}(\mathfrak{a}\tau+\mathfrak{e}(\tau))}
	\big(\hat{C}_{2}+\hat{C}_{2}(\mathfrak{a}+\mathfrak{e}(\tau))\big)\int_{0}^{\tau}(|\mathfrak{g}_{j}^{n}(s)|^2+|\mathfrak{f}_{j}^{n}(s)|^2)\mathrm{d}s,
	\end{align}
	and
  	\begin{align}
  	|\mathcal{U}_{j+1}^{n,2}(\tau)|^{2}
  	\leq&
  	|\mathcal{U}_{j+1}^{n,2}(0)|^{2}
  	+
  	\hat{C}_{3}\mathrm{e}^{\hat{C}_{3}(\mathfrak{a}\tau+\mathfrak{e}(\tau))}
  	\big(\tilde{\mathcal{L}}_{j+1}^{n,2}(0)\tau+\tilde{\mathcal{D}}_{j+1}^{n,2}(0)\tau\big)
  	\nonumber\\
  	&
  	+\hat{C}_{3}\mathrm{e}^{\hat{C}_{3}(\mathfrak{a}\tau+\mathfrak{e}(\tau))}
  	\big(\hat{C}_{4}+\hat{C}_{4}(\mathfrak{a}\tau+\mathfrak{e}(\tau))\big)
  	\int_{0}^{\tau}(|\mathfrak{g}_{j}^{n}(s)|^2+|\mathfrak{f}_{j}^{n}(s)|^2)\mathrm{d}s\,
  	\label{eq:smooth-U}\\
  	|\mathcal{V}_{j+1}^{n,2}(\tau)|^{2}
  	\leq&|\mathcal{V}_{j+1}^{n,2}(0)|^{2}
  	+
  	\hat{C}_{3}\mathrm{e}^{\hat{C}_{3}(\mathfrak{a}\tau+\mathfrak{e}(\tau))}
  	\big(\tilde{\mathcal{L}}_{j+1}^{n,2}(0)\tau+\tilde{\mathcal{D}}_{j+1}^{n,2}(0)\tau\big)
  	\nonumber\\
  	&
  	+\hat{C}_{3}\mathrm{e}^{\hat{C}_{3}(\mathfrak{a}\tau+\mathfrak{e}(\tau))}
  	\big(\hat{C}_{4}+\hat{C}_{4}(\mathfrak{a}\tau+\mathfrak{e}(\tau))\big)
  	\int_{0}^{\tau}(|\mathfrak{g}_{j}^{n}(s)|^2+|\mathfrak{f}_{j}^{n}(s)|^2)\mathrm{d}s.\label{eq:smooth-V}
  \end{align}
  Here the constants $\hat{C}_2$, $\hat{C}_3$ and $\hat{C}_4$ are independent of $\mathcal{M}_0$, $\mathcal{M}$ and $\tau$.
\end{lemma}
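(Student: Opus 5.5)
The plan mirrors the proof of Lemma~\ref{lem:pointwise-estimates-UV}, with the extra source terms carried through a Gronwall argument.

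\emph{Step 1: differential inequality for $\tilde{\mathcal D}_{j+1}^{n,2}$.} Differentiate $\tilde{\mathcal D}_{j+1}^{n,2}(s)$ in $s$ using the product rule.
\begin{itemize}
\item The factors $|\mathcal U_{j+1}^{n,2}|^2$ and $|\mathcal V_{j+1}^{n,2}|^2$ are handled by \eqref{eq:smooth-dU}--\eqref{eq:smooth-dV}.
\item The factors $|v_{j+1}^{n,2}|^2$ and $|u_{j+1}^{n,2}|^2$ of the splitting solution are handled by \eqref{2.2}--\eqref{2.3}.
\item The factors $|\hat u_{j+1}^{n,2}|^2$ and $|\hat v_{j+1}^{n,2}|^2$ satisfy the perturbed system \eqref{eq:4.13-1}. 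Repeating the computation of Lemma~\ref{lemma2.2}, and using Young's inequality on the sources, gives
\[
\frac{\mathrm d}{\mathrm ds}\big(|\hat u_{j+1}^{n,2}|^2+|\hat v_{j+1}^{n,2}|^2\big)\le \hat{\mathfrak s}_{j+1}^{n,2}(s)+|\mathfrak g_j^n|^2+|\mathfrak f_j^n|^2 .
\]
Hence $\hat{\mathfrak s}_{j+1}^{n,2}(s)\le e^{\tau}\big(\hat{\mathfrak s}_{j+1}^{n,2}(0)+\mathfrak e(\tau)\big)$. The same argument gives $\frac{\mathrm d}{\mathrm ds}|\hat u_{j+1}^{n,2}|^2\le C(\hat{\mathfrak s}+|\hat u|^2|\hat v|^2)+|\mathfrak g_j^n|^2$.
\end{itemize}
Summing these pieces, and bounding $\mathfrak s_{j+1}^{n,2}(s)+\hat{\mathfrak s}_{j+1}^{n,2}(s)\le C(\mathfrak a+\mathfrak e(\tau))$ via \eqref{2.1}, yields
\[
\frac{\mathrm d}{\mathrm ds}\big(\tilde{\mathcal L}+\tilde{\mathcal D}\big)\le \hat C\big(1+\mathfrak a+\mathfrak e(\tau)\big)\big(\tilde{\mathcal L}+\tilde{\mathcal D}\big)+\hat C\big(1+\mathfrak a+\mathfrak e(\tau)\big)\big(|\mathfrak g_j^n|^2+|\mathfrak f_j^n|^2\big).
\]
Here the term $\tilde{\mathcal L}$ on the right of \eqref{eq:smooth-dUV} contributes the constant part of the coefficient.

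\emph{Main obstacle: the source-weighted cross terms.} Products such as $|\mathcal U|^2|\mathfrak g_j^n|^2$ and $|\mathcal U|^2|\hat u|^2|\hat v|^2$ appear when the $\hat u,\hat v$ factors are differentiated. The first kind is absorbed by writing the coefficient of $\tilde{\mathcal L}$ as $\hat C(1+|\mathfrak g_j^n|^2+|\mathfrak f_j^n|^2+\mathfrak s+\hat{\mathfrak s})$. Its integral over $[0,\tau]$ is then at most $\hat C(\tau+\mathfrak e(\tau)+\mathfrak a\tau)$, consistent with the exponent in \eqref{lem:smooth-LD}. For the second kind I will use $|\mathcal U|^2|\hat u|^2|\hat v|^2\le \hat{\mathfrak s}\,\tilde{\mathcal D}$, exactly as in Lemma~\ref{lem:pointwise-estimates-UV}.

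\emph{Step 2: Gronwall.} Integrate the inequality for $\tilde{\mathcal L}+\tilde{\mathcal D}$ over $[0,s]$ and apply Gronwall. Since $\tau<1$, the factor $\hat C\tau$ is absorbed into the constant $\hat C_2$. The terms $\hat{C}_2\mathfrak a\tau$ and $\hat{C}_2\mathfrak e(\tau)$ appear in the exponent. The inhomogeneous term is bounded by $(\hat C_2+\hat C_2(\mathfrak a+\mathfrak e(\tau)))\int_0^\tau(|\mathfrak g_j^n|^2+|\mathfrak f_j^n|^2)\,\mathrm ds$. This gives \eqref{lem:smooth-LD}.

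\emph{Step 3: the endpoint estimates.} Insert \eqref{lem:smooth-LD} into \eqref{eq:smooth-dU} and integrate over $[0,\tau]$. The homogeneous part produces the factor $\tau$ multiplying $\tilde{\mathcal L}(0)+\tilde{\mathcal D}(0)$. The source part contributes terms of the form $\hat C(1+\mathfrak a+\mathfrak e)\mathfrak e\,\tau$ together with $\int_0^\tau|\mathfrak g_j^n|^2\,\mathrm ds$. I will use $\mathfrak a\tau\le\mathfrak a\tau$, keeping the $\tau$ from the $s$-integration attached to $\mathfrak a$, so that only $\mathfrak a\tau+\mathfrak e$ appears, which matches \eqref{eq:smooth-U}. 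The bound \eqref{eq:smooth-V} follows by the symmetric argument. All constants depend only on $m,\alpha,\beta$, and not on $\mathcal M_0$, $\mathcal M$ or $\tau$.
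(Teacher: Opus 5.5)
Your proposal is correct and follows essentially the same route as the paper. You bound $\hat{\mathfrak s}_{j+1}^{n,2}(s)\le \mathrm{e}^{\tau}\big(\hat{\mathfrak s}_{j+1}^{n,2}(0)+\mathfrak e(\tau)\big)$ via the perturbed system, differentiate $\tilde{\mathcal L}_{j+1}^{n,2}+\tilde{\mathcal D}_{j+1}^{n,2}$ by the product rule, apply Gronwall, and then integrate \eqref{eq:smooth-dU}--\eqref{eq:smooth-dV} over $[0,\tau]$. One correction: the display closing your Step 1, with the constant coefficient $\hat C(1+\mathfrak a+\mathfrak e(\tau))$, is not valid as written, and the time-dependent coefficient $\hat C\big(1+\mathfrak s+\hat{\mathfrak s}+|\mathfrak g_j^n|^2+|\mathfrak f_j^n|^2\big)$ from your ``main obstacle'' paragraph is the one to feed into Gronwall, exactly as the paper does.
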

\begin{proof}
First, using \eqref{eq:4.13-1} and carrying out similar arguments as in the proof of \eqref{2.2}, \eqref{2.3} and \eqref{2.1},  we deduce that
\begin{align}
	&\frac{\mathrm{d}|\hat{u}_{j+1}^{n,2}(s)|^{2}}{\mathrm{d}s}
	\leq
	(m+1)\big( |\hat{u}_{j+1}^{n,2}(s)|^{2}+|\hat{v}_{j+1}^{n,2}(s)|^{2}\big) +4|\beta||\hat{u}_{j+1}^{n,2}(s)|^{2}|\hat{v}_{j+1}^{n,2}(s)|^{2}+|\mathfrak{g}_{j}^{n}(s)|^2
	,\label{eq:4.36}\\
	&\frac{\mathrm{d}|\hat{v}_{j+1}^{n,2}(s)|^{2}}{\mathrm{d}s}
	\leq
	(m+1)\big( |\hat{u}_{j+1}^{n,2}(s)|^{2}+|\hat{v}_{j+1}^{n,2}(s)|^{2}\big) +4|\beta||\hat{u}_{j+1}^{n,2}(s)|^{2}|\hat{v}_{j+1}^{n,2}(s)|^{2}
	+|\mathfrak{f}_{j}^{n}(s)|^2,\label{eq:4.37}
\end{align}
and
\begin{equation}\label{eq:4.39}
	\frac{\mathrm{d}|\hat{u}_{j+1}^{n,2}(s)|^{2}}{\mathrm{d}s}+\frac{\mathrm{d}|\hat{v}_{j+1}^{n,2}(s)|^{2}}{\mathrm{d}s}\leq
	|\hat{u}_{j+1}^{n,2}(s)|^{2}+|\hat{v}_{j+1}^{n,2}(s)|^{2}
	+|\mathfrak{g}_{j}^{n}(s)|^2+|\mathfrak{f}_{j}^{n}(s)|^2.
\end{equation}
Therefore, for $s\in[0,\tau]$, we get
\begin{equation}\label{eq:4.38}
	|\hat{u}_{j+1}^{n,2}(s)|^{2}+|\hat{v}_{j+1}^{n,2}(s)|^{2}
	\leq \mathrm{e}^{\tau}\Big(
	|\hat{u}_{j+1}^{n,2}(0)|^{2}+|\hat{v}_{j+1}^{n,2}(0)|^{2}
	+\int_{0}^{\tau}(|\mathfrak{g}_{j}^{n}(s)|^2+|\mathfrak{f}_{j}^{n}(s)|^2)\mathrm{ds}\Big).
\end{equation}
Here and in the sequel, we let
\begin{equation*}
	\hat{\mathfrak{s}}_{j+1}^{n,2}(s)	
	=|\hat{u}_{j+1}^{n,2}(s)|^{2}+|\hat{v}_{j+1}^{n,2}(s)|^{2},\quad s\in[0,\tau].
\end{equation*}

Recalling the definition of $\mathfrak{s}_{j}^{n,2}(s)$ in Definition \ref{def:functional-uv} and using \eqref{2.1}, we have
		\begin{equation*}
		|u_{j+1}^{n,2}(s)|^{2}+|v_{j+1}^{n,2}(s)|^{2}
		=
		|u_{j+1}^{n,2}(0)|^{2}+|v_{j+1}^{n,2}(0)|^{2}
		=\mathfrak{s}_{j+1}^{n,2}(0),\quad s\in[0,\tau].
	\end{equation*}
Moreover, for $\tau\in(0,\tau_{\sharp})$, it follows from \eqref{3.16.5} that
\begin{equation*}
	\mathfrak{s}_{j+1}^{n,2}(0)\tau\leq c_0.
\end{equation*}

Since $\tau\in(0,\tau_{\sharp})\subset(0,1)$, by the above estimates, \eqref{2.2}--\eqref{2.3} and \eqref{eq:def-Lij}--\eqref{eq:smooth-dUV}, we have		
\begin{align*}
	&\frac{\mathrm{d}\big(\tilde{\mathcal{D}}_{j+1}^{n,2}(s)+\tilde{\mathcal{L}}_{j+1}^{n,2}(s)\big)}{ds}\\
	\leq &
	\Big(2\hat{C}_1+(\hat{C}_1+m+1)\Big(\mathfrak{s}_{j+1}^{n,2}(0)+\hat{\mathfrak{s}}_{j+1}^{n,2}(0)+\mathrm{e}\int_{0}^{\tau}(|\mathfrak{g}_{j}^{n}|^2+|\mathfrak{f}_{j}^{n}|^2)\mathrm{ds}\Big)
	+|\mathfrak{g}_{j}^{n}(s)|^2+|\mathfrak{f}_{j}^{n}(s)|^2
	\Big)
	\tilde{\mathcal{L}}_{j+1}^{n,2}(s)\\
	&+\Big(2\hat{C}_1+ (\hat{C}_1+4|\beta|)\Big(\mathfrak{s}_{j+1}^{n,2}(0)+\hat{\mathfrak{s}}_{j+1}^{n,2}(0)+\mathrm{e}\int_{0}^{\tau}(|\mathfrak{g}_{j}^{n}(s)|^2+|\mathfrak{f}_{j}^{n}(s)|^2)\mathrm{ds}\Big)\Big)
	\tilde{\mathcal{D}}_{j+1}^{n,2}(s)\\
	&+\Big(2\hat{C}_1+\hat{C}_1\Big(\mathfrak{s}_{j+1}^{n,2}(0)+\hat{\mathfrak{s}}_{j+1}^{n,2}(0)+\mathrm{e}\int_{0}^{\tau}(|\mathfrak{g}_{j}^{n}(s)|^2+|\mathfrak{f}_{j}^{n}(s)|^2)\mathrm{ds}\Big)\Big)
	(|\mathfrak{g}_{j}^{n}(s)|^2+|\mathfrak{f}_{j}^{n}(s)|^2).
\end{align*}	
Then using Gronwall's inequality and setting
$\hat{C}_2=2\hat{C}_1+(\hat{C}_1+m+4|\beta|+2)\mathrm{e}$, we obtain \eqref{lem:smooth-LD}.

Next, using \eqref{eq:smooth-dU} and \eqref{lem:smooth-LD}, we deduce that
		\begin{align*}
			\frac{\mathrm{d}|\mathcal{U}_{j+1}^{n}(s)|^{2}}{\mathrm{d}s}
			\leq&
			\hat{C}_1\big(\tilde{\mathcal{L}}_{j+1}^{n,2}(s)
			+\tilde{\mathcal{D}}_{j+1}^{n,2}(s)+|\mathfrak{g}_{j}^{n}(s)|^2\big)\\
	\leq&
\hat{C}_1
\mathrm{e}^{\hat{C}_{2}+\hat{C}_{2}(\mathfrak{a}\tau+\mathfrak{e}(\tau))}
\big(\tilde{\mathcal{L}}_{j+1}^{n,2}(0)+\tilde{\mathcal{D}}_{j+1}^{n,2}(0)\big)
+\hat{C}_1|\mathfrak{g}_{j}^{n}(s)|^2\\
&+
\hat{C}_1
\mathrm{e}^{\hat{C}_{2}+\hat{C}_{2}(\mathfrak{a}\tau+\mathfrak{e}(\tau))}
\Big(\big(\hat{C}_{2}+\hat{C}_{2}(\mathfrak{a}+\mathfrak{e}(\tau))\big)\int_{0}^{\tau}(|\mathfrak{g}_{j}^{n}(s)|^2+|\mathfrak{f}_{j}^{n}(s)|^2)\mathrm{d}s\Big).
		\end{align*}
Integrating the above inequality over $[0,\tau]$, we obtain \eqref{eq:smooth-U}
by setting
 $\hat{C}_{3}=\hat{C}_1\mathrm{e}^{\hat{C}_{2}}+\hat{C}_{2}$ and $\hat{C}_{4}=\hat{C}_{2}+1$.
Finally, \eqref{eq:smooth-V} follows as in \eqref{eq:smooth-U}, by using \eqref{eq:smooth-dV} in place of \eqref{eq:smooth-dU}.
	\end{proof}

\subsection{$L^2$ estimates and proof of main result}
This subsection is devoted to establishing the $L^2$ estimates for $(\mathcal{U},\mathcal{V})$ and proving the main theorem. Using the pointwise estimates established in Lemma \ref{lemma5.4}, we first aim to establish the local $L^2$ estimates for $(\mathcal{U},\mathcal{V})$.
To achieve this, we construct a Glimm-type functional $\tilde{\mathcal F}(n;\Delta)$ for each discrete characteristic triangle $\Delta=\Delta(j_1,n_1;n_0)$, see Fig.~\ref{fig1}.

We begin by introducing the following functionals in constructing $\tilde{\mathcal F}(n;\Delta)$.
\begin{definition}\label{definition6.3}
	For $j,j_{1}=0,\pm1,\ldots$ and $n,n_{1}=0,1,\ldots$ with $0\leq n_0\leq n\leq n_{1}$, define
	\begin{align*}
		&\tilde{\mathcal{L}}(n;\Delta)
		=\sum_{j=j_{1}-n_{1}+n}^{j_{1}+n_{1}-n}
		\left( |\mathcal{U}_{j}^{n}|^{2}
		+|\mathcal{V}_{j}^{n}|^{2}\right),
		\qquad\quad
		\tilde{\mathcal{E}}(n;\Delta)
		=\sum_{j=j_{1}-n_{1}+n}^{j_{1}+n_{1}-n-2}
		\left( |\mathfrak{g}_{j}^{n}|^{2}
		+|\mathfrak{f}_{j}^{n}|^{2}\right),
		\\
		&\tilde{\mathcal{D}}(n;\Delta)=
		\sum_{j=j_{1}-n_{1}+n}^{j_{1}+n_{1}-n-2}			
			\Big(
			|\mathcal{U}_{j}^{n}|^{2}
			\big(|v_{j+2}^{n}|^{2}+|\hat{v}_{j+2}^{n}|^{2}\big)
			+|\mathcal{V}_{j+2}^{n}|^{2}
			\big(|u_{j}^{n}|^{2}+|\hat{u}_{j}^{n}|^{2}\big)
			\Big),
	\end{align*}
		and
		\begin{equation*}\tilde{\mathcal{Q}}(n;\Delta)
		=\sum_{j_{1}-n_{1}+n\leq j< k\leq j_{1}+n_{1}-n }
		\Big(
		|\mathcal{U}_{j}^{n}|^{2}
		\big(|v_{k}^{n}|^{2}+|\hat{v}_{k}^{n}|^{2}\big)
		+|\mathcal{V}_{k}^{n}|^{2}
		\big(|u_{j}^{n}|^{2}+|\hat{u}_{j}^{n}|^{2}\big)
		\Big).\end{equation*}
	\end{definition}
	
Now we define a new Glimm-type functional on a discrete characteristic angle.
\begin{definition}[Glimm-type functional]\label{def:Glimm}
	For any integer $n=0,1,2,\ldots$ and any $\tau>0$, define
	\begin{equation*}
	\tilde{\mathcal{F}}(n;\Delta)
	=\tilde{\mathcal{L}}(n;\Delta)\tau+\tilde{\kappa }\tilde{\mathcal{Q}}(n;\Delta)\tau^{2},
	\end{equation*}
	where $\tilde{\kappa }$ is a large positive constant to be chosen later.
\end{definition}
To estimate the bound of $\tilde{\mathcal{F}}$ in $\Delta$, we define
\begin{equation*}
	\hat{\mathfrak{s}}(n;\Delta)
	=\sum_{j=j_{1}-n_{1}+n}^{j_{1}+n_{1}-n}|\hat{u}_{j}^{n}|^{2}|\hat{v}_{j}^{n}|^{2},	
	\qquad
	\hat{\mathfrak{q}}(n;\Delta)
	=\sum_{j=j_{1}-n_{1}+n}^{j_{1}+n_{1}-n-2}|\hat{u}_{j}^{n}|^{2}|\hat{v}_{j+2}^{n}|^{2}.	
\end{equation*}

With the coefficient $\tilde{\kappa }$ chosen properly, the following proposition shows that the Glimm-type functional $\tilde{\mathcal{F}}$ is uniformly bounded.
\begin{proposition}\label{lemma5.6}
There exist constants $\delta>0$ and $\hat{C}_7>0$, independent of $\mathcal{M}_0$, $\mathcal{M}$ and $\tau$, such that if $\mathfrak{s}(n_{0};\Delta)\tau\leq\delta$, $\sup\limits_{n_0\leq n\leq n_1-1}\hat{\mathfrak{s}}(n;\Delta)\tau\leq\delta$ and $\sup\limits_{n_0\leq n \leq n_1-1}\int_{0}^{\tau}\tilde{\mathcal{E}}(n;\Delta)\mathrm{d}s\leq\delta$, then there exists a constant $0<\hat{\tau}_{\sharp}<1$, depending on $\mathcal{M}_0$, $\mathcal{M}$ and the initial data $(\hat{u}(x,0),\hat{v}(x,0))$, such that for any $\tau\in(0,\hat{\tau}_{\sharp})$,
	\begin{equation}\label{eq:F-1-UNI}
		\tilde{\mathcal{F}}(n+1;\Delta)
	\leq
	\Big(1+\hat{C}_7\tau+\hat{C}_7
	\big(\mathfrak{q}(n;\Delta)\tau^2+\hat{\mathfrak{q}}(n;\Delta)\tau^2\big)\Big)
	\tilde{\mathcal{F}}(n;\Delta)
	+\hat{C}_7\tau\int_{0}^{\tau}\tilde{\mathcal{E}}(n;\Delta)\mathrm{d}s.
	\end{equation}
Moreover, let $\tilde{\mathcal{K}}(n,n_0)=\hat{C}_7(n-n_0)\tau
	+\hat{C}_7\sum_{p=n_{0}}^{n-1}
	\big(\mathfrak{q}(p;\Delta)\tau^2+\hat{\mathfrak{q}}(p;\Delta)\tau
	^2\big)$.
	Then for any $T>0$, there exists a constant $\hat{C}_8(T)>0$, independent of $\mathcal{M}_0$, $\mathcal{M}$ and $\tau$, such that for all integers $n$ satisfying $\:0\leq n_{0}\leq n\leq n_{1}$, $n\leq (T+1)/\tau$, with the range of $n$ shown in Fig.~\ref{fig-n}, it holds that
	\begin{equation}\label{eq:smooth-F}
		\tilde{\mathcal{F}}(n;\Delta)
		\leq
		\mathrm{e}^{\tilde{\mathcal{K}}(n,n_0)}
		\Big(\tilde{\mathcal{F}}(n_0;\Delta)+\hat{C}_7\tau\int_{0}^{\tau}\sum_{p=n_{0}}^{n-1}\tilde{\mathcal{E}}(p;\Delta)\mathrm{d}s\Big)
	\end{equation}
	and
	\begin{equation*}
		\tilde{\mathcal{K}}(n,n_0)\leq \hat{C}_8(T).
	\end{equation*}
\end{proposition}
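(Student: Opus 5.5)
I will follow the three-step structure of the proof of Proposition~\ref{pro:functional-decreasing}, now with the smooth solution $(\hat u,\hat v)$ playing the role of $(\tilde u,\tilde v)$ and with the source terms $\mathfrak{g}_j^n,\mathfrak{f}_j^n$ tracked. Throughout I use \eqref{eq:U-CAL}: the linear step is an exact shift, $(\mathcal{U}_{j+1}^{n,2}(0),\mathcal{V}_{j+1}^{n,2}(0))=(\mathcal{U}_j^n,\mathcal{V}_{j+2}^n)$, and the smooth solution is carried along with it through the auxiliary system \eqref{eq:4.13-1}.

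\textbf{Step 1 (transport part).} Since the shift relation \eqref{eq:U-CAL} holds for $(u,v)$, for $(\mathcal U,\mathcal V)$, and, by the initial data in \eqref{eq:4.13-1}, for $(\hat u,\hat v)$, the index manipulation of Step~\ref{step:1} applies verbatim. It gives $\tilde{\mathcal L}(n+1-;\Delta)\le \tilde{\mathcal L}(n;\Delta)$ and
\[
\tilde{\mathcal Q}(n+1-;\Delta)\le \tilde{\mathcal Q}(n;\Delta)-\tilde{\mathcal D}(n;\Delta).
\]
The terms with $k=j+2$ are exactly what is lost in passing to the pre-step quantities, and $\tilde{\mathcal D}(n;\Delta)$ in Definition~\ref{definition6.3} is defined with the offset $j,j+2$ for precisely this reason. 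Summing over the shifted triangle, $\tilde{\mathcal D}(n;\Delta)$ equals $\sum_j\tilde{\mathcal D}_{j+1}^{n,2}(0)$.

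\textbf{Step 2 (nonlinear part).} Sum \eqref{eq:smooth-U}--\eqref{eq:smooth-V} over $j$. Here $\mathfrak a\tau$ is bounded: $\mathfrak s_{j+1}^{n,2}(0)\tau\le c_0$ by \eqref{3.16.5}, and $\hat{\mathfrak s}$ is controlled by the hypothesis together with $\mathcal M_0$, once $\hat\tau_\sharp$ is small. Also $\mathfrak e(\tau)\le\delta$. This yields
\[
\tilde{\mathcal L}(n+1)\tau-\tilde{\mathcal L}(n+1-)\tau\le C\tau\,\tilde{\mathcal L}\tau+C\tilde{\mathcal D}\tau^2+C\tau\int_0^\tau\tilde{\mathcal E}\,\mathrm ds .
\]
For $\tilde{\mathcal Q}$ I expand the products exactly as in Step~\ref{step:2}. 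The weights $|u|^2,|v|^2$ grow according to \eqref{2.10}--\eqref{2.11}, and $|\hat u|^2,|\hat v|^2$ according to \eqref{eq:4.36}--\eqref{eq:4.37}. The differences $|\mathcal U|^2,|\mathcal V|^2$ grow according to \eqref{eq:smooth-U}--\eqref{eq:smooth-V}. The local sums of weights multiplied by $\tau$ are bounded by $\delta$, using \eqref{2.4} and the hypotheses on $\mathfrak s(n_0)$ and $\hat{\mathfrak s}$. The cross quartic terms produce $(\mathfrak q+\hat{\mathfrak q})\tau^2\,\tilde{\mathcal L}\tau$. This gives
\[
\tilde{\mathcal Q}(n+1)\tau^2-\tilde{\mathcal Q}(n+1-)\tau^2\le C[\tau+(\mathfrak q+\hat{\mathfrak q})\tau^2]\tilde{\mathcal L}\tau+\tilde C\delta\tilde{\mathcal D}\tau^2+C\tau\int_0^\tau\tilde{\mathcal E}.
\]

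\textbf{Step 3 (closing).} Choose $\delta<1/(2\tilde C)$, then $\tilde\kappa$ large, so that the coefficient of $\tilde{\mathcal D}\tau^2$ is nonpositive. Using $\tilde{\mathcal L}\tau\le\tilde{\mathcal F}$ then gives \eqref{eq:F-1-UNI}. A discrete Gronwall iteration, using $1+x\le \mathrm e^x$ and bounding each growth factor by $\mathrm e^{\tilde{\mathcal K}(n,n_0)}$, gives \eqref{eq:smooth-F}. For the bound on $\tilde{\mathcal K}$: $\sum_p\mathfrak q\tau^2\le c(T)$ by Lemma~\ref{lem:nonlinear-estimates}. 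For $\sum_p\hat{\mathfrak q}\tau^2$, note that $\hat u_j^n$ and $\hat v_{j+2}^n$ are, up to $O(\mathcal M\tau)$, values of the smooth solution transported along characteristics. Repeating the argument of \eqref{eq:u02-v02} with $L^2$ conservation for the smooth solution should give a bound depending only on $T$ and $c_0$, not on $\mathcal M_0$. I expect this $\mathcal M$-independence, together with the $\mathcal M$-independence of $\tilde{\mathcal K}$ more generally, to be the main obstacle. If the bound for $\hat{\mathfrak q}$ fails, I would fall back on using the hypothesis $\hat{\mathfrak s}\tau\le\delta$ together with $n\tau\le T+1$.
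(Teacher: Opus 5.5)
Your Steps 1--3 reproduce the paper's argument. The paper does not pass through an intermediate level $n+1-$; it bounds $\tilde{\mathcal L}(n+1;\Delta)-\tilde{\mathcal L}(n;\Delta)$ and $\tilde{\mathcal Q}(n+1;\Delta)-\tilde{\mathcal Q}(n;\Delta)$ directly from \eqref{eq:U-CAL}, \eqref{2.10}--\eqref{2.11}, \eqref{eq:4.36}--\eqref{eq:4.37} and \eqref{eq:smooth-U}--\eqref{eq:smooth-V}. It obtains the same gain $-\tilde{\mathcal D}(n;\Delta)\tau^2$ from the pairs $k=j+2$, the same growth factor $\tau+(\mathfrak q+\hat{\mathfrak q})\tau^2$ and the same source $\tau\int_0^\tau\tilde{\mathcal E}$. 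It then takes $\delta$ small and $\tilde\kappa$ large exactly as you do, so \eqref{eq:F-1-UNI} and \eqref{eq:smooth-F} are fine.

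What is not yet a proof is $\tilde{\mathcal K}(n,n_0)\le\hat C_8(T)$, i.e.\ a bound on $\sum_p\hat{\mathfrak q}(p;\Delta)\tau^2$ independent of $\mathcal M_0$, $\mathcal M$ and $\tau$. The paper disposes of this by citing Lemma~\ref{lem:nonlinear-estimates}, which only controls the $\mathfrak q$ part, so your concern is justified. The independence of $\mathcal M_0$ matters: otherwise $\hat C_1(T)$ in the proof of Theorem~\ref{mainresult} depends on $\mathrm k$, and the limit $\mathrm k\to\infty$ in \eqref{eq:final} is lost. Your fallback cannot work. From $\hat{\mathfrak s}\tau\le\delta$ the best you get is $\hat{\mathfrak q}(p;\Delta)\tau^2\le\delta^2$ per step, and summing over $(T+1)/\tau$ steps blows up as $\tau\to0$. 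Bounding one factor by $\mathcal M_0^2$ instead gives $\mathcal M_0^2\delta(T+1)$, which is not $\mathcal M_0$-independent.

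Your primary route does work, but $\mathrm{L}^2$ conservation alone is not enough. The smooth solution is not merely transported along characteristics, so you need the continuous analogue of \eqref{2.8}--\eqref{2.9}, obtained as in Lemma~\ref{lem:pointwise-estimates}:
\begin{itemize}
\item start from $(\partial_t+\partial_x)|\hat u|^2\le m(|\hat u|^2+|\hat v|^2)+4|\beta||\hat u|^2|\hat v|^2$ and apply Gronwall along $x-t=\mathrm{const}$;
\item control the exponent by $\int_0^t|\hat v(x_0+s,s)|^2\,\mathrm{d}s\le\frac12\hat c_0^2$, which comes from integrating $\partial_t(|\hat u|^2+|\hat v|^2)+\partial_x(|\hat u|^2-|\hat v|^2)=0$ over a characteristic triangle (the analogue of \eqref{2.5});
\item conclude $|\hat u(x,t)|^2\le C(T,\hat c_0)\big(|\hat u(x-t,0)|^2+m\hat c_0^2\big)$, and symmetrically for $\hat v$, with no dependence on $\mathcal M_0$.
\end{itemize}
Evaluate these bounds at grid points and repeat \eqref{eq:u2v2}--\eqref{eq:u02-v02}, bounding only one factor pointwise in the $m\hat c_0^2$ terms. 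This bounds $\sum_p\hat{\mathfrak q}(p;\Delta)\tau^2$ by $C(T,\hat c_0)$, up to Riemann-sum errors of order $\mathcal M_0\mathcal M\tau$ times the support length. Those errors are absorbed by shrinking $\hat\tau_\sharp$, which is allowed to depend on $\mathcal M_0$, $\mathcal M$ and the initial data.
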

\begin{proof}
We split the proof into three steps.

\Step \label{step:smooth-1}
For $\tilde{\mathcal{L}}(n;\Delta)$, we aim to show the following claim:
There exists a constant $0<\hat{\tau}_{\sharp}<1$, depending on $\mathcal{M}_0$, $\mathcal{M}$ and the initial data $(\hat{u}(x,0),\hat{u}(x,0))\in\mathrm{C}_{\mathrm{c}}^\infty(\mathbb{R})$, such that
for all $\tau\in(0,\tau_{\sharp})$, there exists a constant $\hat{C}_5>0$, independent of $\mathcal{M}_0$, $\mathcal{M}$ and $\tau$, such that
	\begin{align}\label{corollary6.1}
		\tilde{\mathcal{L}}(n+1;\Delta)-\tilde{\mathcal{L}}(n;\Delta)
		\leq&
		\hat{C}_5
		\Big(\tilde{\mathcal{L}}(n;\Delta)\tau+\tilde{\mathcal{D}}(n;\Delta)\tau+\int_{0}^{\tau}\tilde{\mathcal{E}}(n;\Delta)\mathrm{d}s\Big).
\end{align}
To prove \eqref{corollary6.1}, we deduce from Definition \ref{definition6.3}, \eqref{eq:U-CAL} and the definition of $\tilde{\mathcal L}_{j+1}^{n,2}$ in \eqref{eq:def-Lij} that
\begin{align}
	\tilde{\mathcal{L}}(n+1;\Delta)
	=&\sum_{j=j_{1}-n_{1}+n+1}^{j_{1}+n_{1}-n-1}
	\left( |\mathcal{U}_{j}^{n+1}|^{2}
	+|\mathcal{V}_{j}^{n+1}|^{2}\right)
	=
	\sum_{j=j_{1}-n_{1}+n}^{j_{1}+n_{1}-n-2}
	\left( |\mathcal{U}_{j+1}^{n+1}|^{2}
	+|\mathcal{V}_{j+1}^{n+1}|^{2}\right)\nonumber \\
	=&	
	\sum_{j=j_{1}-n_{1}+n}^{j_{1}+n_{1}-n-2}
	\left( |\mathcal{U}_{j+1}^{n,2}(\tau)|^{2}
	+|\mathcal{V}_{j+1}^{n,2}(\tau)|^{2}\right)
	=\sum_{j=j_{1}-n_{1}+n}^{j_{1}+n_{1}-n-2}
	\tilde{\mathcal{L}}_{j+1}^{n,2}(\tau),\label{eq:L-n}
\end{align}
and
\begin{equation}\label{eq:L-n-0}
	\sum_{j=j_{1}-n_{1}+n}^{j_{1}+n_{1}-n-2}
	\tilde{\mathcal{L}}_{j+1}^{n,2}(0)
	\leq
	\sum_{j=j_{1}-n_{1}+n}^{j_{1}+n_{1}-n}
	\left( |\mathcal{U}_{j}^{n}|^{2}
	+|\mathcal{V}_{j}^{n}|^{2}\right)
	=	\tilde{\mathcal{L}}(n;\Delta).
\end{equation}

Then, using \eqref{eq:smooth-dUV} and \eqref{lem:smooth-LD}, we have	
\begin{align*}
\frac{\mathrm{d}\tilde{\mathcal{L}}_{j+1}^{n,2}(s)}{\mathrm{d}s}
	\leq&
2\hat{C}_1
\mathrm{e}^{\hat{C}_{2}+\hat{C}_{2}(\mathfrak{a}\tau+\mathfrak{e}(\tau))}
\big(\tilde{\mathcal{L}}_{j+1}^{n,2}(0)+\tilde{\mathcal{D}}_{j+1}^{n,2}(0)\big)
+2\hat{C}_1\big(|\mathfrak{g}_{j}^{n}(s)|^2+|\mathfrak{f}_{j}^{n}(s)|^2\big)\\
&+
2\hat{C}_1
\mathrm{e}^{\hat{C}_{2}+\hat{C}_{2}(\mathfrak{a}\tau+\mathfrak{e}(\tau))}
\Big(\big(\hat{C}_{2}+\hat{C}_{2}(\mathfrak{a}+\mathfrak{e}(\tau))\big)\int_{0}^{\tau}(|\mathfrak{g}_{j}^{n}(s)|^2+|\mathfrak{f}_{j}^{n}(s)|^2)\mathrm{d}s\Big).
	\end{align*}
Integrating the above equation over $[0,\tau]$ with respect to $s$ and using $0<\tau<1$, we obtain
\begin{align}\label{6.30.5}
	\tilde{\mathcal{L}}_{j+1}^{n,2}(\tau)
	 \leq&
	 \tilde{\mathcal{L}}_{j+1}^{n,2}(0)
	 +2\hat{C}_1\mathrm{e}^{\hat{C}_{2}+\hat{C}_{2}(\mathfrak{a}\tau+\mathfrak{e}(\tau))}
	 \Big(\tilde{\mathcal{L}}_{j+1}^{n,2}(0)\tau+\tilde{\mathcal{D}}_{j+1}^{n,2}(0)\tau\Big)
	 \nonumber\\
	 &+2\hat{C}_1\mathrm{e}^{\hat{C}_{2}+\hat{C}_{2}(\mathfrak{a}\tau+\mathfrak{e}(\tau))}
	 \big(1+\hat{C}_{2}\tau+\hat{C}_{2}(\mathfrak{a}\tau+\mathfrak{e}(\tau))\big)\int_{0}^{\tau}(|\mathfrak{g}_{j}^{n}(s)|^2+|\mathfrak{f}_{j}^{n}(s)|^2)\mathrm{d}s.
	\end{align}
	
Next, we estimate $\mathfrak{a}\tau+\mathfrak{e}(\tau)=
	\mathfrak{s}_{j+1}^{n,2}(0)\tau+\hat{\mathfrak{s}}_{j+1}^{n,2}(0)\tau
+\int_{0}^{\tau}(|\mathfrak{g}_{j}^{n}(s)|^2+|\mathfrak{f}_{j}^{n}(s)|^2)\mathrm{ds}$, which is defined by \eqref{def:ae}.
By \eqref{5.1}, there exists a constant $\hat{c}_0>0$, independent of $\mathcal{M}_0$, $\mathcal{M}$ and $\tau$, such that
$\|\hat{u}(x,0)\|_{\mathrm{L}^{2}(\mathbb{R})} +
\|\hat{v}(x,0))\|_{\mathrm{L}^{2}(\mathbb{R})}\leq \hat{c}_0.$ Then
it follows from \cite[Lemma 2.1]{Y.ZhangandQ.Zhao1} that there exists a constant $0<\hat{\tau}_{\sharp}<1$, depending on $\mathcal{M}_0$, $\mathcal{M}$ and the initial data $(\hat{u}(x,0),\hat{u}(x,0))\in \mathrm{C}_{\mathrm{c}}^\infty(\mathbb{R})$, such that
\begin{align*}
	\hat{\mathfrak{s}}_{j+1}^{n,2}(0)\tau
	=&(|\hat{u}(j\tau,n\tau)|^{2}+|v((j+2)\tau,n\tau)|^{2})\tau\\
	=&\int_{j\tau}^{(j+1)\tau}2\big(8\mathcal{M}^2\tau^2
	+|\hat{u}(x,n\tau)|^{2}+|v(x,n\tau)|^{2}\big)\mathrm{d}x
	\leq 16\mathcal{M}^2\tau^2+\hat{c_0} \leq \delta+\hat{c_0},
\end{align*}
and
\begin{equation*}
	\mathfrak{e}(\tau)
	=\int_{0}^{\tau}(|\mathfrak{g}_{j}^{n}(s)|^2+|\mathfrak{f}_{j}^{n}(s)|^2)\mathrm{ds}
	\leq \delta.
\end{equation*}
where we have used Lemma \ref{lem:error}. Hence, recall that $\mathfrak{s}_{j+1}^{n,2}(0)\tau\leq c_0$, we conclude that
\begin{equation}\label{eq:exponen}
	\mathfrak{a}\tau+\mathfrak{e}(\tau)=
	\mathfrak{s}_{j+1}^{n,2}(0)\tau+\hat{\mathfrak{s}}_{j+1}^{n,2}(0)\tau
	+\int_{0}^{\tau}(|\mathfrak{g}_{j}^{n}(s)|^2+|\mathfrak{f}_{j}^{n}(s)|^2)\mathrm{ds}
	\leq 2\delta+\hat{c_0}+c_0.
\end{equation}
Thus, plugging \eqref{eq:exponen} into \eqref{6.30.5} yields
\begin{align}\label{6.30.5-1}
	\tilde{\mathcal{L}}_{j+1}^{n,2}(\tau)
	\leq&
	\tilde{\mathcal{L}}_{j+1}^{n,2}(0)
	+2\hat{C}_1\mathrm{e}^{\hat{C}_{2}+\hat{C}_{2}(2\delta+\hat{c_0}+c_0)}
	\Big(\tilde{\mathcal{L}}_{j+1}^{n,2}(0)\tau+\tilde{\mathcal{D}}_{j+1}^{n,2}(0)\tau\Big)
\\
	&+2\hat{C}_1\mathrm{e}^{\hat{C}_{2}+\hat{C}_{2}(2\delta+\hat{c_0}+c_0)}
	\big(1+\hat{C}_{2}\tau+\hat{C}_{2}(2\delta+\hat{c_0}+c_0)\big)\int_{0}^{\tau}(|\mathfrak{g}_{j}^{n}(s)|^2+|\mathfrak{f}_{j}^{n}(s)|^2)\mathrm{d}s.\nonumber
\end{align}

Furthermore, recalling the Definition \ref{definition6.3} of $\tilde{\mathcal{D}}(n;\Delta)$ and the definition  of $\tilde{\mathcal{D}}_{j+1}^{n,2}$ in \eqref{eq:def-Dij}, and using \eqref{5.3}, \eqref{eq:4.13-1} and \eqref{eq:U-CAL}, we get
\begin{equation*}
	\sum_{j=j_{1}-n_{1}+n}^{j_{1}+n_{1}-n-2}\tilde{\mathcal{D}}_{j+1}^{n,2}(0)
	=
	\sum_{j=j_{1}-n_{1}+n}^{j_{1}+n_{1}-n-2}
	\Big(
	|\mathcal{U}_{j}^{n}|^{2}
	\big(|v_{j+2}^{n}|^{2}+|\hat{v}_{j+2}^{n}|^{2}\big)
	+|\mathcal{V}_{j+2}^{n}|^{2}
	\big(|u_{j}^{n}|^{2}+|\hat{u}_{j}^{n}|^{2}\big)
	\Big)
	=\tilde{\mathcal{D}}(n;\Delta).
\end{equation*}
From this, summing \eqref{6.30.5-1} over all integers $j$ with $j_{1}-n_{1}+n\leq j\leq j_{1}+n_{1}-n-2$ and using \eqref{eq:L-n} and \eqref{eq:L-n-0}, we obtain \eqref{corollary6.1} by setting $\hat{C}_5=2\hat{C}_1\mathrm{e}^{\hat{C}_{2}+\hat{C}_{2}(2\delta+\hat{c_0}+c_0)}
\big(1+\hat{C}_{2}+\hat{C}_{2}(2\delta+\hat{c_0}+c_0)\big)$.

\Step \label{step:smooth-2}
We estimate  $\tilde{\mathcal{Q}}(n+1;\Delta)-\tilde{\mathcal{Q}}(n;\Delta)$.
To see this, we first establish pointwise estimates for $|\hat{u}_{j+1}^{n,2}|^{2}$ and $|\hat{v}_{j+1}^{n,2}(\tau)|^{2}$. Using \eqref{eq:4.36}--\eqref{eq:4.38} and \eqref{eq:exponen} and carrying out similar arguments as in the proof of \eqref{2.10} and \eqref{2.11}, we deduce that
\begin{align*}
	|\hat{u}_{j+1}^{n,2}(\tau)|^{2}
	\leq&|\hat{u}_{j+1}^{n,2}(0)|^{2}
	+\hat{m}_1\hat{\mathfrak{s}}_{j+1}^{n,2}(0)\tau
	+\hat{c}_{1}|\hat{u}_{j+1}^{n,2}(0)|^{2}|\hat{v}_{j+1}^{n,2}(0)|^{2}\tau
	+\hat{c}_{2}\int_{0}^{\tau}(|\mathfrak{g}_{j}^{n}(s)|^2+|\mathfrak{f}_{j}^{n}(s)|^2)\mathrm{ds},\\
	|\hat{v}_{j+1}^{n,2}(\tau)|^{2}
	\leq&|\hat{v}_{j+1}^{n,2}(0)|^{2}
	+\hat{m}_1\hat{\mathfrak{s}}_{j+1}^{n,2}(0)\tau
	+\hat{c}_{1}|\hat{u}_{j+1}^{n,2}(0)|^{2}|\hat{v}_{j+1}^{n,2}(0)|^{2}\tau
	+\hat{c}_{2}\int_{0}^{\tau}(|\mathfrak{g}_{j}^{n}(s)|^2+|\mathfrak{f}_{j}^{n}(s)|^2)\mathrm{ds},
\end{align*}
where
$\hat{c}_{1}=4|\beta|\mathrm{e}^{4|\beta|(2\delta+\hat{c}_0)}$, $\hat{m}_1=(m+1)(1+\hat{c}_{1}(2\delta+\hat{c}_0))$ and $\hat{c}_{2}=\hat{m}_1+\hat{m}_1(2\delta+\hat{c}_0)$.

Let
\begin{equation*}
	\mathfrak{a}(n;\delta)=\sum_{j=j_{1}-n_{1}+n}^{j_{1}+n_{1}-n}\mathfrak{a},
	\qquad\qquad
	\mathfrak{e}(n;\delta;\tau)
	=\sum_{j=j_{1}-n_{1}+n}^{j_{1}+n_{1}-n-2}\mathfrak{e}(\tau).
\end{equation*}
Then, by the assumptions of Proposition \ref{lemma5.6}, we have
\begin{equation}\label{eq:a}
	\mathfrak{a}(n;\delta)\tau
	=\mathfrak{s}(n;\Delta)\tau
	+\hat{\mathfrak{s}}(n;\Delta)\tau
	\leq
	\mathfrak{s}(n_{0};\Delta)\tau
	+\sup\limits_{n_0\leq n\leq n_1-1}\hat{\mathfrak{s}}(n;\Delta)\tau\leq 2\delta,
\end{equation}
and
\begin{equation}\label{eq:e}
	\mathfrak{e}(n;\delta;\tau)
	=\int_{0}^{\tau}\sum_{j=j_{1}-n_{1}+n}^{j_{1}+n_{1}-n-2}(|\mathfrak{g}_{j}^{n}(s)|^2+|\mathfrak{f}_{j}^{n}(s)|^2)\mathrm{ds}
	=
	\sup\limits_{n_0\leq n\leq n_1-1}\int_{0}^{\tau}	\tilde{\mathcal{E}}(n;\Delta) \mathrm{d}s
	\leq \delta.
\end{equation}

Next, using \eqref{2.10}--\eqref{2.11} and \eqref{eq:smooth-U}--\eqref{eq:smooth-V}, it follows from \eqref{eq:U-CAL} and \eqref{eq:exponen} that
	\begin{align}\label{eq:smooth-Q-DE}
		&\tilde{\mathcal{Q}}(n+1;\Delta)\nonumber\\
		=&
		\sum_{j_{1}-n_{1}+n\leq j< k\leq j_{1}+n_{1}-n-2 }
		\left(
		\big(|\mathcal{U}_{j+1}^{n+1}|^{2}
		|v_{k+1}^{n+1}|^{2}+|\hat{v}_{k+1}^{n+1}|^{2}\big)
		+|\mathcal{V}_{k+1}^{n+1}|^{2}
		\big(|u_{j+1}^{n+1}|^{2}|\hat{u}_{j+1}^{n+1}|^{2}\big)
		\right)\nonumber\\
		\leq &
		\sum_{j_{1}-n_{1}+n\leq j< k\leq j_{1}+n_{1}-n}
		\left(
		|\mathcal{U}_{j}^{n}|^{2}
		\big(|v_{k}^{n}|^{2}+|\hat{v}_{k}^{n}|^{2}\big)
		+|\mathcal{V}_{k}^{n}|^{2}
		\big(|u_{j}^{n}|^{2}+|\hat{u}_{j}^{n}|^{2}\big)
		\right)\nonumber\\
		&-\sum_{j_{1}-n_{1}+n\leq j< k\leq j_{1}+n_{1}-n-2}
		\left(
		|\mathcal{U}_{j}^{n}|^{2}
		\big(|v_{j+2}^{n}|^{2}+|\hat{v}_{j+2}^{n}|^{2}\big)
		+|\mathcal{V}_{j+2}^{n}|^{2}
		\big(|u_{j}^{n}|^{2}+|\hat{u}_{j}^{n}|^{2}\big)
		\right)\nonumber\\
		&+
		\hat{C}_6
		\big(\mathfrak{a}(n;\delta)\tau+\mathfrak{e}(n;\delta;\tau) +\mathfrak{q}(n;\Delta)\tau+\hat{\mathfrak{q}}(n;\Delta)\tau\big)
		\tilde{\mathcal{L}}(n;\Delta)\nonumber\\
		&+
		\hat{C}_6
		\big(\mathfrak{a}(n;\delta)\tau+\mathfrak{e}(n;\delta;\tau) +\mathfrak{q}(n;\Delta)\tau^2+\hat{\mathfrak{q}}(n;\Delta)\tau^2\big)
	   \tilde{\mathcal{D}}(n;\Delta)\nonumber\\
	    &+
	    \hat{C}_6
	    \big(\mathfrak{a}(n;\delta)+\mathfrak{e}(n;\delta;\tau) +\mathfrak{q}(n;\Delta)\tau+\hat{\mathfrak{q}}(n;\Delta)\tau\big)\int_{0}^{\tau}\tilde{\mathcal{E}}(n;\Delta)\mathrm{d}s\nonumber\\
	    \leq &
	    \tilde{\mathcal{Q}}(n;\Delta)-\tilde{\mathcal{D}}(n;\Delta)+
	    \hat{C}_6
	    \big(\mathfrak{a}(n;\delta)\tau+\mathfrak{e}(n;\delta;\tau) +\mathfrak{q}(n;\Delta)\tau^2+\hat{\mathfrak{q}}(n;\Delta)\tau^2\big)
	    \tilde{\mathcal{D}}(n;\Delta)\nonumber\\
	    &+
	    \hat{C}_6
	    \big(\mathfrak{a}(n;\delta)\tau+\mathfrak{e}(n;\delta;\tau) +\mathfrak{q}(n;\Delta)\tau+\hat{\mathfrak{q}}(n;\Delta)\tau\big)
	    \tilde{\mathcal{L}}(n;\Delta)\nonumber\\
	    &+
	    \hat{C}_6
	    \big(\mathfrak{a}(n;\delta)+\mathfrak{e}(n;\delta;\tau) +\mathfrak{q}(n;\Delta)\tau+\hat{\mathfrak{q}}(n;\Delta)\tau\big)
	    \int_{0}^{\tau}\tilde{\mathcal{E}}(n;\Delta)\mathrm{d}s,
		\end{align}
where $\hat{C}_6=4\hat{c}_3(m+\hat{m}_1+1+c_1+\hat{c}_1)+\hat{c}_2(1+\hat{c}_3)$,
$\hat{c}_3=\hat{C}_3\mathrm{e}^{\hat{C}_3(2\delta+\hat{c}_0)+c_0}\big(1+\hat{C}_4+\hat{C}_4(2\delta+\hat{c}_0)+c_0)\big)$.

Thus, substituting \eqref{eq:a} and \eqref{eq:e} into \eqref{eq:smooth-Q-DE}, and  choosing $\delta>0$ sufficiently small such that $-1+\hat{C}_6(2\delta+\delta+2\delta^2)<-\frac{1}{2}$, we obtain
\begin{align}\label{eq:smooth-Q}
	\tilde{\mathcal{Q}}(n+1;\Delta)\tau^2-\tilde{\mathcal{Q}}(n;\Delta)\tau^2\nonumber
	\leq &
	\big(-1+\hat{C}_6(2\delta+\delta+2\delta^2)\big)\tilde{\mathcal{D}}(n;\Delta)\tau^2
	\nonumber\\
	 &+
	\hat{C}_6\tau
	\big(\mathfrak{a}(n;\delta)\tau+\mathfrak{e}(n;\delta;\tau) +\mathfrak{q}(n;\Delta)\tau+\hat{\mathfrak{q}}(n;\Delta)\tau\big)
	\tilde{\mathcal{L}}(n;\Delta)\tau\nonumber\\
	&+
	\hat{C}_6\tau
	\big(\mathfrak{a}(n;\delta)\tau+\mathfrak{e}(n;\delta;\tau) +\mathfrak{q}(n;\Delta)\tau^2+\hat{\mathfrak{q}}(n;\Delta)\tau^2\big)
	\int_{0}^{\tau}\tilde{\mathcal{E}}(n;\Delta)\mathrm{d}s\nonumber\\
	\leq &-\frac{1}{2}\tilde{\mathcal{D}}(n;\Delta)\tau^2
	+\hat{C}_6
	\big(3\delta+2\delta^2\big)
	\tau\int_{0}^{\tau}\tilde{\mathcal{E}}(n;\Delta)\mathrm{d}s\nonumber\\
	&+\hat{C}_6
	\big(3\delta\tau +\mathfrak{q}(n;\Delta)\tau^2+\hat{\mathfrak{q}}(n;\Delta)\tau^2\big)
	\tilde{\mathcal{L}}(n;\Delta)\tau.
\end{align}

\Step \label{step:smooth-3}
Using \eqref{corollary6.1} and \eqref{eq:smooth-Q}, we have
	\begin{align*}
		&\tilde{\mathcal{F}}(n+1;\Delta)-\tilde{\mathcal{F}}(n;\Delta)\\
		\leq&
		\hat{C}_5\tau
		\Big(\tilde{\mathcal{L}}(n;\Delta)\tau+\tilde{\mathcal{D}}(n;\Delta)\tau+\int_{0}^{\tau}\tilde{\mathcal{E}}(n;\Delta)\mathrm{d}s\Big)
		-\frac{\tilde{\kappa }}{2}\tilde{\mathcal{D}}(n;\Delta)\tau^2
		+\hat{C}_6\tilde{\kappa }
		\big(3\delta+2\delta^2\big)
		\tau\int_{0}^{\tau}\tilde{\mathcal{E}}(n;\Delta)\mathrm{d}s\nonumber\\
		&+\hat{C}_6\tilde{\kappa }
		\big(3\delta\tau +\mathfrak{q}(n;\Delta)\tau^2+\hat{\mathfrak{q}}(n;\Delta)\tau^2\big)
		\tilde{\mathcal{L}}(n;\Delta)\tau.
	\end{align*}
Therefore, we can choose $\tilde{\kappa }>0$ large enough such that $-\frac{\tilde{\kappa }}{2}+\hat{C}_5<-1$. Then
\begin{align*}
	\tilde{\mathcal{F}}(n+1;\Delta)
	\leq &
	\big(1+(\hat{C}_5+3\delta\hat{C}_6\tilde{\kappa })\tau
	+\hat{C}_6\tilde{\kappa }
	\big(\mathfrak{q}(n;\Delta)\tau^2+\hat{\mathfrak{q}}(n;\Delta)\tau
	^2\big)\big)
\tilde{\mathcal{F}}(n;\Delta)\\
	&+\big(\hat{C}_5+\hat{C}_6\tilde{\kappa }
	\big(3\delta+2\delta^2\big)\big)\tau\int_{0}^{\tau}\tilde{\mathcal{E}}(n;\Delta)\mathrm{d}s,
\end{align*}
which gives \eqref{eq:F-1-UNI} with $\hat{C}_7=\hat{C}_5+3\delta\hat{C}_6\tilde{\kappa }+\hat{C}_6\tilde{\kappa }+\hat{C}_6\tilde{\kappa }
\big(3\delta+2\delta^2\big)$.

Since $1+x\leq \mathrm{e}^x$ and $n\leq (T+1)/\tau$, we deduce that
\begin{align*}
\tilde{\mathcal{F}}(n;\Delta)
\leq&
\mathrm{e}^{\hat{C}_7\tau+
\hat{C}_7 (\mathfrak{q}(n;\Delta)\tau^2+\hat{\mathfrak{q}}(n;\Delta)\tau
^2)}
\tilde{\mathcal{F}}(n-1;\Delta)
+\hat{C}_7\tau\int_{0}^{\tau}\tilde{\mathcal{E}}(n;\Delta)\mathrm{d}s\\
\leq&
\mathrm{e}^{\hat{C}_7(n-n_0)\tau
+\hat{C}_7\sum_{p=n_{0}}^{n-1}
\big(\mathfrak{q}(p;\Delta)\tau^2+\hat{\mathfrak{q}}(p;\Delta)\tau
^2\big)}
\Big(\tilde{\mathcal{F}}(n_0;\Delta)+\hat{C}_7\tau\int_{0}^{\tau}\sum_{p=n_{0}}^{n-1}\tilde{\mathcal{E}}(p;\Delta)\mathrm{d}s\Big),
\end{align*}
which together with Lemma~\ref{lem:nonlinear-estimates} gives \eqref{eq:smooth-F}.
The proof is complete.
\end{proof}

Now we consider the difference $(\mathcal{U},\mathcal{V})
=(\hat{u}-u^{(\tau)},\hat{v}-v^{(\tau)})$ between two consecutive time steps, namely, $t\in[n\tau,(n+1)\tau)$.
\begin{lemma}\label{lem:0-to-tau}
	For all $s\in[0,\tau]$, there holds that
		\begin{align*}
		&|\mathcal{U}(x,n\tau+s)|^2
		\leq
		\mathrm{e}^{2\tau}|\mathcal{U}(x-s,n\tau)|^2
		+\mathrm{e}^{2\tau}\big(m^2+(|\alpha|+4|\beta|)^2\big)\mathcal{M}_0^6(T)\tau,\\
		&|\mathcal{V}(x,n\tau+s)|^2
		\leq
		\mathrm{e}^{2\tau}|\mathcal{V}(x+s,n\tau)|^2
		+\mathrm{e}^{2\tau}\big(m^2+(|\alpha|+4|\beta|)^2\big)\mathcal{M}_0^6(T)\tau.
	\end{align*}
\end{lemma}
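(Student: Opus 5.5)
We prove Lemma~\ref{lem:0-to-tau} by the method of characteristics, treating the two components separately. Only $\mathcal{U}$ is discussed; the $\mathcal{V}$ estimate is identical, with the characteristic $x+s$ in place of $x-s$ and \eqref{eq:NLDE}$_2$ in place of \eqref{eq:NLDE}$_1$.

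Fix $x$ and $n$ with $n\tau\le T$, and consider the line $\sigma\mapsto(x-s+\sigma,\,n\tau+\sigma)$ for $\sigma\in[0,s]$. On this line, Step(n,1), i.e.\ \eqref{5.5}, gives $u^{(\tau)}(x-s+\sigma,n\tau+\sigma)=u^{(\tau)}(x-s,n\tau)$, because for $\sigma<\tau$ no nonlinear step occurs. By \eqref{5.2} the smooth solution satisfies
\begin{equation*}
\frac{\mathrm{d}}{\mathrm{d}\sigma}\hat{u}(x-s+\sigma,n\tau+\sigma)=\big(im\hat{v}+i\mathcal{N}_1(\hat{u},\hat{v})\big)(x-s+\sigma,n\tau+\sigma).
\end{equation*}
Set $w(\sigma)=\mathcal{U}(x-s+\sigma,n\tau+\sigma)$. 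Then
\begin{equation*}
w'(\sigma)=\big(im\hat{v}+i\mathcal{N}_1(\hat{u},\hat{v})\big)(x-s+\sigma,n\tau+\sigma)=:G(\sigma),
\end{equation*}
and $w(0)=\mathcal{U}(x-s,n\tau)$, $w(s)=\mathcal{U}(x,n\tau+s)$.

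To bound $G$, note that $|\mathcal{N}_1(\hat u,\hat v)|\le(|\alpha|+4|\beta|)|\hat u||\hat v|^2\le(|\alpha|+4|\beta|)\mathcal{M}_0^3$ and $|m\hat v|\le m\mathcal{M}_0\le m\mathcal{M}_0^3$, since $\mathcal{M}_0\ge1$. Consequently
\begin{equation*}
|G|^2\le 2\big(m^2+(|\alpha|+4|\beta|)^2\big)\mathcal{M}_0^6.
\end{equation*}
The goal is an inequality of Gronwall type. We compute
\begin{equation*}
\frac{\mathrm{d}}{\mathrm{d}\sigma}|w|^2=2\,\mathrm{Re}\{G\overline{w}\}\le 2|w|^2+\tfrac12|G|^2,
\end{equation*}
using $2ab\le 2a^2+\tfrac12 b^2$. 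Gronwall's inequality on $[0,s]\subset[0,\tau]$ then gives
\begin{equation*}
|w(s)|^2\le \mathrm{e}^{2s}|w(0)|^2+\mathrm{e}^{2s}\int_0^s\tfrac12|G|^2\,\mathrm{d}\sigma\le \mathrm{e}^{2\tau}|w(0)|^2+\mathrm{e}^{2\tau}\big(m^2+(|\alpha|+4|\beta|)^2\big)\mathcal{M}_0^6\tau,
\end{equation*}
which is exactly the claimed bound.

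The only delicate point is the endpoint $s=\tau$. There, $u^{(\tau)}(\cdot,(n+1)\tau)$ has already passed through the nonlinear step $S^2_\tau$, so the transport identity \eqref{5.5} no longer holds. For $s\in[0,\tau)$ the argument above applies verbatim. At $s=\tau$ we read the statement with the left limit $(n+1)\tau-$, consistent with the splitting construction, and obtain it by letting $s\to\tau^-$ in the bound above. We must also make sure the characteristic stays within the region where $\mathcal{M}_0(T)$ is defined. This requires $n\tau+s\le T$; alternatively, one uses that $\hat u$ is smooth up to the relevant time. Both are routine bookkeeping.
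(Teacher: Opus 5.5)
Your proposal is correct and is essentially the paper's argument: differentiate $|\mathcal{U}|^2$ along the characteristic on which $u^{(\tau)}$ is constant, bound the source $im\hat{v}+i\mathcal{N}_1(\hat{u},\hat{v})$ by $\mathcal{M}_0$, and apply Young's inequality and Gronwall's inequality. The only cosmetic difference is that the paper applies $2ab\le a^2+b^2$ to the linear and nonlinear source terms separately, while you combine them into $G$; both give the same constants. Your restriction to $s<\tau$, with the left-limit reading at $s=\tau$, matches the paper, whose proof is also carried out only for $s\in[0,\tau)$.
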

\begin{proof}
	By \eqref{eq:NLDE} and \eqref{eq:subproblem1}, we have
	\begin{equation*}
		\left\{
		\begin{aligned}
			&\frac{\mathrm{d}}{\mathrm{d}s}|\mathcal{U}(x+s,n\tau+s)|^2
			=2\mathrm{Re}\left\lbrace[im\hat{v}^{(\tau)}\overline{\mathcal{U}^{(\tau)}}+i\mathcal{N}_{1}(\hat{u}^{(\tau)},\hat{v}^{(\tau)})\overline{\mathcal{U}}](x+s,n\tau+s)
			\right\rbrace,\\
			&\frac{\mathrm{d}}{\mathrm{d}s}|\mathcal{V}(x-s,n\tau+s)|^2
			=2\mathrm{Re}\left\lbrace[im\hat{u}^{(\tau)}\overline{\mathcal{V}}+i\mathcal{N}_{2}(\hat{u}^{(\tau)},\hat{v}^{(\tau)})\overline{\mathcal{V}}](x-s,n\tau+s)\right\rbrace.
		\end{aligned}\right.
	\end{equation*}
Then for $s\in[0,\tau)$, using Young's inequality and Gronwall's inequality, we have
	\begin{align*}
		&|\mathcal{U}(x+s,n\tau+s)|^2
		\leq
		\mathrm{e}^{2s}\left( |\mathcal{U}(x,n\tau)|^2
		+\big(m^2\mathcal{M}_0^2+(|\alpha|+4|\beta|)^2\mathcal{M}_0^6\big)s\right),\\
		&|\mathcal{V}(x-s,n\tau+s)|^2
		\leq
		\mathrm{e}^{2s}\left( |\mathcal{V}(x,n\tau)|^2
		+\big(m^2\mathcal{M}_0^2+(|\alpha|+4|\beta|)^2\mathcal{M}_0^6\big)s\right),
	\end{align*}
	which completes the proof.
\end{proof}

To estimate $
(\mathcal{U},\mathcal{V})
=(\hat{u}-u^{(\tau)},\hat{v}-v^{(\tau)})
$ for any $t\in [0,T]$,
by Proposition \ref{lemma5.6} and Lemma \ref{lem:0-to-tau}, we have the following.
\begin{corollary}\label{coro:unique}
Under the assumptions of Proposition \ref{lemma5.6}, there exists a constant $\hat{C}(T)>0$,  which depends only on $T$, $\delta$, $c_0$ and the system \eqref{eq:NLDE}, but is independent of $\mathcal{M}_0$, $\mathcal{M}$ and $\tau$, such that for all $\tau\in(0,\hat{\tau}_{\sharp})$ and $t\in[n_0\tau,T]$, it holds that
	\begin{align*}
		&\int_{j_1\tau-n_1\tau+t}^{j_1\tau+n_1\tau-t}
		\left(|\mathcal{U}(x,t)|^2
		+|\mathcal{V}(x,t)|^2
		\right)\mathrm{d}x\\
		\leq&
		\hat{C}(T)
	    \tilde{\mathcal{F}}(n_0;\Delta)
		+\hat{C}(T)\tau\int_{0}^{\tau}\Big(\sum_{p=n_{0}}^{n-1}\sum_{j=-\infty}^{+\infty}\big( |\mathfrak{g}_{j}^{p}(s)|^{2}
		+|\mathfrak{f}_{j}^{p}(s)|^{2}\big)\Big)\mathrm{d}s
		+\hat{C}(T)\mathcal{M}_0^6\mathcal{M}^2(n_1-n_0)\tau^2.
	\end{align*}
\end{corollary}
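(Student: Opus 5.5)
The argument has two parts, mirroring the proof of Proposition \ref{pro:stability-bounds}. First we treat the grid times $t=n\tau$ with $n_0\le n\le n_1$, $n\tau\le T$. Then we pass to intermediate times $t\in(n\tau,(n+1)\tau)$ using Lemma \ref{lem:0-to-tau}.

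\emph{Grid times.} Since $(u^{(\tau)},v^{(\tau)})$ is piecewise constant on $[j\tau,(j+1)\tau)$, estimates \eqref{eq:error1}--\eqref{eq:error2} and $(a+b)^2\le 2a^2+2b^2$ give
\begin{equation*}
\int_{(j_1-n_1+n)\tau}^{(j_1+n_1-n)\tau}\big(|\mathcal{U}|^2+|\mathcal{V}|^2\big)(x,n\tau)\,\mathrm{d}x
\le 2\tilde{\mathcal{L}}(n;\Delta)\tau+4\mathcal{M}^2\tau^2\cdot 2(n_1-n)\tau .
\end{equation*}
The interval has length $2(n_1-n)\tau$, so the last term is of order $\mathcal{M}^2(n_1-n_0)\tau^2$ times a factor $\tau\le1$. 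For the first term we use $\tilde{\mathcal{L}}(n;\Delta)\tau\le\tilde{\mathcal{F}}(n;\Delta)$ together with \eqref{eq:smooth-F} and $\tilde{\mathcal{K}}(n,n_0)\le\hat{C}_8(T)$. This gives
\begin{equation*}
\tilde{\mathcal{F}}(n;\Delta)\le \mathrm{e}^{\hat{C}_8(T)}\tilde{\mathcal{F}}(n_0;\Delta)+\hat{C}_7\mathrm{e}^{\hat{C}_8(T)}\tau\int_0^\tau\sum_{p=n_0}^{n-1}\tilde{\mathcal{E}}(p;\Delta)\,\mathrm{d}s .
\end{equation*}
Each $\tilde{\mathcal{E}}(p;\Delta)$ is a partial sum over $j$, so it is bounded by the full sum $\sum_{j}(|\mathfrak{g}_j^p|^2+|\mathfrak{f}_j^p|^2)$, which is exactly the second term in the claimed bound. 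Since $\mathcal{M}_0\ge1$, the discretization term is absorbed into $\hat{C}(T)\mathcal{M}_0^6\mathcal{M}^2(n_1-n_0)\tau^2$.

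\emph{Intermediate times.} Let $t=n\tau+s$ with $s\in(0,\tau)$ and $n=[t/\tau]$. Lemma \ref{lem:0-to-tau} bounds $|\mathcal{U}(x,t)|^2$ by $\mathrm{e}^{2\tau}|\mathcal{U}(x-s,n\tau)|^2$ plus $\mathrm{e}^{2\tau}(m^2+(|\alpha|+4|\beta|)^2)\mathcal{M}_0^6\tau$, and bounds $\mathcal{V}$ analogously with the shift $x+s$. We integrate over $x\in[j_1\tau-n_1\tau+t,\,j_1\tau+n_1\tau-t]$. After the change of variables $x\mp s$, this interval lies inside $[(j_1-n_1+n)\tau,(j_1+n_1-n)\tau]$, by the same characteristic-triangle inclusion used in the proof of Proposition \ref{pro:stability-bounds}. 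The grid-time bound at level $n$ then applies, multiplied by $\mathrm{e}^{2\tau}\le \mathrm{e}^2$.

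The remaining source term is $C\mathcal{M}_0^6\tau$ integrated over an interval of length at most $2(n_1-n_0)\tau$. This yields $C\mathcal{M}_0^6(n_1-n_0)\tau^2\le C\mathcal{M}_0^6\mathcal{M}^2(n_1-n_0)\tau^2$, using $\mathcal{M}\ge1$. Collecting all constants, which depend only on $T$, $\delta$, $c_0$ and the system, defines $\hat{C}(T)$.

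\emph{Main obstacle.} The delicate point is keeping the constants independent of $\mathcal{M}_0$, $\mathcal{M}$ and $\tau$. This requires the following checks.
\begin{itemize}
\item The hypotheses of Proposition \ref{lemma5.6} must hold on every level $n\le[T/\tau]$ used. The bound on $\hat{C}_8(T)$ comes from Lemma \ref{lem:nonlinear-estimates} applied to the time-splitting solution, and from a similar bound for $\hat{\mathfrak{q}}$ obtained from the $\mathrm{L}^2$ control of the smooth solution. Neither should depend on $\mathcal{M}_0$ or $\mathcal{M}$.
\item The $\mathcal{M}$-dependence must enter only through the explicit discretization and source terms.
\item The endpoint bookkeeping must be checked: $\tilde{\mathcal{L}}$ sums over $j_1-n_1+n\le j\le j_1+n_1-n$, while the shifted spatial interval must not exceed the cells $[j\tau,(j+1)\tau)$ covered by this range. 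If it does exceed them by one cell, we enlarge the triangle slightly or absorb the boundary cell into the $\mathcal{M}^2\tau^2$ term.
\end{itemize}
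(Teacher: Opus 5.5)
Your proposal is correct and follows the paper's proof essentially step for step. At grid times you use \eqref{eq:error1}--\eqref{eq:error2} together with \eqref{eq:smooth-F} and $\tilde{\mathcal{K}}(n,n_0)\le\hat{C}_8(T)$, bounding $\tilde{\mathcal{E}}(p;\Delta)$ by the full sum over $j$; at intermediate times you apply Lemma~\ref{lem:0-to-tau} along characteristics. Your bookkeeping is in fact slightly cleaner than the paper's display \eqref{eq:t}: the source term integrates over an interval of length at most $2(n_1-n_0)\tau$, giving $(n_1-n_0)\tau^2$, and you keep the factor $\mathrm{e}^{2\tau}\le\mathrm{e}^2$ on the grid-time term. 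The endpoint inclusion you flagged holds exactly, since $x\mp s$ stays in $[(j_1-n_1+n)\tau,(j_1+n_1-n)\tau]$.
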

\begin{proof}
For	$t\in(n\tau,(n+1)\tau)\cap[n_0\tau,T]$, Lemma \ref{lem:0-to-tau} implies
\begin{align}\label{eq:t}
	\hspace*{-0.3cm}&\int_{j_1\tau-n_1\tau+t}^{j_1\tau+n_1\tau-t}
		\left(|\mathcal{U}(x,t)|^2
		+|\mathcal{V}(x,t)|^2
		\right)\mathrm{d}x
	\nonumber\\
	\hspace*{-0.3cm}\leq&	\int_{j_1\tau-n_1\tau+n\tau}^{j_1\tau+n_1\tau-n\tau}
		\left(
		|\mathcal{U}(x,n\tau)|^2
		+ |\mathcal{V}(x,n\tau)|^2
		\right)
		\mathrm{d}x
		+4\mathrm{e}^{2\tau}\big(m^2+(|\alpha|+4|\beta|)^2\big)\mathcal{M}_0^6(n-n_0)\tau^2.
	\end{align}
	
On the other hand,  to estimate the first term on the right hand of \eqref{eq:t}, we consider the time step $t=n\tau$ for $n=0,1,2,\ldots$ with $0\leq n_{0}\leq n$ and  $n\leq (T+1)/\tau$. Then, by \eqref{eq:error1}, \eqref{eq:error2} and Proposition~\ref{lemma5.6}, we have
\begin{align}
	&\int_{j_1\tau-n_1\tau+n\tau}^{j_1\tau+n_1\tau-n\tau}
	\left( |\mathcal{U}(x,n\tau)|^2+ |\mathcal{V}(x,n\tau)|^2\right)
	\mathrm{d}x\nonumber\\
	\leq &2\sum_{j=j_1-n_1+n}^{j_1+n_1-n}
	\left( |\mathcal{U}_{j}^{n}|^2+\mathcal{M}^2\tau^2 +|\mathcal{V}_{j}^{n}|^2+\mathcal{M}^2\tau^2\right)\tau\nonumber\\
	\leq&
	2\mathrm{e}^{\hat{C}_8(T)}
	\Big(\tilde{\mathcal{F}}(n_0;\Delta)+\hat{C}_7\tau\int_{0}^{\tau}\sum_{p=n_{0}}^{n-1}\tilde{\mathcal{E}}(p;\Delta)\mathrm{d}s\Big)
	+8\mathcal{M}^2(n_1-n_0)\tau^3.\label{eq:n-tau}
\end{align}

Substituting \eqref{eq:n-tau} into \eqref{eq:t}, and setting $\hat{C}(T)=2\mathrm{e}^{2+\hat{C}_8(T)}(1+\hat{C}_7)+8\mathrm{e}^{2}+4\mathrm{e}^{2}\big(m^2+(|\alpha|+4|\beta|)^2\big)$, we complete the proof.
\end{proof}
	
The remainder of this section will be devoted to the proof of the main theorem.
\begin{proof}[\textbf{\textup{Proof of Theorem \ref{mainresult}}}]
To prove Theorem \ref{mainresult}, by Lemma \ref{lem:proposition4.1}, it suffices to prove that
$
(u_\flat,v_\flat)(x,t)=(\hat{u},\hat{v})(x,t)$ in  $\mathrm{L}^2(\mathbb{R}\times[0,T]).
$

First, for a fixed $\mathrm{k}$, we estimate the difference between the smooth solution
$(\hat{u}_{\mathrm{k}},\hat{v}_{\mathrm{k}})$ and the limit $(u_\flat,v_\flat)$ of any convergent subsequence of the time splitting solutions $\{(u^{(\tau_l)},v^{(\tau_l)})\}_{l=1}^{+\infty}$ with $\tau_l\to0$ as $l\to+ \infty$.
Here $\{(\hat{u}_{\mathrm{k}},\hat{v}_{\mathrm{k}})\}_{k=1}^{+\infty}$ is the sequence of solutions with \eqref{5.1} and \eqref{eq:uniqueness-s}.

Notice that $(\hat{u}_{\mathrm{k}}(x,0),\hat{v}_{\mathrm{k}}(x,0))\in \mathrm{C}_{\mathrm{c}}^\infty(\mathbb{R}\times [0,T])$ for any $k$. Without loss of generality, we assume that
\begin{equation*}
	\mathrm{supp}(\hat{u}_{\mathrm{k}}(\cdot,0))\subset[-\Theta_k,\Theta_k],\quad
	\mathrm{supp}(\hat{v}_{\mathrm{k}}(\cdot,0))\subset[-\Theta_k,\Theta_k],
\end{equation*}
where $\mathrm{supp}(\omega)$ denotes the support of $\omega$ and $\Theta_k\geq\Theta$.
It is shown in \cite[Lemma 3.1]{Y.ZhangandQ.Zhao1} that the smooth solutions $(\hat{u}_{\mathrm{k}},\hat{v}_{\mathrm{k}})$, $k=1,2,\ldots$, have compact support in $\mathbb{R}\times [0,T]$. More precisely,
\begin{equation*}
	\mathrm{supp}(\hat{u}_{\mathrm{k}}(\cdot,t))\subset[-\Theta_k-t,\Theta_k+t],\quad
	\mathrm{supp}(\hat{v}_{\mathrm{k}}(\cdot,t))\subset[-\Theta_k-t,\Theta_k+t],\quad \text{ for } t\geq 0.
\end{equation*}
Let
\begin{align*}
	\mathcal{M}_{0,\mathrm{k}}
	=\max_{\mathbb{R}\times[0,T]}
	(|\hat{u}_\mathrm{k}|+|\hat{v}_\mathrm{k}|+1),\quad
	\mathcal{M}_\mathrm{k}=\max_{\mathbb{R}\times[0,T]}(|\partial_t\hat{u}_\mathrm{k}|
	+|\partial_x\hat{u}_\mathrm{k}|
	+|\partial_t\hat{v_\mathrm{k}}|+|\partial_x\hat{v}_\mathrm{k}|+1).
	\end{align*}

Given a constant $\delta>0$, it is shown in \cite[Lemmas 5.2--5.3]{N.Li} that there exists a $\hat{\sigma}>0$, independent of $\mathrm{k}$ and $\tau_l$, such that for any $t\in[0,T]$ and any interval $(x_1,x_2)\subset [-\Theta_k-4T,\Theta_k+4T]$ with $|x_1-x_2|<4\hat{\sigma}$, we have
	\begin{equation}\label{eq:uni-small-1}
		\sup_{\mathrm{k}}\int_{x_1}^{x_2}
		\big(|\hat{u}_{\mathrm{k}}(x,t)|^2+|\hat{v}_{\mathrm{k}}(x,t)|^2\big)\mathrm{d}x
		\leq\frac{ \delta}{4}.
	\end{equation}
Then, using \eqref{eq:initial-data-convergent} and \eqref{eq:interval-any}, there exists a $\sigma\in(0,\min\{\hat{\tau}_\sharp,\hat{\sigma}\})$, independent of $\mathrm{k}$ and $\tau_l$, such that for any interval $[x_1,x_2]\subset [-\Theta_k-4T,\Theta_k+4T]$ with $|x_1-x_2|\leq 4\sigma$, there holds that
\begin{equation}\label{eq:uni-small}
	\int_{x_1}^{x_2}\big(|u^{(\tau_l)}|^{2}+|v^{(\tau_l)}|^{2}\big)(x,t)\mathrm{d}x
	\leq\delta,\qquad n=0,1,2,\ldots,[T/\sigma].
\end{equation}
Without loss of generality, we further assume that $\Theta_k+4T=2N\sigma$.

Next, we consider the time interval $t\in[n\sigma,(n+1)\sigma]$ for $n=0,1,2,\ldots,[T/\sigma]$.
By \eqref{5.1}, there exists a constant $\hat{c}_0>0$, independent of $\mathrm{k}$ and $\tau_l$, such that
$\|\hat{u}_{\mathrm{k}}(x,0)\|_{\mathrm{L}^{2}(\mathbb{R})} +
\|\hat{v}_{\mathrm{k}}(x,0))\|_{\mathrm{L}^{2}(\mathbb{R})}\leq \hat{c}_0.$
Since $|u_{\mathrm{k}}(x,t)-u_{\mathrm{k}}(j\tau_l,t)|\leq \mathcal{M}_k\tau_l$ for $x\in[j\tau_l,(j+1)\tau_l]$ and $t\in[0,T]$, it follows from \cite[Lemma 2.1]{Y.ZhangandQ.Zhao1} that
\begin{align*}
&\sum_{j=j_1-n_1+n}^{j=j_1+n_1-n} \big(|u_{\mathrm{k}}(j\tau_l,t)|^{2}+|v_{\mathrm{k}}(j\tau_l,t)|^{2}\big)\tau_l\\
\leq &
\sum_{j=j_1-n_1+n}^{j=j_1+n_1-n-1}\int_{j\tau_l}^{(j+1)\tau_l}
2\big(2\mathcal{M}_k^2\tau_l^2+|u_{\mathrm{k}}(x,t)|^{2}
+|v_{\mathrm{k}}(x,t)|^{2}\big)\mathrm{d}x
\leq  4\mathcal{M}_k^2\tau_l^2+\hat{c}_0.
\end{align*}
Thus, carrying out similar arguments as in the proof of \eqref{eq:F-L}, for any given constant $\delta>0$, there exists a constant $0<\hat{\tau}_*<\frac{\delta}{2\mathcal{M}_k}$, such that, for all $\tau\in(0,\hat{\tau}_*)$, we have
\begin{equation}\label{eq:4.52}
	\tilde{\mathcal{F}}(n\sigma;\Delta)
	=\tilde{\mathcal{L}}(n\sigma;\Delta)\tau_l
	+\tilde{\kappa}\tilde{\mathcal{Q}}(n\sigma;\Delta)\tau_l^{2}
	\leq (c_0\kappa+\hat{c}_0\tilde{\kappa}+\delta^2+1)\tilde{\mathcal{L}}(n\sigma;\Delta)\tau_l.
\end{equation}

Let $\hat{C}_1(T)=\hat{C}(T)(c_0\kappa+\hat{c}_0\tilde{\kappa}+\delta^2+1)$, where  $\hat{C}(T)$ is given by Corollary \ref{coro:unique}. Hence, the constant $\hat{C}_1(T)$ is independent of $\mathcal{M}_{0,\mathrm{k}}$, $\mathcal{M}_{\mathrm{k}}$ and $\tau_l$.
Using \eqref{eq:uni-small-1}--\eqref{eq:uni-small} and applying Corollary \ref{coro:unique} to $(\hat{u}_{\mathrm{k}},\hat{v}_{\mathrm{k}})$ and $(u^{(\tau_l)},v^{(\tau_l)})$ over the domain $\Lambda((j+2)\sigma,(n+2)\sigma;n\sigma)$, we deduce that, for all $\tau\in(0,\min\{\tau_\sharp,\hat{\tau},\hat{\tau}_\sharp,\hat{\tau}_*\})$,
	\begin{align*}
&\sup_{n\sigma \leq t\leq (n+2)\sigma}	
\int_{j\sigma+t-n\sigma}^{(j+4)\sigma-t+n\sigma}
	\left(|\hat{u}_{\mathrm{k}}(x,t)-u^{(\tau_l)}(x,t)|^2
	+|\hat{v}_{\mathrm{k}}(x,t)-v^{(\tau_l)}(x,t)|^2
	\right)\mathrm{d}x
	\\
\leq 	
&\hat{C}_1(T)
\int_{j\sigma}^{(j+4)\sigma}
\left(|\hat{u}_{\mathrm{k}}(x,n\sigma)-u^{(\tau_l)}(x,n\sigma)|^2
+|\hat{v}_{\mathrm{k}}(x,n\sigma)-v^{(\tau_l)}(x,n\sigma)|^2
\right)\mathrm{d}x\\
&+\hat{C}_1(T)\tau_l\int_{0}^{\tau_l}\Big(\sum_{p=n_{0}}^{n-1}\sum_{j=-\infty}^{+\infty}\big( |\mathfrak{g}_{j}^{p}|^{2}
+|\mathfrak{f}_{j}^{p}|^{2}\big)\Big)\mathrm{d}s
+4\hat{C}_1(T)\mathcal\mathcal{M}_{0,\mathrm{k}}^6\mathcal{M}_{\mathrm{k}}^2\sigma\tau_l.
\end{align*}
By Lemma \ref{lem:error} and \eqref{eq:limit-solution}, letting $l\to +\infty$, i.e. $\tau_l\to 0^+$, we obtain
\begin{align*}
	&\sup_{n\sigma \leq t\leq (n+2)\sigma}	
	\int_{j\sigma+t-n\sigma}^{(j+4)\sigma-t+n\sigma}
	\left(|\hat{u}_{\mathrm{k}}(x,t)-u_\flat(x,t)|^2
	+|\hat{v}_{\mathrm{k}}(x,t)-v_\flat(x,t)|^2
	\right)\mathrm{d}x\\
	\leq &
	\hat{C}_1(T)
	\int_{j\sigma}^{(j+4)\sigma}
	\left(|\hat{u}_{\mathrm{k}}(x,n\sigma)-u_\flat(x,n\sigma)|^2
	+|\hat{v}_{\mathrm{k}}(x,n\sigma)-v_\flat(x,n\sigma)|^2
	\right)\mathrm{d}x.
\end{align*}
Summing the above inequality over all integers $j$ with$-2N\leq j\leq 2N-4$ yields
\begin{align*}
	&\sup_{n\sigma \leq t\leq (n+2)\sigma}	
	\int_{-\Theta_k-4T+t}^{\Theta_k+4T-t}
	\left(|\hat{u}_{\mathrm{k}}(x,t)-u_\flat(x,t)|^2
	+|\hat{v}_{\mathrm{k}}(x,t)-v_\flat(x,t)|^2
	\right)\mathrm{d}x\\
	\leq &
	4\hat{C}_1(T)
	\int_{-\Theta_k-4T+n\sigma}^{\Theta_k+4T-n\sigma}
	\left(|\hat{u}_{\mathrm{k}}(x,n\sigma)-u_\flat(x,n\sigma)|^2
	+|\hat{v}_{\mathrm{k}}(x,n\sigma)-v_\flat(x,n\sigma)|^2
	\right)\mathrm{d}x.
\end{align*}
Since $\hat{u}_{\mathrm{k}}(x,t)=\hat{v}_{\mathrm{k}}(x,t)=0$ for $|x|> \Theta_k+t$, we deduce from \eqref{2.4} that
\begin{align} \label{eq:substi-2}
	&\lim\limits_{l\to +\infty}\sup_{n\sigma \leq t\leq (n+2)\sigma}	
	\int_{|x|> \Theta_k+t}
	\left(|u^{(\tau_l)}(x,t)|^2
	+|v^{(\tau_l)}(x,t)|^2
	\right)\mathrm{d}x\nonumber\\
	\leq &\lim\limits_{l\to +\infty}\int_{|x|> \Theta_k+n\sigma}
	\left(|u^{(\tau_l)}(x,n\sigma)|^2
	+|v^{(\tau_l)}(x,n\sigma)|^2
	\right)\mathrm{d}x\nonumber\\
	\leq&
    \int_{|x|> \Theta_k+n\sigma}
    \left(|\hat{u}_{\mathrm{k}}(x,n\sigma)-u_\flat(x,n\sigma)|^2
    +|\hat{v}_{\mathrm{k}}(x,n\sigma)-v_\flat(x,n\sigma)|^2
    \right)\mathrm{d}x\nonumber.
\end{align}
Consequently, we conclude that
\begin{align*}
	&\sup_{n\sigma \leq t\leq (n+1)\sigma}	
	\int_{\mathbb{R}}
	\left(|\hat{u}_{\mathrm{k}}(x,t)-u_\flat(x,t)|^2
	+|\hat{v}_{\mathrm{k}}(x,t)-v_\flat(x,t)|^2
	\right)\mathrm{d}x\\
	\leq &
	(4\hat{C}_1(T)+1)
	\int_{\mathbb{R}}
	\left(|\hat{u}_{\mathrm{k}}(x,n\sigma)-u_\flat(x,n\sigma)|^2
	+|\hat{v}_{\mathrm{k}}(x,n\sigma)-v_\flat(x,n\sigma)|^2
	\right)\mathrm{d}x.
\end{align*}
Let $\left[\frac{T}{\sigma}\right]$ denote the greatest integer less than or equal to $\frac{T}{\sigma}$. Since $0\leq n\leq \left[\frac{T}{\sigma}\right]$ and $\sigma>0$ is fixed in the sequel, we obtain
\begin{align*}
	&\sup_{0 \leq t\leq T}	
	\int_{\mathbb{R}}
	\left(|\hat{u}_{\mathrm{k}}(x,t)-u_\flat(x,t)|^2
	+|\hat{v}_{\mathrm{k}}(x,t)-v_\flat(x,t)|^2
	\right)\mathrm{d}x\\
	\leq &
	(4\hat{C}_1(T)+1)^{\left[\frac{T}{\sigma}\right]+1}
	\int_{\mathbb{R}}
	\left(|\hat{u}_{\mathrm{k}}(x,0)-u_\flat(x,0)|^2
	+|\hat{v}_{\mathrm{k}}(x,0)-v_\flat(x,0)|^2
	\right)\mathrm{d}x.
\end{align*}
Thus, integrating the above inequality over $[0,T]$ with respect to $t$, we get
\begin{equation}\label{eq:final}
\begin{aligned}
	&\int_{0}^{T}
	\int_{\mathbb{R}}
	\left(|\hat{u}_{\mathrm{k}}(x,t)-u_\flat(x,t)|^2
	+|\hat{v}_{\mathrm{k}}(x,t)-v_\flat(x,t)|^2
	\right)\mathrm{d}x\mathrm{d}t\\
	\leq &
	(4\hat{C}_1(T)+1)^{\left[\frac{T}{\sigma}\right]+1}
	T\int_{\mathbb{R}}
	\left(|\hat{u}_{\mathrm{k}}(x,0)-u_\flat(x,0)|^2
	+|\hat{v}_{\mathrm{k}}(x,0)-v_\flat(x,0)|^2
	\right)\mathrm{d}x.
\end{aligned}
\end{equation}

Finally, by \eqref{eq:initial-data-convergent} and \eqref{5.1}--\eqref{eq:limit-solution}, we obtain
\begin{equation}\label{eq:final-0}
	(u_\flat,v_\flat)(x,0)=(\hat{u},\hat{v})(x,0)\quad \text{ in } \:  \mathrm{L}^2(\mathbb{R}).
\end{equation}
Therefore, letting $\mathrm{k}\to+\infty$ in \eqref{eq:final} and using \eqref{eq:final-0}, we conclude that
\begin{equation*}
	(u_\flat,v_\flat)(x,t)=(\hat{u},\hat{v})(x,t)\qquad \text{ in }\:  \mathrm{L}^2(\mathbb{R}\times[0,T])
\end{equation*}
 for any $T>0$.
The proof of Theorem~\ref{mainresult} is thus complete.
\end{proof}

\appendix
\section{Some auxiliary lemmas}

In this appendix, we first present a compactness result, which is used to prove Lemma \ref{lem:proposition4.1}.

\begin{lemma}[\text{Ascoli characterization of compact sets in $\mathrm{C}([0,T]; B)$, see \cite[Lemma 1]{Simon1986}}]\label{lem:A2}
Let $B$ is a Banach space. A set $\mathcal{F}$ of $\mathrm{C}([0,T];B)$ is relatively compact if and only if:
\begin{enumerate}
    \item [\textup{(i)}]
$
\mathcal{F}(t)=\{f(t): f\in\mathcal{F}\} \ \text{is relatively compact in } B,\quad \forall\,0<t<T;
$
\item [\textup{(ii)}]
$
\mathcal{F} \ \text{is uniformly equicontinuous: } \forall \varepsilon>0,\ \exists \eta \ \text{such that}
$
\[
\|f(t_2)-f(t_1)\|_B \le \varepsilon,\quad \forall f\in\mathcal{F},\ \forall\,0<t_1\le t_2<T \ \text{such that } |t_2-t_1|\le \eta.
\]
\end{enumerate}
\end{lemma}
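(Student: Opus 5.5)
This is the Ascoli-type criterion in $\mathrm{C}([0,T];B)$, and I would prove it directly from Arzelà--Ascoli, working with the sup-norm $\|f\|_{\mathrm{C}}=\sup_{t\in[0,T]}\|f(t)\|_B$. The guiding observation is that condition (i) is assumed only for interior times $0<t<T$. So the first task is to extend it to the endpoints, and after that the argument is a standard total-boundedness proof.

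For necessity, let $\mathcal F$ be relatively compact. The evaluation map $f\mapsto f(t)$ is $1$-Lipschitz from $\mathrm{C}([0,T];B)$ to $B$, so the image $\mathcal F(t)$ is relatively compact, which gives (i). For (ii), fix $\varepsilon>0$ and cover $\mathcal F$ by finitely many balls of radius $\varepsilon/3$ centred at $f_1,\dots,f_N$. Each $f_k$ is uniformly continuous on the compact interval $[0,T]$, so there is a common $\eta$ with $\|f_k(t_2)-f_k(t_1)\|_B\le\varepsilon/3$ whenever $|t_2-t_1|\le\eta$. A triangle inequality through the nearest centre then gives $\|f(t_2)-f(t_1)\|_B\le\varepsilon$ for every $f\in\mathcal F$.

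For sufficiency, I would carry out three steps in order.

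\emph{Step 1 (endpoints).} I extend (i) to every $t\in[0,T]$. Given $t\in[0,T]$ and $\varepsilon>0$, choose an interior point $t'\in(0,T)$ with $|t-t'|\le\eta(\varepsilon)$. Equicontinuity extends to the closed interval by continuity of each $f$, so $\mathcal F(t)$ lies in the $\varepsilon$-neighbourhood of the relatively compact set $\mathcal F(t')$. A finite $\varepsilon$-net of $\mathcal F(t')$ is therefore a $2\varepsilon$-net of $\mathcal F(t)$, and $\mathcal F(t)$ is totally bounded.

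\emph{Step 2 (compact set of values).} Fix $\varepsilon$ and a partition $0=s_0<s_1<\dots<s_M=T$ with mesh at most $\eta(\varepsilon)$. The union $K=\bigcup_i \overline{\mathcal F(s_i)}$ is a finite union of compact sets, hence compact in $B$, so it has a finite $\varepsilon$-net $\{b_1,\dots,b_R\}$.

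\emph{Step 3 (total boundedness of $\mathcal F$).} For each map $\phi:\{0,\dots,M\}\to\{1,\dots,R\}$, let $\mathcal F_\phi$ be the set of $f\in\mathcal F$ with $\|f(s_i)-b_{\phi(i)}\|_B\le\varepsilon$ for all $i$. There are finitely many $\phi$, and the sets $\mathcal F_\phi$ cover $\mathcal F$. Take $f,g\in\mathcal F_\phi$ and $t\in[0,T]$, and pick $s_i$ with $|t-s_i|\le\eta$. Then
\[
\|f(t)-g(t)\|_B\le\|f(t)-f(s_i)\|_B+\|f(s_i)-g(s_i)\|_B+\|g(s_i)-g(t)\|_B\le 4\varepsilon,
\]
so each $\mathcal F_\phi$ has sup-norm diameter at most $4\varepsilon$. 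Hence $\mathcal F$ is totally bounded, and because $\mathrm{C}([0,T];B)$ is complete, $\mathcal F$ is relatively compact.

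I expect the main obstacle to be Step 1. If the endpoint extension is skipped, the compact set $K$ in Step 2 is not available near $t=0$ and $t=T$. The uniform equicontinuity in (ii) is exactly the tool that bridges this gap.
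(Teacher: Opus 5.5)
Your proof is correct and complete. For necessity you use the $1$-Lipschitz evaluation maps and a finite $\varepsilon/3$-net. For sufficiency you extend (i) to $t=0,T$ via (ii), then run a finite-net argument showing total boundedness in the complete space $\mathrm{C}([0,T];B)$.

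The paper gives no argument of its own: it simply cites Simon's Lemma 1, which is the vector-valued Arzel\`a--Ascoli theorem, so your write-up is a self-contained version of that classical proof. Your brief claim that (ii) extends to the closed interval is also fine, since any pair $t_1\le t_2$ in $[0,T]$ is a limit of pairs in $(0,T)$ with no larger separation, and each $f$ is continuous.
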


Next, we recall the following global existence result for strong solutions to the NLDE Cauchy problem \eqref{eq:NLDE}--\eqref{eq:NLDE-initail-data}.
\begin{lemma}[\text{Global existence of strong solution, see \cite[Theorem~1.1]{Y.ZhangandQ.Zhao1}}]\label{lem:A3}
	For $(u_0,v_0)\in\mathrm{L}^2(\mathbb{R})$, the NLDE Cauchy problem \eqref{eq:NLDE}--\eqref{eq:NLDE-initail-data} admits a unique global strong solution $(\hat{u},\hat{v})$ with $(\hat{u},\hat{v}) \in \mathrm{C}([0,\infty);\mathrm{L}^{2}(\mathbb{R}))$. Moreover, $|\hat{u}|^2|\hat{v}|^2\in \mathrm{L}^{2}(\mathbb{R}\times[0,T])$ for any $T>0$.
\end{lemma}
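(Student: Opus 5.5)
The plan is to construct the strong solution as the $\mathrm{L}^2$-limit of smooth solutions with mollified, compactly supported data. The work splits into three parts: global smooth solvability with bounds depending only on $\|(u_0,v_0)\|_{\mathrm{L}^2}$, a continuous analogue of the Glimm-type stability estimate of Proposition~\ref{pro:functional-decreasing}, and a limiting argument giving existence and uniqueness. This mirrors the discrete arguments of Sections~\ref{sec:conservation}--\ref{sec:compactness}, with the scheme replaced by the PDE itself.

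\textbf{Smooth solutions.} Take $(u_{0,\mathrm{k}},v_{0,\mathrm{k}})\in\mathrm{C}_c^\infty(\mathbb{R})$ converging to $(u_0,v_0)$ in $\mathrm{L}^2$, and solve \eqref{eq:NLDE} locally by the standard contraction argument along characteristics. The key identity is the local conservation law
\[
\partial_t(|u|^2+|v|^2)+\partial_x(|u|^2-|v|^2)=0,
\]
the continuous version of \eqref{2.7}. Integrating it over a characteristic triangle $\Lambda(x_1,t_1;t_0)$ gives the analogues of \eqref{2.4}--\eqref{2.5}:
\begin{itemize}
\item the energy on each time slice of the triangle is bounded by the energy on its base;
\item the flux of $|u|^2$ through the right edge and of $|v|^2$ through the left edge is bounded by the same base energy.
\end{itemize}
Next, along a $u$-characteristic $|u|^2$ satisfies the inequality behind \eqref{2.2}, namely $\tfrac{d}{ds}|u|^2\le m(|u|^2+|v|^2)+4|\beta||u|^2|v|^2$. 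The factor $\int|v|^2\,ds$ along that characteristic is controlled by the edge-flux bound, so Gronwall gives the continuous analogue of \eqref{2.8}--\eqref{2.9},
\[
|u(x,t)|^2\le C(T)\big(|u_0(x-t)|^2+1\big),
\]
with $C(T)$ depending only on $T$ and $\|(u_0,v_0)\|_{\mathrm{L}^2}$, and similarly for $v$. Derivatives satisfy a linear system along characteristics whose coefficients are now bounded, so they cannot blow up in finite time either. Hence the smooth solutions exist globally, and finite propagation speed keeps them compactly supported.

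\textbf{Stability.} For two smooth solutions with difference $(U,V)$, I would study, on a characteristic triangle, the functional
\[
\mathcal{L}+\kappa\mathcal{Q},\qquad \mathcal{Q}=\iint_{x<y}\Big(|U(x)|^2\big(|v(y)|^2+|\tilde v(y)|^2\big)+|V(y)|^2\big(|u(x)|^2+|\tilde u(x)|^2\big)\Big)\,dx\,dy .
\]
Its time derivative produces a negative term $-\mathcal{D}$, where $\mathcal{D}$ is the pointwise quartic interaction; this is the continuous version of \eqref{eq:claim1-2}. The growth of $\mathcal{L}$ is bounded by $C(\mathcal{L}+\mathcal{D})$, and the growth of $\mathcal{Q}$ by (base energy)$\cdot\mathcal{D}$ plus terms $(1+\int|u|^2|v|^2dx)\,\mathcal{L}$. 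Requiring the base energy to be at most $\delta$ and choosing $\kappa$ large absorbs the $\mathcal{D}$ terms, exactly as in \eqref{eq:-1}. The interaction integral $\int_0^T\!\int|u|^2|v|^2\,dx\,dt$ is bounded uniformly in terms of $T$ and the data, as in Lemma~\ref{lem:nonlinear-estimates}, so Gronwall closes. The smallness $\delta$ holds on triangles of width $4\sigma$, with $\sigma$ uniform in $\mathrm{k}$ by equi-integrability of the data together with the pointwise bounds, as in Lemma~\ref{lem:subcase2}. Tiling the strip by such triangles, as in Lemmas~\ref{lem:subcase1}--\ref{lem:subcase2}, gives
\[
\sup_{0\le t\le T}\|(U,V)(t)\|_{\mathrm{L}^2}\le C(T)\,\|(U,V)(0)\|_{\mathrm{L}^2}.
\]

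\textbf{Limit and uniqueness.} By the stability estimate, $(u_\mathrm{k},v_\mathrm{k})$ is Cauchy in $\mathrm{C}([0,T];\mathrm{L}^2)$ for every $T$. Its limit $(\hat u,\hat v)$ is a strong solution in the sense of Definition~\ref{definition1.1}. Any two strong solutions are, by definition, limits of smooth solutions whose initial data converge to the same $\mathrm{L}^2$ function, so the stability estimate passes to the limit and gives uniqueness.

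\textbf{Expected obstacle.} The delicate point is the final claim $|\hat u|^2|\hat v|^2\in\mathrm{L}^2(\mathbb{R}\times[0,T])$. The triangle flux estimates readily give $|u|^2|v|^2\in\mathrm{L}^1$ uniformly in $\mathrm{k}$. For $\mathrm{L}^2$, my first attempt would change to characteristic coordinates $\xi=x-t$, $\eta=x+t$ and use the pointwise bounds, reducing matters to a tensor-product integral in $|u_0(\xi)|^2$ and $|v_0(\eta)|^2$. That integral is finite only in $\mathrm{L}^1$, so in $\mathrm{L}^2$ it would require $u_0,v_0\in\mathrm{L}^4$, which $\mathrm{L}^2$ data need not satisfy. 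Closing this gap would need a sharper bilinear estimate exploiting the transversality of the two characteristic families; I do not currently see how to obtain it, and it may be that only the $\mathrm{L}^1$ interaction bound is actually available under $\mathrm{L}^2$ data.
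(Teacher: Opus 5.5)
Your existence and uniqueness argument is essentially the one behind the cited result. The paper gives no proof of this lemma and simply quotes Zhang--Zhao. Their method is the one Sections~\ref{sec:conservation}--\ref{sec:compactness} discretize and the one you reconstruct:
\begin{itemize}
\item smooth compactly supported approximants;
\item the local conservation law on characteristic triangles;
\item pointwise bounds along characteristics;
\item a Bony/Glimm-type functional for the difference under local smallness;
\item tiling of the strip.
\end{itemize}
That part is fine.

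The obstacle you flag at the end is not a missing idea: the final clause is false as written, and your characteristic-coordinate computation is essentially the counterexample. Take $m=\beta=0$, $\alpha\neq0$, which the hypotheses allow.
\begin{itemize}
\item For smooth solutions, $(\partial_t+\partial_x)|u|^2=2\,\mathrm{Re}\{i\alpha|u|^2|v|^2\}=0$ and $(\partial_t-\partial_x)|v|^2=0$.
\item Hence $|u_{\mathrm{k}}(x,t)|=|u_{0,\mathrm{k}}(x-t)|$ and $|v_{\mathrm{k}}(x,t)|=|v_{0,\mathrm{k}}(x+t)|$.
\item Passing to the $\mathrm{L}^2(\mathbb{R}\times[0,T])$ limit in Definition~\ref{definition1.1} gives $|\hat u(x,t)|=|u_0(x-t)|$ and $|\hat v(x,t)|=|v_0(x+t)|$ a.e.
\end{itemize}
With $\xi=x-t$, $\eta=x+t$ this gives the identity
\[
\int_0^T\!\!\int_{\mathbb{R}}|\hat u|^4|\hat v|^4\,\mathrm{d}x\,\mathrm{d}t
=\frac12\iint_{0\le\eta-\xi\le 2T}|u_0(\xi)|^4|v_0(\eta)|^4\,\mathrm{d}\xi\,\mathrm{d}\eta .
\]
Now choose $u_0(\xi)=|\xi|^{-1/3}\chi_{[-1,1]}(\xi)\in\mathrm{L}^2(\mathbb{R})$ and $v_0=\chi_{[-1,1+2T]}$. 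For every $\xi\in[-1,1]$ the $\eta$-integral equals $2T$, so the right-hand side is $T\int_{-1}^{1}|\xi|^{-4/3}\,\mathrm{d}\xi=+\infty$. Your tensor-product bound is therefore an identity in this case, and no bilinear or transversality estimate can improve it.

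What is true is the bound you already have, and your argument proves it. The smooth approximants satisfy a uniform bound, and Fatou along an a.e.\ convergent subsequence gives
\[
\int_0^T\!\!\int_{\mathbb{R}}|\hat u|^2|\hat v|^2\,\mathrm{d}x\,\mathrm{d}t<\infty,
\]
that is, $|\hat u||\hat v|\in\mathrm{L}^2(\mathbb{R}\times[0,T])$. This is the continuous counterpart of Lemma~\ref{lem:nonlinear-estimates} and presumably what the quoted theorem intends. The stronger $\mathrm{L}^2$ clause is never used elsewhere in the paper, so Theorem~\ref{mainresult} is unaffected.
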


{\bf Acknowledgments.}
The research of Yongqian Zhang is supported in part by the NSFC Project (11421061, 12271507) and by Natural Science Foundation of Shanghai 15ZR1403900. The research of Qin Zhao is supported in part by the NSFC Project (12101471, 12272297).


\end{document}